\Crefname{appsec}{Appendix}{Appendices}
\numberwithin{equation}{section}
\newtheorem{thm}{Theorem}[section]
\newtheorem{lem}[thm]{Lemma}
\newtheorem{prop}[thm]{Proposition}
\newtheorem{cor}[thm]{Corollary}
\theoremstyle{definition}
\newtheorem{ex}[thm]{Example}
\theoremstyle{definition}
\newtheorem{defn}[thm]{Definition}
\theoremstyle{remark}
\newtheorem{rmk}[thm]{Remark}
\newcommand{\SkAlg}[2]{\operatorname{Sk}_{#1}\left(#2\right)}
\newcommand{\SkAlgSt}[1]{\operatorname{Sk}^{\operatorname{st}}_q\left(#1\right)}
\newcommand{\AW}[1]{\operatorname{AW}(#1)}
\newcommand{\V}{\mathcal{V}}
\newcommand{\slt}{\mathfrak{sl}_2}
\newcommand{\Uq}{\mathcal{U}_q(\slt)}
\newcommand{\qtr}{\operatorname{tr}_q(\mathcal{K})}
\newcommand{\rchoose}[2]{\left(\hspace{-0.4em}\left( \begin{matrix} #1 \\ #2 \end{matrix} \right)\hspace{-0.4em}\right)}
\newcommand{\catname}[1]{\normalfont{\textbf{#1}}}
\newcommand{\SkCat}[2]{\catname{SkCat}_{#1}(#2)}
\newcommand{\faktor}[2]{{#1}/{#2}} 
\DeclareMathOperator{\op}{op}
\DeclareMathOperator{\id}{id}
\DeclareMathOperator{\lf}{lf}
\DeclareMathOperator{\ch}{ch}
\DeclareMathOperator{\SL}{SL}
\title{Higher Rank Askey--Wilson Algebras as Skein Algebras}
\author[J. Cooke]{Juliet Cooke}
\author[A. Lacabanne]{Abel Lacabanne}
\address{J.C.: School of Mathematical Sciences, University Park, Nottingham, NG7 2RD, United Kingdom, \href{https://julietcooke.net/}{www.julietcooke.net}}
\email{Juliet.Cooke@nottingham.ac.uk}
\address{A.L.: Laboratoire de Math{\'e}matiques Blaise Pascal (UMR 6620), Universit{\'e} Clermont Auvergne, Campus Universitaire des C{\'e}zeaux, 3 place Vasarely, 63178 Aubi{\`e}re Cedex, France, \href{http://www.normalesup.org/~lacabanne}{www.normalesup.org/$\sim$lacabanne}}
\email{abel.lacabanne@uca.fr}
\date{\today}
\begin{document}
\maketitle

\begin{abstract}

    In this paper we give a topological interpretation and diagrammatic calculus for the rank $(n-2)$ Askey--Wilson algebra by proving there is an explicit isomorphism with the Kauffman bracket skein algebra of the $(n+1)$-punctured sphere. To do this we consider the Askey-Wilson algebra in the braided tensor product of $n$ copies of either the quantum group $\mathcal{U}_q{(\mathfrak{sl}_2)}$ or the reflection equation algebra. We then use the isomorpism of the Kauffman bracket skein algebra of the $(n+1)$-punctured sphere with the $\mathcal{U}_q{(\mathfrak{sl}_2})$ invariants of the Aleeksev moduli algebra to complete the correspondence. We also find the graded vector space dimension of the $\mathcal{U}_q{(\mathfrak{sl}_2})$ invariants of the Aleeksev moduli algebra and apply this to finding a presentation of the skein algebra of the five-punctured sphere and hence also find a presentation for the rank $2$ Askey--Wilson algebra.
\end{abstract}

\setcounter{tocdepth}{1}
\tableofcontents

\section{Introduction}

The main goal of this paper is to give a topological interpretation of the higher rank Askey--Wilson algebra. More precisely, we will prove that there is an explicit isomorphism
\[\AW{n} \xrightarrow{\sim }\SkAlg{q}{\Sigma_{0,n+1}}\] 
between the rank $(n-2)$ Askey-Wilson algebra $\AW{n}$ and the Kauffman bracket skein algebra $\SkAlg{q}{\Sigma_{0,n+1}}$ of the $(n+1)$-punctured sphere. The Kauffman bracket skein algebra is an invariant of oriented surfaces given by considering framed links in the thickened surface and imposing the following skein relations
\begin{align*}
	\skeindiagram{leftnoarrow} &= q^{\frac{1}{2}}\; \skeindiagram{horizontal} + q^{-\frac{1}{2}} \; \skeindiagram{vertical}, \\ 
	\skeindiagram{circle} &= -q - q^{-1}
  \end{align*}
  which allows one to resolve all crossing and remove trivial links at the cost of a constant. These relations and the diagrammatic calculus based on Jones--Wenzl idempotents \cite{MV94,Lickorish93,KL94book}, make skein algebras an ideal setting for carrying out concrete computations. The Kauffman bracket skein relation can be renormalised to give the famous Jones polynomial and the skein algebra $\SkAlg{q}{\Sigma}$ itself is a quantisation of the $\SL_2$ character variety of the surface \cite{Bullock97,PrzytyckiSikora00,BullockFrohmanKania99}. 

On the other hand, the original Askey--Wilson algebra $\operatorname{Zh}_q(a,b,c,d)$ was first introduced by Zhedanov \cite{Zhedanov91} in 1991 as the algebra of the bispectral operators of the Askey--Wilson polynomials.
Askey--Wilson polynomials are hypergeometric orthogonal polynomials which can be considered as Macdonald polynomials for the affine root system $(C^{\vee}_1, C_1)$ \cite{NS04}. 
Askey--Wilson algebras and polynomials have applications in physics such as to the one-dimensional Asymmetric Simple Exclusion Process (ASEP) statistical mechanics model~\cite{USW04} as well as applications to more algebraic areas such as the theory of Leonard pairs \cite{TV04}.
Askey--Wilson polynomials can also be truncated to give $q$-Racah polynomials which encode the $6j$-symbols or Racah coefficients which occur in angular momentum recoupling when there are three sources of angular momentum.
The Askey--Wilson algebra\footnote{This is the original Askey--Wilson algebra of Zhedanov. In this paper we use the special Askey--Wilson algebra which if we specialise its parametes is isomorphic to the truncated $q$-Onsager algebra quotiented by the Skylanin determinant.} can in turn be considered as a truncated version of the $q$-Onsager algebra; the $q$-Onsager algebra is the reflection equation algebra of the quantum group $\mathcal{U}_q\big(\widehat{\mathfrak{sl}_2}\big)$ of the affine lie group $\widehat{\mathfrak{sl}_2}$ \cite{Terwilliger01,Baseilhac05} and is used in integrable systems such as in the analysis of the XXZ spin chains with non-diagonal boundary conditions \cite{BK05,BB13}.

There are multiple alternative versions of Askey--Wilson algebras. As well as of Zhedanov's original algebra $\operatorname{Zh}_q(a,b,c,d)$ there is $\operatorname{aw(3)}$ in which the parameters $a,b,c,d$ have been replaced by central elements and the universal Askey--Wilson algebra $\Delta_q$ of Terwilliger \cite{Terwilliger11} with a different choice of central elements. We shall use $\AW{3}$ which is a quotient of $\operatorname{aw(3)}$ by a relation involving the quantum Casimir. For a more complete account of the different versions of the Askey--Wilson algebra and their applications see \cite{CFG21} and the references therein. 

  There are multiple possible approaches to generalising the definition of the Askey--Wilson algebra to higher ranks and we shall follow the approach based on relating $\AW{3}$ to quantum groups. Huang  showed that there is an embedding
 \[\AW{3} \xhookrightarrow{} \Uq \otimes \Uq \otimes \Uq\] 
with the generators $\{\,\Lambda_{A} \;\vert\; A \subseteq \{1, 2, 3\} \,\}$ of $\AW{3}$ under the embedding being constructed out the quantum Casimir of $\Uq$ using coproducts and a map $\tau$ \cite{Huang17,CrampeGaboriaudVinetZaimi20}. This definition was then generalised by Post and Walter to $\AW{4}$ \cite{PostWalter17} and by de Clerq et al.\ to $\AW{n}$ which is defined as a subalgebra $\AW{n} \subset \Uq^{\otimes n}$ generated by explicit generators $\{\,\Lambda_A\;\vert\; A \subseteq \{1,\ldots,n\}\,\}$ \cite{DeClercq19,DeBieDeClercqVanDeVijver20}. De Clerq et al.\ also showed that an algebra isomorphic to $\AW{n}$, the rank $(n-2)$ Bannai--Ito algebra, was the symmetric algebra of the $\mathbb{Z}^n_2$ $q$-Dirac--Dunkl model \cite{DeBieDeClercqVanDeVijver20}.

\subsection*{Isomorphism between Askey--Wilson and Skein Algebras}
  In this paper we shall prove
  \begin{thm}
    \label{thm:AWskeiniso}
    There is an isomorphism 
    \[\SkAlg{q}{\Sigma_{0,n+1}} \xrightarrow{\sim} \AW{n}: s_A \mapsto -\Lambda_A\] 
    between the Kauffman bracket skein algebra $\SkAlg{q}{\Sigma_{0,n+1}}$ of the $(n+1)$-punctured sphere and the rank $(n-2)$ Askey-Wilson algebra $\AW{n}$ which sends the\footnote{There are two choices: either the curves always go below or always go above the points they do not include. In this paper we shall choose below.} simple closed curve $s_A$ around the punctures $A$ to the Askey-Wilson generator $\Lambda_A$ with a negative coefficient.
  \end{thm}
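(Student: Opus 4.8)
The plan is to realise the map as a composite passing through the $\Uq$-invariants of the Alekseev moduli algebra of the $(n{+}1)$-punctured sphere: on one side this invariant algebra is the skein algebra, and on the other side it can be transported into $\Uq^{\otimes n}$ so that the skein generators become the Askey--Wilson generators. Present $\Sigma_{0,n+1}$ as a disk with $n$ interior punctures, the boundary carrying the last puncture; then its moduli algebra is the braided tensor product $\mathcal{B}^{\otimes n}$ of $n$ copies of the reflection equation algebra $\mathcal{B}$ (with braided, $R$-matrix-twisted multiplication), and by the cited isomorphism $\SkAlg{q}{\Sigma_{0,n+1}}\cong\bigl(\mathcal{B}^{\otimes n}\bigr)^{\Uq}$. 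Under this identification $s_A$ corresponds to the quantum trace of the holonomy of the loop encircling exactly the punctures in $A$, i.e.\ to $\operatorname{tr}_q$ of the ordered product of the matrix generators indexed by $i\in A$.

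First I would set up the ``braided Askey--Wilson'' picture inside $\bigl(\mathcal{B}^{\otimes n}\bigr)^{\Uq}$: define $\widetilde\Lambda_A:=\operatorname{tr}_q\bigl(\prod_{i\in A}\mathcal{K}_i\bigr)$ and show these elements generate the invariant algebra. Here the graded dimension of $\bigl(\mathcal{B}^{\otimes n}\bigr)^{\Uq}$ computed in this paper is used to confirm that no further generators are needed (and, for small $n$, to extract the presentation mentioned in the abstract). Next I would transport this into $\Uq^{\otimes n}$ via the standard algebra embedding of the reflection equation algebra into the locally finite part of $\Uq$ --- there is a parallel description directly in a braided tensor power of $\Uq$ itself --- obtaining a comparison between $\bigl(\mathcal{B}^{\otimes n}\bigr)^{\Uq}$ and a subalgebra of $\Uq^{\otimes n}$; one must check that the braiding in the source is reconciled with the ordinary multiplication in the target, which becomes tractable on the invariant part.

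The crux is the generator computation: under this transport $\widetilde\Lambda_A\mapsto-\Lambda_A$, where $\Lambda_A$ is the generator of $\AW{n}$ built from the quantum Casimir of $\Uq$ by iterated coproducts along the strands in $A$ (and the twist $\tau$) as in \cite{DeClercq19,DeBieDeClercqVanDeVijver20}. The identity to prove is that the quantum trace of a product of holonomy matrices, pushed through $\mathcal{B}\hookrightarrow\Uq$, equals the Casimir transported along the coproduct, up to the sign and the powers of $q^{1/2}$ that relate the Kauffman bracket normalisation to the quantum-trace normalisation. I expect this normalisation bookkeeping, together with verifying that the transport is a well-defined algebra morphism, to be the main obstacle; the cases $|A|=1$ (a single holonomy trace becoming minus the Casimir on strand $i$) and $|A|=2$ can be checked directly and then propagated.

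Finally I would assemble the isomorphism. The composite $\SkAlg{q}{\Sigma_{0,n+1}}\xrightarrow{\sim}\bigl(\mathcal{B}^{\otimes n}\bigr)^{\Uq}\to\Uq^{\otimes n}$ sends $s_A$ to $-\Lambda_A$, so its image is the subalgebra generated by the $\Lambda_A$, which is $\AW{n}$ by definition; hence the corestriction $\SkAlg{q}{\Sigma_{0,n+1}}\to\AW{n}$ is surjective with the stated effect on generators. For injectivity it suffices that $\bigl(\mathcal{B}^{\otimes n}\bigr)^{\Uq}\to\Uq^{\otimes n}$ be injective, which follows from the injectivity of $\mathcal{B}\hookrightarrow\Uq$ and exactness of $\Uq$-invariants for generic $q$; alternatively one matches the graded dimension of $\bigl(\mathcal{B}^{\otimes n}\bigr)^{\Uq}$ against that of $\AW{n}$. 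Surjectivity and injectivity together give the claimed isomorphism $s_A\mapsto-\Lambda_A$.
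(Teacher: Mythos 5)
Your route is the one the paper takes: the chain $\SkAlg{q}{\Sigma_{0,n+1}}\subseteq\SkAlgSt{\Sigma_{0,n+1}}\xrightarrow{\sim}\mathcal{O}_q(\SL_2)^{\tilde{\otimes} n}\xrightarrow{\sim}\left(\Uq^{\lf}\right)^{\tilde{\otimes} n}\hookrightarrow\Uq^{\otimes n}$, matching $s_A\mapsto-\Delta^{(k-1)}(\qtr)_{\underline{i}}\mapsto-\Lambda_A$ on generators and corestricting. Two points where your sketch deviates deserve comment. First, you pass through the identification of $\SkAlg{q}{\Sigma_{0,n+1}}$ with the full $\Uq$-invariant subalgebra of the moduli algebra and propose to use the Hilbert series to show that the elements $\operatorname{tr}_q\left(\prod_{i\in A}\mathcal{K}_i\right)$ generate the invariants. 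That step is both unnecessary and, as stated, not available for general $n$: a Hilbert-series comparison certifies generation only once you already control a linear basis of the subalgebra these elements generate, which the paper achieves only for $n=4$ via a term-rewriting system. The paper sidesteps this entirely by citing Bullock's theorem that the $s_A$ generate $\SkAlg{q}{\Sigma_{0,n+1}}$, so the image of the injective composite is by definition the subalgebra generated by the $-\Lambda_A$, i.e.\ $\AW{n}$; no statement about the full invariant algebra is needed.

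Second, the step you flag as the crux --- propagating the generator computation beyond $|A|\le 2$ and reconciling the braided with the ordinary multiplication --- is indeed where all the work lies, but it is not done ``on the invariant part''. The paper constructs an explicit injective unbraiding morphism $\gamma_n$ on all of $\left(\Uq^{\lf}\right)^{\tilde{\otimes} n}$ from the quasi-$R$-matrix (with an explicit left inverse, which is also what gives injectivity of the theorem's map --- not exactness of invariants), and then proves by induction that $\gamma_n\left(\underline{\Delta}^{(k-1)}(\Lambda)_{\underline{i}}\right)$ reproduces the $\Delta/\tau_L$ expression defining $\Lambda_A$; the key input is that $\tau_L$ is exactly conjugation by the quasi-$R$-matrix. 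Your plan is workable, but without this inductive unbraiding computation the central identity $s_A\mapsto-\Lambda_A$ for general $A$ remains unproved.
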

        
  \noindent To show the power of this theorem  in \cref{sec:commutator}, we shall use some skein algebraic calculations to obtain an elegant new proof of the theorem of de Clerq \cite[Theorem~3.2]{DeClercq19} which states that $\AW{n}$ satisfies a generalisation of the commutator relations which are used to define the classical Askey--Wilson algebras.
  We also show in \cref{sec:braid_grp} that this isomorphism is compatible with the action of the braid group.

  \cref{thm:AWskeiniso} is a generalisation of the classical result that $\AW{3}$ is isomorphic to the Kauffman bracket skein algebra of the four-punctured sphere. This was proven by showing that $\AW{3}$ is isomorphic to the $(C^{\vee}_1, C_1)$ spherical double affine Hecke algebra (DAHA) \cite{Koornwinder07,Terwilliger13} and by comparing the presentation of the $(C^{\vee}_1, C_1)$ spherical DAHA to the presentation of the Kauffman bracket skein algebra of the four-punctured sphere \cite{,BS18, Cooke18,Hikami19}.
  This approach is not readily generalisable so we will instead prove \cref{thm:AWskeiniso} by chaining together the following three maps 
  \[
    \SkAlg{q}{\Sigma_{0, n+1}} \subseteq \SkAlgSt{\Sigma_{0, n+1}} \xrightarrow{\sim} \mathcal{O}_q(\mathfrak{sl}_2)^{\tilde{\otimes} n} \xrightarrow{\sim} \left( \Uq^{\lf} \right)^{\tilde{\otimes} n} \xhookrightarrow{} \Uq^{\otimes n},
  \]
  which we shall now discuss.
  \subsection*{Braiding the Tensor Product and the Generators of the Askey--Wilson Algebra}
  The last of these maps is a injective homomorphism that unbraids the braided tensor product $\tilde{\otimes}$ of Majid \cite{Majid91,Majid95book} to give the ordinary tensor product. In \cref{sec:braided_tensor_product} we will show that if we consider the Askey--Wilson algebra $\AW{n}$ as a subalgebra of the braided rather than unbraided tensor product of $n$ copies of $\Uq$ we obtain a simpler description of the generators with no $\tau$ map:
\begin{thm}
  \label{thm:braidedgenerators}
  The generator $\Lambda_A$ for $A = \{i_1< \dots< i_k\} \subseteq \{1, \dots, n\}$ as an element of $\left( \Uq^{\lf} \right)^{\tilde{\otimes} n}$ is given by $\underline{\Delta}^k(\Lambda)_{\underline{i}}$ where $\Lambda$ is the quantum Casimir, $\underline{\Delta}$ is the `braided' coproduct, and the subscript $\underline{i} = (i_1, \dots, i_k)$ denotes that $j^{th}$ tensor factor of the coproduct is placed in position $i_j$ of the tensor product and $1$ is placed in all the empty positions. For example,
  \[\Lambda_{134} = \Lambda_{(1)} \otimes 1 \otimes \Lambda_{(2)} \otimes \Lambda_{(3)} \in \left( \Uq^{\lf} \right)^{\tilde{\otimes} 4}\]
  where we are using Sweedler notation for the coproduct. 
\end{thm}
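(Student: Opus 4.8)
The plan is to compute the image of $\underline{\Delta}^{k}(\Lambda)_{\underline{i}}$ under the unbraiding homomorphism $\left(\Uq^{\lf}\right)^{\tilde{\otimes} n} \hookrightarrow \Uq^{\otimes n}$ of \cref{sec:braided_tensor_product} and to match it, term by term, against the definition of $\Lambda_A$ given by de Clercq et al.\ in \cite{DeClercq19, DeBieDeClercqVanDeVijver20}. First I would recall that definition: for $A = \{i_1 < \dots < i_k\}$ the generator $\Lambda_A$ is built from the $(k-1)$-fold iterated coproduct $\Delta^{(k-1)}(\Lambda) \in \Uq^{\otimes k}$ of the quantum Casimir by spreading its $k$ legs over the positions $i_1, \dots, i_k$ and inserting, for every element of $[i_1, i_k] \setminus A$, a copy of Huang's map $\tau$ acting on the legs to one side of that gap. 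Writing $\Theta_n$ for the unbraiding isomorphism, the theorem then amounts to the identity $\Theta_n\!\left(\underline{\Delta}^{k}(\Lambda)_{\underline{i}}\right) = \Lambda_A$ in $\Uq^{\otimes n}$.

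Two ingredients feed into this. The first is an explicit handle on the braided coproduct: realising $\Uq^{\lf}$ as a braided Hopf algebra in the category of $\Uq$-modules via Majid's transmutation construction, one has $\underline{\Delta}(x) = x_{(1)} S(\mathcal{R}^{(2)}) \otimes \left(\mathcal{R}^{(1)} \triangleright x_{(2)}\right)$ with $\triangleright$ the adjoint action. The first key step is a lemma pinning down $\underline{\Delta}^{k}(\Lambda)$ explicitly; here the ad-invariance of the Casimir --- equivalently, its centrality in $\Uq$ --- should cause the $\mathcal{R}$-matrix corrections in the transmutation coproduct to collapse, leaving simply the ordinary iterated coproduct, $\underline{\Delta}^{k}(\Lambda) = \Delta^{(k-1)}(\Lambda)$. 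I expect this to come out of pushing centrality through the Sweedler legs of $\Delta(\Lambda)$ together with the normalisations $(\epsilon \otimes \id)(\mathcal{R}) = (\id \otimes \epsilon)(\mathcal{R}) = 1$; the only delicate point is keeping track of the antipodes ($S$ versus $S^{-1}$) and the left versus right adjoint actions.

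The second ingredient is the precise form of $\Theta_n$, which is built from the $R$-matrix. The second key step is a local computation showing that the effect of $\Theta_n$ on a leg of $\Delta^{(k-1)}(\Lambda)$ that must be transported past a trivial tensor factor is exactly ``apply $\tau$'', so that the $\mathcal{R}$-matrix contributions produced by the units sitting in the gaps of $A$ turn precisely into the $\tau$-insertions of the de Clercq formula. Granting the lemma, the identity $\Theta_n(\underline{\Delta}^{k}(\Lambda)_{\underline{i}}) = \Lambda_A$ then follows by induction on $k = |A|$: coassociativity of $\underline{\Delta}$ lets one peel off a single tensor leg, which is matched against the corresponding reduction of the de Clercq expression. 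The base case $k = 1$ is the tautology $\Lambda_{\{i\}} = 1^{\otimes(i-1)} \otimes \Lambda \otimes 1^{\otimes(n-i)}$ on both sides, and $k = 2$ reproduces Huang's original formulas \cite{Huang17}, including the $\tau$ occurring in $\Lambda_{13}$.

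The main obstacle will be this second step: unwinding the definition of $\Theta_n$ far enough that the $\tau$-maps emerge transparently, while keeping straight the $\mathcal{R}$-matrix legs, antipodes and adjoint actions across all $n$ tensor factors at once. Beyond the ad-invariance lemma I expect no conceptual difficulty --- the content is the bookkeeping --- and the inductive scheme above is designed precisely so that at each stage only one new tensor leg and one new gap have to be analysed.
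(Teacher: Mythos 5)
Your overall architecture is the right one and is essentially the paper's: push $\underline{\Delta}^{(k-1)}(\Lambda)_{\underline{i}}$ through the unbraiding map, induct one tensor leg at a time, and identify the contribution of each unit sitting in a gap of $A$ with an insertion of $\tau_L$. Your second key step is correct and is exactly the paper's mechanism: the one-step unbraiding of $1\otimes b$ produces $\sum_i K^{i+\lvert b\rvert}v_i\otimes u_i\rhd b$, which is precisely the formula the paper establishes for $\tau_L$ (\cref{eq:tau_adj_action}), so the $\tau$-insertions of the de Clercq definition emerge with no further work.

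However, your first key step is a genuine gap: the claimed identity $\underline{\Delta}^{k}(\Lambda)=\Delta^{(k-1)}(\Lambda)$ is false, and the reason you give for it does not apply. In the transmutation formula $\underline{\Delta}(x)=\sum x_{(1)}S\left(K^{i+\lvert x_{(2)}\rvert}v_i\right)\otimes u_i\rhd x_{(2)}$ the adjoint action falls on the individual Sweedler leg $x_{(2)}$, not on $x$ itself; centrality of $\Lambda$ says nothing about ad-invariance of $\Lambda_{(2)}$, so the $R$-matrix corrections do not collapse. Indeed the paper's \cref{lem:formula_bar_coproduct} computes $\underline{\Delta}(\Lambda)$ explicitly and it visibly differs from $\Delta(\Lambda)$ --- for instance it contains a term $q^{-1}\Lambda\otimes\Lambda$, hence a nonzero $FE\otimes FE$ component that $\Delta(\Lambda)$ does not have. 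The statement that is actually true, and that your induction needs, is the conjugated one: $\gamma_k\circ\underline{\Delta}^{(k-1)}=\Delta^{(k-1)}\circ\gamma_1$ on $\Uq^{\lf}$ (the remark after \cref{prop:inserting_units}). There the quasi-$R$-matrix factors carried by $\underline{\Delta}$ cancel against those introduced by the unbraiding map via $\sum_{i,j\geq 0}u_ju_i\otimes S(v_i)K^{j}v_j=1\otimes 1$, i.e.\ the invertibility of $\Theta$ --- not via any property of $\Lambda$. This is the $i_1=1$ case of the paper's induction in \cref{prop:inserting_units}, which is moreover carried out for arbitrary $x\in\Uq^{\lf}$; the specialisation to $x=\Lambda$ only enters at the very end to identify the result with \cref{def:AW_generators}. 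If you replace your false lemma with this cancellation argument inside the inductive step, the rest of your plan goes through as written.
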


\subsection*{Reflection Equation Algebras and Presentations} The middle map is a Hopf algebra isomorphism and is given componentwise by the Rosso isomorphism from the reflection equation algebra $\mathcal{O}_q(\mathfrak{sl}_2)$ to $\Uq^{\lf}$, the locally finite subalgebra of $\Uq$ \cite{KS97book}. 
The defining relation for the reflection equation algebra is the reflection equation which first arose in integrable systems related to factorisable scattering on a half-line with a reflecting wall and is based on the standard $R$-matrix of $\Uq$ \cite{Kulish96}. The algebra $\mathcal{O}_q(\mathfrak{sl}_2)^{\tilde{\otimes} n}$ is a special case for the $(n+1)$-punctured sphere of the Aleeksev moduli algebra $\mathcal{L}_{\Sigma}$ which is defined combinatorically for more general surfaces with different tensor products depending on how the handles of the handlebody decomposition of the surface interact \cite{Alekseev94,AGS96}.

The subalgebra $\mathcal{L}_{\Sigma}^{\Uq}$ of the Aleeksev moduli algebra which is invariant under the action of $\Uq$ is naturally filtered by degree. 
In \cref{sec:dimensions} we will use the explicit algebraic description for $\mathcal{L}_{\Sigma}$ to compute the Hilbert series of $\mathcal{L}_{\Sigma}^{\Uq}$ which enumerates the vector space dimension of each graded part of the associated graded algebra. 
For a general punctured surface $\Sigma_{g,r}$ of genus $g$ with $r>1$, the graded algebras associated with $\mathcal{L}_{\Sigma_{g,r}}^{\Uq}$ and the skein algebra $\SkAlg{q}{\Sigma_{g,r}}$ punctures are graded isomorphic. 
Also, for the punctured sphere $\Sigma_{0,n+1}$, the graded algebras associated with $\mathcal{L}_{\Sigma_{0, n+1}}^{\Uq}$ and the Askey--Wilson algebra $\AW{n}$ are graded isomorphic. 
Thus, we also obtain Hilbert series for $\SkAlg{q}{\Sigma_{g,r}}$ and $\AW{n}$:

\begin{thm}
  \label{thm:hilbert}
  The Hilbert series of $\mathcal{L}_{\Sigma_{g,r}}^{\Uq}$, $\AW{n}$ and $\SkAlg{q}{\Sigma_{g,r}}$ is
  \[
  h(t) = \frac{(1+t)^{n-2}}{(1-t)^{n}(1-t^2)^{2n-3}}\left(\sum_{k=0}^{n-2}{\binom{n-2}{k}}^2t^{2k} - \sum_{k=0}^{n-3}\binom{n-2}{k}\binom{n-2}{k+1}t^{2k+1}\right)
  \]
  where $n = 2g + r - 1$.
\end{thm}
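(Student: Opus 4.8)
The plan is to reduce all three Hilbert series to a single computation and carry it out by character theory. The associated graded algebras of $\mathcal{L}_{\Sigma_{g,r}}^{\Uq}$, of $\SkAlg{q}{\Sigma_{g,r}}$ (for $r>1$) and, in the sphere case, of $\AW{n}$ are pairwise graded isomorphic --- via the isomorphism chain of the introduction and \cref{thm:AWskeiniso} --- so it suffices to compute the Hilbert series of $\operatorname{gr}\mathcal{L}_{\Sigma_{g,r}}^{\Uq}$. With $n=2g+r-1$ we have $\mathcal{L}_{\Sigma_{g,r}}\cong\mathcal{O}_q(\mathfrak{sl}_2)^{\tilde{\otimes}n}$ as a filtered $\Uq$-module algebra, so only one tensor factor needs to be understood: by the Rosso isomorphism $\mathcal{O}_q(\mathfrak{sl}_2)\cong\Uq^{\lf}$ and the $q$-analogue of the Peter--Weyl decomposition of the locally finite part, the degree-$d$ graded piece of $\operatorname{gr}\mathcal{O}_q(\mathfrak{sl}_2)$ is the $\Uq$-module $V_d\otimes V_d^{*}$, of character $\chi_d(\theta)^2$, where $V_d$ is the simple module of dimension $d+1$.

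Because the category of finite-dimensional $\Uq$-modules is semisimple, taking $\Uq$-invariants is exact and therefore commutes with passing to the associated graded; moreover the braiding in $\tilde{\otimes}$ affects only the algebra structure, not the underlying $\Uq$-module, so it is irrelevant for counting dimensions. Hence the degree-$d$ part of $\operatorname{gr}\mathcal{L}_{\Sigma_{g,r}}^{\Uq}$ has dimension $\sum_{k_1+\dots+k_n=d}\dim\operatorname{End}_{\Uq}(V_{k_1}\otimes\cdots\otimes V_{k_n})$. Using $\dim\operatorname{Hom}_{\Uq}(W_1,W_2)=\langle\chi_{W_1},\chi_{W_2}\rangle$ for the pairing $\langle a,b\rangle=\tfrac{2}{\pi}\int_0^{\pi}a(\theta)\overline{b(\theta)}\sin^2\theta\,d\theta$ on characters, together with the fact that the $\chi_k$ are real (so $\dim\operatorname{End}_{\Uq}(W)=\langle\chi_W^2,1\rangle$), the generating function factorises over the $n$ slots:
\[
h(t)\;=\;\bigl\langle\, f(\theta,t)^{n},\,1\,\bigr\rangle,\qquad f(\theta,t)=\sum_{k\ge0}\chi_k(\theta)^2\,t^{k}.
\]
From the Clebsch--Gordan rule $\chi_k^2=\sum_{j=0}^{k}\chi_{2j}$ and a geometric summation one gets $f(\theta,t)=\frac{1+t}{(1-t)(1-2t\cos2\theta+t^2)}$, whence
\[
h(t)\;=\;\frac{2}{\pi}\,\frac{(1+t)^{n}}{(1-t)^{n}}\int_0^{\pi}\frac{\sin^2\theta\,d\theta}{(1-2t\cos2\theta+t^2)^{n}}.
\]

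It remains to evaluate this integral. Substituting $\phi=2\theta$, writing $z=e^{i\phi}$ and using $1-2t\cos\phi+t^2=z^{-1}(z-t)(1-tz)$, the residue theorem reduces the integral to a constant times $\operatorname{Res}_{z=t}\dfrac{(z-1)^2z^{n-2}}{(z-t)^{n}(1-tz)^{n}}$. The decisive step is the M\"obius change of variable $z=\dfrac{t+w}{1+tw}$, which sends $z=t$ to $w=0$, clears the poles, and --- up to an explicit monomial in $1-t$ and $1+t$ --- identifies this residue with the coefficient of $w^{n-1}$ in $(w-1)^2(t+w)^{n-2}(1+tw)^{n-2}$. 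Since $P(w):=(t+w)^{n-2}(1+tw)^{n-2}$ is palindromic of degree $2n-4$, one has $[w^{n-1}]P=[w^{n-3}]P$, so expanding $(w-1)^2$ the extracted coefficient collapses to $-2\bigl([w^{n-2}]P-[w^{n-3}]P\bigr)$; reading off these two coefficients of $P$ gives exactly $\sum_k\binom{n-2}{k}^2t^{2k}-\sum_k\binom{n-2}{k}\binom{n-2}{k+1}t^{2k+1}$, and reassembling the prefactor produces the stated formula. I expect the genuinely delicate point to be the structural input of the first paragraph --- checking that the degree filtration on $\mathcal{L}_{\Sigma_{g,r}}$ has associated graded whose single-factor graded piece is the $q$-Peter--Weyl module $\bigoplus_{d\ge0}V_d\otimes V_d^{*}$ --- rather than the residue computation, which, thanks to the M\"obius substitution and the palindromy of $P$, is short and self-contained.
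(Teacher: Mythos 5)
Your proof is correct, and its first half coincides with the paper's: both reduce everything to the graded character of $\mathcal{L}_{\Sigma_{g,r}}=\mathcal{O}_q(\SL_2)^{\tilde{\otimes}n}$ (your identification of the degree-$d$ piece of one factor with $V_d\otimes V_d^*$, i.e.\ $\sum_d\chi_d(u)^2t^d=\frac{1+t}{(1-t)(1-u^2t)(1-u^{-2}t)}$, is exactly the graded character the paper imports from \cite{Cooke18}), and both extract the trivial-isotypic multiplicity by Weyl character orthogonality --- your Haar-measure pairing $\frac{2}{\pi}\int_0^\pi(\cdot)\sin^2\theta\,d\theta$ is the integral form of the paper's trick of taking the $u^1$-coefficient of $(u-u^{-1})h_A(u,t)$. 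Where you genuinely diverge is the endgame: the paper expands $(1-u^{\pm2}t)^{-n}$ into sums of products $\rchoose{n}{m}\rchoose{n}{m'}$ and then invokes the identity \cite[(6.10)]{CGPdAV} to convert that double sum into the stated closed form, whereas you evaluate the contour integral directly, and your residue computation checks out: the M\"obius substitution $z=\frac{t+w}{1+tw}$ sends $z-t\mapsto\frac{w(1-t^2)}{1+tw}$, $1-tz\mapsto\frac{1-t^2}{1+tw}$, $z-1\mapsto\frac{(1-t)(w-1)}{1+tw}$, so the residue becomes $(1-t)^2(1-t^2)^{1-2n}[w^{n-1}]\bigl((w-1)^2P(w)\bigr)$ with $P(w)=(t+w)^{n-2}(1+tw)^{n-2}$, the palindromy $[w^{n-1}]P=[w^{n-3}]P$ collapses this to $-2([w^{n-2}]P-[w^{n-3}]P)$, and reading off those coefficients gives precisely $\sum_k\binom{n-2}{k}^2t^{2k}-\sum_k\binom{n-2}{k}\binom{n-2}{k+1}t^{2k+1}$ with the correct prefactor $\frac{(1+t)^{n-2}}{(1-t)^n(1-t^2)^{2n-3}}$. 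The payoff of your route is that it is self-contained --- it in effect reproves the binomial identity the paper cites --- at the cost of a slightly longer analytic detour; the structural inputs you flag as delicate (invariants commuting with $\operatorname{gr}$ by semisimplicity, the braiding not affecting the underlying module, the Peter--Weyl form of $\Uq^{\lf}$) are used equally, and equally implicitly, in the paper's own argument. The only caveat is the degenerate small cases: your contour argument as stated needs $n\geq2$ (for $n=1$ there is an extra pole at $z=0$), but this is outside the range where the stated formula is meaningful anyway.
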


Despite skein algebras dating back to the 80s, presentations of Kauffman bracket skein algebras are only known for a handful of punctured surfaces: spheres with up to four punctures and tori with up to two punctures \cite{BullockPrzytycki00}. Whilst it is not difficult to find relations between elements of a skein algebra, it is difficult to conclude you have enough relations. In \cref{sec:presentation} we will use this Hilbert series together with a Poincare--Birkhoff--Witt basis to solve this problem and obtain a presentation for the skein algebra of the five-punctured sphere. This also give us a presentation for the rank 2 Askey--Wilson algebra $\AW{4}$ which is the first of the generalised Askey--Wilson algebras and also the case originally considered by Post and Walter. 
\begin{thm}
  \label{thm:presshort}
  A presentation for $\SkAlg{q}{\Sigma_{0,5}} \cong \AW{4}$ is given by the simple loops $s_A$ for $A \subseteq \{1,2,3,4\}$ subject to the generalised Askey--Wilson commutator relations, non-intersecting loops commuting and relations of types (see \cref{thm:presentation_fivepunctures} and \cref{app:1} for full details):
  \begin{gather*}
    \diagramhh{present}{4cubic}{0pt}{0pt}{0.25} \; 
  \diagramhh{present}{cubictriple}{0pt}{0pt}{0.25} \;
  \diagramhh{present}{quadratic}{0pt}{0pt}{0.25} \;
  \diagramhh{present}{2tripleloop}{0pt}{0pt}{0.25} \\
  \diagramhh{present}{cross}{0pt}{0pt}{0.25} \;
  \diagramhh{present}{2triplelink}{0pt}{0pt}{0.25} \; 
  \diagramhh{present}{doubletriplecross}{0pt}{0pt}{0.25}
  \end{gather*}
\end{thm}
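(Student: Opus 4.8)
The plan is to exhibit $\SkAlg{q}{\Sigma_{0,5}}$ as a quotient of the algebra $\mathcal{A}$ abstractly presented by the generators $s_A$, $A\subseteq\{1,2,3,4\}$, together with the relations listed above (spelled out in \cref{thm:presentation_fivepunctures} and \cref{app:1}), and then to force the quotient map to be an isomorphism by a dimension count built on \cref{thm:hilbert}. First I would check that all the proposed relations — the generalised Askey--Wilson commutator relations, the commutativity of non-intersecting loops, and the seven families pictured above — genuinely hold in $\SkAlg{q}{\Sigma_{0,5}}$. Each is verified by a direct Kauffman-bracket computation: resolve every crossing with the skein relation, discard contractible loops at the cost of a scalar, and simplify the resulting linear combination of multicurves on the five-punctured sphere; the commutator relations may alternatively be transported from the Askey--Wilson side via \cref{thm:AWskeiniso} and de Clercq's relations. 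This produces a surjective algebra homomorphism $\pi\colon\mathcal{A}\twoheadrightarrow\SkAlg{q}{\Sigma_{0,5}}$.

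Next I would extract from the relations a Poincar\'e--Birkhoff--Witt type spanning set of $\mathcal{A}$. Here $s_\emptyset$ is a scalar and the five loops around individual punctures are central, so the substance of the presentation lies in the ten remaining generators, the loops separating the five punctures into a pair and a triple. After fixing a linear order on these ten loops, the commutator relations let one reorder an arbitrary monomial, and the quadratic and cubic relations let one rewrite every product of pairwise intersecting loops that is not in a chosen ``admissible'' family in terms of strictly smaller monomials. Carrying out this rewriting on the associated graded algebra for the filtration used in \cref{thm:hilbert}, a confluence (Diamond Lemma) argument shows that the resulting standard monomials span $\operatorname{gr}\mathcal{A}$, hence span $\mathcal{A}$; I would then enumerate them degree by degree and verify that their generating function is precisely $h(t)$ with $n=4$.

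Finally I would assemble the two halves. Since the associated graded of the filtered surjection $\pi$ is again surjective, for every $d$ one has $\dim\operatorname{gr}_d\SkAlg{q}{\Sigma_{0,5}}\le\dim\operatorname{gr}_d\mathcal{A}\le\#\{\text{standard monomials of degree }d\}$, whereas \cref{thm:hilbert} computes $\sum_d\dim\operatorname{gr}_d\SkAlg{q}{\Sigma_{0,5}}\,t^d$ to be $h(t)$ with $n=4$, which we have matched to the generating function of the standard monomials; hence every inequality is an equality, the standard monomials form a basis of both algebras, and $\pi$ is an isomorphism. The main obstacle is the middle step: proving that the seven relation families really do suffice to reduce every monomial to a standard one — that the rewriting is confluent and that no relation has been overlooked. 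This is where the whole list is genuinely needed, through a somewhat delicate analysis of how triples of pairwise-intersecting pair/triple loops on $\Sigma_{0,5}$ resolve, and the Hilbert series of \cref{thm:hilbert} is exactly what certifies, a posteriori, that the spanning set produced is as small as it can possibly be.
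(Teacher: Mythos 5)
Your overall strategy is exactly the paper's: verify the relations skein-theoretically to get a surjection $\pi\colon\mathcal{A}\twoheadrightarrow\SkAlg{q}{\Sigma_{0,5}}$, use the Diamond Lemma to extract a Poincar\'e--Birkhoff--Witt spanning set of the abstractly presented algebra, match its generating function against the Hilbert series of \cref{thm:hilbert} with $n=4$, and conclude that $\pi$ is an isomorphism. You have also correctly located the hard part in the confluence of the rewriting system.

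However, the middle step as you describe it — running the Diamond Lemma directly on the listed relations — would fail, and this is a genuine gap rather than a routine verification. The cubic, quartic and cubic-with-triple relations have left-hand sides that are products of three or four generators, and these interact badly with the quadratic commutator relations: resolving the overlap ambiguity between, say, the commutator rule for $s_{23}s_{12}$ and the cubic rule for $s_{12}s_{23}s_{13}$ produces the irreducible term $s_{12}s_{23}^2s_{13}$, and completing the system forces you to add a rule for $s_{12}s_{23}^ns_{13}$ for every $n$, so the naive rewriting system is infinite and its confluence cannot be checked by finitely many ambiguity computations. The paper's resolution is to enlarge the generating set by the non-simple curves $s_{1\overline{2}3}$, $s_{1\overline{23}4}$, etc.\ (loops with ``double points'') and the formal products $s_{12,34}$, $s_{23,14}$, together with ``generator generating relations'' expressing products like $s_{12}s_{23}$ in terms of these; every relation then becomes pairwise, one obtains a finite system of $241$ rules whose ambiguities are checked by computer, termination is established via a bespoke composite partial order (reduced degree, then total degree, then a ``group distance'' order), and only at the very end are the auxiliary generators eliminated to recover the presentation in the simple loops alone. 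Without some such device your spanning-set count cannot get off the ground, and the a posteriori Hilbert-series certification you invoke only certifies a spanning set you have actually produced.
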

In particular, this means that $\AW{n}$ contains many relations which are not derived from the original Askey--Wilson operator relations.

\subsection*{Skein Algebras and Generalisations}
Finally, the leftmost map is a Hopf algebra isomorphism from the stated skein algebra $\SkAlgSt{\Sigma_{0, n+1}}$ to the Aleeksev moduli algebra. Stated skein algebras are an extension of Kauffman bracket skein algebras which allows tangles with end points rather than only closed loops \cite{Le18,CL20}. This extension means that, unlike ordinary skein algebras, stated skein algebras behave well under gluing: they satisfy an excision property. This is crucial in constructing the isomorphism to $\mathcal{L}_{\Sigma_{0,n+1}} = \mathcal{O}_q(\mathfrak{sl}_2)^{\tilde{\otimes} n}$ given in \cite{CL20}. 

The stated skein algebra is a special case for $\Uq$ of a more general construction based on skein categories $\SkCat{\mathcal{V}}{\Sigma}$
\cite{WalkerTQFT,JohnsonFreyd15,Cooke19} and internal skein algebras $\SkAlg{\mathcal{V}}{\Sigma}^{\operatorname{int}}$ 
\cite{GunninghamJordanSafronov19} which generalises to other quantum groups $\mathcal{U}_q(\mathfrak{g})$ or indeed any ribbon category $\mathcal{V}$ 
(for the explicit relation between stated and internal skein algebras see \cite{haioun21}). 
Skein categories are categories whose hom-spaces are vector spaces of ribbon tangles (often with coupons) in the thickened surface with skein relations imposed: the skein relations are determined by the choice of quantum group or ribbon category. 
The skein algebra $\SkAlg{\mathcal{V}}{\Sigma}$ is then simply $\operatorname{Hom}_{\SkCat{\mathcal{V}}{\Sigma}}(\varnothing, \varnothing)$ and the internal skein algebra is $\SkAlg{\mathcal{V}}{\Sigma}^{\operatorname{int}} = \operatorname{Hom}_{\SkCat{\mathcal{V}}{\Sigma}}(\_, \varnothing): \mathcal{V} \to \operatorname{Vect}$. 
These skein categories satisfy excision and can thus be considered as factorisation homology theories \cite{CookeThesis,Cooke19} (and independently when $\mathcal{V}$ is modular by \cite{KT21}). 

For any punctured surface $\Sigma$ and any quantum group $\mathcal{U}_q(\mathfrak{g})$ when $q$ is not a root of unity the internal skein algebra is isomorphic to the Aleeksev moduli algebra and the skein algebra is the $\mathcal{U}_q(\mathfrak{g})$-invariant subalgebra \cite{GunninghamJordanSafronov19} (and \cite{Faitg2020} for the $G = \SL_2$ case without using factorisation homology).
This gives us a generalisation of the left-hand map to an isomorphism
\[\SkAlg{\mathcal{U}_q(\mathfrak{g})}{\Sigma} \to \mathcal{L}_{G, \Sigma}^{\mathcal{U}_q(\mathfrak{g})}\]
for any punctured surface $\Sigma$ and for any quantum group $\mathcal{U}_q(\mathfrak{g})$ assuming $q$ is generic. For a higher genus surface $\mathcal{L}_{G, \Sigma_{g,r}} = \mathcal{O}_q(\mathfrak{g})^{\hat{\otimes} 2g} \tilde{\otimes} \mathcal{O}_q(\mathfrak{g})^{\tilde{\otimes} r-1}$ where $\hat{\otimes}$ is a different tensor product from the Majid braided tensor product. Whilst it is beyond the scope of this paper, the consideration of other gauge groups in particular $\mathfrak{g} = \mathfrak{sl}_n$ using this connection to skein theory may prove fruitful and would be interesting to compare to other generalisations such as the one based of the affine $\hat{\mathfrak{sl}_n}$ $q$-Osanger algebra in \cite{BCP19}.

\subsection*{Summary of Sections}
\begin{description}
  \item[\cref{sec:AWAlgebras}] In this section we define $\AW{n}$ and its set of generators $\Lambda_A$.
  \item[\cref{sec:braided_tensor_product}] In this section we define the braided tensor product, show that the unbraiding map is an injective morphism of algebras and prove \cref{thm:braidedgenerators}.
  \item[\cref{sec:moduli}] In this section we define the reflection equation algebra $\mathcal{O}_q(\SL_2)$, Alekseev moduli algebra $\mathcal{L}_{\Sigma}$ and the map between the reflection equation algebra and the quantum group $\Uq$.
  \item[\cref{sec:skein}]  In this section we define the stated skein algebra and prove \cref{thm:AWskeiniso}.
  \item[\cref{sec:commutator}] In this section we use skein algebras to give a much shorter proof of Theorem 3.2 of \cite{DeClercq19}.
  \item[\cref{sec:braid_grp}] In this section we show that the isomorphism given in \cref{thm:AWskeiniso} is compatible with the action of the braid group.
  \item[\cref{sec:dimensions}] In this section we find the graded vector space dimension of the graded algebra associated to $\mathcal{L}_{\Sigma}^{\Uq}$ (\cref{thm:hilbert}) and consequently of the associated graded algebras of the Askey--Wilson algebra $\AW{n}$ and the skein balgebra $\SkAlg{q}{\Sigma_{g,r}}$ for $r>1$.
  \item[\cref{sec:presentation}] In this section we prove \cref{thm:presshort} by constructing a confluent terminating term rewriting system based on the relations to obtain a linear basis with the same Hilbert series as $\SkAlg{q}{\Sigma_{0,5}}$.
\end{description}

\subsection*{Notation}

For an algebra $A$ over a field, two integers $k<l$ and $\underline{i} = (i_1,\ldots,i_k)$ with $1\leq i_1\leq \cdots \leq i_k \leq l$, we will use an embedding $A^{\otimes k} \rightarrow A^{\otimes l}$. It is defined by $x_1\otimes \cdots x_k \mapsto 1 \otimes \cdots \otimes 1 \otimes x_{1}\otimes 1 \otimes \cdots \otimes 1 \otimes x_{k}  \otimes 1 \otimes \cdots \otimes 1$, where the tensorand $x_{j}$ is at the $i_j$-th position. The image of $x\in A^{\otimes k}$ will be then denoted by $x_{\underline{i}}$.

We will also use the Sweedler notation for coproducts and coaction: if $(C,\;\Delta)$ is a coalgebra and $(M,\;\Delta_M)$ is a right-comodule over $C$, the coproduct of $c\in C$ will be denoted by $\Delta(c) = \sum c_{(1)}\otimes c_{(2)}$ and the coaction of $C$ on $m\in M$ by $\Delta_M(m) = \sum m_{(1)}\otimes m_{(0)}$.

\subsection*{Acknowledgements}
We would like to thank Peter Samuelson for first pointing us towards the work of De Clerq et. al..
We would also like to thank Martina Balagovic, Hendrik De Bie, Hadewijch De Clercq, Matthieu Faitg, Julien Gaboriaud, David Jordan, Pedro Vaz and Thomas Wright for many valuable conversations related to this work.
This work was supported by the F.R.S.-FNRS., the Max Planck Institute for Mathematics Bonn and by a PEPC JCJC grant from INSMI (CNRS).


\section{Askey--Wilson Algebras}
\label{sec:AWAlgebras}

In this section we shall define the Askey--Wilson algebra $\AW{3}$, explain how it can be embedded into three tensor copies of the quantum group $\Uq$ and thus generalised to the higher rank Askey--Wilson algebra $\AW{n}$.

\subsection{Classical Askey--Wilson algebras}
The Askey--Wilson algebra was originally defined by Zhedanov \cite{Zhedanov91} to study Askey--Wilson orthogonal polynomials as the representations of Askey--Wilson algebras can be used to better understand the associated  polynomials. 

\begin{defn}
  The \emph{Zhedanov Askey--Wilson algebra $\operatorname{Zh}_q(a_1,a_2,a_3,a_{123})$} is the algebra over $\mathbb{C}(q)$ with generators $A$, $B$ and $C$ such that 
  \begin{align*}
    A + \left(q^2 - q^{-2}\right)^{-1} [B, C]_q &= \left(q + q^{-1}\right)^{-1} \left( C_1 C_2 + C_3 C_{123} \right) \\
    B + \left(q^2 - q^{-2}\right)^{-1} [C, A]_q &= \left(q + q^{-1}\right)^{-1} \left( C_2 C_3 + C_1 C_{123} \right) \\
    C + \left(q^2 - q^{-2}\right)^{-1} [A, B]_q &= \left(q + q^{-1}\right)^{-1} \left( C_3 C_1 + C_2 C_{123} \right)  
  \end{align*}
  where $[X, Y]_q := qXY - q^{-1}YX$ is the \emph{quantum Lie bracket} and 
  $C_i := q^{a_i} + q^{-a_i}$. 
\end{defn}

If we weaken the relations, so that instead of having equalities we simply require the expressions on the left-hand-side are central in the algebra, we obtain the \emph{universal} Askey--Wilson algebras which was defined by Terwilliger \cite{Terwilliger11}:

\begin{defn}
  The \emph{universal Askey--Wilson algebra} $\Delta_q$ is the algebra over $\mathbb{C}(q)$ with generators $A$, $B$, $C$ such that 
  \[A + \left(q^2 - q^{-2}\right)^{-1} [B, C]_q \quad B + \left(q^2 - q^{-2}\right)^{-1} [C, A]_q \quad C + \left(q^2 - q^{-2}\right)^{-1} [A, B]_q\]
  are central. 
\end{defn}

If we set 
\begin{align*}
  \alpha &= \left(q+q^{-1}\right)\left(A + \left(q^2 - q^{-2}\right)^{-1}[B, C]_q\right), \\
  \beta &= \left(q+q^{-1}\right)\left( B + \left(q^2 - q^{-2}\right)^{-1} [C, A]_q \right) , \\ 
  \gamma &= \left(q+q^{-1}\right)\left(C + \left(q^2 - q^{-2}\right)^{-1} [A, B]_q \right)
\end{align*}
and also define the `Casimir' element
\[\Omega = qABC + q^2 A^2 + q^{-2} B^2 + q^2 C^2 - q A \alpha - q^{-1} B \beta - q C \gamma\]
we also have the following alternative presentation:
\begin{prop}[{\cite[Proposition~2.8]{Terwilliger13}}]
	The \emph{universal Askey--Wilson algebra} $\Delta_q$ is the algebra over $\mathbb{C}(q)$ with generators $A$, $B$, $C$, $\alpha$, $\beta$, $\gamma$ and $\Omega$ such that $\alpha$, $\beta$, $\gamma$ and $\Omega$ are central and
	\begin{align*}
		[A, B]_q &= -\left( q^2 - q^{-2} \right) C + \left( q - q^{-1} \right) \gamma \\
		[B, C]_q &= -\left( q^2 - q^{-2} \right) A + \left( q - q^{-1} \right) \alpha \\
		[C, A]_q &= -\left( q^2 - q^{-2} \right) B + \left( q - q^{-1} \right) \beta \\
		\Omega   &= q ABC + q^2 A^2 + q^{-2} B^2 + q^2 C^2 - q A \alpha - q^{-1} B \beta - q C \gamma
	\end{align*}
\end{prop}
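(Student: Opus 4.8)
The plan is to show that the algebra $\Delta_q'$ defined by the seven generators $A$, $B$, $C$, $\alpha$, $\beta$, $\gamma$, $\Omega$ and the four displayed relations is isomorphic to the algebra $\Delta_q$ with three generators $A$, $B$, $C$ and the three centrality conditions. The natural strategy is to exhibit mutually inverse homomorphisms. First I would define a map $\varphi\colon \Delta_q' \to \Delta_q$ by sending $A,B,C$ to the corresponding generators of $\Delta_q$ and sending $\alpha,\beta,\gamma,\Omega$ to the explicit expressions displayed just before the proposition statement; one then checks that the four relations of $\Delta_q'$ hold in $\Delta_q$. Three of these are immediate: rearranging $\beta = (q+q^{-1})\bigl(B + (q^2-q^{-2})^{-1}[C,A]_q\bigr)$ gives exactly $[C,A]_q = -(q^2-q^{-2})B + (q-q^{-1})\beta$ after multiplying through, using $(q+q^{-1})(q^2-q^{-2})^{-1} = (q-q^{-1})^{-1}$ so that $(q+q^{-1})(q^2-q^{-2})^{-1}\cdot(q^2-q^{-2}) = q+q^{-1}$ — wait, more carefully, one just clears denominators — and similarly for the other two; the fourth relation is the definition of $\Omega$ verbatim. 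In the other direction I would define $\psi\colon \Delta_q \to \Delta_q'$ on $A,B,C$ as the identity on generators, and the content is to verify that the three elements $A + (q^2-q^{-2})^{-1}[B,C]_q$ etc. are central in $\Delta_q'$; but by the first three relations of $\Delta_q'$ these elements equal $(q+q^{-1})^{-1}\alpha$, $(q+q^{-1})^{-1}\beta$, $(q+q^{-1})^{-1}\gamma$ respectively, which are central by hypothesis.

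The one genuine point requiring work is that $\varphi$ and $\psi$ are well defined, i.e. compatible with \emph{all} defining relations — in particular that in $\Delta_q$ the element $\Omega$, defined by that degree-three polynomial in $A,B,C,\alpha,\beta,\gamma$, is automatically central. This is the heart of the matter and is not a one-line check: one must commute $\Omega$ past each of $A$, $B$, $C$ (it commutes past $\alpha,\beta,\gamma$ trivially since those are central in $\Delta_q$), repeatedly invoking the $q$-commutator relations to reduce $[\Omega, A]$, $[\Omega, B]$, $[\Omega, C]$ to zero. The computation is symmetric under the cyclic rotation $(A,B,C,\alpha,\beta,\gamma) \mapsto (B,C,A,\beta,\gamma,\alpha)$, so it suffices to do it once, say for $[\Omega,A]$.

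The main obstacle is precisely this centrality verification for $\Omega$: it is a somewhat lengthy but purely mechanical manipulation with the $q$-bracket identities, and the cleanest route is to reuse it rather than reprove it. Indeed, since this proposition is quoted from \cite[Proposition~2.8]{Terwilliger13}, I would simply cite that reference for the centrality of $\Omega$ and for the equivalence of the two presentations, and limit the exposition here to recording the dictionary between the two generating sets (the formulas for $\alpha,\beta,\gamma,\Omega$ above) together with the trivial rearrangement showing the $q$-commutator relations are equivalent to the original three defining relations. Thus no new argument is needed beyond assembling the pieces already on the page.
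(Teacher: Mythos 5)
The paper offers no proof of this proposition at all---it is quoted verbatim from \cite[Proposition~2.8]{Terwilliger13}---and your proposal correctly reconstructs the standard equivalence-of-presentations argument (mutually inverse maps, rearrangement of the $q$-commutator relations, and the dictionary $\alpha,\beta,\gamma,\Omega \leftrightarrow$ explicit expressions) while deferring the only genuinely non-trivial ingredient, the centrality of $\Omega$ in $\Delta_q$, to that same reference. This is consistent with the paper's treatment, and your identification of where the real work lies is accurate.
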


\subsection{Huang's embedding into $\Uq^{ \otimes 3 }$}

Huang \cite{Huang17} showed that the universal Askey--Wilson algebra can be embedded into $\Uq^{ \otimes 3 }$. We first recall the classical definition of $\Uq$ and choose one of its many Hopf algebra structure. We stress that we follow the conventions of \cite{CL20} which are different from \cite{Huang17}.

\begin{defn}
The quantum algebra $\Uq$ is the $\mathbb{C}(q)$-algebra generated by $K$, $K^{-1}$, $E$ and $F$, subject to the following relations:
  \[
    K^{\pm 1}K^{\mp 1} = 1, \quad KE=q^2EK, \quad KF=q^{-2}FK \quad \text{and} \quad [E,F]=\frac{K-K^{-1}}{q-q^{-1}}.
  \]
  
We endow it with a Hopf algebra structure with the following comultiplication $\Delta$, counit $\varepsilon$ and antipode $S$:
  \begin{align*}
    \Delta(K)&= K\otimes K, & \varepsilon(K) &= 1,& S(K) &= K^{-1},\\
    \Delta(E)&= E\otimes K + 1\otimes E, & \varepsilon(E) &= 0,& S(E) &= -EK^{-1},\\
    \Delta(F)&= F\otimes 1 + K^{-1} \otimes F, & \varepsilon(F) &= 0,& S(F) &= -KF.
  \end{align*}
The \emph{quantum Casimir} of $\Uq$ is
\[
	\Lambda := \left(q - q^{-1}\right)^2 FE + q K + q^{-1} K^{-1} = \left(q - q^{-1}\right)^2 EF + q^{-1} K + q K^{-1};
\]
this is a central element.
\end{defn}

The embedding is given by:

\begin{thm}[{\cite[Theorem 4.1 and Theorem 4.8]{Huang17}}]
  Let $\flat: \Delta_{q^{-1}} \rightarrow \Uq \otimes \Uq \otimes \Uq$ be the following map
\begin{align*}
	&A \mapsto \Delta(\Lambda) \otimes 1, \\
	&B \mapsto 1 \otimes \Delta(\Lambda), \\
        &C \mapsto \frac{q^{-1}(\Delta(\Lambda)\otimes 1)(1\otimes \Delta(\Lambda))-q(1\otimes \Delta(\Lambda))(\Delta(\Lambda)\otimes 1)}{q^2-q^{-2}}+\frac{\Lambda \otimes 1 \otimes \Lambda + (1 \otimes \Lambda \otimes 1) \Delta^{(2)} (\Lambda)}{q+q^{-1}},\\
	&\alpha \mapsto \Lambda \otimes \Lambda \otimes 1 + (1 \otimes 1 \otimes \Lambda) \Delta^{(2)} (\Lambda), \\
	&\beta \mapsto 1 \otimes \Lambda \otimes \Lambda + (\Lambda \otimes 1 \otimes 1) \Delta^{(2)} (\Lambda), \\
	&\gamma \mapsto \Lambda \otimes 1 \otimes \Lambda + (1 \otimes \Lambda \otimes 1) \Delta^{(2)} (\Lambda),
\end{align*}
where $\Delta^{(2)} = (\Delta\otimes \id)\circ \Delta$. Then $\flat$ is an injective morphism of algebras.
\end{thm}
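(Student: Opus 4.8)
The plan is to verify the two assertions separately: that $\flat$ respects the defining relations of $\Delta_{q^{-1}}$, so that it is a well-defined algebra map, and that it has trivial kernel. For the first part I would fix notation for the ``partial Casimirs'': write $\Lambda_{\{i\}}\in\Uq^{\otimes 3}$ for $\Lambda$ in the $i$-th tensorand and $1$ elsewhere, $\Lambda_{\{1,2\}}=\Delta(\Lambda)\otimes 1$, $\Lambda_{\{2,3\}}=1\otimes\Delta(\Lambda)$, $\Lambda_{\{1,2,3\}}=\Delta^{(2)}(\Lambda)$, and set $\Lambda_{\{1,3\}}:=\flat(C)$, so that $\flat(A)=\Lambda_{\{1,2\}}$, $\flat(B)=\Lambda_{\{2,3\}}$, $\flat(C)=\Lambda_{\{1,3\}}$, while $\flat(\alpha),\flat(\beta),\flat(\gamma)$ are the three elements $\Lambda_{\{i\}}\Lambda_{\{j\}}+\Lambda_{\{k\}}\Lambda_{\{1,2,3\}}$ with $\{i,j,k\}=\{1,2,3\}$.

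The easy part is then centrality, inside the image of $\flat$, of $\flat(\alpha),\flat(\beta),\flat(\gamma)$. Indeed $\Lambda_{\{1\}},\Lambda_{\{2\}},\Lambda_{\{3\}}$ are central in all of $\Uq^{\otimes 3}$ because $\Lambda$ is central in $\Uq$, and $\Lambda_{\{1,2,3\}}$ commutes with $\Lambda_{\{1,2\}}$ and with $\Lambda_{\{2,3\}}$: apply the algebra homomorphisms $\Delta\otimes\id$ and $\id\otimes\Delta$ to the fact that $\Lambda\otimes 1$ and $1\otimes\Lambda$ are central in $\Uq^{\otimes 2}$, using coassociativity. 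Hence $\Lambda_{\{1,2,3\}}$ also commutes with $\Lambda_{\{1,3\}}=\flat(C)$, which is a polynomial in $\flat(A),\flat(B)$ and central elements, and the desired commuting follows.

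The substance is in the three $q^{-1}$-commutator relations. One of them, the relation determining $C$, holds by construction: rearranging the formula defining $\flat(C)$ gives exactly $[\flat(A),\flat(B)]_{q^{-1}}=(q^{2}-q^{-2})\flat(C)-(q-q^{-1})\flat(\gamma)$, which is Terwilliger's relation with $q$ replaced by $q^{-1}$. The remaining two I would obtain either by a direct computation of nested $q^{-1}$-commutators such as $[\flat(B),[\flat(A),\flat(B)]_{q^{-1}}]_{q^{-1}}$ in $\Uq^{\otimes 3}$, which, using coassociativity and centrality of $\Lambda$, collapses to a single identity for $\Delta(\Lambda)$ in $\Uq^{\otimes 2}$; or, more conceptually, by producing the cyclic $\mathbb{Z}/3$ symmetry of the picture --- a suitable automorphism built from the $R$-matrix (Huang's map $\tau$) permutes $(\Lambda_{\{1,2\}},\Lambda_{\{2,3\}},\Lambda_{\{1,3\}})$ and $(\flat(\gamma),\flat(\alpha),\flat(\beta))$ cyclically, so that the single relation just checked propagates to the other two. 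Finally I would \emph{define} $\flat(\Omega)$ by the fourth relation; its centrality in the image is then a formal consequence of the three commutator relations and of the centrality of $\flat(\alpha),\flat(\beta),\flat(\gamma)$, i.e.\ of the identity already valid in $\Delta_{q^{-1}}$.

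For injectivity I would use a faithfulness argument. The algebra $\Delta_{q^{-1}}$ carries a Poincaré--Birkhoff--Witt-type basis in $A,B,C$ together with the central elements; to see that no nonzero element dies, let $\Delta_{q^{-1}}$ act, through $\flat$ and the $\Uq^{\otimes 3}$-action, on tensor products $M(\lambda_1)\otimes M(\lambda_2)\otimes M(\lambda_3)$ of Verma $\Uq$-modules. If $\flat(x)=0$ then $x$ annihilates all of these modules; expanding $x$ in the PBW basis and letting the highest weights $\lambda_i$ vary over a Zariski-dense set forces every coefficient of $x$ to vanish. (Alternatively one equips both sides with compatible filtrations and checks that the induced map of associated graded algebras, which encodes the three ``classical'' Casimirs in an essentially commutative setting, is injective.) The step I expect to be the real obstacle is the commutator verification above --- grinding through the nested $q^{-1}$-commutators of $\Delta(\Lambda)$, or else setting up the $R$-matrix symmetry precisely enough to do the bookkeeping for free; centrality and injectivity are comparatively soft.
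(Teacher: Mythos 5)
First, note that the paper does not actually prove this statement: it is imported (up to the change of coproduct conventions discussed in the remark that follows it and in the subsection on changing conventions) from Huang's Theorems 4.1 and 4.8, so there is no in-paper argument to compare yours against. Judged on its own terms, your skeleton has the right architecture and several of your observations are correct: the relation involving $\gamma$ does hold by construction once the formula for $\flat(C)$ is rearranged, and the centrality of $\flat(\alpha),\flat(\beta),\flat(\gamma)$ in the image does follow formally from centrality of $\Lambda$ together with applying the algebra maps $\Delta\otimes\id$ and $\id\otimes\Delta$ to the identity $[\Lambda\otimes 1,\,\Delta(\Lambda)]=0$.

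However, the proposal does not establish the theorem; it reduces it to two computations that it then declines to perform, and both proposed shortcuts have problems. For the two non-definitional $q$-commutators, the ``cyclic $\mathbb{Z}/3$ symmetry'' route is circular as stated: the easy part of the braid-group action (cf.\ \cref{sec:braid_grp}) tells you that conjugation by the $R$-matrix in slots $2,3$ sends $\Delta(\Lambda)\otimes 1$ to $(1\otimes\tau_L)\Delta(\Lambda)$, but to conclude that it sends $\flat(A)$ to $\flat(C)$ you must first identify $\flat(C)$ with $(1\otimes\tau_L)\Delta(\Lambda)$, and that identification (the Cramp\'e--Gaboriaud--Vinet--Zaimi result quoted immediately after the theorem) is itself equivalent to one of the commutator relations you are trying to prove. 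Indeed, in this paper the hard case of the braid action on the generators (\cref{prop:action_AW}) is deduced \emph{from} the commutator relations, not the other way round. So either you carry out the nested $q^{-1}$-commutator computation in $\Uq^{\otimes 3}$ honestly --- this is the actual content of Huang's Theorem 4.1 and is genuinely long --- or you first prove $\flat(C)=(1\otimes\tau_L)\Delta(\Lambda)$ by direct computation, which is comparable work. Second, the injectivity sketch is too quick: $A$, $B$ and $C$ do not act by scalars on $M(\lambda_1)\otimes M(\lambda_2)\otimes M(\lambda_3)$, so letting the $\lambda_i$ range over a Zariski-dense set does not by itself separate the coefficients of distinct PBW monomials $A^iB^jC^k\alpha^r\beta^s\gamma^t$; Huang's Theorem 4.8 requires a finer analysis (comparing actions on particular weight vectors and leading terms), and the filtration alternative you mention still requires proving injectivity of the associated graded map, which is not obviously easier. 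Injectivity is not ``comparatively soft'' here.
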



\begin{rmk}
  In Huang's original result, the isomorphism $\Delta_{q} \rightarrow \Uq \otimes \Uq \otimes \Uq$ uses the opposite coproduct to $\Delta$. It is related to $\flat$ by the algebra isomorphism $\Delta_q \rightarrow\Delta_{q^{-1}}$ which sends $(A,B,C,\alpha,\beta,\gamma,\delta)$ to $(B,A,C,\beta,\alpha,\gamma)$ and with the algebra automorphism of $\Uq\otimes\Uq\otimes\Uq$ sending $x\otimes y \otimes z$ to $z \otimes y \otimes x$.
\end{rmk}

For the generator $C$, which is not required to generate the algebra, but makes the presentation more symmetric, \textcite{CrampeGaboriaudVinetZaimi20} showed that its image has the simpler expression $C^{\flat} = (1 \otimes \tau_L) \Delta(\Lambda)$ where $\tau_L$ is defined below.
\begin{defn}
  \label{def:coaction_sl2}
  We denote by $\mathcal{I}_L$ the subalgebra of $\Uq$ generated by $E$, $FK$, $K$ and $\Lambda$. It is a left $\Uq$-comodule with action 
  \[
    \tau_L \colon \left\{
      \begin{array}{lcl}
        \mathcal{I}_L & \longrightarrow & \Uq \otimes \mathcal{I}_L \\
        E & \longmapsto & K \otimes E\\
        FK & \longmapsto & K^{-1} \otimes FK -q F \otimes \Lambda + q\left(q+q^{-1}\right)F\otimes K - q^{-1}(q-q^{-1})^2F^2K\otimes E\\
        K & \longmapsto & 1 \otimes K - q^{-1} ( q - q^{-1} )^2 FK \otimes E\\
        \Lambda & \longmapsto &  1 \otimes \Lambda
      \end{array}
    \right.
  \]
\end{defn}


Clearly the image of the map $\flat$ is contained in the subalgebra of $\Uq^{ \otimes 3 }$ generated by the elements
\[
	\Lambda_1 := \Lambda \otimes 1 \otimes 1, \quad \Lambda_2 := 1 \otimes \Lambda \otimes 1, \quad \Lambda_2 := 1 \otimes 1 \otimes \Lambda, \quad
	\Lambda_{123} := \Delta^2(\Lambda),
\]
\[
	\Lambda_{12} := \Delta(\Lambda) \otimes 1, \quad \Lambda_{13} := (1 \otimes \tau_L)\Delta(\Lambda), \quad \Lambda_{23} := 1 \otimes \Delta^2(\Lambda).
\]
It was shown by Huang \cite[Corollary 4.6]{Huang17} that this subalgebra is contained in the centraliser of $\Uq$ in $\Uq^{ \otimes 3 }$
\[
	\mathfrak{C}(\Uq) = \Big\{\, X \in \Uq^{ \otimes 3 } \Big| \big[(\Delta \otimes \id)\Delta(x),\; X\big] = 0, \; \forall x \in \Uq \,\Big\}
\]

	Note that whilst there is an injective $\mathbb{C}$-algebra homomorphism $\Delta_{q^{-1}} \to \mathfrak{C}(\Uq)$ it is not surjective as we do not have $\Lambda_i$ and $\Lambda_{123}$ in the image. We instead need the following centrally extended universal Askey--Wilson algebra:  

  \begin{defn}
    The \emph{Askey--Wilson algebra $\operatorname{AW}(3)$}\footnote{Our Askey--Wilson algebra $\AW{3}$ is the Special Askey--Wilson algebra ${\bf{saw}}(3)$ of \cite{CrampeGaboriaudVinetZaimi20}.} is the algebra over $\mathbb{C}(q)$ with generators $A$, $B$, $C$ and central generators $C_1$, $C_2$, $C_3$, $C_{123}$ such that 
    \begin{gather*}
      A + \left(q^2 - q^{-2}\right)^{-1} [B, C]_q = \left(q + q^{-1}\right)^{-1} \left( C_1 C_2 + C_3 C_{123} \right) \\
      B + \left(q^2 - q^{-2}\right)^{-1} [C, A]_q = \left(q + q^{-1}\right)^{-1} \left( C_2 C_3 + C_1 C_{123} \right) \\
      C + \left(q^2 - q^{-2}\right)^{-1} [A, B]_q = \left(q + q^{-1}\right)^{-1} \left( C_3 C_1 + C_2 C_{123} \right) \\
    \Omega = \left(q + q^{-1} \right)^2 - C_1^2 - C_2^2 - C_3^2 - C_{123}^2 - C_1 C_2 C_3 C_{123}
  \end{gather*}
    where 
    \[\Omega := qABC + q^2 A^2 + q^{-2} B^2 + q^2 C^2 - q A \left( C_1 C_2 + C_3 C_{123} \right) - q^{-1} B \left( C_2 C_3 + C_1 C_{123} \right) - q C \left( C_3 C_1 + C_2 C_{123} \right).\]
  \end{defn}

  
\subsection{Higher rank Askey--Wilson algebras}

The embedding of the universal Askey--Wilson algebra into the centraliser of three copies of the quantum group $\Uq$ makes it possible define a higher rank Askey--Wilson algebra by constructing Casimirs $\Lambda_A$ for $A \subseteq \{1, \dots, n\}$ in the centraliser of $\Uq^{ \otimes n }$  for $n \geq 1$. 
Post and Walter \cite{PostWalter17} generalised to $n = 4$, and  De Clercq, De Bie and Van de Vijer \cite{DeClercq19,DeBieDeClercqVanDeVijver20} gave a definition for general $n$. The following definition is a slight rewriting of the definition of \cite[Definition~2.3]{DeClercq19} with our conventions of coproduct.


\begin{defn}
  \label{def:AW_generators}
  Let $A=\{i_1 < \cdots <i_k\}$ be a non-empty subset of $\left\{1, \dots, n\right\}$ and let $\Lambda$ be the quantum Casimir of $U_q(\slt)$. The \emph{Casimir $\Lambda_A$} is defined by 
  \[
    \Lambda_A = \left(\left(\id^{\otimes (i_k-2)}\otimes \alpha_{i_k-1}\right)\circ\cdots\circ (\id\otimes \alpha_2) \circ \alpha_1 (\Lambda)\right)\otimes 1^{n-i_k},
  \]
  with $\alpha_i=\Delta$ if $i\in A$ and $\alpha_i=\tau_L$ otherwise. We also set $\Lambda_{\emptyset}=q+q^{-1}$ by convention.
\end{defn}

\begin{defn}
  \label{def:AW}
  The \emph{Askey--Wilson algebra of rank $(n-2)$} denoted $\AW{n}$ is the subalgebra of $\Uq^{\otimes n}$ generated by $\Lambda_A$ for all non-empty subsets $A \subseteq \{1, \dots, n\}$. 
\end{defn}

\begin{rmk}
  This algebra is defined over $\mathbb{C}(q)$, but will need to consider filed extension of this algebra, notably to $\mathbb{C}(q^{1/4})$ where $q^{1/4}$ is a fixed fourth root of $q$.
\end{rmk}

De Clercq \cite[Theorem 3.1 and 3.2]{DeClercq19} show that the higher rank Askey--Wilson algebras satisfy a generalised version of the commutator relations which define $\AW{3}$. 
We shall give an alternative and much shorter proof of this result in \cref{sec:commutator}.

\subsection{Changing the coproduct or how to obtain isomorphisms}
\label{sec:change_conventions}

We show now explain how the change of conventions for the coproduct from those used in \cite{Huang17} to those used in this paper affect the definition of the Askey--Wilson algebra $\AW{n}$. Let us denote by $\widetilde{\mathrm{AW}}(n)$ the Askey--Wilson algebra of \cite{DeClercq19,DeBieDeClercqVanDeVijver20} obtained from the coproduct $\Delta^{\op}$. Explicitly, $\widetilde{\mathrm{AW}}(n)$ is the subalgebra of $\Uq^{\otimes n}$ generated by $\tilde{\Lambda}_A$ for $A$ a non-empty subset of $\{1,\ldots,n\}$ given by
\[
  \tilde{\Lambda}_A = 1^{\otimes i_1-1}\otimes\left(\left(\alpha_{i_1+1}\otimes\id^{\otimes (n-i_1-1)}\right)\circ\cdots\circ (\alpha_{n-1}\otimes \id) \circ \alpha_n (\Lambda)\right)
\]
for $A=\{i_1< \ldots <i_k\}$, with $\alpha_i = \Delta^{\op}$ if $i\in A$ and $\alpha_i = \tau_L^{\op}$ otherwise. 

The following isomorphism is the higher rank version of the algebra isomorphism $\Delta_q \rightarrow \Delta_{q^{-1}}$ of \cite[Lemma 2.11]{Terwilliger13} which sends $(A,B,C,\alpha,\beta,\gamma)$ to $(B,A,C,\beta,\alpha,\gamma)$.

\begin{prop}
  The algebra isomorphism $\Uq^{\otimes n} \rightarrow \Uq^{\otimes n}$ given by $x_1\otimes \cdots \otimes x_n \mapsto x_n \otimes \cdots \otimes x_1$ restricts in an algebra isomorphism $\AW{n} \simeq \widetilde{\mathrm{AW}}(n)$. This isomorphism moreover sends $\Lambda_A$ to $\tilde{\Lambda}_{\tilde{A}}$ where $\tilde{A}=\left\{\,n+1-i \ \middle\vert \ i\in A\,\right\}$. 
\end{prop}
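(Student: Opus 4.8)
The plan is to prove that the flip map $\sigma\colon \Uq^{\otimes n}\to\Uq^{\otimes n}$, $x_1\otimes\cdots\otimes x_n\mapsto x_n\otimes\cdots\otimes x_1$, carries the generator $\Lambda_A$ to $\tilde\Lambda_{\tilde A}$, and then to observe that this immediately yields the claimed restriction to an isomorphism $\AW{n}\simeq\widetilde{\mathrm{AW}}(n)$. First I would record that $\sigma$ is visibly an algebra automorphism of $\Uq^{\otimes n}$ (it permutes tensor factors), so the only real content is the identity $\sigma(\Lambda_A)=\tilde\Lambda_{\tilde A}$; once this holds, $\sigma$ maps the generating set $\{\Lambda_A\}$ of $\AW{n}$ bijectively onto the generating set $\{\tilde\Lambda_{\tilde A}\}$ of $\widetilde{\mathrm{AW}}(n)$, hence restricts to an algebra isomorphism between the two subalgebras, and its inverse is the flip again.

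The core computation is the identity $\sigma(\Lambda_A)=\tilde\Lambda_{\tilde A}$, which I would establish by comparing the two iterated constructions in \cref{def:AW_generators} and in \cref{sec:change_conventions}. The key observation is the interplay between the flip and the coproduct: for the standard and opposite coproducts one has $\sigma_{12}\circ\Delta = \Delta^{\op}$ as maps $\Uq\to\Uq^{\otimes 2}$, and similarly $\sigma_{12}\circ\tau_L$ relates to $\tau_L^{\op}$ (where here $\sigma_{12}$ is the swap on two factors). More precisely, I would prove by induction on $i_k$ (the largest element of $A$) that applying the flip on $i_k$ letters to the partial expression
\[
\left(\id^{\otimes(i_k-2)}\otimes\alpha_{i_k-1}\right)\circ\cdots\circ(\id\otimes\alpha_2)\circ\alpha_1(\Lambda)
\]
built with $\alpha_i=\Delta$ or $\tau_L$ produces exactly the partial expression
\[
\left(\beta_{i_1+1}\otimes\id^{\otimes(i_k-i_1-1)}\right)\circ\cdots\circ(\beta_{i_k-1}\otimes\id)\circ\beta_{i_k}(\Lambda)
\]
built with $\beta_i=\Delta^{\op}$ or $\tau_L^{\op}$, where the roles of $\Delta$-positions and $\tau_L$-positions are reversed through $i\mapsto i_k+1-i$. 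Padding by $1^{\otimes(n-i_k)}$ on the right on the $\Lambda_A$ side corresponds, after the full flip on $n$ letters, to padding by $1^{\otimes(i_1-1)}$ on the left on the $\tilde\Lambda_{\tilde A}$ side, and tracking the index shift confirms $\tilde A=\{n+1-i\mid i\in A\}$. The inductive step amounts to checking that inserting one more application of $\id^{\otimes(\cdot)}\otimes\alpha_j$ and then flipping is the same as first flipping and then inserting $\beta_{j'}\otimes\id^{\otimes(\cdot)}$ at the mirrored slot; this is a bookkeeping identity about where a swap of adjacent blocks of tensor factors commutes past $\Delta$ versus $\Delta^{\op}$ (and $\tau_L$ versus $\tau_L^{\op}$).

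The main obstacle, and the step I would be most careful with, is verifying the base case and coherence for the non-grouplike comodule map $\tau_L$: unlike the coproduct, for which $\sigma\circ\Delta=\Delta^{\op}$ is standard, one must check that the map $\tau_L^{\op}$ used to define $\widetilde{\mathrm{AW}}(n)$ really is the flip-conjugate of $\tau_L$ from \cref{def:coaction_sl2}. This is a direct but slightly delicate check on the four generators $E$, $FK$, $K$, $\Lambda$ of $\mathcal I_L$, using that $\tau_L(\Lambda)=1\otimes\Lambda$ so that the "$\Lambda$ slot" behaves well, and that the formulas for $\tau_L(E)$, $\tau_L(K)$, $\tau_L(FK)$ are compatible with the opposite-coproduct conventions after applying $\sigma$. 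Everything else is formal: once $\sigma(\Lambda_A)=\tilde\Lambda_{\tilde A}$ is in hand, surjectivity onto $\widetilde{\mathrm{AW}}(n)$ is immediate since $A\mapsto\tilde A$ is an involution on nonempty subsets of $\{1,\dots,n\}$, and injectivity is inherited from $\sigma$ being injective on all of $\Uq^{\otimes n}$.
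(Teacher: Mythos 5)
Your proposal is correct and takes essentially the same route as the paper, which disposes of this proposition with the single line that it is immediate from the definitions of $\Lambda_A$ and $\tilde{\Lambda}_A$; your induction on $i_k$ is just that bookkeeping written out, and your index check $n-i_k = (n+1-i_k)-1$ for the unit padding is the right one. The only superfluous step is the ``delicate check'' that $\tau_L^{\op}$ is the flip-conjugate of $\tau_L$: since $\tau_L^{\op}$ is by definition $\sigma\circ\tau_L$, there is nothing to verify on generators.
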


\begin{proof}
  This is immediate from the definitions of $\Lambda_A$ and $\tilde{\Lambda}_A$.
\end{proof}

The following anti-isomorphism is the higher rank version of the algebra anti-isomorphism $\Delta_q \rightarrow \Delta_{q^{-1}}$ which sends $(A,B,C,\alpha,\beta,\gamma)$ to $(A,B,C,\alpha,\beta,\gamma)$.

\begin{prop}
  The algebra anti-isomorphism $S^{\otimes n} \colon \Uq^{\otimes n} \rightarrow \Uq^{\otimes n}$ restricts in an algebra anti-isomorphism $\AW{n} \simeq \widetilde{\mathrm{AW}}(n)$. This isomorphism moreover sends $\Lambda_A$ to $\tilde{\Lambda}_{A}$. 
\end{prop}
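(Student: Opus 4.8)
The plan is to first observe that $S^{\otimes n}$ is indeed an algebra anti-isomorphism of $\Uq^{\otimes n}$ — the antipode $S$ of $\Uq$ is an algebra anti-homomorphism, and it is bijective because $S^2$ is conjugation by $K$ — and then to prove that it sends each generator $\Lambda_A$ of $\AW{n}$ to the generator $\tilde\Lambda_A$ of $\widetilde{\mathrm{AW}}(n)$. Granting this, since $S^{\otimes n}$ is a bijective algebra anti-homomorphism carrying a generating set of $\AW{n}$ onto a generating set of $\widetilde{\mathrm{AW}}(n)$, and a subalgebra is closed under products irrespective of the order in which they are formed, $S^{\otimes n}$ restricts to the asserted anti-isomorphism $\AW{n}\simeq\widetilde{\mathrm{AW}}(n)$; the convention $\Lambda_\emptyset=q+q^{-1}=\tilde\Lambda_\emptyset$, together with $S$ fixing scalars, covers $A=\emptyset$.

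For the identity $S^{\otimes n}(\Lambda_A)=\tilde\Lambda_A$ I would argue by induction on $\lvert A\rvert$, peeling the outermost map $\id^{\otimes(i_k-2)}\otimes\alpha_{i_k-1}$ off the recursive definition of $\Lambda_A$ and pushing the copies of $S$ inwards. Three compatibilities drive the computation. First, $S(\Lambda)=\Lambda$: from $S(E)=-EK^{-1}$, $S(F)=-KF$, $S(K)=K^{-1}$ one computes $S\big((q-q^{-1})^2FE+qK+q^{-1}K^{-1}\big)=(q-q^{-1})^2EF+q^{-1}K+qK^{-1}$, which is the second expression for $\Lambda$. Second, the standard Hopf identity $(S\otimes S)\circ\Delta=\Delta^{\op}\circ S$ (whence also $(S\otimes S)\circ\Delta^{\op}=\Delta\circ S$). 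Third, the analogous statement for the coaction: $S$ maps $\mathcal{I}_L$ onto the corresponding subalgebra $S(\mathcal{I}_L)$ and intertwines $\tau_L$ with $\tau_L^{\op}$ up to the flip of the two tensor legs, which one checks directly on the algebra generators $E$, $FK$, $K$, $\Lambda$ of $\mathcal{I}_L$. Pushing the $S$'s through the composition defining $\Lambda_A$ using the last two facts replaces every $\Delta$ by $\Delta^{\op}$ and every $\tau_L$ by $\tau_L^{\op}$; reorganising the result with the coassociativity of $\Delta$ and the comodule axioms for $\tau_L$ in their opposite forms — which let one pass between building iterated coproducts and coactions by always acting on the first leg versus always on the last leg — then yields exactly the recursion defining $\tilde\Lambda_A$, while $S(\Lambda)=\Lambda$ settles the base case $\lvert A\rvert=1$.

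I expect the third compatibility to be the main obstacle: one must pin down the precise identity relating $S$ and $\tau_L$, keeping the two tensor legs straight (since $\tau_L$ takes values in $\Uq\otimes\mathcal{I}_L$ whereas $\tau_L^{\op}$ takes values in $\mathcal{I}_L\otimes\Uq$), and then carry out the routine but fiddly bookkeeping of the $1$-paddings and leg positions at each step of the induction. An alternative that avoids re-deriving the leg reorganisation is to bootstrap from the preceding proposition, which gives $\tilde\Lambda_A=\operatorname{rev}(\Lambda_{\tilde A})$ for the leg-reversing isomorphism $\operatorname{rev}\colon x_1\otimes\cdots\otimes x_n\mapsto x_n\otimes\cdots\otimes x_1$: it then suffices to verify that the anti-automorphism $\operatorname{rev}\circ S^{\otimes n}$ of $\Uq^{\otimes n}$ sends $\Lambda_A$ to $\Lambda_{\tilde A}$, which trades the leg reorganisation for the relabelling $i\mapsto n+1-i$ but still rests on the same three compatibilities.
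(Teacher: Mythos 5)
Your plan follows essentially the same route as the paper, which establishes the proposition by combining the Hopf identity $(S\otimes S)\circ\Delta=\Delta^{\op}\circ S$ with a compatibility between $S$ and the coaction and then citing \cite[Proposition~2.3]{DeClercq19}; your additional ingredients ($S(\Lambda)=\Lambda$ for the base case, the induction on $\lvert A\rvert$, and the alternative reduction via the leg-reversal to the preceding proposition) simply make explicit what the paper leaves implicit. The only point requiring care is the one you correctly flag as the main obstacle: the identity the paper actually invokes is $(S\otimes S)\circ\tau_L=\tau_R\circ S$, where $\tau_R$ is the \emph{right} coaction of \cite[Definition~2.1]{DeClercq19} defined on the image coideal $S(\mathcal{I}_L)$ rather than a mere flip of $\tau_L$, so after pushing the antipodes through one still needs the comparison between generators built from $\tau_R$ and those built from $\tau_L^{\op}$ --- which is exactly what the cited Proposition~2.3 of \cite{DeClercq19} supplies.
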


\begin{proof}
  The isomorphism follows from the fact that $S\otimes S \circ \Delta = \Delta^{\op}$, that $S\otimes S \circ \tau_L = \tau_R\circ S$ and from \cite[Proposition~2.3]{DeClercq19}. Here $\tau_R$ is given in \cite[Definition 2.1]{DeClercq19}.
\end{proof}

Composing the two previous isomorphisms, we obtain the higher rank version of the algebra anti-automorphism of $\AW{n}$ of \cite[Lemma 2.9]{Terwilliger13}.

\begin{prop}
  There exists an algebra anti-automorphism of $\AW{n}$ sending $\Lambda_A$ to $\Lambda_{\tilde{A}}$. 
\end{prop}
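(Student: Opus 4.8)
The plan is to obtain the desired anti-automorphism as a composite of the isomorphism and the anti-isomorphism produced in the two preceding propositions, being careful to invert one of them so that the result is an endomorphism of $\AW{n}$ and not merely a map into $\widetilde{\mathrm{AW}}(n)$. Concretely, I would write $\phi\colon \AW{n}\xrightarrow{\sim}\widetilde{\mathrm{AW}}(n)$ for the algebra isomorphism with $\phi(\Lambda_A)=\tilde{\Lambda}_{\tilde A}$, and $\psi\colon \AW{n}\to\widetilde{\mathrm{AW}}(n)$ for the algebra anti-isomorphism with $\psi(\Lambda_A)=\tilde{\Lambda}_A$, and then consider $\Phi := \phi^{-1}\circ\psi\colon \AW{n}\to\AW{n}$, which is defined because $\phi$ is surjective onto $\widetilde{\mathrm{AW}}(n)$.

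First I would record that $\Phi$ is a bijective $\mathbb{C}(q)$-linear map, being a composite of bijections, and that it reverses multiplication: since $\phi^{-1}$ is again an algebra homomorphism and $\psi$ is an anti-homomorphism, for $x,y\in\AW{n}$ we have
\[
  \Phi(xy)=\phi^{-1}\bigl(\psi(xy)\bigr)=\phi^{-1}\bigl(\psi(y)\psi(x)\bigr)=\phi^{-1}(\psi(y))\,\phi^{-1}(\psi(x))=\Phi(y)\,\Phi(x),
\]
so $\Phi$ is an algebra anti-automorphism of $\AW{n}$.

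It then remains to identify $\Phi$ on the generators. Since $A\mapsto\tilde A=\{\,n+1-i \mid i\in A\,\}$ is an involution on the non-empty subsets of $\{1,\dots,n\}$, the relation $\phi(\Lambda_A)=\tilde{\Lambda}_{\tilde A}$ rewrites as $\phi^{-1}(\tilde{\Lambda}_B)=\Lambda_{\tilde B}$ for every non-empty $B$, and hence
\[
  \Phi(\Lambda_A)=\phi^{-1}\bigl(\psi(\Lambda_A)\bigr)=\phi^{-1}\bigl(\tilde{\Lambda}_A\bigr)=\Lambda_{\tilde A},
\]
which is the assertion. As a remark, applying $\Phi$ twice sends $\Lambda_A$ to $\Lambda_{\tilde{\tilde A}}=\Lambda_A$, so $\Phi$ is in fact an anti-involution, although only the anti-automorphism property is claimed.

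I expect no genuine obstacle here: all of the substantive work has already been carried out in establishing the isomorphism and the anti-isomorphism between $\AW{n}$ and $\widetilde{\mathrm{AW}}(n)$, and the only things to watch are the direction of each arrow (which is what forces the use of $\phi^{-1}$ rather than $\phi$) and the bookkeeping of the involuted index set $\tilde A$.
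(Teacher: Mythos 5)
Your proposal is correct and is exactly the paper's argument: the paper obtains the anti-automorphism by composing the flip isomorphism and the antipode anti-isomorphism of the two preceding propositions, which is precisely your $\phi^{-1}\circ\psi$. Your additional care with the direction of the arrows and the verification that $\tilde{A}\mapsto A$ is an involution only makes explicit what the paper leaves implicit.
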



\section{Braided Tensor Product of copies of the locally finite part of $\Uq$}
\label{sec:braided_tensor_product}

As explained in the introduction, the isomorphism between the skein algebra of the $(n+1)$-punctured sphere and the Askey--Wilson algebra $\AW{n}$ consists of a sequence of steps. In this section, we describe the Askey--Wilson algebra as the image of an injective morphism called the unbraiding map and we give an explicit form of the preimages of the generators $\Lambda_A$. One of the key ideas is the interpretation of the coaction $\tau$ as the conjugation by the $R$-matrix \cite{CrampeGaboriaudVinetZaimi20} and the fact that the left coideal $\mathcal{I}_L$ is the locally finite part of $\Uq$ for the left adjoint action.

\subsection{Grading and the left adjoint action}
\label{sec:left_adjoint_action}

There exists a $\mathbb{Z}$-grading on $\Uq$ given on the generators by $\lvert E \rvert =1$, $\lvert F \rvert = -1$ and $\lvert K \rvert = 0$. Note that $Kx = q^{2\lvert x \rvert} xK$ for any homogeneous element $x$.

As it is a Hopf algebra, $\Uq$ acts on itself via the left adjoint action given by
\[
  h\rhd x = \sum h_{(1)}xS\left(h_{(2)}\right),
\]
for any $h,x\in \Uq$. Note that $\lvert h\rhd x \rvert = \lvert h\rvert + \lvert x \rvert$.

\begin{defn}
The locally finite elements of $\Uq$ for the left adjoint action are:
\[
  \Uq^{\lf} = \Big\{ \,x \in \Uq  \mathrel{\Big\vert}  \Uq\rhd x \text{ is finite dimensional} \,\Big\}.
\]
\end{defn}

By definition, $\Uq$ then acts on $\Uq^{\lf}$ via the left adjoint action. It is known that $\Uq^{\lf}$ is a subalgebra of $\Uq$ and also a left coideal of $\Uq$, that is $\Delta\left(\Uq^{\lf}\right) \subset \Uq\otimes \Uq^{\lf}$, see \cite[Lemma 3.112]{voigt-yuncken} for example.

The following is a theorem of Joseph--Letzter \cite{JosephLetzter92} in the specific case of $\Uq$. Note that our convention for the coproduct is different from \cite{JosephLetzter92}.

\begin{prop}
  The subalgebra and left coideal $\Uq^{\lf}$ coincides with $\mathcal{I}_L$.
\end{prop}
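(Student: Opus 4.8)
The plan is to show the two subalgebras coincide by a double inclusion, relying on the structural description of $\mathcal{I}_L$ as a left coideal together with the known presentation of $\Uq^{\lf}$ from Joseph--Letzter. First I would verify that $\mathcal{I}_L \subseteq \Uq^{\lf}$: since $\mathcal{I}_L$ is generated as an algebra by $E$, $FK$, $K$ and $\Lambda$, and $\Uq^{\lf}$ is a subalgebra, it suffices to check that each of these four generators is locally finite for the left adjoint action. For $K$ one computes $h \rhd K$ on $h \in \{E,F,K\}$ and observes the adjoint orbit lies in the span of $\{K, FK E, \dots\}$ — more cleanly, $\mathcal{U}_q \rhd K$ is spanned by $K$ together with finitely many weight vectors, using that the adjoint action preserves the $\mathbb{Z}$-grading and that in each fixed degree the relevant weight spaces are finite dimensional once one bounds the $K$-weight; similarly for $E$, $FK$, $\Lambda$ (the last being central, so $h \rhd \Lambda = \varepsilon(h)\Lambda$, which is as finite-dimensional as it gets). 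The cleanest route is to recall that $\Uq^{\lf} = \bigoplus_{\mu \in 2P^+} (\mathrm{ad}\, \Uq)(K_{-2\mu})$-type decomposition and exhibit $E$, $FK$, $K$ as lying in the $\mathrm{ad}$-submodule generated by $K^{-1}$ (degree reasons plus the explicit coproduct make this a short check), but I would likely instead just directly bound orbits degree-by-degree.

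For the reverse inclusion $\Uq^{\lf} \subseteq \mathcal{I}_L$, I would use that $\Uq^{\lf}$ is itself a left coideal and is generated, by Joseph--Letzter, by $K^{-1}$ together with $\mathrm{ad}$-invariant elements — concretely, $\Uq^{\lf}$ is spanned by elements of the form (weight-zero combinations of $E$'s, $F$'s) times powers of $K^{-1}$ with the $K$-power tied to the weight. The key computation is that the formula for $\tau_L$ in \cref{def:coaction_sl2} exhibits $\mathcal{I}_L$ as precisely the left coideal generated by $E$, $FK$, $K$, $\Lambda$, and one checks these generate the same coideal as $\{K^{-1}, C\}$ where $C$ is the Casimir: indeed $\Lambda$ is the Casimir up to normalisation, $K \in \mathcal{I}_L$ and $K = (q\Lambda^{-1}\cdot) \dots$ — more directly, $K^{-1} = q^{-1}(\Lambda - (q-q^{-1})^2 FE)K^{-1} - q^{-2}K^{-2}$ rearranges to express $K^{-1}$ in terms of $K$, $\Lambda$, $FE$, and since $E, FK \in \mathcal{I}_L$ we get $FKE = (FK)(E) \cdot$(conjugate by $K$) $\in \mathcal{I}_L$, hence $FE \in \mathcal{I}_L$, hence $K^{-1} \in \mathcal{I}_L$. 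With $K^{-1} \in \mathcal{I}_L$ in hand, a dimension/weight count in each graded piece shows $\mathcal{I}_L$ is large enough to contain all of $\Uq^{\lf}$; alternatively, cite the explicit basis of $\Uq^{\lf}$ from Joseph--Letzter (monomials $F^a E^b K^{-b-a}\cdot(\text{central})$ roughly) and check each basis element lies in the algebra generated by $E$, $FK$, $K$, $\Lambda$.

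The main obstacle I anticipate is the reverse inclusion: writing down a clean, self-contained argument that $\mathcal{I}_L$ is not merely contained in $\Uq^{\lf}$ but exhausts it. The honest way to close this gap without reproving Joseph--Letzter is to invoke their explicit description of $\Uq^{\lf}$ (the theorem being cited) as the left coideal subalgebra generated by a single lowest-weight-type generator, verify $\mathcal{I}_L$ contains that generator (the $K^{-1} \in \mathcal{I}_L$ computation above), and then argue both are the \emph{smallest} left coideal subalgebra containing it — or, since $\mathcal{I}_L \subseteq \Uq^{\lf}$ is already established, that any left coideal subalgebra of $\Uq$ containing $K^{-1}$ and contained in $\Uq^{\lf}$ and itself generating $\Uq^{\lf}$-worth of weight spaces must equal $\Uq^{\lf}$. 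In practice I expect the slickest writeup simply states: Joseph--Letzter give generators of $\Uq^{\lf}$; each lies visibly in $\mathcal{I}_L$; conversely the generators of $\mathcal{I}_L$ were shown locally finite above; done. The rest is the routine (if slightly tedious) business of pushing the explicit $\tau_L$ and coproduct formulae around.
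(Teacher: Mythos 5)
For this proposition the paper gives no proof at all: it is quoted as a theorem of Joseph--Letzter, with an explicit warning that the coproduct conventions here differ from theirs. Your overall strategy --- double inclusion, checking that the four generators of $\mathcal{I}_L$ are locally finite and that the Joseph--Letzter generators of $\Uq^{\lf}$ lie in $\mathcal{I}_L$ --- is the right one, and the first half goes through: with $h\rhd x=\sum h_{(1)}xS(h_{(2)})$ and this paper's coproduct one finds $E\rhd K=(1-q^2)E$, $F\rhd K=(1-q^{-2})FK$, $E\rhd E=0$, $F\rhd FK=0$, $E\rhd FK$ and $F\rhd E$ both landing in $\operatorname{span}\{\Lambda,K\}$, and $h\rhd\Lambda=\varepsilon(h)\Lambda$; so $\mathcal{I}_L$ is an $\operatorname{ad}$-stable subalgebra on which each generator has finite orbit, whence $\mathcal{I}_L\subseteq\Uq^{\lf}$.

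The genuine error is in the reverse inclusion, where you have the chirality of Joseph--Letzter backwards for these conventions. Your ``key computation'' asserts $FE\in\mathcal{I}_L$ and hence $K^{-1}\in\mathcal{I}_L$. Both claims are false: from $FK$ and $E$ one only obtains $FKE=q^{2}FEK$, and there is no $K^{-1}$ in $\mathcal{I}_L$ with which to strip off the trailing $K$. Worse, $K^{-1}$ is not even locally finite here --- $E\rhd K^{-1}=(1-q^{-2})EK^{-2}$, $E\rhd(EK^{-2})=(1-q^{-4})E^{2}K^{-3}$, and so on generate an infinite linearly independent orbit --- so $K^{-1}\in\mathcal{I}_L$ would directly contradict the inclusion $\mathcal{I}_L\subseteq\Uq^{\lf}$ you established in the first half. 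With this paper's coproduct the distinguished locally finite group-like element is $K$, not $K^{-1}$, and the Joseph--Letzter decomposition reads $\Uq^{\lf}=\bigoplus_{n\ge0}(\operatorname{ad}\Uq)(K^{n})$, with $(\operatorname{ad}\Uq)(K^{n})\cong V_0\oplus V_2\oplus\cdots\oplus V_{2n}$ of dimension $(n+1)^2$ (matching the graded character of $\mathcal{O}_q(\SL_2)$ used later in the paper). Once that statement is quoted in the correct convention, the reverse inclusion is immediate from what you already proved: $K\in\mathcal{I}_L$ and $\mathcal{I}_L$ is an $\operatorname{ad}$-stable subalgebra, so it contains every $(\operatorname{ad}\Uq)(K^{n})$. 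No computation involving $K^{-1}$ or $FE$ is needed, and the one you propose cannot be repaired.
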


\subsection{The quasi-R-matrix}
\label{sec:notations}

In this subsection, we introduce the $R$-matrix for $\Uq$ following \cite[Chapter 4]{Lusztig93}. The Hopf algebra $\Uq$ fails to be a quasi-triangular Hopf algebra. This problem is usually overcomed using the notion of quasi-$R$-matrix.

First, we define $\Psi\colon \Uq \otimes \Uq \rightarrow \Uq\otimes \Uq$ as the algebra isomorphism defined on homogeneous elements by $\Psi(x\otimes y) = x K^{-\lvert y\rvert} \otimes K^{-\lvert x \rvert} y =  K^{-\lvert y\rvert}x \otimes yK^{-\lvert x \rvert}$.
\begin{thm}[{\cite[Chapter 4]{Lusztig93}}]
  \label{thm:quasi-R-matrix}
There exists an element $\Theta = \sum_{i\geq 0}\Theta_i$ with $\Theta_i = u_i\otimes v_i$ satisfying the following properties:
\begin{itemize}
\item \label{itm:grad_theta} $u_i \otimes v_i = q^{i(i-1)/2}\displaystyle\frac{ \left(q-q^{-1}\right)^i}{[i]!} E^i \otimes F^i$;
\item for all $x\in \Uq$, we have $\Psi(\Delta^{\op}(x))\Theta = \Theta\Delta(x)$;
\item \label{itm:inv_theta} $\Theta$ is invertible with inverse $\Gamma = \sum_{i\geq 0} \Gamma_i$ with $\Gamma_i = S(u_i)K^i \otimes v_i = u_i \otimes S^{-1}(v_i)K^{-i}$;
\item $\Delta\otimes \id(\Theta) = \Psi_{23}(\Theta_{13})\Theta_{23}$ and $\id\otimes\Delta(\Theta) = \Psi_{12}(\Theta_{13})\Theta_{12}$.
\end{itemize}
\end{thm}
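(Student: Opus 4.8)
The plan is to construct $\Theta$ directly, by undetermined coefficients, exploiting that for $\Uq$ the positive and negative parts are the polynomial algebras $\mathbb{C}(q)[E]$ and $\mathbb{C}(q)[F]$. I look for $\Theta=\sum_{i\ge0}\Theta_i$ in a suitable completion of $\mathbb{C}(q)[E]\otimes\mathbb{C}(q)[F]$ with $\Theta_i$ homogeneous of bidegree $(i,-i)$ and $\Theta_0=1\otimes1$; since each such graded component is one-dimensional this already forces $\Theta_i=c_iE^i\otimes F^i$ for scalars $c_i$ with $c_0=1$. As $\Psi\circ\Delta^{\op}$ and $\Delta$ are algebra homomorphisms, the relation $\Psi(\Delta^{\op}(x))\Theta=\Theta\Delta(x)$ is multiplicative in $x$, so it suffices to impose it on the generators $K$, $E$, $F$. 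For $K$ it holds automatically, each $\Theta_i$ commuting with $K\otimes K$ by homogeneity. Imposing it on $E$ and (symmetrically) on $F$, and using $EF^{i}-F^{i}E=[i]F^{i-1}\frac{q^{-(i-1)}K-q^{i-1}K^{-1}}{q-q^{-1}}$, the coefficients of $K$ and of $K^{-1}$ both reduce to the single recursion $c_{i+1}=q^{i}\frac{q-q^{-1}}{[i+1]}c_i$; with $c_0=1$ this has the unique solution $c_i=q^{i(i-1)/2}\frac{(q-q^{-1})^i}{[i]!}$, establishing the first two bullets simultaneously.

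For invertibility I would exhibit the inverse. Put $\Gamma=\sum_{i\ge0}\gamma_iE^i\otimes F^i$ with $\gamma_i=(-1)^iq^{-i(i-1)}c_i$; using $S(E^i)=(-1)^iq^{-i(i-1)}E^iK^{-i}$ one checks that $\gamma_iE^i\otimes F^i=S(u_i)K^i\otimes v_i=u_i\otimes S^{-1}(v_i)K^{-i}$, which is the stated form of $\Gamma_i$. Multiplying the two series and extracting the bidegree-$(n,-n)$ part, the identities $\Theta\Gamma=\Gamma\Theta=1\otimes1$ reduce — via $c_kc_{n-k}=q^{-k(n-k)}\qbinom{k}{n}{q}c_n$, which is immediate from the closed form of $c_i$ (here $\qbinom{k}{n}{q}=[n]!/([k]![n-k]!)$) — to the $q$-binomial identity $\sum_{k=0}^{n}(-1)^kq^{-k(n-1)}\qbinom{k}{n}{q}=0$ for $n\ge1$, namely the value at $z=1$ of $\sum_k(-1)^kq^{-k(n-1)}\qbinom{k}{n}{q}z^k=\prod_{j=0}^{n-1}(1-q^{-2j}z)$. (Alternatively, $\Gamma$ is the image of $\Theta$ under the bar involution, and one may instead note that $\Theta$ acts unipotently on tensor products of integrable modules, so is invertible with inverse in the same completion.)

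For the last bullet I would compute directly. The $q$-binomial theorem applied to $(E\otimes K)(1\otimes E)=q^{2}(1\otimes E)(E\otimes K)$ gives $\Delta(E^n)=\sum_k q^{k(n-k)}\qbinom{k}{n}{q}E^k\otimes E^{n-k}K^k$, hence $(\Delta\otimes\id)(\Theta)=\sum_{n,k}c_nq^{k(n-k)}\qbinom{k}{n}{q}\,E^k\otimes E^{n-k}K^k\otimes F^n$. On the other hand $\Psi(1\otimes F^i)=K^i\otimes F^i$ yields $\Psi_{23}(\Theta_{13})=\sum_i c_iE^i\otimes K^i\otimes F^i$ and so $\Psi_{23}(\Theta_{13})\Theta_{23}=\sum_{i,j}c_ic_jq^{2ij}E^i\otimes E^jK^i\otimes F^{i+j}$; comparing coefficients, the two agree precisely because $c_n\qbinom{k}{n}{q}=q^{k(n-k)}c_kc_{n-k}$, again immediate from the formula for $c_i$. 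The identity $(\id\otimes\Delta)(\Theta)=\Psi_{12}(\Theta_{13})\Theta_{12}$ is the mirror computation. A more structural alternative: both sides of each identity lie in the relevant completion with leading term $1^{\otimes3}$ and both intertwine the corresponding pair of threefold coproducts, so a uniqueness argument (the same one that pins down $\Theta$) identifies them.

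The step I expect to be most delicate is the bookkeeping, not any single identity: tracking the $K$-twists hidden in $\Psi$ and the powers of $q$ generated by the commutation relations, and — more substantively — justifying that $\Theta$ and all the products above live in, and may be manipulated inside, an appropriate completion of the tensor powers of $\Uq$ (in practice one argues degree by degree, or evaluates everything on tensor products of integrable $\Uq$-modules, where all series truncate). Once this framework is set, existence and the two structural properties are essentially forced; the only genuinely external inputs are the $q$-binomial identities used for the inverse and for the coproduct formula.
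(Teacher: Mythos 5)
Your proposal is correct, and it is worth noting that the paper itself offers no proof of this statement: it is quoted from Lusztig (Chapter 4 of \cite{Lusztig93}) and merely translated into the paper's conventions for the coproduct and for $\Psi$. Your argument is therefore a genuine addition rather than a variant of an existing proof, and I have checked its key points against the conventions actually used here ($\Delta(E)=E\otimes K+1\otimes E$, $\Delta(F)=F\otimes1+K^{-1}\otimes F$, $\Psi(x\otimes y)=xK^{-\lvert y\rvert}\otimes K^{-\lvert x\rvert}y$). Imposing the intertwining relation on $E$ does give, after the commutation formula for $EF^i-F^iE$, the single recursion $c_{i+1}=q^i\tfrac{q-q^{-1}}{[i+1]}c_i$ from both the $K$ and $K^{-1}$ coefficients, and the generator $F$ reproduces the same recursion, so the closed form $c_i=q^{i(i-1)/2}(q-q^{-1})^i/[i]!$ is forced; the identity $c_kc_{n-k}=q^{-k(n-k)}\qbinom{k}{n}{q}c_n$ is correct and does reduce both the invertibility claim (via the vanishing of $\prod_{j=0}^{n-1}(1-q^{-2j})$ for $n\geq 1$) and the coproduct identity $\Delta\otimes\id(\Theta)=\Psi_{23}(\Theta_{13})\Theta_{23}$ to elementary $q$-binomial facts; and $S(E^i)K^i=(-1)^iq^{-i(i-1)}E^i$ confirms the stated form of $\Gamma_i$ in both of its presentations. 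The one point you rightly flag as delicate --- that all series and products live in the degree-wise completion of $\mathbb{C}(q)[E]\otimes\mathbb{C}(q)[F]$, where each bidegree-$(n,-n)$ component is finite-dimensional and all the sums being compared are locally finite --- is handled adequately by your degree-by-degree bookkeeping, and is exactly how Lusztig's treatment proceeds. The only cosmetic caution is that your final ``structural alternative'' for the coproduct identities quietly invokes a uniqueness statement for intertwiners with constant term $1\otimes1\otimes1$ that you have not proved; since your direct computation already establishes the identities, that remark should be presented as an aside rather than as the argument.
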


In Sweedler notation, one has that for any homogeneous $x\in \Uq$
\begin{equation}
  \label{eq:coproduct-op-sweedler}
  \sum_{i\geq 0} x_{(2)}K^{-\lvert x_{(1)} \rvert}u_i \otimes x_{(1)}K^{\lvert x_{(2)} \rvert}v_i
  =
  \sum_{i\geq 0} u_i x_{(1)} \otimes v_i x_{(2)}.
\end{equation}

Thanks to the quasi-$R$-matrix, one can endow the category of type $1$ integrable weight modules with a braiding as follows. Let $M$ and $N$ be two integrable weight modules of type $1$. For $m\in M$ of weight $\mu$ and $n\in N$ of weight $\nu$, we define
\[
  c_{M,N}(m \otimes n) = \sum_{i \geq 0}q^{(\mu + 2i)(\nu - 2i)/2}(v_i n)\otimes (u_i m).
\]
Note that in general the coefficients appearing in the formula above are in $\mathbb{Q}(q^{1/2})$. This then defines a map $c_{M,N}\colon M\otimes N \rightarrow N\otimes M$ which is a braiding for the category of type 1 integrable weight modules (see \cite[Section 3]{Jantzen96} for more details).

\subsection{Braided tensor product}
\label{sec:braided_tensor}

For two algebras in a braided monoidal category, Majid had defined their braided tensor product \cite[Lemma 9.2.12]{Majid95book}. In the spectific case of a category of modules of a quasi-triangular Hopf algebra, the multiplication in the braided tensor product is explicitely given using the $R$-matrix \cite[Corollary 9.2.13]{Majid95book}. In our situation, since we are working in the category of integrable weight modules over $\Uq$, we can not define the braided tensor product of $\Uq$ with itself since it is not an integrable module for the left adjoint action. One way to remedy to this problem is to work with the locally finite part $\Uq^{\lf}$ so that the definition of the multiplication is still valid.

\begin{prop}
  For any homogeneous elements $g,h,x$ and $y$ of $\Uq^{\lf}$, we define the braided product $\cdot$ by
  \begin{equation}
    (g\otimes h)\cdot (x\otimes y) = \sum_{i\geq 0}q^{2(\lvert h \rvert + i)(\lvert x \rvert - i)}g(v_i\rhd x) \otimes (u_i \rhd h)y.
  \end{equation}

  This defines an associative multiplication on the vector space $\Uq^{\lf}\otimes \,\Uq^{\lf}$. The resulting algebra is the \emph{braided tensor product} and will be denoted by $\Uq^{\lf}\tilde{\otimes}\,\Uq^{\lf}$.
\end{prop}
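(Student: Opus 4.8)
The plan is to verify associativity of the braided product by a direct computation, but organised so that the $R$-matrix identities from \cref{thm:quasi-R-matrix} do the heavy lifting rather than brute force. The key observation is that the braided product is nothing but the multiplication transported from the braided monoidal category of type $1$ integrable weight modules: $\Uq^{\lf}$ is an algebra object in that category (its multiplication is a morphism because $\Uq^{\lf}$ is a left coideal and the adjoint action is by algebra maps), and the braided tensor product of two algebra objects in a braided monoidal category is associative by \cite[Lemma 9.2.12]{Majid95book}. So conceptually there is nothing to prove; the content of the proposition is that the abstract formula specialises to the displayed one once the braiding $c_{M,N}$ is written out in terms of $\Theta = \sum_i u_i\otimes v_i$.

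Concretely, I would proceed as follows. First, recall that in a braided monoidal category the product on $V\otimes W$ (for algebra objects $V,W$) is $(\mathrm{id}_V\otimes m_W)\circ(\mathrm{id}_V\otimes c_{W,V}\otimes\mathrm{id}_W)\circ(m_V\otimes m_W)$ composed appropriately, i.e.\ $(g\otimes h)\cdot(x\otimes y) = \sum g\,c_{W,V}'(h\otimes x)\,y$ with the braiding applied to the middle two tensorands. Here the relevant category is $\Uq^{\lf}$-modules inside $\Uq$-modules, so $h$ and $x$ are viewed via the left adjoint action, and $c$ is the braiding from \cref{sec:notations}. Substituting the explicit formula $c_{M,N}(m\otimes n)=\sum_i q^{(\mu+2i)(\nu-2i)/2}(v_in)\otimes(u_im)$ with $m=h$ of weight determined by $|h|$ and $n=x$ of weight determined by $|x|$ — noting that for the adjoint action the relevant ``weight'' of a homogeneous element $z$ is $2|z|$ — gives exactly $\sum_i q^{2(|h|+i)(|x|-i)} g(v_i\rhd x)\otimes(u_i\rhd h)y$ after matching the quadratic exponent $(\mu+2i)(\nu-2i)/2$ with $\mu=2|h|$, $\nu=2|x|$. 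Second, for associativity one either invokes the abstract lemma directly, or, if a self-contained check is wanted, one expands $\big((g\otimes h)\cdot(x\otimes y)\big)\cdot(z\otimes w)$ and $(g\otimes h)\cdot\big((x\otimes y)\cdot(z\otimes w)\big)$ and reconciles them using the hexagon identities for $c$, which at the level of $\Theta$ are precisely the coproduct formulas $\Delta\otimes\id(\Theta)=\Psi_{23}(\Theta_{13})\Theta_{23}$ and $\id\otimes\Delta(\Theta)=\Psi_{12}(\Theta_{13})\Theta_{12}$ from \cref{thm:quasi-R-matrix}, together with the intertwining property $\Psi(\Delta^{\op}(x))\Theta=\Theta\Delta(x)$ rewritten in Sweedler form as \eqref{eq:coproduct-op-sweedler}.

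Finally, one must check that the product is well-defined, i.e.\ that the sum over $i$ is finite and lands in $\Uq^{\lf}\otimes\Uq^{\lf}$: finiteness holds because $\Theta_i$ is proportional to $E^i\otimes F^i$ and $E^i\rhd h=0$ for $i$ large (as $\Uq\rhd h$ is finite dimensional and the grading is bounded on it), and closure holds because $\Uq^{\lf}$ is a subalgebra stable under the adjoint action. I expect the main obstacle — if one insists on a direct rather than categorical verification of associativity — to be bookkeeping of the $q$-powers: one has three homogeneity labels and three summation indices, and the exponent that must come out right is a sum of bilinear forms in these that has to be matched against what the two hexagon relations produce; the grading identity $|v_i\rhd x| = |x|-i$, $|u_i\rhd h|=|h|+i$ is what makes the powers track correctly, and getting the interaction between $\Psi$ (which contributes $K^{-|\cdot|}$ twists) and the grading shifts under $\rhd$ exactly right is the only genuinely delicate point. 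The cleanest writeup simply cites \cite[Lemma 9.2.12, Corollary 9.2.13]{Majid95book} after establishing that $\Uq^{\lf}$ is an algebra in the braided category and that the stated formula is the specialisation of Majid's.
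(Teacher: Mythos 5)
Your proposal is correct and matches the paper's own (implicit) justification: the paper gives no proof of this proposition beyond the preceding paragraph, which cites Majid's \cite[Lemma 9.2.12, Corollary 9.2.13]{Majid95book} and notes that passing to the locally finite part $\Uq^{\lf}$ is what makes the formula well defined — exactly the two points your plan rests on. Your exponent bookkeeping (adjoint weight $2\lvert z\rvert$, giving $(\mu+2i)(\nu-2i)/2 = 2(\lvert h\rvert+i)(\lvert x\rvert-i)$) and the finiteness/closure checks are all correct, and the optional direct verification via the hexagon identities for $\Theta$ is a sound, if unnecessary, supplement.
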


We can also inductively define a braided product on $\Uq^{\otimes n}$ and we denote the resulting algebra by $\left(\Uq^{\lf}\right)^{\tilde{\otimes} n}$. One can check that this multiplication is compatible with the adjoint action: for any $h\in \Uq$ and $x,y \in \left(\Uq^{\lf}\right)^{\tilde{\otimes} n}$, one has
\[
  h\rhd (x\cdot y) = \sum (h_{(1)}\rhd x)\cdot (h_{(2)}\rhd y).
\]

This multiplication endows $\left(\Uq^{\lf}\right)^{\tilde{\otimes} n}$ with an algebra structure in the category of integrable $\Uq$-modules. We can also endow this algebra with a bialgebra structure in the category of $\Uq$-modules.
Define $\underline{\Delta}$ following \cite[Section 3]{Majid91}:
\[
  \underline{\Delta}(x) = \sum_{i\geq 0} x_{(1)}S\left(K^{i+\lvert x_{(2)} \rvert}v_i\right)\otimes u_i\rhd x_{(2)}
\]
One may check that this endows $\Uq^{\lf}$ with a bialgebra structure in the category of left $\Uq$-modules, that is the multiplication $\cdot$ and the comultiplication $\underline{\Delta}$ satisfy the usual axioms of a bialgebra and that they are all compatible with the adjoint action. For example,
\[
  \underline{\Delta}(h\rhd x) = h\rhd \underline{\Delta}(x).
\]

Furthermore, we inductively define the iterated comultiplication $\Delta^{(n)}$ by
\[
  \Delta^{(0)} = \id,\quad\text{and}\quad \Delta^{(n+1)} = \left(\id\otimes\Delta^{(n)}\right)\circ \Delta,
\]
and define the iterated comultiplication $\underline{\Delta}^{(n)}$ similarly. Note that $\Delta^{(1)}=\Delta$ and $\underline{\Delta}^{(1)}=\underline{\Delta}$. One may inductively show that
\begin{align}
  \underline{\Delta}^{(n-1)}(x)
  &= \sum_{i\geq 0} x_{(1)}S\left(K^{i+\lvert x_{(2)} \rvert}v_i\right)\otimes  \underline{\Delta}^{(k-2)}\left(u_i\rhd x_{(2)}\right)\nonumber \\
  &= \sum_{i\geq 0} x_{(1)}S\left(K^{i+\lvert x_{(2)} \rvert}v_i\right)\otimes  u_i\rhd \underline{\Delta}^{(n-2)}\left(x_{(2)}\right) \label{eq:iterated_underlined_comultiplication_qgr}.
\end{align}

We have the following explicit formulas for $\underline{\Delta}(x)$ when $x$ is a generator of $\left(\Uq^{\lf}\right)^{\tilde{\otimes} n}$, each of which follows from direct computation.

\begin{lem}\label{lem:formula_bar_coproduct}
  We have
  \begin{align*}
    \underline{\Delta}(E) & = E\otimes K + q^{-1}\left(\Lambda-q^{-1}K\right)\otimes E,\\
    \underline{\Delta}(KF) & = K\otimes KF + q^{-1}KF\otimes \left(C-q^{-1}K\right),\\
    \underline{\Delta}(K) & = K\otimes K + q^{-1}\left(q-q^{-1}\right)^2KF\otimes E,\\
    \underline{\Delta}(\Lambda) & = q^{-1}C\otimes C - q^{-2}C\otimes K - q^{-2} K \otimes C + q^{-2}\left(q+q^{-1}\right)K\otimes K \\
    &\quad+ \left(q-q^{-1}\right)^2\left(E\otimes KF + q^{-2}KF\otimes E\right).
  \end{align*}
\end{lem}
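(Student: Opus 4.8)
The approach is to evaluate the defining formula
\[
  \underline{\Delta}(x)=\sum_{i\geq 0}x_{(1)}S\!\left(K^{i+\lvert x_{(2)}\rvert}v_i\right)\otimes u_i\rhd x_{(2)}
\]
directly on each of the four generators $x\in\{E,KF,K,\Lambda\}$ of $\Uq^{\lf}=\mathcal{I}_L$. The ingredients are: the explicit form $u_i\otimes v_i=q^{i(i-1)/2}\tfrac{(q-q^{-1})^i}{[i]!}E^i\otimes F^i$ from \cref{thm:quasi-R-matrix}; the ordinary coproducts $\Delta(E)=E\otimes K+1\otimes E$, $\Delta(K)=K\otimes K$, $\Delta(KF)=KF\otimes K+1\otimes KF$ and $\Delta(\Lambda)=(q-q^{-1})^2\Delta(F)\Delta(E)+q\,K\otimes K+q^{-1}K^{-1}\otimes K^{-1}$; and the fact that on a homogeneous $y$ one has $E\rhd y=(Ey-yE)K^{-1}$, so that $u_i\rhd y$ is a scalar multiple of $E^i\rhd y$, which is computed by iterating this rule. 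Substituting these, each term of $\underline{\Delta}(x)$ becomes an explicit element of $\Uq^{\lf}\otimes\Uq^{\lf}$ that one then rewrites using the relations of $\Uq$ and the two presentations of the Casimir $\Lambda$.

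For $x=E$ and $x=K$ this is immediate: $E\rhd E=0$ and $E^2\rhd K=0$, so only $i=0,1$ contribute, and after collecting the (at most) two resulting terms and, where needed, substituting $(q-q^{-1})^2EF=\Lambda-q^{-1}K-qK^{-1}$, one reads off the stated formulas. For $x=KF$ the sum runs over $i=0,1,2$, since $E\rhd KF\neq 0$ and in fact $E^2\rhd KF=-(q+q^{-1})E\neq 0$; the one point to notice is that the $i=2$ contribution coming from the Sweedler component $(x_{(1)},x_{(2)})=(1,KF)$ exactly cancels the $KF^2\otimes E$ term produced at $i=1$ by the component $(KF,K)$, after which the substitution $(q-q^{-1})^2FE=\Lambda-qK-q^{-1}K^{-1}$ assembles the two-term answer $K\otimes KF+q^{-1}KF\otimes(\Lambda-q^{-1}K)$ (the symbol $C$ occurring in the statement is this Casimir, $C=\Lambda$).

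The case $x=\Lambda$ is where the real work lies, and it is the main obstacle. The obstruction to a naive term-by-term evaluation is that $\Delta(\Lambda)$ contains summands such as $K^{-1}\otimes K^{-1}$ and $F\otimes E$, and $E\rhd(-)$ is \emph{not} locally nilpotent on $K^{-1}$ or on $FE$, so the sum over $i$ does not terminate componentwise; a direct argument must check that, once the six Sweedler components of $\Delta(\Lambda)$ are combined, all contributions with $i\geq 2$ cancel — as they must, since $\Lambda\in\Uq^{\lf}$ forces $\underline{\Delta}(\Lambda)$ to be a finite sum. I would sidestep this bookkeeping by using that $\underline{\Delta}$ is an algebra homomorphism for the braided product, together with the identity $\Lambda K=(q-q^{-1})^2(KF)E+qK^2+q^{-1}$, which holds in $\Uq^{\lf}$ (it follows from the first presentation of $\Lambda$ and the relation $FEK=(KF)E$ coming from the $K$-commutation rules). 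Applying $\underline{\Delta}$ gives
\[
  \underline{\Delta}(\Lambda)\cdot\underline{\Delta}(K)=(q-q^{-1})^2\,\underline{\Delta}(KF)\cdot\underline{\Delta}(E)+q\,\underline{\Delta}(K)\cdot\underline{\Delta}(K)+q^{-1}(1\otimes 1),
\]
whose right-hand side is computed from the three formulas already proved by expanding the braided products via $(g\otimes h)\cdot(x\otimes y)=\sum_{i}q^{2(\lvert h\rvert+i)(\lvert x\rvert-i)}g(v_i\rhd x)\otimes(u_i\rhd h)y$; since $\underline{\Delta}(K)$ has leading term $K\otimes K$ and $\Uq^{\lf}\tilde{\otimes}\,\Uq^{\lf}$ is a domain, this pins down $\underline{\Delta}(\Lambda)$. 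Equivalently, one multiplies the claimed expression for $\underline{\Delta}(\Lambda)$ on the right by $\underline{\Delta}(K)$ in the braided tensor product and checks that it reproduces this expression. Either way what remains is a finite, if somewhat lengthy, computation, and the genuine difficulty throughout is bookkeeping: controlling the $i$-summation in the $\Lambda$ case and keeping careful track of the powers of $K$ generated both by the antipode factor $S(K^{i+\lvert x_{(2)}\rvert}v_i)$ and by commuting $K$ past $E$ and $F$.
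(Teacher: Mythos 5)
Your proposal is correct, and for $E$, $K$ and $KF$ it is exactly the direct evaluation of $\underline{\Delta}(x)=\sum_{i\geq 0}x_{(1)}S(K^{i+\lvert x_{(2)}\rvert}v_i)\otimes u_i\rhd x_{(2)}$ that the paper has in mind (the paper records no details, only that the formulas ``follow from direct computation''); your observations that only $i\le 1$ contribute for $E$ and $K$, that $E^2\rhd KF=-(q+q^{-1})E$ so the $KF$ case stops at $i=2$, and that the $i=2$ term from the component $(1,KF)$ cancels the $KF^2\otimes E$ term from $(KF,K)$ at $i=1$, all check out, as does reading $C$ as the Casimir $\Lambda$. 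For $x=\Lambda$ you take a genuinely different route: rather than evaluating the defining sum you apply the multiplicativity of $\underline{\Delta}$ (stated in the paper just before the lemma) to $\Lambda K=(q-q^{-1})^2(KF)E+qK^2+q^{-1}$ and recover $\underline{\Delta}(\Lambda)$ by right-cancelling $\underline{\Delta}(K)$. This is legitimate and buys you a computation entirely in terms of the three formulas already established; note, though, that the cancellation step deserves one line of justification — the cleanest is to push the identity through the injective unbraiding map $\gamma_2$, under which $\underline{\Delta}(K)$ becomes the invertible element $\Delta(K)=K\otimes K$ of $\Uq^{\otimes 2}$, rather than asserting that the braided tensor square is a domain. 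Two small remarks on your description of the road not taken: the direct computation for $\Lambda$ is less pathological than you suggest, since the two problematic Sweedler components combine as $(q-q^{-1})^2K^{-1}\otimes FE+q^{-1}K^{-1}\otimes K^{-1}=K^{-1}\otimes(\Lambda-qK)$, after which every second tensor factor lies in $\Uq^{\lf}$ and the sum terminates at $i=2$; and your parenthetical claim that \emph{all} contributions with $i\geq 2$ cancel is not accurate (the $i=2$ contribution of the $K^{-1}E\otimes FK$ component survives and feeds the $KF\otimes E$ term; only the $i\geq 3$ tail vanishes). Neither point affects the validity of the proof you actually give.
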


\subsection{Unbraiding the braided tensor product}
\label{sec:unbraiding}

Following \cite{FioreSteinackerWess03}, we unbraid the tensor product into the usual tensor product. Since we work with the quantum group $\Uq$, we need to use the quasi-$R$-matrix $\Theta$ and the locally finite part.

\begin{prop}
  \label{prop:one-step_unbraiding_qgr}
  For any $k\geq 1$, the map $\varphi_n\colon \left(\Uq^{\lf}\right)^{\tilde{\otimes} n} \rightarrow \Uq\otimes \left(\Uq^{\lf}\right)^{\tilde{\otimes} (n-1)}$ defined by
  \[
    \varphi_n(a\otimes b) = \sum_{i\geq 0} aK^{i+\lvert b \rvert}v_i\otimes u_i\rhd b,
  \]
  for any homogeneous elements $a\in \Uq^{\lf}$ and $b\in \left(\Uq^{\lf}\right)^{\tilde{\otimes} (n-1)}$, is an injective morphism of algebras.
\end{prop}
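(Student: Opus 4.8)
The plan is to verify the two claims separately: first that $\varphi_n$ is an algebra homomorphism, then that it is injective. For the homomorphism property, I would work directly with the explicit multiplication formula on $\left(\Uq^{\lf}\right)^{\tilde{\otimes} n}$. Write a general element as $a\otimes b$ with $a\in\Uq^{\lf}$ homogeneous and $b\in\left(\Uq^{\lf}\right)^{\tilde{\otimes}(n-1)}$ homogeneous, and similarly $c\otimes d$. On the one hand, multiply $(a\otimes b)\cdot(c\otimes d)$ in the braided tensor product using the braided product formula, then apply $\varphi_n$; on the other hand, compute $\varphi_n(a\otimes b)\cdot\varphi_n(c\otimes d)$ in $\Uq\otimes\left(\Uq^{\lf}\right)^{\tilde{\otimes}(n-1)}$ (note this is the ordinary tensor product of algebras in the first slot, braided in the remaining slots — or more precisely $\varphi_n$ lands in an ordinary tensor product $\Uq\otimes(\cdots)$, so multiplication there is the honest product). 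The point is that the braiding hidden in the first braided slot gets absorbed into the conjugation-by-$\Theta$ that defines $\varphi_n$. Both sides will unwind into sums over $i,j\geq 0$ of terms involving $u_i,v_i$ and $u_j,v_j$; the key algebraic input is the intertwining relation $\Psi(\Delta^{\op}(x))\Theta=\Theta\Delta(x)$ from \cref{thm:quasi-R-matrix}, rewritten in the Sweedler form \eqref{eq:coproduct-op-sweedler}, together with the compatibility $\Delta\otimes\id(\Theta)=\Psi_{23}(\Theta_{13})\Theta_{23}$ which handles how the $R$-matrix factors interact when $b$ itself is decomposed. I would also use repeatedly that $h\rhd(x\cdot y)=\sum(h_{(1)}\rhd x)\cdot(h_{(2)}\rhd y)$ and $\lvert h\rhd x\rvert=\lvert h\rvert+\lvert x\rvert$, together with the $K$-commutation $Kx=q^{2|x|}xK$, to match the powers of $q$ and the $K$-factors on the two sides. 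This is essentially the computation of \cite{FioreSteinackerWess03} adapted to the quasi-$R$-matrix setting, so I would present it as a (somewhat lengthy but routine) bookkeeping verification and not belabour it.

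For injectivity, I would exhibit a left inverse. The natural candidate is the map $\psi_n\colon\Uq\otimes\left(\Uq^{\lf}\right)^{\tilde{\otimes}(n-1)}\to\left(\Uq^{\lf}\right)^{\tilde{\otimes} n}$ (as vector spaces) given by the analogous formula but using the inverse quasi-$R$-matrix $\Gamma=\sum_i\Gamma_i$ from \cref{thm:quasi-R-matrix}, namely something like $\psi_n(a\otimes b)=\sum_{i}aS(u_i)K^{-i-?}\otimes\cdots$ — the precise exponents and which component of $\Gamma_i$ to use are dictated by unwinding $\Gamma\Theta=1$. One checks $\psi_n\circ\varphi_n=\id$ by the same grading/Sweedler manipulations, using $\sum_i u_iS(v_i)K^{\lvert\cdot\rvert}$-type identities that follow from $\Theta\Gamma=\Gamma\Theta=1$ and the explicit form $\Gamma_i=S(u_i)K^i\otimes v_i$. (One subtlety: $\psi_n$ a priori only needs to be defined on the image of $\varphi_n$, or on a completion, since $\Theta$ and $\Gamma$ are infinite sums; but on any fixed element $b$ only finitely many $u_i\rhd b$ are nonzero because $b\in\Uq^{\lf}$ has finite-dimensional adjoint orbit, so all sums are finite and everything is well-defined.) Alternatively — and perhaps more cleanly — injectivity follows from a filtration/degree argument: filter both sides by a suitable filtration (e.g. by the total $E$-degree or by the PBW filtration) for which the associated graded of $\varphi_n$ is the identity, since $\Theta_i$ raises degree for $i>0$; then $\varphi_n$ is injective because its associated graded map is.

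The main obstacle will be the homomorphism verification, specifically keeping the braiding conventions consistent: the braided product formula involves $q^{2(|h|+i)(|x|-i)}$-type coefficients and $v_i\rhd x$, $u_i\rhd h$, while $\varphi_n$ involves $K^{i+|b|}v_i$ and $u_i\rhd b$, and one must be scrupulous that when $b$ is expanded through $\Delta$ (to apply the action or to recurse), the $\Psi$-twists in the compatibility relation for $\Theta$ produce exactly the $K$-powers demanded by the braided product. I expect the cleanest route is to prove the $n=2$ case by hand from \eqref{eq:coproduct-op-sweedler} and then induct on $n$, using that $\varphi_n$ only touches the first tensor factor against the rest so that the inductive hypothesis applies verbatim to $\left(\Uq^{\lf}\right)^{\tilde{\otimes}(n-1)}$, with the braided product on that sub-factor already known to be associative.
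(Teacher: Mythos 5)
Your proposal matches the paper's proof in both structure and substance: the paper verifies the homomorphism property by exactly the direct computation you describe (expanding the braided product, invoking the adjoint-action compatibility, the identities $\Delta\otimes\id(\Theta)=\Psi_{23}(\Theta_{13})\Theta_{23}$ and $\id\otimes\Delta(\Theta)=\Psi_{12}(\Theta_{13})\Theta_{12}$, the relation $Kx=q^{2\lvert x\rvert}xK$, and finally the cancellation $\sum_{i,j\geq 0}u_ju_i\otimes S(v_i)K^jv_j=1\otimes 1$), and it establishes injectivity by exhibiting the left inverse $a\otimes b\mapsto\sum_{i\geq 0}aS(v_i)K^{-(i+\lvert b\rvert)}\otimes u_i\rhd b$, which is the map you sketch (with $S(v_i)$ rather than $S(u_i)$ in the first slot, as the identity $\Gamma\Theta=1$ indeed dictates). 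The only cosmetic difference is that the paper carries out the computation uniformly in $n$ rather than by induction from the $n=2$ case.
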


\begin{proof}
  First, it is easy to check that $a\otimes b\mapsto \left( \sum_{i\geq 0} \left(aS(v_i)K^{-(i+\lvert b \rvert)} \right) \otimes u_i\rhd b \right)$ is a left inverse of $\varphi_n$.

  We now check that $\varphi_n$ is a morphism of algebra. Let $a,c\in \Uq^{\lf}$ and $b,d\in \left(\Uq^{\lf}\right)^{\tilde{\otimes} (n-1)}$ and we compute:
  \begin{align*}
    \varphi_n\big((a\otimes b)\cdot (c\otimes d)\big)
    &=\sum_{i\geq 0}q^{2(\lvert b \rvert + i)(\lvert c\rvert - i)} \varphi_n\big(a(v_i\rhd c)\otimes ((u_i\rhd b)\cdot d)\big)\\
    &=\sum_{i,j\geq 0}q^{2(\lvert b \rvert + i)(\lvert c\rvert - i)} a(v_i \rhd c)K^{i+j+\lvert b \rvert + \lvert d \rvert}v_j \otimes u_j \rhd ((u_i\rhd b)\cdot d).
  \end{align*}
  Now, we use the definition of the left adjoint action, its compatibility with the braided product, together with $\Delta\otimes\id(\Theta) = \Psi_{23}(\Theta_{13})\Theta_{23}$ and $\id\otimes \Delta(\Theta) = \Psi_{12}(\Theta_{13})\Theta_{12}$ to find that
  \begin{multline*}
    \varphi_n\big((a\otimes b)\cdot (c\otimes d)\big) =\\
    \sum_{i,j,k,l \geq 0} q^{2(\lvert b \rvert + i + k)(\lvert c\rvert - i - k)} aK^{-i}v_k c S(v_i)K^{i+j+k+l+\lvert b\rvert +\lvert d \rvert} v_j v_l \otimes (u_j u_i u_k \rhd b)\cdot \left(K^j u_l \rhd d\right).
  \end{multline*}
  Since for any homogeneous $x\in \Uq$ we have $Kx = q^{2\lvert x\rvert} x K$, we find that
  \[
    \varphi_n\big((a\otimes b)\cdot (c\otimes d)\big) =
    \sum_{i,j,k,l \geq 0}  aK^{k+\lvert b \rvert}v_k c S(v_i)K^{j} v_j K^{l+\lvert d \rvert}v_l \otimes (u_j u_i u_k \rhd b)\cdot (u_l \rhd d).
  \]
  Finally, since $\sum_{i,j\geq 0}u_j u_i \otimes S(v_i)K^{j} v_j = 1\otimes 1$, we have
  \[
     \varphi_n\big((a\otimes b)\cdot (c\otimes d)\big) =
                                                 \sum_{k,l\geq 0}  \left(aK^{k+\lvert b \rvert}v_k\right)\left(c K^{l+\lvert d \rvert}v_l\right) \otimes (u_k \rhd b)\cdot (u_l \rhd d)\\
    = \varphi_n(a\otimes b)\cdot \varphi_n(c\otimes d),
  \]
  and $\varphi_n$ is a morphism of algebra.
\end{proof}

By composing the one-step unbraiding maps $\varphi_n$ from \cref{prop:one-step_unbraiding_qgr}, one can inductively define an unbraiding map $\gamma_n\colon \left(\Uq^{\lf}\right)^{\tilde{\otimes} n}\rightarrow \Uq^{\otimes n}$. We define $\gamma_1$ to be the injection of the locally finite part $\Uq^{\lf}$ inside the whole quantum group $\Uq$ and for $n\geq 1$ we define $\gamma_{n+1}=(\id\otimes \gamma_n)\circ \varphi_{n+1}$.

\begin{cor}
  The map $\gamma_n\colon \left(\Uq^{\lf}\right)^{\tilde{\otimes} n}\rightarrow \Uq^{\otimes n}$ is an injective morphism of algebras.
\end{cor}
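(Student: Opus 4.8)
The plan is a short induction on $n$, with \cref{prop:one-step_unbraiding_qgr} doing all the real work. The base case $n=1$ is immediate: by definition $\gamma_1$ is the inclusion $\Uq^{\lf}\hookrightarrow\Uq$, and this is an injective morphism of algebras since $\Uq^{\lf}$ is a subalgebra of $\Uq$ (recalled in \cref{sec:left_adjoint_action}).

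For the inductive step I would assume $\gamma_n\colon\left(\Uq^{\lf}\right)^{\tilde{\otimes} n}\to\Uq^{\otimes n}$ is an injective morphism of algebras and then analyse the two factors of the defining composite $\gamma_{n+1}=(\id_{\Uq}\otimes\gamma_n)\circ\varphi_{n+1}$. The map $\varphi_{n+1}\colon\left(\Uq^{\lf}\right)^{\tilde{\otimes}(n+1)}\to\Uq\otimes\left(\Uq^{\lf}\right)^{\tilde{\otimes} n}$ is an injective morphism of algebras by \cref{prop:one-step_unbraiding_qgr}; here the target carries the ``half-unbraided'' structure combining the ordinary product on the leading $\Uq$-factor with the braided product on $\left(\Uq^{\lf}\right)^{\tilde{\otimes} n}$, which is precisely the structure for which that proposition establishes multiplicativity. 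For the second factor I would note that $\id_{\Uq}\otimes\gamma_n$ is a map $\Uq\otimes\left(\Uq^{\lf}\right)^{\tilde{\otimes} n}\to\Uq\otimes\Uq^{\otimes n}=\Uq^{\otimes(n+1)}$, and since $\gamma_n$ sends the braided product of its source to the ordinary product of $\Uq^{\otimes n}$, a one-line check on elementary tensors — $(a\otimes x)(c\otimes y)=ac\otimes(x\cdot y)\mapsto ac\otimes\gamma_n(x)\gamma_n(y)=(a\otimes\gamma_n(x))(c\otimes\gamma_n(y))$ — shows $\id_{\Uq}\otimes\gamma_n$ is an algebra morphism from the half-unbraided algebra to the ordinary tensor algebra $\Uq^{\otimes(n+1)}$.

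Injectivity of $\gamma_{n+1}$ then follows because it is a composite of injective maps: $\varphi_{n+1}$ is injective by \cref{prop:one-step_unbraiding_qgr} (whose proof in fact exhibits an explicit left inverse), and $\id_{\Uq}\otimes\gamma_n$ is injective since we work over a field and $\gamma_n$ is injective, so tensoring it with $\id_{\Uq}$ preserves injectivity. This completes the induction. I do not expect a genuine obstacle here: all the analytic content sits in \cref{prop:one-step_unbraiding_qgr}, and the only mild care needed is bookkeeping the algebra structures on the intermediate objects $\Uq\otimes\left(\Uq^{\lf}\right)^{\tilde{\otimes} k}$ so that $\id_{\Uq}\otimes\gamma_n$ is visibly multiplicative — which is formal once one observes that $\varphi_{n+1}$ lands in the algebra whose second tensor slot still uses the braided product.
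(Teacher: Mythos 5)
Your argument is correct and is exactly the route the paper takes: the corollary is stated without a separate proof precisely because it follows by induction from \cref{prop:one-step_unbraiding_qgr} via the defining composite $\gamma_{n+1}=(\id\otimes\gamma_n)\circ\varphi_{n+1}$, with the target of $\varphi_{n+1}$ carrying the ordinary tensor product of the algebra $\Uq$ with the braided algebra $\left(\Uq^{\lf}\right)^{\tilde{\otimes} n}$, just as you describe. Your extra bookkeeping of the intermediate algebra structures and the injectivity of $\id\otimes\gamma_n$ over a field is exactly the content the paper leaves implicit.
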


We end this subsection by explaining how the adjoint action $\rhd$ of $\Uq$ on $\left(\Uq^{\lf}\right)^{\tilde{\otimes}n}$ behaves under the unbraiding map $\gamma_n$.

\begin{prop}
  \label{prop:unbraiding_action}
  For any $x \in \left(\Uq^{\lf}\right)^{\tilde{\otimes} n}$ and $h \in \Uq$ we have
  \[
    \gamma_n(h\rhd x) = \sum \Delta^{(n-1)}\left(h_{(1)}\right)\gamma_n(x)\Delta^{(n-1)}\left(S\left(h_{(2)}\right)\right).
  \]
\end{prop}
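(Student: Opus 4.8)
The plan is to prove the formula by induction on $n$, peeling off one tensor factor at a time via the one-step unbraiding maps $\varphi_n$ and using the fact (stated in the excerpt) that the braided product on $\left(\Uq^{\lf}\right)^{\tilde\otimes n}$ is compatible with the adjoint action, i.e. $h\rhd(x\cdot y) = \sum (h_{(1)}\rhd x)\cdot (h_{(2)}\rhd y)$, together with the analogous compatibility $\underline{\Delta}(h\rhd x) = h\rhd\underline{\Delta}(x)$. The base case $n=1$ is immediate: $\gamma_1$ is just the inclusion $\Uq^{\lf}\hookrightarrow\Uq$, $\Delta^{(0)}=\id$, and the claimed identity reads $h\rhd x = \sum h_{(1)} x S(h_{(2)})$, which is the definition of the left adjoint action.

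For the inductive step I would write a general element of $\left(\Uq^{\lf}\right)^{\tilde\otimes(n+1)}$ as $a\otimes b$ with $a\in\Uq^{\lf}$ homogeneous and $b\in\left(\Uq^{\lf}\right)^{\tilde\otimes n}$, and compute $\gamma_{n+1}(h\rhd(a\otimes b))$ using $\gamma_{n+1}=(\id\otimes\gamma_n)\circ\varphi_{n+1}$. The key input is to understand $h\rhd(a\otimes b)$ inside the braided tensor product $\Uq^{\lf}\tilde\otimes\left(\Uq^{\lf}\right)^{\tilde\otimes n}$: by the adjoint-action compatibility this is $\sum(h_{(1)}\rhd a)\otimes(h_{(2)}\rhd b)$ only up to the braiding correction coming from the fact that, strictly, one must apply the coproduct and then move the second Sweedler leg past the first tensor factor using $\Theta$; concretely, one should get $h\rhd(a\otimes b) = \sum_{i\geq 0}(h_{(1)}\rhd a)\otimes\bigl(h_{(2)}\rhd b\bigr)$ with the appropriate $\Theta$-twisting hidden in how $h_{(1)},h_{(2)}$ are distributed. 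Then apply $\varphi_{n+1}$, which reintroduces another copy of $\Theta$ and an adjoint action, and finally $(\id\otimes\gamma_n)$, invoking the inductive hypothesis on the $\gamma_n$-factor to convert the adjoint action into conjugation by $\Delta^{(n-1)}$. Collecting everything, the conjugating elements should assemble into $\Delta^{(n)}(h_{(1)})$ on the left and $\Delta^{(n)}(S(h_{(2)}))$ on the right, using the coassociativity relation $\Delta^{(n)} = (\id\otimes\Delta^{(n-1)})\circ\Delta$ and the fact that $S$ is an anti-coalgebra map.

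The main obstacle will be the bookkeeping of the quasi-$R$-matrix factors: the braided product, the definition of $\varphi_{n+1}$, and the Sweedler expansion of $h$ each contribute a copy of $\Theta$ with its own summation index, and one must show these recombine using the identities $\Delta\otimes\id(\Theta)=\Psi_{23}(\Theta_{13})\Theta_{23}$, $\id\otimes\Delta(\Theta)=\Psi_{12}(\Theta_{13})\Theta_{12}$, the relation $\Psi(\Delta^{\op}(x))\Theta=\Theta\Delta(x)$ from \cref{thm:quasi-R-matrix}, and the cancellation $\sum_{i,j}u_j u_i\otimes S(v_i)K^j v_j = 1\otimes 1$ — exactly the ingredients that appear in the proof of \cref{prop:one-step_unbraiding_qgr}. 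In fact, I expect the cleanest route is to mimic that proof almost verbatim: establish the $n=1$ case of the proposition, then show that if the formula holds for $\gamma_n$ it holds for $\gamma_{n+1}$ by a computation formally identical to the morphism-of-algebras check, with the multiplication in the target replaced by conjugation. A slicker alternative worth attempting is to use that both sides define algebra maps $\left(\Uq^{\lf}\right)^{\tilde\otimes n}\rtimes\Uq\to\Uq^{\otimes n}\rtimes\Uq$ (smash products) agreeing on generators, but verifying the module-algebra axioms still reduces to the same $\Theta$-calculus, so the direct induction is likely the most economical.
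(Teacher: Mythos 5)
Your proposal is correct and matches the paper's proof essentially verbatim: induction on $n$ with the trivial base case, expansion of $\varphi_{n+1}(h\rhd(a\otimes b))$ via the coproduct action on the tensor factors, insertion of counit/antipode pairs, the intertwining relation $\Psi(\Delta^{\op}(x))\Theta=\Theta\Delta(x)$ to move the legs of $h$ past the quasi-$R$-matrix, and the inductive hypothesis on the $\gamma_n$-factor. Your one hedge is unnecessary: the adjoint action on $a\otimes b$ is given purely by the coproduct, $h\rhd(a\otimes b)=\sum(h_{(1)}\rhd a)\otimes(h_{(2)}\rhd b)$, with no $\Theta$-correction — the quasi-$R$-matrix only enters through the braided multiplication and through $\varphi_{n+1}$ itself.
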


\begin{proof}
  Once again, we proceed by induction on $n$ with case $n=1$ being true by the definition of the left adjoint action. We start by computing $\varphi_{n+1}(h\rhd(a\otimes b))$ for any $h\in \Uq$, $a\in \Uq^{\lf}$ and $b\in \left(\Uq^{\lf}\right)^{\tilde{\otimes}n}$:
  \begin{align*}
    \varphi_{n+1}\big(h\rhd(a\otimes b)\big)
    &= \sum \varphi_{n+1}\left(h_{(1)}\rhd a\otimes h_{(2)}\rhd b\right)\\
    &= \sum_{i\geq 0} \left(h_{(1)}\rhd a\right)K^{i+\lvert h_{(2)} \rvert + \lvert b \rvert} v_i \otimes \left(u_i h_{(2)}\right)\rhd b\\
    &= \sum_{i\geq 0} h_{(1)}aS\left(h_{(2)}\right)K^{i+\lvert h_{(3)} \rvert + \lvert b \rvert} v_i h_{(4)}S\left(h_{(5)}\right) \otimes \left(u_i h_{(3)}\right)\rhd b\\
    &= \sum_{i\geq 0} h_{(1)}aS\left(h_{(2)}\right)K^{i} v_i K_{\lvert h_{(3)}\rvert} h_{(4)}K_{\lvert b \rvert}S\left(h_{(5)}\right) \otimes \left( u_i K^{-i} h_{(3)} K_{\lvert h_{(4)}\rvert} \right)\rhd b,
  \end{align*}
  the second to last equality following from the counit and antipode axioms. Now, we use the fact that the quasi-$R$-matrix and $\Psi$ intertwine the coproduct $\Delta$ and its opposite $\Delta^{\op}$ (see \cref{eq:coproduct-op-sweedler}) to obtain
  \begin{align*}
    \varphi_{n+1}\big(h\rhd(a\otimes b)\big)
    &= \sum_{i\geq 0} h_{(1)}aS\left(h_{(2)}\right)h_{(3)}K^i v_i K_{\lvert b \rvert}S\left(h_{(5)}\right) \otimes \left(h_{(4)}u_i K^{-i}\right)\rhd b\\
    &= \sum_{i\geq 0} h_{(1)}aK^i v_i K_{\lvert b \rvert}S\left(h_{(3)}\right) \otimes \left(h_{(2)}u_i K^{-i}\right)\rhd b\\
    &= \sum_{i\geq 0} h_{(1)}aK^{i+\lvert b \rvert} v_i S\left(h_{(3)}\right) \otimes \left(h_{(2)}u_i\right)\rhd b,
  \end{align*}
  where the second to last equality follows once again from the counit and antipode axioms. We now use the inductive definition of the unbraiding map and the induction hypothesis and we obtain
  \begin{align*}
    \gamma_{n+1}\big(h\rhd(a\otimes b)\big)
    &= \sum_{i\geq 0} h_{(1)}aK^{i+\lvert b \rvert} v_i S\left(h_{(3)}\right) \otimes \gamma_n\left(\left(h_{(2)}u_i\right)\rhd b\right)\\
    &= \sum_{i\geq 0} h_{(1)}aK^{i+\lvert b \rvert} v_i S\left(h_{(4)}\right) \otimes \Delta^{(n-1)}\left(h_{(2)}\right)\gamma_n(u_i\rhd b)\Delta^{(n-1)}\left(S\left(h_{(3)}\right)\right)\\
    &= \sum \Delta^{(n)}\left(h_{(1)}\right)\left(\sum_{i\geq 0} aK^{i+\lvert b \rvert} v_i \otimes \gamma_n(u_i\rhd b)\right)\Delta^{(n)}\left(S\left(h_{(2)}\right)\right)\\
    &= \sum \Delta^{(n)}\left(h_{(1)}\right)\gamma_{n+1}(a\otimes b)\Delta^{(n)}\left(S\left(h_{(2)}\right)\right),
  \end{align*}
  as expected.
\end{proof}

This last proposition leads to the following interesting corollary concerning centralizers.

\begin{cor}
  \label{cor:invariant-centralizer}
  Invariants elements of $\left(\Uq^{\lf}\right)^{\tilde{\otimes}n}$ under the left adjoint action are sent by the unbraiding map $\gamma_n$ into the centralizer of $\Uq$ in $\Uq^{\otimes n}$. Explicitly, any $x\in \left(\Uq^{\lf}\right)^{\tilde{\otimes}n}$ stable under the left adjoint action commutes with $\Delta^{(n-1)}(h)$ for any $h\in \Uq$.
\end{cor}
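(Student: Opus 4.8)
The plan is to deduce this immediately from \cref{prop:unbraiding_action}. Recall that an element $x \in \left(\Uq^{\lf}\right)^{\tilde{\otimes} n}$ is \emph{invariant} under the left adjoint action precisely when $h \rhd x = \varepsilon(h)\, x$ for every $h \in \Uq$ (equivalently, $x$ is annihilated by the action of $\ker \varepsilon$). Substituting this into the formula of \cref{prop:unbraiding_action}, and abbreviating $\delta = \Delta^{(n-1)} \colon \Uq \to \Uq^{\otimes n}$ and $y = \gamma_n(x)$, we obtain the ``ad-invariance'' identity
\[
  \sum \delta\!\left(h_{(1)}\right) y\, \delta\!\left(S\!\left(h_{(2)}\right)\right) = \varepsilon(h)\, y \qquad \text{for all } h \in \Uq .
\]

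The remaining step is the standard Hopf-algebraic manipulation that turns such an identity into an honest commutation relation: since $\delta$ is an algebra homomorphism, one applies the identity with $h$ replaced by $h_{(1)}$, multiplies on the right by $\delta(h_{(2)})$, and then uses coassociativity together with the antipode--counit axiom $\sum S\!\left(h_{(1)}\right) h_{(2)} = \varepsilon(h)\, 1$, obtaining
\begin{align*}
  y\, \delta(h)
  &= \sum \varepsilon\!\left(h_{(1)}\right) y\, \delta\!\left(h_{(2)}\right)
   = \sum \delta\!\left(h_{(1)}\right) y\, \delta\!\left(S\!\left(h_{(2)}\right)\right) \delta\!\left(h_{(3)}\right) \\
  &= \sum \delta\!\left(h_{(1)}\right) y\, \delta\!\left(S\!\left(h_{(2)}\right) h_{(3)}\right)
   = \delta(h)\, y .
\end{align*}
Thus $\gamma_n(x)$ commutes with $\Delta^{(n-1)}(h)$ for every $h \in \Uq$, which is exactly the assertion that $\gamma_n(x)$ lies in the centralizer of $\Uq$ in $\Uq^{\otimes n}$ (the algebra $\mathfrak{C}(\Uq)$ of \cref{sec:AWAlgebras} when $n = 3$, and its evident analogue for arbitrary $n$; the two natural iterated coproducts agree by coassociativity).

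I do not expect any genuine obstacle here: given \cref{prop:unbraiding_action}, the corollary is purely formal. The only points requiring care are bookkeeping ones — that ``invariant'' is meant in the augmentation sense $h \rhd x = \varepsilon(h)\, x$, and that the iterated coproduct $\Delta^{(n-1)}$ appearing here is the same, up to the harmless re-bracketing permitted by coassociativity, as the one entering the definition of the centralizer used elsewhere in the paper.
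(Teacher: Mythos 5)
Your proof is correct and follows essentially the same route as the paper: both reduce the statement to \cref{prop:unbraiding_action} and then perform the standard Hopf-algebra manipulation (insert $\sum S(h_{(2)})h_{(3)} = \varepsilon(h_{(2)})1$, apply the ad-invariance identity, and use the counit axiom) to convert ad-invariance of $\gamma_n(x)$ into commutation with $\Delta^{(n-1)}(h)$. The only cosmetic difference is that you start from $y\,\delta(h)$ and the paper starts from $\delta(h)\,z$; the computation is the same.
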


\begin{proof}
  Suppose that $z\in \Uq^{\otimes n}$ satisfies the following: for any $h\in \Uq$ we have
  \[
    \sum \Delta^{(n-1)}\left(h_{(1)}\right)z\Delta^{(n-1)}\left(S\left(h_{(2)}\right)\right )=\varepsilon(h)z.
  \]
  Then for such an element $z$ we have
  \begin{align*}
    \Delta^{(n-1)}(h)z
    &= \sum \Delta^{(n-1)}\left(h_{(1)}\right)z\Delta^{(n-1)}\left(S\left(h_{(2)}\right)\right)\Delta^{(n-1)}\left(h_{(3)}\right)\\
    &= \sum \varepsilon\left(h_{(1)}\right)z\Delta^{(n-1)}\left(h_{(2)}\right)\\
    &= z\Delta^{(n-1)}(h),
  \end{align*}
  the first equality following from the counit and antipode axiom, the second one from the hypothesis on $z$ and the last one from the counit axiom. The statement follows then immediately from \cref{prop:unbraiding_action}
\end{proof}

\subsection{Inserting units and unbraiding}
\label{sec:inserting_1}




Our next aim is to understand the images of the elements $\underline{\Delta}^{(k-1)}(x)_{\underline{i}}$ under the unbraiding map $\gamma_n$. If $x=\Lambda$ is the Casimir element of $\Uq$, we will show that $\underline{\Delta}^{(k-1)}(x)_{\underline{i}}$ coincides with the generator $\Lambda_A$ of the Askey--Wilson algebra, for $A=\{i_1,\ldots,i_k\}$. As a corollary we obtain that the Askey--Wilson algebra $\AW{n}$ lies inside the centralizer of $\Uq$ in $\Uq^{\otimes n}$, generalizing a result of Huang \cite[Corollary 4.6]{Huang17}.  

\begin{lem}
  For any $x\in \Uq^{\lf}$ we have
  \begin{equation}
    \label{eq:tau_adj_action}
  \tau_L(x) = \sum_{i\geq 0} K^{i+\lvert x \rvert}v_i\otimes u_i\rhd x.
\end{equation}
\end{lem}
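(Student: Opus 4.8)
The plan is to verify formula \eqref{eq:tau_adj_action} by direct computation, comparing the right-hand side against the explicit coaction $\tau_L$ given in \cref{def:coaction_sl2}. Since both sides are $\mathbb{C}(q)$-linear and $\Uq^{\lf} = \mathcal{I}_L$ is generated as an algebra by $E$, $FK$, $K$ and $\Lambda$, it would suffice in principle to check the identity on these four generators and then verify that the right-hand side is multiplicative in the appropriate sense. However, the cleanest route is probably to first establish the identity on generators and then invoke the fact (from \cref{prop:one-step_unbraiding_qgr} and the surrounding discussion, or directly from the comodule structure) that both $x \mapsto \tau_L(x)$ and $x \mapsto \sum_i K^{i+\lvert x\rvert} v_i \otimes u_i \rhd x$ are algebra maps $\mathcal{I}_L \to \Uq \otimes \mathcal{I}_L$ with respect to the same multiplications, so agreement on algebra generators forces agreement everywhere.

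Concretely, I would compute the right-hand side of \eqref{eq:tau_adj_action} on each generator using $\Theta_i = q^{i(i-1)/2}\frac{(q-q^{-1})^i}{[i]!} E^i \otimes F^i$ from \cref{thm:quasi-R-matrix}, so $u_i \otimes v_i = q^{i(i-1)/2}\frac{(q-q^{-1})^i}{[i]!} E^i \otimes F^i$. For $x = E$ (weight $\lvert E\rvert = 1$): the term $i=0$ gives $K^{1}\otimes E = K\otimes E$, and for $i \geq 1$ one needs $E^i \rhd E$, which vanishes since $E^i \rhd E$ has weight $i+1 \geq 2$ but lies in the finite-dimensional module generated by $E$ — in fact $E \rhd E = E^2 K + 1 \cdot E \cdot (-EK^{-1}) \cdot$(via $\Delta(E) = E\otimes K + 1\otimes E$) which one checks is zero, and higher powers vanish too. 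Thus the right-hand side is $K\otimes E = \tau_L(E)$. For $x = K$ (weight $0$): the $i=0$ term is $1\otimes K$, the $i=1$ term contributes $K\cdot v_1 \otimes u_1 \rhd K = (q-q^{-1})FK \otimes (E\rhd K)$ up to the scalar, and $E \rhd K = EK S(K) + 1\cdot K\cdot S(E) = EK K^{-1} + K(-EK^{-1}) = E - qEK K^{-1}\cdot$... more carefully $E\rhd K = EKS(K) + K S(E)$ using $\Delta(E) = E\otimes K + 1 \otimes E$ so actually $h_{(1)} x S(h_{(2)})$ with $h = E$: $E\rhd K = E\cdot K \cdot S(K) + 1 \cdot K \cdot S(E) = EKK^{-1} - KEK^{-1} = E - q^{-2}E = (1-q^{-2})E$. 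Multiplying by the scalar $(q-q^{-1})$ and the $K$ prefactor, and checking $i\geq 2$ terms vanish, should reproduce the $-q^{-1}(q-q^{-1})^2 FK \otimes E$ term of $\tau_L(K)$. The cases $x = FK$ and $x = \Lambda$ proceed similarly but with more terms.

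The main obstacle I anticipate is the bookkeeping in the $FK$ and $\Lambda$ cases: these have nonzero weight components or longer expansions, several values of $i$ contribute nontrivially, and one must carefully track the weight exponents $\lvert x\rvert$ and the $K^i$ factors, together with the repeated use of $Kx = q^{2\lvert x\rvert} x K$ to move powers of $K$ past homogeneous elements. There is also the subtlety flagged in \cref{sec:notations} that some coefficients naturally live in $\mathbb{Q}(q^{1/2})$ or $\mathbb{C}(q^{1/4})$; one should check these spurious fractional powers cancel, as they must since $\tau_L$ is defined over $\mathbb{C}(q)$. An alternative, possibly slicker, approach would be to recognize the right-hand side of \eqref{eq:tau_adj_action} as exactly the first step $\varphi_2$ of the unbraiding map applied in a degenerate way, or to derive it from the characterization of $\Uq^{\lf}$ as a coideal together with the interpretation (mentioned at the start of \cref{sec:braided_tensor_product}, citing \cite{CrampeGaboriaudVinetZaimi20}) of $\tau_L$ as conjugation by the $R$-matrix; in that framing \eqref{eq:tau_adj_action} becomes a restatement of the defining property $\Psi(\Delta^{\op}(x))\Theta = \Theta\Delta(x)$ of the quasi-$R$-matrix, specialized to the locally finite part. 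I would pursue the direct computation on generators as the primary argument and mention the conceptual reformulation as motivation.
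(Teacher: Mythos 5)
Your proposal is correct and takes essentially the same route as the paper: the paper likewise names the right-hand side $\tau'_L$, checks that it is multiplicative using $\Delta\otimes\id(\Theta)=\Psi_{23}(\Theta_{13})\Theta_{23}$, and then verifies that $\tau_L$ and $\tau'_L$ agree on the generators of $\Uq^{\lf}$. One small arithmetic slip in your sketch: $E\rhd K = EKK^{-1} - KEK^{-1} = (1-q^{2})E$ rather than $(1-q^{-2})E$, and with the correct value the $i=1$ term yields $-q\left(q-q^{-1}\right)^2 KF\otimes E = -q^{-1}\left(q-q^{-1}\right)^2 FK\otimes E$, matching $\tau_L(K)$ as you predicted.
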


Only a finite number of terms of the sum are non-zero since $x$ lies in the locally finite part of $\Uq$ for the left adjoint action.

\begin{proof}
  Let us denote by $\tau'_L(x)$ the right-hand side of \cref{eq:tau_adj_action}. One easily checks that $\tau'_L(xy) = \tau'_L(x)\tau'_L(y)$ using the relation $\Delta\otimes \id(\Theta) = \Psi_{23}(\Theta_{13})\Theta_{23}$. It remains to check that $\tau_L$ and $\tau'_L$ coincide on the generators of $\Uq$, which is a straightforward calculation.
\end{proof}

As noted in \cite[Lemma 4.1]{CrampeGaboriaudVinetZaimi20}, the map $\tau_L$ is also given by conjugation by the $R$-matrix.

\begin{lem}
  \label{lem:tau-conj}
  For any $x\in \Uq^{\lf}$ we have $\tau_L(x) = \Psi^{-1}\left(\Theta_{21}(1\otimes x)\Theta_{21}^{-1}\right)$.
\end{lem}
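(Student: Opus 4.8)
The plan is to use that both sides of the claimed equality are algebra homomorphisms from $\Uq^{\lf}=\mathcal{I}_L$ to (a completion of) $\Uq\otimes\Uq$. The left side is multiplicative because $\mathcal{I}_L$ is a comodule \emph{algebra} for $\tau_L$ (this is exactly what is checked in the proof of the preceding lemma), and the right side is multiplicative because $x\mapsto 1\otimes x$ is an algebra map, conjugation by the invertible element $\Theta_{21}$ is an algebra automorphism, and $\Psi$ (hence $\Psi^{-1}$) is an algebra isomorphism. Since $\mathcal{I}_L$ is generated by $E$, $FK$, $K$ and $\Lambda$, it therefore suffices to verify the identity on these four generators; once that is done, the right-hand side automatically takes values in $\Uq\otimes\Uq^{\lf}$, so no genuine completion is needed and the formula holds for every $x\in\Uq^{\lf}$.

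For $\Lambda$ and $E$ there is essentially nothing to compute. Since $\Lambda$ is central in $\Uq$, the element $1\otimes\Lambda$ commutes with $\Theta_{21}^{\pm1}$, so $\Theta_{21}(1\otimes\Lambda)\Theta_{21}^{-1}=1\otimes\Lambda$, and $\Psi^{-1}(1\otimes\Lambda)=1\otimes\Lambda$ because $\lvert\Lambda\rvert=0$; this matches $\tau_L(\Lambda)$. Similarly, each homogeneous component $\Theta_i=u_i\otimes v_i$ has $u_i$ a scalar multiple of $E^i$, so $1\otimes E$ commutes with every $\Theta_{21,i}=v_i\otimes u_i$, hence with $\Theta_{21}$, giving $\Psi^{-1}(1\otimes E)=K\otimes E=\tau_L(E)$.

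The two remaining cases are the actual computations. For $x=K$ one expands $\Theta_{21}(1\otimes K)\Theta_{21}^{-1}$ using the explicit form of $\Theta$ and of its inverse $\Gamma$; because $K$ normalises the subalgebra generated by $E$, the result is a sum of terms $F^{m}\otimes E^{m}K$ whose coefficient is a one-variable $q$-binomial sum, and this sum vanishes for $m\geq 2$ (a rescaled instance of the quantum binomial theorem), so only the $m=0$ and $m=1$ terms survive; applying $\Psi^{-1}$ then yields $\tau_L(K)=1\otimes K-q^{-1}(q-q^{-1})^2\,FK\otimes E$ of \cref{def:coaction_sl2}. For $x=FK$ the same strategy works, now also using the standard commutation relation expressing $E^iF$ as a combination of lower powers of $E$ times elements of $\mathbb{C}(q)[K^{\pm1}]$; once more the a priori infinite sums collapse by $q$-binomial identities, this time to the four-term expression for $\tau_L(FK)$.

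The only real obstacle is the bookkeeping: keeping the grading twists introduced by $\Psi^{\pm1}$ straight, and checking that the a priori infinite sums produced by conjugating by $\Theta_{21}$ actually truncate — equivalently, that the right-hand side lands in $\Uq\otimes\Uq^{\lf}$ — which is precisely where the locally finite hypothesis and the $q$-binomial cancellations enter. One can sidestep the truncation issue altogether by instead starting from the manifestly finite formula \cref{eq:tau_adj_action} of the preceding lemma, applying $\Psi$, and then using the Hopf-algebraic identities $\Delta\otimes\id(\Theta)=\Psi_{23}(\Theta_{13})\Theta_{23}$ and $\id\otimes\Delta(\Theta)=\Psi_{12}(\Theta_{13})\Theta_{12}$ together with the resulting explicit coproducts $\Delta(u_i)$ to rewrite it, after a reindexing, as $\Psi^{-1}\!\left(\Theta_{21}(1\otimes x)\Theta_{21}^{-1}\right)$.
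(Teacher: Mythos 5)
Your proposal is correct, but your main route is genuinely different from the paper's. The paper's proof is a single uniform computation valid for every $x\in\Uq^{\lf}$ at once: it expands $\Theta_{21}^{-1}$ using the explicit inverse $\Gamma_i=S(u_i)K^i\otimes v_i$ from \cref{thm:quasi-R-matrix}, applies the identity $\Delta\otimes\id(\Theta)=\Psi_{23}(\Theta_{13})\Theta_{23}$ to collapse the double sum $\sum_{i,j}v_iv_j\otimes u_i x S(u_j)K^j$ into $\sum_i v_i\otimes (u_i\rhd x)K^i$, and then applies $\Psi^{-1}$ to land exactly on the right-hand side of \cref{eq:tau_adj_action}; all generator checking has already been absorbed into the preceding lemma, where it concerns the (easier) adjoint-action formula rather than the conjugation formula. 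Your route instead establishes multiplicativity of both sides and verifies the identity on the four generators $E$, $FK$, $K$, $\Lambda$ of $\mathcal{I}_L$. The checks for $\Lambda$ and $E$ are correct as you state them, and the strategy for $K$ and $FK$ is sound (the twisted products $\Theta_{21}(1\otimes K)\Gamma_{21}$ and $\Theta_{21}(1\otimes FK)\Gamma_{21}$ do truncate by $q$-exponential functional equations, consistently with the two- and four-term formulas in \cref{def:coaction_sl2}), but these two computations are substantially heavier than anything in the paper's argument, and you would need to actually carry out the $q$-binomial cancellations to have a complete proof. What your approach buys in exchange is that it re-derives the formulas of \cref{def:coaction_sl2} directly from the conjugation picture and makes visible exactly where local finiteness enters; your closing paragraph is, in effect, the paper's proof read in reverse, so you have both arguments in hand.
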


\begin{proof}
  We start with the conjugation by $\Theta_{21}$:
  \[
    \Theta_{21}(1\otimes x)\Theta_{21}^{-1}
    = \sum_{i,j\geq 0} v_i v_j \otimes u_ix S(u_j) K^j\\
    = \sum_{i,j\geq 0} v_i v_j \otimes u_i x S\left(K^i u_j\right) K^{i+j}\\
    = \sum_{i\geq 0} v_i\otimes (u_i\rhd x)  K^{i},
  \]
  the last equality following from $\Delta\otimes (\Theta) = \Psi_{23}(\Theta_{13})\Theta_{23}$ and from the definition of the adjoint action. Applying $\Psi^{-1}$ concludes the proof.
\end{proof}

\begin{cor}
  The map $\tau_L$ is a morphism of algebras and defines a left coation of $\Uq$ on $\Uq^{\lf}$, that is $(\Delta\otimes \id)\circ \tau_L = (\id\otimes \tau_L)\circ \tau_L$ and $(\varepsilon\otimes \id)\circ \tau_L=\id$.
\end{cor}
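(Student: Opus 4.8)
The plan is to deduce both statements from the closed formula $\tau_L(x)=\sum_{i\ge 0}K^{i+\lvert x\rvert}v_i\otimes(u_i\rhd x)$ established in \eqref{eq:tau_adj_action}, together with the quasi-$R$-matrix identities of \cref{thm:quasi-R-matrix}. I fix the normalization $u_i=E^i$, $v_i=q^{i(i-1)/2}\tfrac{(q-q^{-1})^i}{[i]!}F^i$ of $\Theta_i=u_i\otimes v_i$, so that $u_ju_i=u_{i+j}$, and I first record that every sum occurring below is finite: since $x\in\Uq^{\lf}$ the module $\Uq\rhd x$ is finite-dimensional, $u_i\rhd x=E^i\rhd x$ has degree $i+\lvert x\rvert$, and such a weight cannot recur, so $u_i\rhd x=0$ for $i\gg 0$; as $\Uq^{\lf}$ is $\rhd$-stable the same holds after a further application of $\tau_L$. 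That $\tau_L$ is an algebra morphism is already contained in the proof of the lemma establishing \eqref{eq:tau_adj_action}, where the right-hand side of \eqref{eq:tau_adj_action} was shown to be multiplicative; equivalently, \cref{lem:tau-conj} exhibits $\tau_L$ as the composite of $x\mapsto 1\otimes x$, conjugation by the invertible element $\Theta_{21}$, and the algebra isomorphism $\Psi^{-1}$, each of which is a morphism of algebras.

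For the counit axiom I apply $\varepsilon\otimes\id$ to the formula: since $\varepsilon(K)=1$ and $\varepsilon(v_i)=0$ for $i\ge 1$, only the $i=0$ summand survives, and it equals $\varepsilon(v_0)(u_0\rhd x)=1\rhd x=x$. For the coaction axiom $(\Delta\otimes\id)\circ\tau_L=(\id\otimes\tau_L)\circ\tau_L$ I expand both sides of the formula. On the left I use $\Delta(K)=K\otimes K$ together with the identity $\id\otimes\Delta(\Theta)=\Psi_{12}(\Theta_{13})\Theta_{12}$, which in the chosen normalization reads $\Delta(v_i)=\sum_{j+l=i}K^{-j}v_l\otimes v_j$; recombining powers of $K$ this yields $(\Delta\otimes\id)\tau_L(x)=\sum_{j,l\ge 0}\bigl(K^{l+\lvert x\rvert}v_l\otimes K^{j+l+\lvert x\rvert}v_j\bigr)\otimes(u_{j+l}\rhd x)$. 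On the right I use $\lvert u_i\rhd x\rvert=i+\lvert x\rvert$, the fact that $\rhd$ is an action of $\Uq$ (so $u_j\rhd(u_i\rhd x)=(u_ju_i)\rhd x=u_{i+j}\rhd x$), and the same formula a second time, obtaining $(\id\otimes\tau_L)\tau_L(x)=\sum_{i,j\ge 0}K^{i+\lvert x\rvert}v_i\otimes K^{i+j+\lvert x\rvert}v_j\otimes(u_{i+j}\rhd x)$. Relabelling $l\mapsto i$ on the left identifies the two sums.

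I do not expect a genuine obstacle here: the only point requiring care is transcribing the quasi-$R$-matrix relations of \cref{thm:quasi-R-matrix} into the elementary form for $\Delta(v_i)$ used above and keeping the $K$-exponents straight, which is exactly the sort of ``straightforward calculation'' invoked elsewhere in this section. If one prefers to avoid the explicit formula altogether, one may instead verify the coaction axiom directly on the four algebra generators $E$, $FK$, $K$, $\Lambda$ of $\Uq^{\lf}$ from \cref{def:coaction_sl2}, since both $(\Delta\otimes\id)\circ\tau_L$ and $(\id\otimes\tau_L)\circ\tau_L$ are algebra morphisms once $\tau_L$ is; on that route the cases $E$, $K$, $\Lambda$ are immediate and only $FK$ is mildly tedious.
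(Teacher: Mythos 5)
Your proposal is correct and follows essentially the same route as the paper: the algebra-morphism claim is deduced from the conjugation description of $\tau_L$ (\cref{lem:tau-conj}), and the coaction axiom from the quasi-$R$-matrix identity $\id\otimes\Delta(\Theta)=\Psi_{12}(\Theta_{13})\Theta_{12}$, which is precisely the relation the paper invokes. You have merely written out the ``not difficult to check'' computation explicitly (and correctly, including the finiteness of the sums), so there is nothing to change.
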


\begin{proof}
  The map $\tau_L$ is clearly a morphism of algebra thanks to \cref{lem:tau-conj}. It is also not difficult to check that it is a left coaction using the relation $\id\otimes \Delta (\Psi) = \Psi_{12}(\Theta_{13})\Theta_{12}$.
\end{proof}

Finally, we obtain an explicit formula for the image of $\underline{\Delta}^{(k-1)}(x)_{\underline{i}}$ under unbraiding.

\begin{prop}
  \label{prop:inserting_units}
  Let $n\in \mathbb{Z}_{>0}$, $k\in \mathbb{Z}_{\geq0}$ with $k\leq n$, $\underline{i}=(i_1,\ldots,i_k)$ with $1\leq i_1< \cdots < i_k \leq n$ and $x\in \Uq^{\lf}$. Then
  \[
    \gamma_n\left(\underline{\Delta}^{(k-1)}(x)_{\underline{i}}\right) = \left(\id^{\otimes (i_k-2)}\otimes \alpha_{i_k-1}\right)\circ\cdots\circ (\id\otimes \alpha_2) \circ \alpha_1 (x)\otimes 1^{n-i_k},
  \]
  with $\alpha_i=\Delta$ if $i\in\{i_1,\ldots,i_k\}$ and $\alpha_i=\tau_L$ otherwise.
\end{prop}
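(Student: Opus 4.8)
The plan is to prove the formula by induction on $n$, handling the inductive step in two cases according to whether $1\in A:=\{i_1<\cdots<i_k\}$, and pushing everything through the recursion $\gamma_n=(\id\otimes\gamma_{n-1})\circ\varphi_n$. By linearity I may assume $x$, and hence every Sweedler component that occurs, is homogeneous, and I take $k\geq 1$ (for $k=0$ the identity is the stated convention). The base case $n=1$ forces $k=1$, $\underline{i}=(1)$, $\underline{\Delta}^{(0)}(x)=x$ and $\gamma_1(x)=x$, so both sides equal $x$. I also record once and for all the elementary fact that $\gamma_m(x\otimes 1^{m-1})=x\otimes 1^{m-1}$ for $x\in\Uq^{\lf}$: applying $\varphi_m$ and using $u_l\rhd 1^{m-1}=\varepsilon(u_l)1^{m-1}$ leaves only the $l=0$ summand, and one iterates; this is exactly the case $k=1$, $i_1=1$, in which the composition on the right is empty.

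For the inductive step, suppose first that $i_1>1$, so the first tensor leg of $\underline{\Delta}^{(k-1)}(x)_{\underline{i}}$ is $1$ and $\underline{\Delta}^{(k-1)}(x)_{\underline{i}}=1\otimes\underline{\Delta}^{(k-1)}(x)_{\underline{i}-1}$ with $\underline{i}-1=(i_1-1,\ldots,i_k-1)$. Then $\varphi_n$ produces $\sum_{l\geq 0}K^{l+\lvert x\rvert}v_l\otimes u_l\rhd\big(\underline{\Delta}^{(k-1)}(x)_{\underline{i}-1}\big)$. Since $\underline{\Delta}$ — and hence $\underline{\Delta}^{(k-1)}$ — intertwines the adjoint action, and insertion of units plainly does as well, $u_l\rhd\underline{\Delta}^{(k-1)}(x)_{\underline{i}-1}=\underline{\Delta}^{(k-1)}(u_l\rhd x)_{\underline{i}-1}$ with $u_l\rhd x\in\Uq^{\lf}$, so applying $\id\otimes\gamma_{n-1}$ and the induction hypothesis turns the right leg into $G(u_l\rhd x)\otimes 1^{n-i_k}$, where $G$ is the composite of the maps $\beta_1,\ldots,\beta_{i_k-2}$ delivered by the hypothesis, $\beta_j$ being $\Delta$ exactly when $j+1\in A$. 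As $\beta_j=\alpha_{j+1}$ for all $j$ (both are $\Delta$ precisely when $j+1\in A$) and $\sum_{l\geq 0}K^{l+\lvert x\rvert}v_l\otimes(u_l\rhd x)=\tau_L(x)$ by \eqref{eq:tau_adj_action}, the whole expression equals $\big[(\id^{\otimes(i_k-2)}\otimes\alpha_{i_k-1})\circ\cdots\circ(\id\otimes\alpha_2)\circ\alpha_1\big](x)\otimes 1^{n-i_k}$ with $\alpha_1=\tau_L$, as desired; in the extreme case $i_k\leq 2$ (which forces $k=1$) this is the degenerate fact above.

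Now suppose $i_1=1$; if $k=1$ this is the degenerate fact, so assume $k\geq 2$. Expanding $\underline{\Delta}^{(k-1)}(x)$ by \eqref{eq:iterated_underlined_comultiplication_qgr} and inserting at $\underline{i}=(1,i_2,\ldots,i_k)$ gives $\underline{\Delta}^{(k-1)}(x)_{\underline{i}}=\sum_{i\geq 0}\big(x_{(1)}S(K^{i+\lvert x_{(2)}\rvert}v_i)\big)\otimes\big(u_i\rhd\underline{\Delta}^{(k-2)}(x_{(2)})\big)_{\underline{j}}$ with $\underline{j}=(i_2-1,\ldots,i_k-1)$. Applying $\varphi_n$, using that the right leg has degree $i+\lvert x_{(2)}\rvert$ and the antipode identity $S(K^{i+\lvert x_{(2)}\rvert}v_i)K^{l+i+\lvert x_{(2)}\rvert}=S(v_i)K^l$, along with $u_l\rhd(u_i\rhd(\,\cdot\,))=(u_lu_i)\rhd(\,\cdot\,)$ and again that unit-insertion intertwines the action, yields $\sum_{i,l\geq 0}\big(x_{(1)}S(v_i)K^lv_l\big)\otimes(u_lu_i)\rhd\big(\underline{\Delta}^{(k-2)}(x_{(2)})\big)_{\underline{j}}$. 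I then evaluate the bilinear map $g\otimes h\mapsto x_{(1)}h\otimes\big(g\rhd(\underline{\Delta}^{(k-2)}(x_{(2)}))_{\underline{j}}\big)$ on the element $\sum_{i,l\geq 0}u_lu_i\otimes S(v_i)K^lv_l=1\otimes 1$ (the relation already used in the proof of \cref{prop:one-step_unbraiding_qgr}) to collapse the double sum to $\sum x_{(1)}\otimes(\underline{\Delta}^{(k-2)}(x_{(2)}))_{\underline{j}}$. Applying $\id\otimes\gamma_{n-1}$ and the induction hypothesis (with $k-1$ and $\underline{j}$) to $x_{(2)}\in\Uq^{\lf}$, and identifying the resulting maps with $\alpha_2,\ldots,\alpha_{i_k-1}$ exactly as in the previous case, the result becomes $\big[(\id^{\otimes(i_k-2)}\otimes\alpha_{i_k-1})\circ\cdots\circ(\id\otimes\alpha_2)\big](\Delta(x))\otimes 1^{n-i_k}$, which is the right-hand side since here $\alpha_1=\Delta$.

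I expect the real work to be the case $i_1=1$: one must keep the $\mathbb{Z}$-degrees precise enough that the $K$-powers cancel to $S(v_i)K^l$ so that the relation $\sum_{i,l}u_lu_i\otimes S(v_i)K^lv_l=1\otimes 1$ applies, and one must check that after commuting the adjoint action through unit-insertion and through $\gamma_{n-1}$ the factors $u_lu_i$ and $S(v_i)K^lv_l$ occupy the correct slots of the bilinear pairing. The remaining bookkeeping — matching each intermediate map $\beta_j$ from the hypothesis with the required $\alpha_{j+1}$ under the index shifts $\underline{i}\mapsto\underline{i}-1$ and $\underline{i}\mapsto\underline{j}$ — is routine, since in every instance the relevant map is $\Delta$ exactly on the indices lying in $A$.
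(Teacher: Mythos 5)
Your proof is correct and follows essentially the same route as the paper's: induction on $n$ via the recursion $\gamma_n=(\id\otimes\gamma_{n-1})\circ\varphi_n$, split into the cases $i_1>1$ (where $\varphi_n$ produces exactly $\tau_L$ via \cref{eq:tau_adj_action}) and $i_1=1$ (where the $K$-power cancellation and the identity $\sum_{i,l}u_lu_i\otimes S(v_i)K^lv_l=1\otimes 1$ collapse the double sum to $\sum x_{(1)}\otimes\underline{\Delta}^{(k-2)}(x_{(2)})_{\underline{j}}$). The only difference is that you spell out the degenerate cases and the intertwining of unit-insertion with the adjoint action more explicitly than the paper does, which is harmless.
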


\begin{proof}
  Once again, we proceed on induction on $n$, and there is nothing to prove if $n=1$. Let us suppose the result proven for some $n\in\mathbb{Z}_{>0}$ and any $k,\underline{i}$ and $x$ as above. Let $k\leq n+1$, $\underline{i}=(i_1,\ldots,i_k)$ with $1\leq i_1 < \cdots < i_k\leq n+1$ and $x\in \Uq^{\lf}$.

  We first suppose that $i_{1}\neq 1$ so that $\underline{\Delta}^{(k-1)}(x)_{\underline{i}}=1\otimes \underline{\Delta}^{(k-1)}(x)_{\underline{i}-1}$, where $\underline{i}-1 = (i_1-1,\ldots,i_k-1)$. Then
  \[
    \varphi_{n+1}\left(\underline{\Delta}^{(k-1)}(x)_{\underline{i}}\right)=\sum_{i\geq 0}K^{i+\lvert x \rvert}v_i\otimes u_i\rhd \underline{\Delta}^{(k-1)}(x)_{\underline{i}-1} = \sum_{i\geq 0}K^{i+\lvert x \rvert}v_i\otimes \underline{\Delta}^{(k-1)}(u_i\rhd x)_{\underline{i}-1}.
  \]
  By the induction hypothesis, we have
  \[
    \gamma_n\left(\underline{\Delta}^{(k-1)}(u_i\rhd x)_{\underline{i}-1}\right) = \left(\id^{\otimes (i_k-3)}\otimes \beta_{i_k-2}\right)\circ\cdots\circ (\id\otimes \beta_2) \circ \beta_1 (u_i\rhd x)\otimes 1^{n-i_k+1},
  \]
  with $\beta_i=\Delta$ if $i\in\{i_1-1,\ldots,i_k-1\}$ and $\beta_i=\tau_L$ otherwise. Therefore,
  \begin{align*}
    \gamma_{n+1}\left(\underline{\Delta}^{(k-1)}(x)_{\underline{i}}\right)
    &= (\id\otimes \gamma_{n})\circ \varphi_{n+1}\left(\underline{\Delta}^{(k-1)}(x)_{\underline{i}}\right)\\
    &= \sum_{i\geq 0} K^{i+\lvert x \rvert} u_i\otimes \gamma_n\left(\underline{\Delta}^{(k-1)}(v_i\rhd x)_{\underline{i}-1}\right)\\
    &= \left(\id^{\otimes (i_k-2)}\otimes \beta_{i_k-2}\right)\circ\cdots\circ (\id\otimes \beta_2) \circ (\id\otimes\beta_1) \left(\sum_{i\geq 0} K^{i+\lvert x \rvert} u_i\otimes (v_i\rhd x)\right)\otimes 1^{n-i_k+1}\\
    &=\left(\id^{\otimes (i_k-2)}\otimes \beta_{i_k-2}\right)\circ\cdots\circ (\id\otimes \beta_2) \circ (\id\otimes\beta_1) \circ \tau_L(x)\otimes 1^{n-i_k+1},
  \end{align*}
  which has the desired form if we set $\alpha_i=\beta_{i-1}$ for $2\leq i \leq i_{k}-1$ and $\alpha_1=\tau_L$.

  We now suppose that $i_1=1$ so that
  \[
    \underline{\Delta}^{(k-1)}(x)_{\underline{i}}=\sum_{i\geq 0} x_{(1)}S\left(K^{i+\lvert x_{(2)}\rvert}v_i\right)\otimes u_i\rhd \underline{\Delta}^{(k-2)}\left(x_{(2)}\right)_{\underline{i}^{-}-1},
  \]
where $\underline{i}^{-}-1 = (i_2-1,\ldots,i_k-1)$. Then we have
  \begin{align*}
    \varphi_{n+1}\left(\underline{\Delta}^{(k-1)}(x)_{\underline{i}}\right)
    &=\sum_{i,j\geq 0}x_{(1)}S\left(K_{\mu+\lvert x_{(2)}\rvert}v_i\right)K^{i+j+\lvert x_{(2)}\rvert}v_j\otimes \left((u_ju_i)\rhd \underline{\Delta}^{(k-2)}\left(x_{(2)}\right)_{\underline{i}^{-}-1}\right)\\
    &=\sum_{i,j\geq 0}x_{(1)}S(v_i)K^{j}v_j\otimes \left((u_ju_i)\rhd \underline{\Delta}^{(k-2)}\left(x_{(2)}\right)_{\underline{i}^{-}-1}\right)\\
    &= \sum x_{(1)}\otimes \underline{\Delta}^{(k-2)}(x)_{\underline{i}^{-}-1},
  \end{align*}
  the last equality following once again from \cref{itm:inv_theta}. As in the case $i_1\neq 1$, we use the inductive definition of $\gamma_{n+1}$, apply the induction hypothesis and obtain the expected result, the only difference being that $\alpha_1=\Delta$ instead $\alpha_1=\tau_L$.
\end{proof}

\begin{rmk}
  In the specific case of $k=n$, the previous proposition states that $\gamma_n\circ\underline{\Delta}^{(n-1)} = \Delta^{(n-1)}\circ \gamma_1$.
\end{rmk}

With $x=\Lambda$ we recover the generator $\Lambda_A$ of $\AW{n}$ for a suitable $A\subset\{1,\ldots,n\}$.

\begin{cor}
  \label{cor:AW_generators}
  The element $\underline{\Delta}^{(k-1)}(\Lambda)_{\underline{i}}$ is sent to the element $\Lambda_A$ of the Askey--Wilson algebra under the unbraiding map $\gamma_n$, where $A=\{i_1,\ldots,i_k\}$.
\end{cor}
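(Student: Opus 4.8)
The statement is essentially a specialisation of \cref{prop:inserting_units}, so the plan is short. First I would observe that the quantum Casimir $\Lambda$ lies in $\Uq^{\lf}$: indeed $\Lambda$ is one of the listed generators of $\mathcal{I}_L$ in \cref{def:coaction_sl2}, and $\mathcal{I}_L = \Uq^{\lf}$ by the Joseph--Letzter proposition recalled above. Hence $\underline{\Delta}^{(k-1)}(\Lambda)$ makes sense as an element of $\left(\Uq^{\lf}\right)^{\tilde{\otimes} k}$, and $\underline{\Delta}^{(k-1)}(\Lambda)_{\underline{i}}$ makes sense in $\left(\Uq^{\lf}\right)^{\tilde{\otimes} n}$ for $\underline{i} = (i_1 < \cdots < i_k)$.

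Next I would simply apply \cref{prop:inserting_units} with $x = \Lambda$. This gives
\[
  \gamma_n\left(\underline{\Delta}^{(k-1)}(\Lambda)_{\underline{i}}\right) = \left(\id^{\otimes (i_k-2)}\otimes \alpha_{i_k-1}\right)\circ\cdots\circ (\id\otimes \alpha_2) \circ \alpha_1 (\Lambda)\otimes 1^{n-i_k},
\]
with $\alpha_i = \Delta$ if $i \in \{i_1,\ldots,i_k\} = A$ and $\alpha_i = \tau_L$ otherwise. The right-hand side is, verbatim, the expression defining $\Lambda_A$ in \cref{def:AW_generators}, so the two elements coincide. For completeness one should also note that the boundary convention matches: when $A = \varnothing$ (the case $k = 0$), \cref{prop:inserting_units} degenerates to $\gamma_n(\Lambda_{\varnothing}) = \Lambda = q + q^{-1}$ placed in no tensor slot, agreeing with the convention $\Lambda_{\varnothing} = q + q^{-1}$; for non-empty $A$ we have $k \geq 1$ and the formulas agree directly.

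There is essentially no obstacle here — all the real content was carried out in the inductive proof of \cref{prop:inserting_units} and in the identification $\tau_L(x) = \sum_{i \geq 0} K^{i+\lvert x\rvert} v_i \otimes u_i \rhd x$. The only points requiring (trivial) care are checking that $\Lambda$ is locally finite, so that the braided coproduct and the unbraiding map are defined on it, and matching the indexing and empty-set conventions between \cref{def:AW_generators} and \cref{prop:inserting_units}. As an immediate consequence, combining this with \cref{cor:invariant-centralizer} and the fact that $\Lambda$, being central, is $\Uq$-invariant for the adjoint action (whence so is $\underline{\Delta}^{(k-1)}(\Lambda)_{\underline{i}}$, since $\underline{\Delta}$ and the unit insertions are $\Uq$-equivariant), one recovers that every generator $\Lambda_A$ — and therefore all of $\AW{n}$ — lies in the centraliser of $\Uq$ in $\Uq^{\otimes n}$, generalising Huang's \cite[Corollary 4.6]{Huang17}.
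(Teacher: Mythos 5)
Your proposal is correct and follows exactly the paper's (implicit) argument: the corollary is obtained by specialising \cref{prop:inserting_units} to $x=\Lambda$ and comparing the resulting formula with \cref{def:AW_generators}. The extra checks you include — that $\Lambda$ lies in $\Uq^{\lf}$ via \cref{def:coaction_sl2} and Joseph--Letzter, and the empty-set convention — are sensible but not needed beyond what the paper already takes for granted.
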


Since the Casimir element $\Lambda$ is central, combining \cref{prop:inserting_units} with \cref{cor:invariant-centralizer}, we obtain the following corollary:

\begin{cor}
  \label{cor:AW_centralizer}
  The Askey--Wilson algebra $\AW{n}$ is a subalgebra of the centralizer of $\Uq$ in $\Uq^{\otimes n}$.
\end{cor}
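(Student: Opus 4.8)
The plan is to reduce the statement to the two corollaries just established. By \cref{cor:AW_generators}, each generator $\Lambda_A$ of $\AW{n}$, for $A=\{i_1<\cdots<i_k\}$, equals $\gamma_n\!\left(\underline{\Delta}^{(k-1)}(\Lambda)_{\underline{i}}\right)$; and by \cref{cor:invariant-centralizer}, the unbraiding map $\gamma_n$ carries any element of $\left(\Uq^{\lf}\right)^{\tilde{\otimes}n}$ that is stable under the left adjoint action into the centralizer $\{\,X\in\Uq^{\otimes n}\mid [\Delta^{(n-1)}(h),X]=0\ \forall h\in\Uq\,\}$. So the entire task is to check that $\underline{\Delta}^{(k-1)}(\Lambda)_{\underline{i}}$ is adjoint-invariant, and then to invoke closure of the centralizer under multiplication.

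First I would observe that the quantum Casimir $\Lambda$ is central in $\Uq$, whence $h\rhd\Lambda=\sum h_{(1)}\Lambda S(h_{(2)})=\Lambda\sum h_{(1)}S(h_{(2)})=\varepsilon(h)\Lambda$; in particular $\Lambda$ is adjoint-invariant and lies in $\Uq^{\lf}$. Next, since the braided coproduct intertwines the adjoint action, $\underline{\Delta}(h\rhd x)=h\rhd\underline{\Delta}(x)$, and hence by an immediate induction $\underline{\Delta}^{(k-1)}(h\rhd x)=h\rhd\underline{\Delta}^{(k-1)}(x)$, the element $\underline{\Delta}^{(k-1)}(\Lambda)\in\left(\Uq^{\lf}\right)^{\tilde{\otimes}k}$ is again adjoint-invariant. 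Finally, the underlying $\Uq$-module structure on the braided tensor product is the ordinary diagonal one through $\Delta^{(n-1)}$, and the unit is invariant ($h\rhd 1=\varepsilon(h)1$); therefore the embedding $\left(\Uq^{\lf}\right)^{\tilde{\otimes}k}\to\left(\Uq^{\lf}\right)^{\tilde{\otimes}n}$ that places the tensor factors in positions $\underline{i}$ and fills the remaining slots with $1$ is a morphism of $\Uq$-modules. Consequently $\underline{\Delta}^{(k-1)}(\Lambda)_{\underline{i}}$ is adjoint-invariant.

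Applying \cref{cor:invariant-centralizer} then gives $\Lambda_A=\gamma_n\!\left(\underline{\Delta}^{(k-1)}(\Lambda)_{\underline{i}}\right)$ commutes with $\Delta^{(n-1)}(h)$ for every $h\in\Uq$; the element $\Lambda_\emptyset=q+q^{-1}$ is a scalar and lies in the centralizer trivially. Since the centralizer is a subalgebra of $\Uq^{\otimes n}$ and $\AW{n}$ is by definition generated by the $\Lambda_A$, we conclude $\AW{n}$ is contained in the centralizer of $\Uq$ in $\Uq^{\otimes n}$, which recovers and generalizes Huang's \cite[Corollary~4.6]{Huang17}.

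I do not anticipate a genuine obstacle: all the analytic content has been absorbed into \cref{prop:inserting_units} and \cref{prop:unbraiding_action}. The only point that warrants a sentence of care is that spreading the braided coproduct of $\Lambda$ across the chosen tensor positions while padding with units preserves adjoint-invariance, which comes down to $h\rhd 1=\varepsilon(h)1$ together with multiplicativity of the adjoint action over the (braided) tensor product.
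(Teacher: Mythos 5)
Your proposal is correct and follows exactly the paper's route: the paper derives this corollary by combining \cref{prop:inserting_units} (equivalently \cref{cor:AW_generators}) with \cref{cor:invariant-centralizer} and the centrality of the Casimir $\Lambda$. You have merely spelled out the intermediate verifications (adjoint-invariance of $\underline{\Delta}^{(k-1)}(\Lambda)_{\underline{i}}$ via $\underline{\Delta}(h\rhd x)=h\rhd\underline{\Delta}(x)$ and $h\rhd 1=\varepsilon(h)1$, plus closure of the centralizer under multiplication) that the paper leaves implicit.
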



\section{Moduli algebras}
\label{sec:moduli}

In order to state the results of Costantino--Lê for (stated) skein algebras of punctured spheres, we introduce the quantum coordinate algebra and the reflection equation algebra.

\subsection{Quantum coordinate algebra} 
\label{sec:qcoor}

We begin by defining the quantum coordinate algebra, otherwise known as the Faddeev-Reshetikhin-Takhtajan algebra \cite{FRT89}, for the following $4\times 4$ $R$-matrix:
\[
  R =
  \begin{pmatrix}
    q & 0 & 0 & 0 \\
    0 & 1 & q-q^{-1} & 0\\
    0 & 0 & 1 & 0\\
    0 & 0 & 0 & q
  \end{pmatrix}.
\]

\begin{defn}
  The quantum coordinate algebra $\SL_q(2)$ is the algebra with generators the entries of the matrix $U = \begin{pmatrix}
    u^+_+ & u^+_-\\
    u^-_+ & u^-_-
  \end{pmatrix}$ with the relations given by
  \begin{align*}
   u^+_-u^+_+ &= q u^+_+u^+_-, & u^-_-u^+_- &= qu^+_-u^-_-, & u^-_+u^+_+ &= q u^+_+u^-_+, & u^-_-u^-_+ &= qu^-_+u^-_-, & u^+_-u^-_+ &= u^-_+u^+_-,
  \end{align*}
  and
  \begin{align*}
    u^+_+u^-_--q^{-1}u^+_-u^-_+ &= 1, & u^-_-u^+_+-qu^-_+u^+_- &= 1.
  \end{align*}
\end{defn} 

We endow this algebra with a structure of a Hopf algebra with coproduct $\Delta$, counit $\varepsilon$ and antipode $S$ given by
\begin{multline*}
  \Delta 
  \begin{pmatrix} 
    u_+^+ & u_-^+\\ 
    u_+^- & u_-^-
  \end{pmatrix} 
  =
  \begin{pmatrix} 
    u_+^+\otimes u_+^+ + u^+_-\otimes u^-_+ & u^+_-\otimes u^-_- + u^+_+\otimes u^+_-\\
    u^-_+\otimes u^+_+ + u^-_-\otimes u^-_+ & u^-_-\otimes u^-_- + u^-_+\otimes u^+_-
  \end{pmatrix},
  \quad
  \varepsilon
  \begin{pmatrix} 
    u_+^+ & u_-^+\\ 
    u_+^- & u_-^-
  \end{pmatrix}
  = 
  \begin{pmatrix} 
    1 & 0\\
    0 & 1
  \end{pmatrix},
  \\ \quad\text{and}\quad
  S 
  \begin{pmatrix} 
    u_+^+ & u_-^+\\ 
    u_+^- & u_-^-
  \end{pmatrix} 
  = 
  \begin{pmatrix} 
    u^-_- & -qu^+_-\\
    -q^{-1}u^-_+ & u^+_+
  \end{pmatrix}.
\end{multline*}

There exists a non-degenerate Hopf pairing $\langle\cdot,\cdot\rangle$ between $\SL_q(2)$ and $\Uq$ given by
\[
\left\langle K, \begin{pmatrix}u^+_+ & u^+_-\\u^-_+ & u^-_-\end{pmatrix}\right\rangle = \begin{pmatrix}q & 0\\ 0 & q^{-1}\end{pmatrix},\quad
\left\langle E, \begin{pmatrix}u^+_+ & u^+_-\\u^-_+ & u^-_-\end{pmatrix}\right\rangle = \begin{pmatrix}0 & 1\\ 0 & 0\end{pmatrix}\quad\text{and}\quad
\left\langle F, \begin{pmatrix}u^+_+ & u^+_-\\u^-_+ & u^-_-\end{pmatrix}\right\rangle = \begin{pmatrix}0 & 0\\ 1 & 0\end{pmatrix}.
\]
This pairing implies that any right comodule over $\SL_q(2)$ can be turned into a left module over $\Uq$: if $M$ is a right $\SL_q(2)$-comodule with coaction $\Delta_M$ then the action of $\Uq$ is
\[
 x\cdot m = \sum \langle x,\;m_{(0)}\rangle\, m_{(1)},
\]
with the right coaction on $M$ being $\Delta_M(m) = \sum m_{(1)}\otimes m_{(0)}$. 

\begin{ex}
  It can be checked that the left $\Uq$-module structure arising from the right $\SL_q(2)$-comodule structure on $\SL_q(2)$ is given on the generators by
\begin{align*}
  K\cdot u^+_+ &= q u^+_+, & K \cdot u^+_- &= q^{-1}u^+_-,& K\cdot u^-_+ &= qu^-_+,& K\cdot u^-_- &=q^{-1}u^-_-,\\
  E\cdot u^+_+ &= 0, & E \cdot u^+_- &= u^+_+,& E\cdot u^-_+ &= 0,& E\cdot u^-_- &=u^-_+,\\
  F\cdot u^+_+ &= u^+_-, & F \cdot u^+_- &= 0,& F\cdot u^-_+ &= u^-_-,& F\cdot u^-_- &=0.
\end{align*}
\end{ex}

The Hopf algebra $\SL_q(2)$ is cobraided with the co-$R$-matrix $\rho$ being given by $R$:
\[
  \rho\left(\begin{pmatrix} 
    u_+^+ & u_-^+\\ 
    u_+^- & u_-^-
  \end{pmatrix}\otimes\begin{pmatrix} 
    u_+^+ & u_-^+\\ 
    u_+^- & u_-^-
  \end{pmatrix}\right) = R.
\]
Therefore, the category of right $\SL_q(2)$-modules is braided: given two $\SL_q(2)$-modules $V$ and $W$ with respective coactions $\Delta_V$ and $\Delta_W$, the co-$R$-matrix $\rho$ defines an isomorphism $c_{V,W}^{\rho} \colon V\otimes W \rightarrow W\otimes V$ given by
\[
  c^{\rho}_{V,W}(v\otimes w) = \sum \rho\left(v_{(0)}\otimes w_{(0)}\right) w_{(1)}\otimes v_{(1)}
\]
where $\Delta_V(v) = \sum v_{(1)}\otimes v_{(0)}\in V\otimes\SL_q(2)$ and $\Delta_W(v) = \sum w_{(1)}\otimes w_{(0)}\in W\otimes \SL_q(2)$.

The following proposition is classical and compares the two braidings $c_{V,W}^\rho$ and $c_{V,W}$, where we see $V$ and $W$ as left $\Uq$-modules through the pairing.

\begin{prop}
  \label{prop:cobr=br}
  Let $V$ and $W$ be two right comodules over $\SL_q(2)$ that we also equip with the structure of left modules over $U_q(\slt)$. If the obtained modules are weight modules with a locally nilpotent action of $E$ and $F$. Then we have $c_{V,W}=c^\rho_{V,W}$ as maps from $V\otimes W$ to $W\otimes V$.
\end{prop}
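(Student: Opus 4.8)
The plan is to use that $c^{\rho}$ and $c$ are both braidings — natural in each variable and constrained by the hexagon identities — so that each is determined by its value on a generating object, and then to check equality on that object. First I would let $\V$ denote the $2$-dimensional right $\SL_q(2)$-comodule with coaction matrix $U$; the left $\Uq$-module it carries through the Hopf pairing is the standard type-$1$ module, on which $E^2=F^2=0$, so $\V$ is itself an object to which the proposition applies. Conversely, any right $\SL_q(2)$-comodule whose associated $\Uq$-module is a weight module with $E$ and $F$ locally nilpotent is, over $\mathbb{C}(q)$ (hence for $q$ not a root of unity), a direct sum of finite-dimensional type-$1$ simple modules, each of which is a direct summand of some tensor power $\V^{\otimes m}$; moreover the functor sending a comodule to its associated $\Uq$-module is strict monoidal. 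Since naturality descends to direct summands, additivity to (possibly infinite) direct sums, and the two hexagon identities express $c^{\rho}_{\V^{\otimes a},\V^{\otimes b}}$ and $c_{\V^{\otimes a},\V^{\otimes b}}$ in terms of $c^{\rho}_{\V,\V}$ and $c_{\V,\V}$, this reduces the proposition to the single equality $c_{\V,\V}=c^{\rho}_{\V,\V}$.

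Then I would restrict to $\V\otimes\V$ and compare the two maps directly. Let $e_{+},e_{-}$ be the weight basis of $\V$, with $K\cdot e_{+}=q e_{+}$, $K\cdot e_{-}=q^{-1}e_{-}$, $E\cdot e_{-}=e_{+}$ and $F\cdot e_{+}=e_{-}$ (the remaining actions of $E$ and $F$ being zero), all read off from the pairing. On one side, $c^{\rho}_{\V,\V}(e_a\otimes e_b)=\sum_{c,d}\rho(u^{c}_{a}\otimes u^{d}_{b})\,e_{d}\otimes e_{c}$, which by $\rho(U\otimes U)=R$ is the flip map composed with the operator given by the matrix $R$. On the other side, since $E$ and $F$ square to zero on $\V$ only the terms $i=0$ and $i=1$ of $\Theta$ contribute, so that for $m$ of weight $\mu$ and $n$ of weight $\nu$
\[
  c_{\V,\V}(m\otimes n)=q^{\mu\nu/2}\,n\otimes m+(q-q^{-1})\,q^{(\mu+2)(\nu-2)/2}\,(F\cdot n)\otimes (E\cdot m);
\]
evaluating this on the four vectors $e_a\otimes e_b$ and comparing with the entries of $R$ yields the equality $c_{\V,\V}=c^{\rho}_{\V,\V}$, and with the previous paragraph this proves the proposition.

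I expect the main obstacle to be the reduction step rather than the $2\times 2$ verification: one has to be sure the hypotheses genuinely place $V$ and $W$ inside the braided monoidal subcategory generated by $\V$, so that the hexagon identities may legitimately be used to descend from $V\otimes W$ to $\V\otimes\V$, and one must keep careful track of the half-integer powers of $q$ coming from the Cartan part that is implicit both in $c$ and in the normalisation of $\rho$. A fallback that avoids the generation argument is to recall that $\rho$ is obtained by pairing, against $\SL_q(2)\otimes\SL_q(2)$, the universal $R$-matrix $\Theta\cdot\kappa$, where $\kappa$ is the Cartan part acting on a product of weight vectors of weights $\mu$ and $\nu$ by $q^{\mu\nu/2}$, and then to verify directly that $\sum\rho(v_{(0)}\otimes w_{(0)})\,w_{(1)}\otimes v_{(1)}=\sum_{i\geq 0}q^{(\mu+2i)(\nu-2i)/2}(v_i\cdot w)\otimes(u_i\cdot v)$ by expanding the left-hand side with the pairings $\langle E^{i},-\rangle$ and $\langle F^{i},-\rangle$ on weight vectors; the only delicate point there is that $\kappa$ lives in a completion of $\Uq\otimes\Uq$, which is harmless on weight modules but must be said.
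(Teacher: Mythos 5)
Your proposal is correct, but your primary route is genuinely different from the paper's. You reduce categorically: both $c$ and $c^{\rho}$ are braidings, so naturality, the hexagon identities and complete reducibility (every comodule satisfying the hypotheses is a direct sum of summands of tensor powers of the fundamental comodule $\V$) reduce everything to the single equality $c_{\V,\V}=c^{\rho}_{\V,\V}$, which you verify on the weight basis. The paper instead reduces the statement to an identity of bilinear forms on $\SL_q(2)\otimes\SL_q(2)$: it sets $\rho'(x\otimes y)=\sum_{n\geq 0} q^{(\lvert x\rvert+2n)(\lvert y\rvert-2n)/2}q^{n(n-1)/2}\frac{(q-q^{-1})^n}{[n]!}\langle E^n,x\rangle\langle F^n,y\rangle$, checks that $\rho'$ satisfies the same multiplicativity (dual quasitriangularity) identities as $\rho$ in each tensor slot, and concludes that it suffices to compare $\rho$ and $\rho'$ on the four generators $u^{\pm}_{\pm}$ --- which is essentially your ``fallback''. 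The two reductions are dual to one another (generating object of the category versus generators of the coalgebra) and both end in the same finite computation; yours buys a conceptual explanation at the cost of invoking semisimplicity for generic $q$ and identifying the essential image of the comodule-to-module functor, while the paper's is more elementary and self-contained, needing only the coalgebra axioms and the explicit pairing. If you pursue your main route, make the complete-reducibility step explicit (local nilpotency of $E,F$ together with the weight hypothesis gives integrability, hence a direct sum of finite-dimensional type-$1$ simples for $q$ generic), since that is precisely where the hypotheses of the proposition are used.
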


\begin{proof}
  By definition, we have
  \[
    c^\rho_{V,W}(v\otimes w) = \sum \rho\left(v_{(0)}\otimes w_{(0)}\right) w_{(1)}\otimes v_{(1)},
  \]
  and
  \[
    c_{V,W}(v\otimes w) = \sum_{n\geq 0} q^{(\lvert v \rvert + 2n)(\lvert w \rvert -2n)/2}q^{n(n-1)/2}\frac{(q-q^{-1})^n}{[n]!}\left\langle E^n,\;v_{(0)}\right\rangle \left\langle F^n ,\; w_{(0)}\right\rangle w_{(1)}\otimes v_{(1)}
  \]
  for all weight vectors $v \in V$ and $w \in W$. Note that since $\Delta_V(K\cdot v) = (1\otimes K) \cdot \Delta_V(v)$, and similarly for $W$, we have
  \[
    c^R_{V,W}(v\otimes w) = \sum_{n\geq 0} q^{(\lvert v_{(0)} \rvert + 2n)(\lvert w_{(0)} \rvert -2n)/2}q^{n(n-1)/2}\frac{(q-q^{-1})^n}{[n]!}\left\langle E^n,\;v_{(0)}\right\rangle \left\langle F^n ,\; w_{(0)}\right\rangle w_{(1)}\otimes v_{(1)}.
  \]

  Therefore, the claim is proved once we have checked that
  \begin{equation}
    \rho(x\otimes y) = \sum_{n\geq 0} q^{(\lvert x \rvert + 2n)(\lvert y \rvert -2n)/2}q^{n(n-1)/2}\frac{(q-q^{-1})^n}{[n]!}\langle E^n,\;x\rangle \langle F^n ,\; y\rangle.\label{eq:rho'}
  \end{equation}

  Denote by $\rho'(x\otimes y)$ the right-hand side of \cref{eq:rho'}. Using the various properties of the $R$-matrix and of the co-$R$-matrix, it is a routine calculation to check that
  \begin{align*}
  \rho'(xy\otimes z) &= \sum \rho'\left(x \otimes z_{(1)}\right)\rho'\left(y \otimes z_{(2)}\right), & \rho'(1\otimes z) &= \varepsilon(z),\\
  \rho'(x\otimes yz) &= \sum \rho'\left(x_{(1)} \otimes z\right)\rho'\left(x_{(2)} \otimes y\right), & \rho'(x\otimes 1) &= \varepsilon(x),
\end{align*}
  for all $x,y$ and $z\in \SL_q(2)$. Therefore, it remains to check that $\rho(x\otimes y)=\rho'(x\otimes y)$ for $x$ and $y$ in the set $\{u^\pm_{\pm}\}$, which is an easy and omitted calculation. 
\end{proof}

\subsection{Reflection equation algebras}
\label{sec:REA}
We now introduce another algebra which is not isomorphic to the quantum coordinate algebra but is twist-equivalent to it \cite{Donin}.
\begin{defn}
  The reflection equation algebra $\mathcal{O}_q(\SL_2)$ is the algebra with generators the entries of the matrix $\mathcal{K}=
  \begin{pmatrix}
    k_+^+ & k_-^+\\
    k_+^- & k_-^-
  \end{pmatrix}$ which satisfy the following:
  \begin{enumerate}
  \item the quantum determinant relation: $k_+^+k_-^--qk_-^+k_+^- = 1$,
  \item the reflection equation:  $R_{21}(\mathcal{K} \otimes I) R (\mathcal{K} = I_2\otimes \mathcal{K}) = ( I\otimes \mathcal{K}) R_{21} (\mathcal{K} \otimes I) R$.
  \end{enumerate}
\end{defn}

From the theory of $L$-operators, see for example \cite[Proposition 3.116]{voigt-yuncken}, we may deduce an algebra isomorphism between $\mathcal{O}_q(\SL_2)$ and $\Uq^{\lf}$. It is explicitly given by
\[
\begin{pmatrix} 
   k_+^+ & k_-^+\\
   k_+^- & k_-^- 
\end{pmatrix}
\mapsto 
\begin{pmatrix}
   q^{-1}\left(\Lambda-q^{-1}K\right) & q^{-1}\left(q-q^{-1}\right)E \\
   \left(q-q^{-1}\right)KF & K 
\end{pmatrix}.
\]

The bar coproduct $\underline{\Delta}$ on $\Uq^{\lf}$ defined in \cref{sec:braided_tensor} has a much nicer expression through this isomorphism: it becomes the usual matrix coproduct given by
\[
\mathcal{K} \mapsto
\begin{pmatrix}
  k_+^+\otimes k_+^+ + k^+_-\otimes k^-_+ & k^+_-\otimes k^-_- + k^+_+\otimes k^+_-\\
  k^-_+\otimes k^+_+ + k^-_-\otimes k^-_+ & k^-_-\otimes k^-_- + k^-_+\otimes k^+_-
\end{pmatrix}.
\]

Finally, we note also that the element $\qtr:=qk^+_++q^{-1}k^-_-$ is sent to the Casimir element $\Lambda$.

\subsection{Alekseev moduli algebras}
\label{sec:alekseev}

We end this section by introducing Alekseev moduli algebras, also known as quantum loop algebras, in the case of $\Uq$. These algebras are attached to the punctured surfaces $\Sigma_{g,r}$. In the particular case of the punctured sphere $\Sigma_{0,n+1}$, we recover the braided tensor power $\mathcal{O}_q(\SL_2)^{\tilde{\otimes} n}\simeq \left(\Uq^{\lf}\right)^{\tilde{\otimes}n}$ already encountered in \cref{sec:unbraiding}. In general, we also need the notion of the elliptic double $\mathcal{D}_q(\SL_2)$. As a vector space, the elliptic double is isomorphic to $\mathcal{O}_q(\SL_2)^{\otimes 2}$, see \cite{Brochier-Jordan} for further details on the multiplication.

\begin{defn}
  The Alekseev moduli algebra associated with the punctured surface $\Sigma_{g,r}$ is
  \[
  \mathcal{L}_{\Sigma_{g,r}} = \mathcal{D}_q(\SL_2)^{\tilde{\otimes} g}\tilde{\otimes}\mathcal{O}_q(\SL_2)^{\tilde{\otimes} r-1},
  \]
  where $\tilde{\otimes}$ still denotes the braided tensor product.
\end{defn}

It should be noted that, as a vector space, the algebra $\mathcal{L}_{\Sigma_{g,r}}(\SL_2)$ is isomorphic to $\mathcal{O}_q(\SL_2)^{\otimes 2g+r-1}$. Indeed, $\mathcal{D}_q(\SL_2)$ is defined as a vector space by $\mathcal{O}_q(\SL_2)^{\otimes 2}$ and the multiplication is twisted using the $R$-matrix. We will also write $\mathcal{L}_{\Sigma_{g,r}} = \mathcal{O}_q(\SL_2)^{\hat{\otimes} 2g}\tilde{\otimes}\mathcal{O}_q(\SL_2)^{\tilde{\otimes} r-1}$, where $\hat{\otimes}$ emphasises that the multiplication on $\mathcal{O}_q(\SL_2)^{\otimes 2g}$ is not the trivial one, but is such that $\mathcal{O}_q(\SL_2)^{\hat{\otimes} 2g}=\mathcal{D}_q(\SL_2)^{\tilde{\otimes} g}$ as an algebra.


\section{Skein Algebras}
\label{sec:skein}

In this section, we shall prove that the Askey--Wilson algebra $\AW{n}$ is isomorphic to the Kauffman bracket skein algebra $\SkAlg{q}{\Sigma_{0,n+1}}$ of the $(n+1)$-punctured sphere and explicitly match up the generators on both sides of this correspondence. To do this we shall consider the Kauffman bracket skein algebra as a subalgebra of the stated skein algebra. Costantino and L\^{e} proved that the stated skein algebra of a $(n+1)$-punctured sphere is isomorphic to the braided tensor product of $n$ copies of the reflection equation algebra $\mathcal{O}_q(\SL_2)$ \cite{CL20}. The reflection equation algebra is in turn isomorphic to $\Uq^{\lf}$ (\cref{sec:REA}). Finally, we shall use the description of the Askey--Wilson algebra $\AW{n}$ as a subalgebra of the braided tensor product of $n$ copies of $\Uq^{\lf}$ which we developed in \cref{sec:unbraiding} to obtain our result. 

\subsection{Kauffman Bracket Skein Algebras and Stated Skein Algebra}
\label{sec:skeindefn}
The Kauffman bracket skein algebra is based on the Kauffman bracket:
\begin{defn}
Let \(L\) be a link without contractible components (but including the empty link). The \emph{Kauffman bracket polynomial} \(\langle L \rangle\) in the variable \(q\) is defined by the following local \emph{skein relations}:
\begin{align}
  \skeindiagram{leftnoarrow} &= q^{\frac{1}{2}}\; \skeindiagram{horizontal} + q^{-\frac{1}{2}} \; \skeindiagram{vertical}, \label{skeinrel_cross} \\ 
  \skeindiagram{circle} &= -q - q^{-1}. \label{skeinrel_loop}
\end{align}
\end{defn}
\noindent These diagrams represent links with blackboard framing which are identical outside a 3-dimensional disc and are as depicted inside the disc. It is an invariant of framed links and it can be `renormalised' to give the Jones polynomial. The Kauffman bracket can also be used to define an invariant of \(3\)-manifolds:
\begin{defn}
Let \(M\) be a smooth 3-manifold, \(R\) be a commutative ring with identity and \(q\) be an invertible element of \(R\). The \emph{Kauffman bracket skein module} \(\SkAlg{q}{M}\) is the \(R\)-module of all formal linear combinations of links, modulo the Kauffman bracket skein relations pictured above.
\end{defn}

\begin{rmk} For the remainder of the paper we will use the coefficient ring \(R := \mathbb{C}\left(q^{1/4}\right)\).
\end{rmk}

For a surface $\Sigma$, we define its \emph{skein algebra} $\SkAlg{q}{\Sigma}$ to be the skein module $\SkAlg{q}{\Sigma\times [0,1]}$ and define multiplication by first stacking the links on top of each other to obtain a link in \(\Sigma \times [0,2]\) and then rescaling the second coordinate to obtain \(\Sigma \times [0,1]\) again. Usually links are drawn by projecting onto $\Sigma$. In this case the multiplication $XY$ is obtained by drawing $Y$ above $X$. 

Typically, this algebra is noncommutative; however, its $q = \pm 1$ specialisation is commutative since in this case the right-hand side of the first skein relation is symmetric with respect to switching the crossing. \citeauthor{Bullock97}, \citeauthor{PrzytyckiSikora00}~\cite{Bullock97, PrzytyckiSikora00} showed that at $q=-1$ the skein algebra $\SkAlg{q}{\Sigma}$ is isomorphic to the ring of functions on the $\SL_2$ character variety of $\Sigma$. \cite{BullockFrohmanKania99} strengthened this statement by showing that the skein algebra is a quantization of the $\SL_2$ character variety of $\Sigma$ with respect to Atiyah--Bott--Goldman Poisson bracket. 

Recall that \(\Sigma_{g,r}\) denotes the compact oriented surface with genus $g$ and $r$ punctures. We will restrict ourselves to punctured surfaces so we assume that $r>1$.
Every punctured surface $\Sigma_{g,r}$ has a handlebody decomposition which is given by attaching $n=2g+r-1$ handles to a disc. For example, the handlebody decomposition of $\Sigma_{0,5}$ is shown in \cref{fig:5punctsphere}. 

\begin{defn}
  Let $\Sigma_{g,n}^{\bullet}$ denote the surface with a choice of marking on its boundary. The marking must be on the disc part of the handlebody decomposition of the surface.
\end{defn}

\begin{figure}[ht]
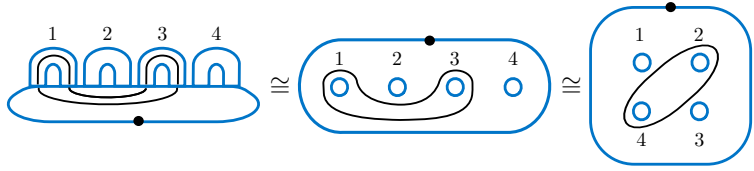

  \centering
  $\diagramhh{loopcalc}{5handle}{12pt}{1pt}{0.25} \cong \diagramhh{loopcalc}{5line}{15pt}{1pt}{0.25} \cong \diagramhh{loopcalc}{5circle}{27pt}{1pt}{0.25}$
  \caption{This figure shows the handlebody decomposition of $\Sigma^{\bullet}_{0,5}$. One of the five punctures is around the outside of the figures and the other four punctures are labeled 1-4. The marking on the boundary is depicted as a black circle and the generating loop $s_{13}$ is also shown.}
  \label{fig:5punctsphere}
\end{figure}

For every subset $A \subseteq \{1, \dots, n\}$ there is a simple closed curve $s_A$ which intersects the handles $A$. These simple curves $s_A$ form a generating set for the skein algebra:

\begin{thm}[\cite{Bullock99}]
  The curves $s_A$ for all non-empty subsets $A \subseteq \{1, \dots, n\}$ generate the skein algebra $\SkAlg{q}{\Sigma_{0,n+1}}$. 
\end{thm}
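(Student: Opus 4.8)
The plan is to prove that the curves $s_A$ generate $\SkAlg{q}{\Sigma_{0,n+1}}$ by reducing an arbitrary link in $\Sigma_{0,n+1}\times[0,1]$ to a polynomial in the $s_A$. First I would recall the handlebody picture of \cref{fig:5punctsphere}: $\Sigma_{0,n+1}^{\bullet}$ is a disc with $n$ one-handles attached, and the complement of the disc deformation retracts onto a wedge of $n$ circles, one per handle. Using the skein relations \eqref{skeinrel_cross} and \eqref{skeinrel_loop} one may first assume the link diagram is projected generically onto $\Sigma_{0,n+1}$ with no crossings (resolving all crossings at the cost of lower-complexity diagrams) and with each component passing over the handles in some word; contractible loops are removed via \eqref{skeinrel_loop}. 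Thus it suffices to express any disjoint union of essential simple closed curves on the surface in terms of the $s_A$.

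The key step is then a normal-form / induction argument on a suitable complexity of the diagram — e.g. the total number of intersection points with a fixed collection of arcs dual to the handles (the ``cocores'' of the one-handles), ordered lexicographically together with the number of components. First I would isolate a component $\gamma$ that meets the cocores in the minimal nontrivial way; if $\gamma$ traverses each handle at most once, then $\gamma$ is isotopic to some $s_A$ (possibly after the ``below/above'' normalisation mentioned in the footnote of \cref{thm:AWskeiniso}), so we are done for that component. Otherwise $\gamma$ traverses some handle at least twice, and I would apply the Kauffman bracket crossing relation \eqref{skeinrel_cross} to a self-crossing of $\gamma$ created by sliding one strand over the handle: this rewrites $\gamma$ as a $\mathbb{C}(q^{1/4})$-linear combination of two diagrams each with strictly fewer intersections with that cocore (one resolution reconnects the strand into two simpler components, the other cancels a backtrack). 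By the induction hypothesis these simpler diagrams lie in the subalgebra generated by the $s_A$, and since that subalgebra is closed under multiplication (stacking), the product $\gamma \cdot (\text{rest of link})$ does too.

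I expect the main obstacle to be making the ``resolve a self-crossing to reduce complexity'' step genuinely monotone: one must choose the crossing and the resolution carefully so that both resolved diagrams strictly decrease in the chosen well-ordering, and one must handle the bookkeeping of how strands reconnect on the disc versus across the handles, together with the convention that all curves pass below the punctures they do not enclose. This is precisely the kind of careful diagrammatic surgery that underlies Bullock's argument \cite{Bullock99}; alternatively, one can shortcut it by invoking that $\SkAlg{q}{\Sigma_{0,n+1}}$ is a flat deformation of the coordinate ring of the $\SL_2$-character variety of the $(n+1)$-punctured sphere \cite{Bullock97,PrzytyckiSikora00,BullockFrohmanKania99}, whose trace functions of simple loops around subsets of punctures are classically known to generate, and then lift generators using the filtration. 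I would present the direct diagrammatic induction as the main line of proof and remark on the deformation-theoretic shortcut as an alternative, citing \cite{Bullock99} for the full details.
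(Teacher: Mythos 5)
The paper does not actually prove this statement: it is imported verbatim from \cite{Bullock99}, so there is no internal proof to compare against. Your sketch is essentially a reconstruction of Bullock's general-surface argument (induction on the number of passes through the handles, resolving crossings between parallel strands in a band), and that route does work, but it is considerably more machinery than the planar case requires. For the $(n+1)$-punctured sphere there is a short argument you could give in full: the skein module $\SkAlg{q}{\Sigma_{0,n+1}}$ has a linear basis consisting of isotopy classes of crossingless multicurves with no contractible components (the standard Przytycki basis, obtained exactly by the crossing-resolution and trivial-loop removal you describe in your first paragraph); on a planar surface every essential simple closed curve is determined up to isotopy by the partition of the punctures it induces, hence is isotopic to $s_A$ where $A$ is the set of inner punctures it encloses, and in particular already traverses each handle at most once. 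Since the components of a multicurve are pairwise disjoint, stacking the corresponding generators $s_{A_1},\dots,s_{A_k}$ produces a link isotopic to the multicurve itself, so every basis element is literally a monomial in the $s_A$ and generation is immediate. The delicate step you flag --- making the resolution of a self-crossing strictly monotone for a well-ordering --- is precisely the part that is needed for positive-genus surfaces (where a simple closed curve can traverse a handle arbitrarily many times) and is the real content of \cite{Bullock99}; in the planar case it never arises, because ``traverses a handle twice'' cannot happen for an embedded curve in minimal position. Your deformation-theoretic shortcut via the $\SL_2$-character variety would also need an extra flatness/filtration argument to lift generators, so I would recommend the basis argument as the cleanest self-contained proof for the case actually used in this paper.
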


In order to relate these Kauffman bracket skein algebras to quantum loop algebras, one must consider the skein algebra as a subalgebra of an algebra of skeins which are not all closed loops.

\begin{defn}[\cite{Le18}]
  Let $\Sigma$ be an oriented surface with boundary $\partial \Sigma$. Let $T$ be a tangle in $\Sigma \times [0, 1]$ together with a colouring $\pm$ on each point where $T$ meets the boundary $\partial \Sigma$. The \emph{stated skein algebra} $\SkAlgSt{\Sigma}$ is the \(R\)-module of all formal linear combinations of isotopy classes of such tangles $T$, modulo the Kauffman bracket skein relations (\cref{skeinrel_cross,skeinrel_loop}) and the boundary conditions:
  \begin{gather}
    \label{eq:skeinboundary}
    \diagramhh{skeinrelations}{pp}{12pt}{1pt}{0.25} = \diagramhh{skeinrelations}{mm}{12pt}{1pt}{0.25} = 0 \quad \quad \diagramhh{skeinrelations}{mp}{12pt}{1pt}{0.25} = q^{-\frac{1}{4}} \quad \quad \diagramhh{skeinrelations}{cup}{12pt}{1pt}{0.25} = q^{\frac{1}{4}}\diagramhh{skeinrelations}{pmstraight}{12pt}{1pt}{0.25} - q^{\frac{5}{4}}\diagramhh{skeinrelations}{mpstraight}{12pt}{1pt}{0.25}
  \end{gather}
\end{defn} 

The reason we wish to consider stated skein algebras is that stated skein algebras satisfy excision \cite[Theorem 4.12]{CL20} which in particular means that the stated skein algebra $\SkAlgSt{\Sigma^{\bullet}_{0,n+1}}$ can be constructed out of copies of the simpler skein algebra $\SkAlgSt{\Sigma^{\bullet}_{0,2}}$.

\begin{rmk}
  Stated skein algebras are a special case for $\SL_2$ of \emph{internal skein algebras} which were defined by \citeauthor{GunninghamJordanSafronov19}~\cite{GunninghamJordanSafronov19} based on \emph{skein categories} \cite{Cooke19,JohnsonFreyd15}. Skein categories and thus internal skein algebras are defined for any linear ribbon category $\V$ over any unital commutative ring.
\end{rmk}

\subsection{Isomorphism of Skein Algebra and Askey--Wilson Algebra}

In this subsection we shall combine the results of this paper so far together with the results of Costantino and L\^{e} to obtain an explicit isomorphism between the Kauffman bracket skein algebra of the $(n+1)$-punctured sphere $\SkAlg{q}{\Sigma_{0,n+1}^{\bullet}}$ and the rank $n$ Askey--Wilson algebra $\AW{n}$.

Costantino and Lê \cite[Theorem 3.4]{CL20} show that the quantum coordinate algebra\footnote{Costantino and Lê follow Majid in referring to $\SL_q(2)$ as the quantum coordinate algebra and denote it as $\mathcal{O}_q(G)$. This $\mathcal{O}_q(G)$ does not correspond to our $\mathcal{O}_q(G)$ which denotes the reflection equation algebra.} $\SL_q(2)$ has a straightforward topological interpretation as the stated skein algebra of the bigon $\mathcal{B}$ with isomorphism
\[\SkAlgSt{\mathcal{B}} \xrightarrow{\sim} \SL_q(2): \alpha(a, b) \mapsto u^{a}_{b} \]
By embedding  the marked annulus $\Sigma^{\bullet}_{0,2}$ into the bigon as shown in the figure they conclude:

\begin{prop}[{\cite[Proposition 4.25]{CL20}}]
  \label{prop:isoskeinrea}
    The stated skein algebra $\SkAlgSt{\Sigma^{\bullet}_{0,2}}$ is isomorphic as a Hopf algebra to the reflection equation algebra $\mathcal{O}_q(\SL_2)$.
\end{prop}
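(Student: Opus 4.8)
The main external inputs are the identification $\SkAlgSt{\mathcal{B}}\cong\SL_q(2)$ of the bigon's stated skein algebra with the quantum coordinate algebra, the excision property of stated skein algebras, and the twist-equivalence $\mathcal{O}_q(\SL_2)\sim\SL_q(2)$ of \cite{Donin}. The plan is to write down an explicit algebra map $\Phi\colon\mathcal{O}_q(\SL_2)\to\SkAlgSt{\Sigma^{\bullet}_{0,2}}$ on generators, check it is well defined and bijective, and then match the remaining coalgebra data topologically.

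First I would define $\Phi$. Let $\kappa^a_b\in\SkAlgSt{\Sigma^{\bullet}_{0,2}}$, for $a,b\in\{+,-\}$, be the class of the single embedded arc that leaves the marked edge with state $a$, winds once around the hole of the annulus, and returns to the marked edge with state $b$, drawn at a fixed height, and set $\Phi(k^a_b)=\kappa^a_b$. Being an algebra map then amounts to checking that the $\kappa^a_b$ satisfy the two defining relations of $\mathcal{O}_q(\SL_2)$. The quantum determinant relation $k^+_+k^-_--qk^-_+k^+_-=1$ is a short computation: stack two parallel wrapping arcs, isotope them so they run side by side along the marked edge, and apply the last boundary relation of \eqref{eq:skeinboundary} together with \eqref{skeinrel_loop}, the resulting contractible component accounting for the right-hand side. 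The reflection equation is the one relation that genuinely uses the winding: one stacks two wrapping arcs, inserts the crossings prescribed by $R$ and $R_{21}$, resolves every crossing with \eqref{skeinrel_cross}, and then slides one strand past the other around the hole by an isotopy of tangles in $\Sigma^{\bullet}_{0,2}\times[0,1]$, normalising the states at the marked edge via \eqref{eq:skeinboundary}. The hard part will be precisely this verification of the reflection equation, since it is here that the non-trivial topology of the annulus enters and the bookkeeping of $R$-matrix coefficients against crossing resolutions is most delicate.

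Next I would prove $\Phi$ is bijective. For surjectivity, any stated tangle in $\Sigma^{\bullet}_{0,2}\times[0,1]$ can be isotoped into a stacked position: a vertical sequence of wrapping arcs, together with contractible loops and small arcs confined to a neighbourhood of the marked edge; the loops disappear by \eqref{skeinrel_loop} and the small arcs reduce to scalars by \eqref{eq:skeinboundary}, so every skein is a polynomial in the $\kappa^a_b$. For injectivity I would compare ordered-monomial (Poincar\'e--Birkhoff--Witt) bases: $\mathcal{O}_q(\SL_2)$ has such a basis in the $k^a_b$, and to see the corresponding monomials in the $\kappa^a_b$ are linearly independent I would use excision to realise $\SkAlgSt{\Sigma^{\bullet}_{0,2}}$ as the bigon with its two marked edges identified, under which the wrapping arc corresponds to the transverse strand of $\mathcal{B}$; the bigon result and the twist-equivalence of \cite{Donin} then supply a basis on the nose. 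A surjection of algebras carrying a basis to a basis is an isomorphism.

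Finally I would match the Hopf data. The comultiplication on $\SkAlgSt{\Sigma^{\bullet}_{0,2}}$ comes from cutting the annulus along a separating marked circle into two nested sub-annuli; a wrapping arc is thereby split into an inner and an outer wrapping arc summed over the intermediate boundary state, giving $\kappa^a_b\mapsto\sum_c\kappa^a_c\otimes\kappa^c_b$, which is exactly the matrix coproduct on $\mathcal{K}$. The counit is evaluation on the empty skein, so $\varepsilon(\kappa^a_b)=\delta^a_b$, and the antipode is induced by the orientation-reversing symmetry of the annulus exchanging its two boundary circles; comparing with the formulas for $\mathcal{O}_q(\SL_2)$ (transported from $\SL_q(2)$ under the twist-equivalence, or read off directly from \eqref{eq:skeinboundary}) shows $\Phi$ respects all of the structure, completing the proof.
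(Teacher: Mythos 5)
The paper does not prove this proposition: it is imported wholesale from \cite[Proposition 4.25]{CL20}, whose argument runs through the bigon. Costantino and L\^{e} first identify $\SkAlgSt{\mathcal{B}}$ with $\SL_q(2)$, then relate the marked annulus to the bigon by cutting along an ideal arc and use the (injective) splitting homomorphism to embed $\SkAlgSt{\Sigma^{\bullet}_{0,2}}$ into a cotensor construction over $\SkAlgSt{\mathcal{B}}$, recognising the image as Majid's transmutation of $\SL_q(2)$, which is exactly $\mathcal{O}_q(\SL_2)$. Your plan --- verify the determinant and reflection equations directly on wrapping arcs and prove bijectivity separately --- is a genuinely different, more elementary route, and the relation-checking part is sound in principle: the reflection equation is precisely the skein-theoretic statement that two stacked wrapping arcs can be slid past one another around the hole at the cost of $R$-matrix conjugation. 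Two caveats there: the bare assignment $k^a_b\mapsto\kappa^a_b$ will not satisfy the relations on the nose --- the isomorphism actually used in the paper is $\beta(\varepsilon_1,\varepsilon_2)\mapsto\bar{C}(-\varepsilon_1)\,k^{-\varepsilon_1}_{\varepsilon_2}$, with an index flip and the normalisation constants $\bar{C}(\pm)$, so these must be built into $\Phi$ --- and the coproduct of $\mathcal{O}_q(\SL_2)$ is only a morphism into the braided tensor product $\mathcal{O}_q(\SL_2)\tilde{\otimes}\,\mathcal{O}_q(\SL_2)$, so ``Hopf algebra'' here means a Hopf algebra object in $\Uq$-modules, which changes what has to be checked for the comultiplication and antipode.

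The step I do not believe as written is injectivity. Identifying the two marked edges of $\mathcal{B}$ is a self-gluing: it produces the \emph{unmarked} annulus rather than $\Sigma^{\bullet}_{0,2}$, and even for the correct cut (an ideal arc recovering $\Sigma^{\bullet}_{0,2}$ from a polygon), the splitting homomorphism associated to a self-glued edge is an injection onto a proper subspace cut out by a coinvariance condition --- it does not carry a basis of $\SkAlgSt{\mathcal{B}}$ to a basis of $\SkAlgSt{\Sigma^{\bullet}_{0,2}}$ ``on the nose''. You would need either to prove linear independence of the ordered monomials in the $\kappa^a_b$ directly (say, via the standard basis of stated simple diagrams and a triangularity argument), or to import the injectivity of the splitting map together with the identification of its image from \cite{CL20}; but the latter is essentially their proof of this very proposition, at which point your direct verification of the reflection equation becomes redundant. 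Surjectivity likewise needs more than isotopy: an arc winding several times around the hole is not isotopic to a stack of once-wrapping arcs, and must be cut at the marked edge using the last relation of \cref{eq:skeinboundary}.
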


The isomorphism is described explicitly in the proof of Proposition 4.25\footnote{Note that the reflection equation algebra is denoted $BSL_q(2)$ by Costantino and Lê and referred to as the transmuted or braided version of the quantum coordinate algebra.} and on generators this isomorphism is given by 
\begin{equation}
  \label{eq:skeintorea}
  \beta(\varepsilon_1, \varepsilon_2) \mapsto \bar{C}(-\varepsilon_1) k^{-\varepsilon_1}_{\varepsilon_2}
\end{equation}
where $C(+) = \bar{C}(-) = -q^{-5/4}$ and $C(-) = \bar{C}(+) = q^{-1/4}$.

\begin{figure}
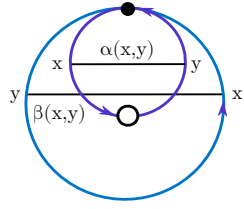

  \centering
  \diagramhh{loopcalc}{emb}{12pt}{1pt}{0.25}
  \caption{The bigon $\mathcal{B}$ shown in purple with its generators $\alpha(x,y)$ is embedded into $\Sigma^{\bullet}_{0,2}$ shown in blue with its generators $\beta(x,y)$.}
\end{figure}

The skein algebra of the (marked) annulus $\SkAlg{q}{\Sigma^{\bullet}_{0,2}}$ is isomorphic to $R[s_1]$ where $s_1$ is the loop around the puncture. If we consider $\SkAlg{q}{\Sigma^{\bullet}_{0,2}}$ as a subalgebra of the associated stated skein algebra we have 
\[s_1 = \diagramhh{loopcalc}{s1}{12pt}{1pt}{0.25} = q^{\frac{1}{4}}\diagramhh{loopcalc}{s1pm}{18pt}{1pt}{0.25} - q^{\frac{5}{4}} \diagramhh{loopcalc}{s1mp}{18pt}{1pt}{0.25} \mapsto q^{\frac{1}{4}} \bar{C}(-) k^-_- - q^{\frac{5}{4}} \bar{C}(+) k^+_+ = - q^{-1} k^-_- - q k^+_+, \]
so using \cref{prop:isoskeinrea} together with the results of \cref{sec:REA} we conclude:
\[f_1: \SkAlgSt{\Sigma^{\bullet}_{0,2}} \xrightarrow{\sim} \mathcal{O}_q(\SL_2),\; s_1 \mapsto - \qtr \]

Using the excision of stated skein algebras, this result can be extended to multiple punctures.

\begin{prop}[\cite{CL20}]
  \label{prop:nisoskeinrea}
    The stated skein algebra $\SkAlgSt{\Sigma^{\bullet}_{0,n+1}}$ is isomorphic as an algebra to the quantum loop algebra $\mathcal{L}_{\Sigma_{0,n+1}}=\mathcal{O}_q(\SL_2)^{\tilde{\otimes} n}$. Moreover, this isomorphism sends the closed loop $s_A$, for $A=\{i_1<\cdots<i_r\}$, to the element $-\Delta^{(k-1)}(\qtr)_{\underline{i}}$ of $\mathcal{O}_q(\SL_2)^{\tilde{\otimes} n}$.
\end{prop}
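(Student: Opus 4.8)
The plan is to reduce to the single-puncture case established just above and then to propagate along the handlebody decomposition using excision. As in \cref{fig:5punctsphere}, $\Sigma^{\bullet}_{0,n+1}$ is obtained by attaching $n$ handles to a disc, so it is glued from $n$ copies of the marked annulus $\Sigma^{\bullet}_{0,2}$ along $n-1$ boundary arcs lying in the disc part. Stated skein algebras satisfy excision \cite[Theorem~4.12]{CL20}, and gluing two such pieces along one arc produces the braided tensor product of their stated skein algebras, the braiding being the one carried by the co-$R$-matrix $\rho$ (equivalently, by crossing strands — the two coincide by \cref{prop:cobr=br}). Iterating over the $n-1$ arcs yields an algebra isomorphism $\SkAlgSt{\Sigma^{\bullet}_{0,n+1}} \xrightarrow{\sim} \SkAlgSt{\Sigma^{\bullet}_{0,2}}^{\tilde{\otimes} n}$. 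Since $f_1$, built from \cref{prop:isoskeinrea}, is an isomorphism of Hopf algebras compatible with these braidings, its braided tensor power is an algebra isomorphism onto $\mathcal{O}_q(\SL_2)^{\tilde{\otimes} n} = \mathcal{L}_{\Sigma_{0,n+1}}$, and composing the two maps proves the first assertion.

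For the image of $s_A$ with $A = \{i_1 < \cdots < i_k\}$, I would cut $\Sigma^{\bullet}_{0,n+1}$ along the arcs separating consecutive handles and track the curve through the gluing. By the convention chosen in this paper, $s_A$ encircles exactly the handles indexed by $A$ and passes below all the others; hence in each annular piece whose handle lies outside $A$ the curve contributes the empty diagram, i.e.\ the unit of that tensor factor, which accounts for the placement prescribed by $\underline{i} = (i_1,\dots,i_k)$. In the remaining $k$ pieces the curve meets the annulus in a single arc joining the two sides of the cut. The standard principle that a closed curve split by a gluing reassembles via the comultiplication — provable here by induction on $k$ from the compatibility of the coproduct on $\SkAlgSt{\Sigma^{\bullet}_{0,2}}$ with excision — identifies the reassembly of these $k$ arcs with $\Delta^{(k-1)}$ applied, in the tensor slots $\underline{i}$, to the class of the single-handle loop. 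That class maps to $-\qtr$ under $f_1$ by the computation preceding the statement ($s_1 = q^{1/4}(\cdots) - q^{5/4}(\cdots) \mapsto -q^{-1}k^-_- - q k^+_+ = -\qtr$, using the boundary relations \cref{eq:skeinboundary}), and $\Delta^{(k-1)}$ here denotes the iterated matrix coproduct of $\mathcal{O}_q(\SL_2)$, which equals $\underline{\Delta}^{(k-1)}$ of \cref{sec:braided_tensor} under the identification $\mathcal{O}_q(\SL_2) \cong \Uq^{\lf}$. Therefore $s_A \mapsto -\Delta^{(k-1)}(\qtr)_{\underline{i}}$.

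The step I expect to be the main obstacle is the convention-sensitive bookkeeping in the cutting argument: verifying that the iterated gluing of the annuli reproduces the braided tensor product with the correct braiding and not its inverse — this is exactly what the convention that curves pass below the punctures they do not enclose pins down — and checking that, after resolving all $k$ stated arcs via \cref{eq:skeinboundary} and carrying them through the iterated coproduct, precisely one overall sign $-1$ and no residual power of $q^{1/4}$ survive. These are finite checks, anchored by the explicitly computed $n=1$ case. One should also confirm that the braided tensor power of $f_1$ really is an algebra morphism for the braided multiplication, which is immediate once $f_1$ is known to intertwine the two braidings.
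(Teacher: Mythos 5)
Your proposal is correct and follows essentially the same route as the paper: the isomorphism itself is taken from \cite[Proposition~4.25]{CL20}, and the image of $s_A$ is computed by cutting the loop at the $k$ punctures of $A$, reducing to the single-annulus computation $s_1 \mapsto -\qtr$ and reassembling via the iterated matrix coproduct placed in the slots $\underline{i}$. The ``finite check'' you defer is exactly what the paper's proof carries out explicitly: expanding $s_A$ at each puncture via \cref{eq:skeinboundary} produces coefficients $\bar{C}(-\varepsilon)^{-1}\prod_i \bar{C}(\varepsilon_i)^{-1}$ which, after applying \cref{eq:skeintorea}, telescope to $\sum_{\varepsilon}\bar{C}(-\varepsilon)^{-1}\bar{C}(\varepsilon)\,\Delta^{(k-1)}\left(k^{\varepsilon}_{\varepsilon}\right)_{\underline{i}} = -\Delta^{(k-1)}(\qtr)_{\underline{i}}$, confirming that exactly one sign and no residual power of $q^{1/4}$ survives.
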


\begin{proof}
  The isomorphism is given in \cite[Proposition 4.25]{CL20}. We untangle the definition of this isomorphism and compute it on the closed loops $s_A$. Let $A=\{i_1 < \cdots < i_k\}$ be a subset of $\{1,\cdots,n\}$.
 
  Using one puncture to flatten the sphere the loop $s_A$ has form
  \[\diagramhh{loopcalc}{sA}{12pt}{1pt}{0.25}\]
  As before we apply \cref{eq:skeinboundary} to obtain
  \[\bar{C}(+)^{-1}\diagramhh{loopcalc}{sApm}{18pt}{1pt}{0.25} + \bar{C}(-)^{-1} \diagramhh{loopcalc}{sAmp}{18pt}{1pt}{0.25}\]
  Applying the relation again at each puncture leaves us with 
  \[\sum_{\varepsilon,\varepsilon_i}\bar{C}(-\varepsilon)^{-1} \prod_{i=1}^{k-1} \bar{C}(\varepsilon_i)^{-1} \diagramhh{loopcalc}{sAdecomp}{20pt}{1pt}{0.25}\]
  where we sum over all possible values of $\varepsilon,\varepsilon_i \in \{\pm\}$.
  The map $g_n$ is now simply given by \cref{eq:skeintorea} on each puncture so we have 
  \begin{align*} 
  \sum_{\varepsilon,\varepsilon_i}\left(\bar{C}(-\varepsilon)^{-1}\bar{C}(\varepsilon) \prod_{i=1}^{k-1} \bar{C}(\varepsilon_i)^{-1}\bar{C}(\varepsilon_i)\right) \left(\bigotimes_{i=1}^{k} k^{\varepsilon_i}_{\varepsilon_{i+1}}\right)_{\!\!\underline{i}} &= \sum_{\varepsilon \in \{\pm\}}\bar{C}(-\varepsilon)^{-1} \bar{C}(\varepsilon)
  \Delta^{(k-1)}\left(k^{\varepsilon}_{\varepsilon}\right)_{\underline{i}} \\
  &= -q\Delta^{(k-1)}\left(k^{+}_{+}\right)_{\underline{i}}-q^{-1}\Delta^{(k-1)}\left(k^{-}_{-}\right)_{\underline{i}}\\
  &= -\Delta^{(k-1)}(\qtr)_{\underline{i}}
  \end{align*}
  as required.
\end{proof}

\begin{rmk}
  In \cite{CL20}, the braided structure on the tensor product $\mathcal{O}_q(\SL_2)^{\tilde{\otimes} n}$ is defined using the co-$R$-matrix $\rho$. But, as an easy consequence of \cref{prop:cobr=br}, this braided structure is the same as the braided structure obtained with the $R$-matrix of $\Uq$ and the left action of $\Uq$ on $\mathcal{O}_q(\SL_2)$ arising from the adjoint action.
\end{rmk}

Combining \cref{prop:nisoskeinrea} with our previous results we conclude:
\begin{thm}
  \label{thm:iso}
  If $q$ is generic then there is an algebra isomorphism $\SkAlg{q}{\Sigma^{\bullet}_{0,n+1}} \to \AW{n}$ sending the closed loop $s_A\in \SkAlg{q}{\Sigma^{\bullet}_{0,n+1}}$ to $-\Lambda_A \in \AW{n}$.
\end{thm}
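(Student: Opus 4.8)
The plan is to realise the claimed map as the composite of the four arrows advertised in the introduction,
\[
  \SkAlg{q}{\Sigma^{\bullet}_{0, n+1}} \subseteq \SkAlgSt{\Sigma^{\bullet}_{0, n+1}} \xrightarrow{\sim} \mathcal{O}_q(\SL_2)^{\tilde{\otimes} n} \xrightarrow{\sim} \left( \Uq^{\lf} \right)^{\tilde{\otimes} n} \xhookrightarrow{\ \gamma_n\ } \Uq^{\otimes n},
\]
and then to identify the image of the first term with $\AW{n}$ by following the generators $s_A$ along the chain. Each arrow is either an inclusion of a subalgebra, an isomorphism, or (for $\gamma_n$) an injective algebra homomorphism, so the composite will automatically be an injective algebra homomorphism; the content is to check that it lands in, and surjects onto, $\AW{n}$, and that it does so with the prescribed sign.

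First I would invoke \cref{prop:nisoskeinrea}: the stated skein algebra of the $(n+1)$-punctured sphere is isomorphic to $\mathcal{O}_q(\SL_2)^{\tilde{\otimes} n}$, and under this isomorphism the closed loop $s_A$ for $A=\{i_1<\cdots<i_k\}$ is sent to $-\Delta^{(k-1)}(\qtr)_{\underline{i}}$, where $\Delta$ now denotes the matrix coproduct of the reflection equation algebra. Since viewing a closed link as a stated tangle with no boundary points gives an injection of the ordinary skein algebra into the stated one, and since the $s_A$ generate $\SkAlg{q}{\Sigma^{\bullet}_{0,n+1}}$ by \cite{Bullock99}, this already presents $\SkAlg{q}{\Sigma^{\bullet}_{0,n+1}}$ as the subalgebra of $\mathcal{O}_q(\SL_2)^{\tilde{\otimes} n}$ generated by the elements $-\Delta^{(k-1)}(\qtr)_{\underline{i}}$.

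Next I would transport everything across the Rosso isomorphism $\mathcal{O}_q(\SL_2)\xrightarrow{\sim}\Uq^{\lf}$ of \cref{sec:REA}. This is a Hopf algebra isomorphism intertwining the matrix coproduct with the braided coproduct $\underline{\Delta}$, and it sends $\qtr$ to the quantum Casimir $\Lambda$; hence it upgrades componentwise to an isomorphism of braided tensor powers $\mathcal{O}_q(\SL_2)^{\tilde{\otimes} n}\xrightarrow{\sim}\left(\Uq^{\lf}\right)^{\tilde{\otimes} n}$ carrying $\Delta^{(k-1)}(\qtr)_{\underline{i}}$ to $\underline{\Delta}^{(k-1)}(\Lambda)_{\underline{i}}$. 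The point that needs care is that the braided tensor product used by Costantino and Lê is built from the co-$R$-matrix $\rho$ of $\SL_q(2)$, whereas the braided tensor power of \cref{sec:braided_tensor} is built from the $R$-matrix of $\Uq$ together with the adjoint action; by \cref{prop:cobr=br} these two braidings agree on the relevant weight modules, so the two meanings of $\tilde{\otimes}$ coincide and the Rosso isomorphism is genuinely an isomorphism of algebras here. Applying then the unbraiding map $\gamma_n\colon\left(\Uq^{\lf}\right)^{\tilde{\otimes} n}\hookrightarrow\Uq^{\otimes n}$, which is an injective algebra homomorphism, and using \cref{cor:AW_generators}, the element $\underline{\Delta}^{(k-1)}(\Lambda)_{\underline{i}}$ is sent to the Askey--Wilson generator $\Lambda_A$ with $A=\{i_1,\ldots,i_k\}$.

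Composing the three maps yields an injective algebra homomorphism $\SkAlg{q}{\Sigma^{\bullet}_{0,n+1}}\to\Uq^{\otimes n}$ with $s_A\mapsto -\Lambda_A$. Its image is the subalgebra generated by the $-\Lambda_A$, which is precisely $\AW{n}$ by \cref{def:AW}; thus the homomorphism is a surjection onto $\AW{n}$, and being injective it is the desired isomorphism $\SkAlg{q}{\Sigma^{\bullet}_{0,n+1}}\xrightarrow{\sim}\AW{n}$. The genericity hypothesis on $q$ is exactly what is needed for the identification $\mathcal{O}_q(\SL_2)\cong\Uq^{\lf}$ and for the Costantino--Lê isomorphism to be available. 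I expect the main obstacle to be the bookkeeping of the third paragraph: verifying that the braiding from $\rho$, the braiding from the $R$-matrix, and the normalisation constants $C(\pm)$, $\bar C(\pm)$ entering \cref{prop:nisoskeinrea} all fit together so that the composite is an algebra map on the nose and sends $s_A$ to $-\Lambda_A$ exactly, rather than to a rescaled variant.
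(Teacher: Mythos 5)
Your proposal is correct and follows essentially the same route as the paper's proof: the same chain $\SkAlg{q}{\Sigma^{\bullet}_{0,n+1}}\subseteq\SkAlgSt{\Sigma^{\bullet}_{0,n+1}}\to\mathcal{O}_q(\SL_2)^{\tilde{\otimes}n}\to(\Uq^{\lf})^{\tilde{\otimes}n}\xrightarrow{\gamma_n}\Uq^{\otimes n}$, with \cref{prop:nisoskeinrea} and \cref{cor:AW_generators} doing the work of tracking $s_A\mapsto-\Lambda_A$. Your explicit appeal to \cref{prop:cobr=br} to reconcile the two braided tensor products is a point the paper relegates to a remark, but it is the same argument.
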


\begin{proof}
  By \cref{prop:nisoskeinrea} there is an isomorphism $\SkAlgSt{\Sigma^{\bullet}_{0,n+1}} \to \mathcal{O}_q(\SL_2)^{\tilde{\otimes} n}$. By \cref{sec:REA} there is an isomorphism $\mathcal{O}_q(\SL_2) \to \Uq^{\lf}$ which gives us an isomorphism $g_n: \SkAlgSt{\Sigma^{\bullet}_{0,n+1}} \to \left(\Uq^{\lf}\right)^{\tilde{\otimes} n}$. Combining this with the injective unbraiding map $\gamma_n: \left(\Uq^{\lf}\right)^{\tilde{\otimes} n} \to \Uq^{\otimes n}$ defined in \cref{sec:unbraiding} gives us an injective algebra morphism 
  \[\SkAlgSt{\Sigma^{\bullet}_{0,n+1}} \xrightarrow{g_n} \left(\Uq^{\lf}\right)^{\tilde{\otimes} n} \xrightarrow{\gamma_n} \Uq^{\otimes n}. \]
  Therefore, to prove the result it is sufficient to show that the generators $s_A \in \SkAlg{q}{\Sigma^{\bullet}_{0,n+1}} \subset \SkAlgSt{\Sigma^{\bullet}_{0,n+1}}$ are sent to the generators $-\Lambda_A \in \AW{n} \subset \Uq^{\otimes n}$, which follows from the explicit computations in \cref{prop:nisoskeinrea} and \cref{cor:AW_generators}.
\end{proof}



\section{Commutator Relations}
\label{sec:commutator}

As an illustration of the usefulness of being able to use diagrams for calculations involving Askey--Wilson algebras, we are now going to reprove the main results of \cite{DeClercq19}. The first result is proving that loops in $\AW{n}$ satisfy a generalisation of the commutator relations used to define $\AW{3}$.

\begin{thm}
\label{thm:commutators}
Let $A_1, A_2, A_3, A_4 \subseteq{\{1, \dots, n\}}$ such that $\max{A_i} < \min{A_{j}}$ whenever $i<j$ and both $A_i$ and $A_j$ are non empty.
Let $A$ and $B$ be one of the following
\begin{enumerate}
    \item $A = A_1 \cup A_2 \cup A_4$ and $B = A_2 \cup A_3$,
    \item $A = A_2 \cup A_3$ and $B = A_1 \cup A_3 \cup A_4$,
    \item $A = A_1 \cup A_3 \cup A_4$ and $B = A_1 \cup A_2 \cup A_4$
\end{enumerate}
We have the commutator
\[[\Lambda_{A},\; \Lambda_{B}]_q = \left(q^{-2}- q^{2}\right) \Lambda_{(A \cup B) \backslash (A \cap B)} + \left(q -q^{-1}\right)\left(\Lambda_{A \cap B} \Lambda_{A \cup B} + \Lambda_{A \backslash (A \cap B)} \Lambda_{B \backslash (A \cap B)}\right).\]
\end{thm}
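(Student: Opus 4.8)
The plan is to translate the algebraic commutator relations into a statement about simple closed curves on the punctured sphere, and to verify it by a skein computation essentially identical to the classical $\AW{3}$ case performed on a four-punctured sphere. Since \cref{thm:iso} gives an isomorphism $\SkAlg{q}{\Sigma^{\bullet}_{0,n+1}} \to \AW{n}$ sending $s_A \mapsto -\Lambda_A$, it suffices to prove the corresponding identity among the loops $s_A$, $s_B$, $s_{A\cap B}$, $s_{A\cup B}$, $s_{A\setminus(A\cap B)}$, $s_{B\setminus(A\cap B)}$, and $s_{(A\cup B)\setminus(A\cap B)}$ in the skein algebra; note that under $s\mapsto-\Lambda$ the signs on the quadratic terms are consistent and the linear term keeps its coefficient, so the identity to prove is the skein version $[s_A,s_B]_q = (q^{-2}-q^2)s_{(A\cup B)\setminus(A\cap B)} + (q-q^{-1})(s_{A\cap B}s_{A\cup B} + s_{A\setminus(A\cap B)}s_{B\setminus(A\cap B)})$.

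First I would observe that, because of the ordering hypothesis $\max A_i < \min A_j$ for $i<j$, the seven curves involved all lie inside a subsurface of $\Sigma_{0,n+1}$ obtained by grouping the punctures of $A_1, A_2, A_3, A_4$ into four "fat" punctures (together with the outer puncture); formally this is an embedding $\Sigma_{0,5}\hookrightarrow \Sigma_{0,n+1}$, and skein algebras are functorial for such inclusions of subsurfaces (an embedding of surfaces induces an algebra map on Kauffman bracket skein algebras, injective here since the complement is a union of punctured discs). Under this embedding the curve $s_{A_i\text{-group}}$ on $\Sigma_{0,5}$ maps to $s_{A_i}$ — more precisely $s_{A_1\cup A_2\cup A_4}$ is the image of the loop enclosing the first, second and fourth fat punctures, and so on, where one must take care that the "curves always pass below the punctures not enclosed" convention is respected by the embedding (this is where the precise choice in the footnote to \cref{thm:AWskeiniso} is used). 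So the whole statement reduces to the single identity in $\SkAlg{q}{\Sigma_{0,5}}$, or even in $\SkAlg{q}{\Sigma_{0,4}}$ after also grouping, say, $A_1\cup A_2$ or $A_2\cup A_3$ as appropriate for each of the three cases.

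Next I would carry out the skein-theoretic computation of $q\,s_A s_B$ and $q^{-1} s_B s_A$ directly. Stacking the two loops gives a diagram with a small number of crossings; applying the Kauffman bracket skein relation \eqref{skeinrel_cross} at each crossing and then resolving the resulting closed components (using \eqref{skeinrel_loop} to remove contractible loops) expresses each product as a linear combination of the loops $s_{A\cup B}s_{A\cap B}$, $s_{A\setminus(A\cap B)}s_{B\setminus(A\cap B)}$ and $s_{(A\cup B)\setminus(A\cap B)}$. Subtracting, the quadratic terms that are symmetric in switching the crossing cancel in $q\,s_As_B - q^{-1}s_Bs_A$ up to the expected $(q-q^{-1})$ prefactor, and one reads off the linear term with coefficient $q^{-2}-q^2$. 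This is exactly the classical four-punctured-sphere computation (as in \cite{BS18, Cooke18, Hikami19}); the three cases (1)--(3) of the theorem correspond to the three cyclically-related commutator relations $[A,B]_q$, $[B,C]_q$, $[C,A]_q$ of $\AW{3}$, so they are handled by the same picture up to relabelling.

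The main obstacle is bookkeeping the intersection pattern and the "below" convention: one must check that for each of the three cases the relevant pair of loops $s_A$, $s_B$ intersects in exactly the configuration that produces the classical relation, and that the resolutions genuinely give the curves $s_{A\cap B}$, $s_{A\cup B}$, etc., rather than some other isotopy class — in particular that the framing factors $q^{\pm1/2}$ and the contractible-loop factors combine to the stated coefficients with no stray powers of $q$. I would organise this by drawing the three pictures once in the grouped surface $\Sigma_{0,5}$ (or $\Sigma_{0,4}$) and doing the resolution explicitly in each, which reduces the whole proof to three essentially identical diagrammatic calculations. Everything else — functoriality of skein algebras under subsurface inclusion and the reduction to $\AW{n}$ via \cref{thm:iso} — is formal.
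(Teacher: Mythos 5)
Your proposal takes essentially the same route as the paper: pass to $\SkAlg{q}{\Sigma_{0,n+1}}$ via the isomorphism $s_A\mapsto-\Lambda_A$, discard the punctures outside $A_1\cup A_2\cup A_3\cup A_4$ (the paper does this by simply omitting them, noting they sit to the side of, or below, all the loops, rather than by invoking a formal subsurface embedding), and then resolve the two crossings of $s_As_B$ and $s_Bs_A$ to read off the relation. The one slip is in your translation step: since the quadratic terms pick up $(-1)^2$ but the linear term only $(-1)$ under $s\mapsto-\Lambda$, the skein-algebra identity must read $[s_A,s_B]_q=\left(q^{2}-q^{-2}\right)s_{(A\cup B)\backslash(A\cap B)}+\left(q-q^{-1}\right)\left(s_{A\cap B}s_{A\cup B}+s_{A\backslash(A\cap B)}s_{B\backslash(A\cap B)}\right)$, with the opposite sign on the linear term from what you wrote; this is exactly what the paper's diagrammatic computation produces, so the error is only in the bookkeeping of the reduction and would be caught when you carried out the resolution.
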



\begin{proof}
	We use the isomorphism $\SkAlg{q}{\Sigma_{0, n+1}} \to \AW{n}\colon s_A \mapsto -\Lambda_A$ and instead prove the result for loops in $\SkAlg{q}{\Sigma_{0, n+1}}$.
	As usual, we represent $\Sigma_{0, n+1}$ as $n$ points in a line with the final point used to flatten the sphere onto the page. We omit any point not in $A_1 \cup A_2 \cup A_3 \cup A_4$ as these points make no difference to the calculation: either they are to the left or right of the loops, or the loops pass below them. We note that the condition on the $A_i$ means that all the points are partitioned into $A_1, A_2, A_3, A_4$ in order.

	If $A = A_1 \cup A_2 \cup A_4$ and $B = A_2 \cup A_3$ then we have
  \begin{align*}
    s_A s_B &= \diagramhh{skeincommutatorproofv2}{AB2}{12pt}{0pt}{0.3} = \diagramhh{skeincommutatorproofv2}{A1A42}{12pt}{0pt}{0.3} + q^{-1} \; \diagramhh{skeincommutatorproofv2}{NotStandard2}{12pt}{0pt}{0.3} \\
    &+ q \; \diagramhh{skeincommutatorproofv2}{A1A3A42}{12pt}{0pt}{0.3} + \diagramhh{skeincommutatorproofv2}{AllA2}{12pt}{0pt}{0.3} \\
    s_B s_A &= \diagramhh{skeincommutatorproofv2}{BA2}{12pt}{0pt}{0.3} = \diagramhh{skeincommutatorproofv2}{A1A42}{12pt}{0pt}{0.3} + q \; \diagramhh{skeincommutatorproofv2}{NotStandard2}{12pt}{0pt}{0.3} \\
     &+ q^{-1} \; \diagramhh{skeincommutatorproofv2}{A1A3A42}{12pt}{0pt}{0.3} + \diagramhh{skeincommutatorproofv2}{AllA2}{12pt}{0pt}{0.3}
  \end{align*}
Hence, we have
\begin{align*}
    q s_A s_B - q^{-1} s_B s_A &=  \left(q^{2}- q^{-2}\right) s_{(A \cup B) \backslash (A \cap B)} + \left(q -q^{-1}\right) (s_{A \backslash (A \cap B)} s_{B \backslash (A \cap B)}+ s_{A \cap B} s_{A \cup B}).
\end{align*}

The other cases are similar.
\end{proof}

The second result of \cite{DeClercq19} is even easier to prove:

\begin{thm}
	Let $ B \subseteq A \subset{\{1, \dots, n\}}$ then $\Lambda_A$ and $\Lambda_B$ commute.
\end{thm}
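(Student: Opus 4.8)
The plan is to argue exactly as in the proof of \cref{thm:commutators}, trading the Askey--Wilson algebra for the skein algebra. Via the isomorphism $\SkAlg{q}{\Sigma^{\bullet}_{0,n+1}} \xrightarrow{\sim} \AW{n}$, $s_A \mapsto -\Lambda_A$, of \cref{thm:iso}, it is equivalent to show that the loops $s_A$ and $s_B$ commute in $\SkAlg{q}{\Sigma_{0,n+1}}$ whenever $B \subseteq A$. The strategy is to isotope $s_A$ and $s_B$ so that they are disjoint, since disjoint simple closed curves always commute in a skein algebra --- the ``non-intersecting loops commuting'' relation already invoked in \cref{thm:presshort}.

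For the disjointness, draw $\Sigma_{0,n+1}$ in the standard way as $n$ points on a line with the remaining puncture flattening the sphere. The standard loop $s_A$ separates the punctures indexed by $A$ from the others, and the subsurface it cuts off on the side of $A$ is a sphere with $|A|+1$ punctures: the punctures of $A$ together with one new puncture $\star$ coming from the boundary $s_A$. Since $B \subseteq A$, the standard loop $s_B$ only encircles punctures indexed by $A$ (passing below every puncture not in $B$, in particular below every puncture not in $A$), so it is isotopic into this subsurface, where it is realised as the standard separating loop for $B$ in the $(|A|+1)$-punctured sphere, with $\star$ playing the role of the flattening puncture. Pushing the boundary curve $s_A$ slightly into this subsurface, we see that $s_A$ and $s_B$ are isotopic to disjoint curves in $\Sigma_{0,n+1}$.

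Finally, if $\gamma_1$ and $\gamma_2$ are disjoint simple closed curves in $\Sigma_{0,n+1}$, then stacking $\gamma_1$ above $\gamma_2$ in $\Sigma_{0,n+1}\times[0,1]$ gives a link isotopic to the one obtained by stacking $\gamma_2$ above $\gamma_1$, by sliding one curve past the other using the product structure in a neighbourhood disjoint from the other; hence $\gamma_1\gamma_2=\gamma_2\gamma_1$ in the skein algebra. Taking $\gamma_1=s_A$ and $\gamma_2=s_B$ yields $s_As_B=s_Bs_A$, and therefore $\Lambda_A\Lambda_B=\Lambda_B\Lambda_A$. The only step requiring genuine care is the claim that the standard representative of $s_B$ can be pushed into the subsurface bounded by $s_A$ when $B\subseteq A$: this is transparent from the line-of-handles picture, but in the final write-up it deserves an accompanying diagram, in the same style as those used in the proof of \cref{thm:commutators}.
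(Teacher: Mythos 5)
Your proposal is correct and follows the same route as the paper: the paper's proof is the one-line observation that for $B \subseteq A$ the loops $s_A$ and $s_B$ do not intersect and hence commute, which is exactly your argument with the isotopy and stacking details spelled out more fully.
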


\begin{proof}
	If $ B \subseteq A \subset{\{1, \dots, n\}}$ then the loops $s_A$ and $s_B$ do not intersect so they commute.
\end{proof}

As the loops $s_A$ and $s_B$ also do not intersect if $A \cap \{\min{B}, \dots, \max{B}\} = \emptyset$, we also have:

\begin{prop}
  Let $ B,\, A \subset{\{1, \dots, n\}}$ such that $A \cap \{\min{B}, \dots, \max{B}\} = \emptyset$ then $\Lambda_A$ and $\Lambda_B$ commute.
\end{prop}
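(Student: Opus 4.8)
The plan is to move the statement into the skein algebra and argue purely geometrically, exactly as in the two preceding results. Via the isomorphism $\SkAlg{q}{\Sigma_{0,n+1}} \to \AW{n}$ of \cref{thm:iso} (which sends $s_A \mapsto -\Lambda_A$), it suffices to prove that the loops $s_A$ and $s_B$ commute in $\SkAlg{q}{\Sigma_{0,n+1}}$, and for this it is enough to produce representative curves that are disjoint: if two framed links can be isotoped so that one lies entirely above the other with no crossings between them, then their product agrees in either order, since stacking introduces no crossings to resolve.

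Next I would unpack the hypothesis. Writing $[\min B, \max B]$ for the set $\{\min B, \dots, \max B\}$, the condition $A \cap [\min B, \max B] = \emptyset$ says that every puncture of $A$ lies strictly to the left of $\min B$ or strictly to the right of $\max B$. Recall the convention fixed in \cref{thm:AWskeiniso}: a loop $s_X$ encircles the punctures of $X$ and passes \emph{below} every puncture not in $X$. I would then isotope $s_B$ into an arbitrarily small tubular neighbourhood of an arc joining the punctures of $B$, this arc running just above the line of punctures and dipping below the non-$B$ punctures that happen to lie in $[\min B, \max B]$. Simultaneously, I would draw $s_A$ so that the strand connecting the portion of $s_A$ over its left punctures to the portion over its right punctures runs underneath the entire block $[\min B, \max B]$, well below the neighbourhood occupied by $s_B$. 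The two curves are then manifestly disjoint, so $s_A s_B = s_B s_A$, hence $[\Lambda_A, \Lambda_B] = 0$.

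The only point requiring care is the case where $A$ has punctures on both sides of $[\min B, \max B]$, since then $s_A$ is genuinely forced to traverse the region occupied by $s_B$; but the ``pass below'' convention provides exactly the vertical room needed to keep the two curves apart, so once the convention is pinned down this is routine and no real obstacle arises. When $A$ lies entirely to one side of $[\min B, \max B]$ the disjointness is even more immediate. I expect the whole argument to be a short paragraph, essentially a one-line corollary of the geometric principle already used to prove the previous theorem.
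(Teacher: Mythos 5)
Your argument is correct and is essentially the paper's own proof: the paper likewise observes that under the hypothesis the curves $s_A$ and $s_B$ can be realised disjointly (with $s_A$ passing below the whole block $\{\min B,\dots,\max B\}$), hence commute, and transfers this through the isomorphism of \cref{thm:iso}. The paper additionally remarks that the statement can alternatively be deduced from \cref{thm:commutators} by taking $A_2=\emptyset$, but the geometric disjointness route you chose is the one it actually uses.
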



\begin{rmk}
  Note that the loops $s_A$ and $s_B$ also do not intersect when $A \cap \{\min{B}, \dots, \max{B}\} = \emptyset$. Hence, we can immediately conclude in this case that $s_A$ and $s_B$ commute. Alternatively, this is the case of \cref{thm:commutators}  with $A_1=\{\,a\in A \;\vert\; a<b,\; \forall b \in B\,\}$, $A_2=\emptyset$, $A_3=B$ and $A_4 = \{\,a\in A \;\vert \;a>b,\; \forall b \in B\,\}$.
\end{rmk}

\section{Action of the braid group}
\label{sec:braid_grp}

As noted in \cite[Section 8]{CFG21}, both the Askey--Wilson algebra $\AW{3}$ and the skein algebra of the $4$-punctured sphere admit an action of the braid group on $3$ strands, and these actions are compatible with the isomorphism between the Askey--Wilson algebra and the skein algebra. We give in this section a higher rank version of this result.

Recall that the braid group on $n$ strands $B_n$ is the group with the following presentation:
\[
  \left\langle \; \beta_1,\ldots,\beta_{n-1}\ \middle\vert\ \beta_i\beta_{i+1}\beta_i = \beta_{i+1}\beta_i\beta_{i+1} \text{ for } 1 \leq i < n-1, \beta_i\beta_j= \beta_j\beta_i \text{ for } \lvert i-j \rvert > 1 \; \right\rangle.
\]

\subsection{Action of $B_n$ on the skein algebra of the $n+1$-punctured sphere}
\label{sec:braid_skein}

The braid group $B_n$ acts by half Dehn twists on the skein algebra of the $(n+1)$-punctured sphere. It permutes the punctures by anti-clockwise rotations and any framed link on the sphere is continuously deformed during the rotation process. 

\begin{ex}
  For example, the generator $\beta_2$ permutes anti-clockwise the second and third punctures and the framed link is deformed during the process:
  \[\beta_2 \cdot \left( \diagramhh{braidgroup}{s134}{12pt}{1pt}{0.25}
  \right) = \diagramhh{braidgroup}{braided}{12pt}{1pt}{0.25}\]
\end{ex}

\begin{prop}
  \label{prop:action_skein}
  Let $A=\{i_1<\cdots<i_k\}$ be a non-empty subset of $\{1,\ldots,n\}$ and $1 \leq i \leq n-1$. We have
\[
  \beta_i\cdot s_A =
  \begin{cases}
    s_A & \text{if } i,i+1 \in A \text{ or } i,i+1\not\in A,\\
    s_{\left(A\backslash \{i\}\right)\cup \{i+1\}} & \text{if } i\in A, i+1\not\in A,\\
    \displaystyle \frac{\left[s_{A},s_{\{i,i+1\}}\right]_q-\left(q-q^{-1}\right)\left(s_{i+1}s_{A\cup\{i\}}+s_{i}s_{A\backslash\{i+1\}}\right)}{q^2-q^{-2}} & \text{if } i\not\in A, i+1\in A.
  \end{cases}
\]
\end{prop}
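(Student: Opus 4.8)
The plan is to carry out the whole computation on the topological side, in $\SkAlg{q}{\Sigma^{\bullet}_{0,n+1}}$, where the $B_n$-action is defined. I would draw $\Sigma_{0,n+1}$ as $n$ marked points on a line (the remaining puncture flattening the sphere), keeping the convention that loops pass below the points they do not enclose. The generator $\beta_i$ is realised by a homeomorphism supported in a disc $D$ meeting only the punctures $i$ and $i+1$, which it rotates anticlockwise by a half-turn, pushing the left puncture through the bottom of $D$ and the right puncture through the top. So in every case it suffices to track the arc $s_A\cap D$ and then re-express the image in standard form.

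Cases 1 and 2 are direct isotopy arguments, best drawn as a small picture. If $i,i+1\in A$ or $i,i+1\notin A$, isotope $s_A$ so that $s_A\cap D$ is a single trivial arc lying above, respectively below, both punctures; the half-twist carries such an arc to an isotopic one, hence $\beta_i\cdot s_A=s_A$. If $i\in A$ and $i+1\notin A$, the arc $s_A\cap D$ encircles only the left puncture; the two punctures are exchanged without collision, the encircled one travelling through the bottom, so its encircling arc stays below the other puncture and becomes the standard arc around the point now in position $i+1$, giving $\beta_i\cdot s_A=s_{(A\setminus\{i\})\cup\{i+1\}}$. I would also record the base case $A=\{i+1\}$ of the remaining case: then $s_{\{i+1\}}\cap D$ is a small loop that simply follows its puncture through the top of $D$, so $\beta_i\cdot s_{\{i+1\}}=s_{\{i\}}$.

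The last case, $i\notin A$ and $i+1\in A$, is the heart of the matter: now the left puncture is dragged through the strand of $s_A$ passing below it, so $\beta_i\cdot s_A$ is a genuinely non-standard curve. One could resolve it directly through a sequence of applications of \eqref{skeinrel_cross}, but a cleaner route is to bootstrap from \cref{thm:commutators}. Put $B=(A\setminus\{i+1\})\cup\{i\}$, so that $i\in B$, $i+1\notin B$ and, by case 2, $\beta_i\cdot s_B=s_A$. With $A_1=\{a\in A: a<i\}$ and $A_4=\{a\in A: a>i+1\}$, the blocks $A_1,\{i\},\{i+1\},A_4$ exhibit $(B,\{i,i+1\})$ as case (1) of \cref{thm:commutators}, whose skein-algebra form (the single-loop term acquiring coefficient $q^2-q^{-2}$ from $s_A\mapsto-\Lambda_A$, as in the proof of \cref{thm:commutators}) reads
\[[s_B,s_{\{i,i+1\}}]_q=(q^2-q^{-2})\,s_A+(q-q^{-1})\bigl(s_{\{i\}}s_{A\cup\{i\}}+s_{A\setminus\{i+1\}}s_{\{i+1\}}\bigr).\]
Applying the algebra automorphism $\beta_i$ and using $\beta_i\cdot s_{\{i,i+1\}}=s_{\{i,i+1\}}$, $\beta_i\cdot s_B=s_A$, $\beta_i\cdot s_{\{i\}}=s_{\{i+1\}}$, $\beta_i\cdot s_{\{i+1\}}=s_{\{i\}}$ (base case), $\beta_i\cdot s_{A\cup\{i\}}=s_{A\cup\{i\}}$ and $\beta_i\cdot s_{A\setminus\{i+1\}}=s_{A\setminus\{i+1\}}$ — all instances of cases 1, 2 and the base case already established — turns this identity into a linear equation for $\beta_i\cdot s_A$ whose solution is exactly the asserted formula.

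The main obstacle is this third case. If one resolves $\beta_i\cdot s_A$ diagrammatically, the work lies in bookkeeping the several smoothings and re-imposing the below-convention on each resulting loop; with the bootstrapping argument the delicate points are instead selecting the correct orientation of the correct case of \cref{thm:commutators}, tracking the sign change between its $\Lambda$-version and its $s$-version, and handling the base case $A=\{i+1\}$ on its own, this being the unique instance of case 3 not reducible to case 2. As a consistency check I would confirm the general formula specialises correctly at $A=\{i+1\}$, using that the contractible loop $s_\emptyset$ equals $-q-q^{-1}$.
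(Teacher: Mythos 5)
Your proof is correct, but for the key case it takes a genuinely different route from the paper's. The paper handles the first two cases exactly as you do (by isotopy) and then treats $i\notin A$, $i+1\in A$ by a direct diagrammatic computation, resolving the crossings of the twisted curve ``along the lines of the graphical proof of \cref{thm:commutators}''. You instead bootstrap algebraically: setting $B=(A\setminus\{i+1\})\cup\{i\}$ so that $\beta_i\cdot s_B=s_A$ by case 2, you apply the automorphism $\beta_i$ to the already-established commutator relation for the pair $\left(B,\{i,i+1\}\right)$ and solve the resulting linear equation for $\beta_i\cdot s_A$. This is legitimate --- \cref{thm:commutators} is proved in \cref{sec:commutator} independently of the braid action, so there is no circularity --- and it is in fact precisely the strategy the paper itself uses for the companion statement on the Askey--Wilson side, \cref{prop:action_AW}. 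Your version buys a proof with no further diagram chase, at the price of the two points you correctly flag: the base case $\beta_i\cdot s_{\{i+1\}}=s_{\{i\}}$ must be done by hand, since for $A=\{i+1\}$ the term $s_{A\setminus\{i+1\}}s_{\{i+1\}}=s_{\emptyset}s_{\{i+1\}}$ makes the coefficient of the unknown vanish (as $(q^2-q^{-2})+(q-q^{-1})s_{\emptyset}=0$ with $s_{\emptyset}=-q-q^{-1}$) and the bootstrap degenerates; and the sign bookkeeping between the $\Lambda$- and $s$-forms of the commutator relation, where only the linear term changes sign under $s_X\mapsto-\Lambda_X$, which you also get right. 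The paper's direct computation, by contrast, keeps the proposition independent of \cref{thm:commutators} but leaves the diagrammatic resolution unwritten.
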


\begin{proof}
  If $i,i+1 \in A$ or $i,i+1\not\in A$, there is nothing to prove. If $i \in A$ and $i+1\not\in A$, it is clear that $\beta_i\cdot s_A =s_{(A\backslash \{i\})\cup \{i+1\}}$. The case $i\not\in A$ and $i+1\in A$ is a pleasant computation along the lines of the graphical proof of \cref{thm:commutators}.
\end{proof}

\subsection{Action of $B_n$ on the higher rank Askey--Wilson algebra}
\label{sec:braid_AW}

The action of the braid group $B_n$ on $\AW{n}$ by algebra automorphisms is given by conjugation by the $R$-matrix. Given $x\in \Uq\otimes \Uq$, we define $\beta(x)$ in (a completion of) $\Uq\otimes \Uq$ by:
\[
  \beta(x) = \sigma\left(\Psi^{-1}\left(\Theta x \Theta^{-1}\right)\right)
\]
where $\sigma(a\otimes b) = b\otimes a$. For $1 \leq i \leq n-1$ the action of the generator $\beta_i\in B_n$ is given by the endomorphism $\id^{\otimes i-1}\otimes \beta \otimes \id^{\otimes n-i-1}$. Once again, we should act on a completion of $\Uq^{\otimes n}$ because the quasi-$R$-matrix $\Theta$ is an infinite sum.

The properties of the quasi-$R$-matrix $\Theta$ ensure that we obtain an action of $B_n$. 

Thanks to \cref{lem:tau-conj}, the generators $\Lambda_A$ of the Askey--Wilson algebra can be rewritten using the action of the braid group. Given $i<j$, let $\beta_{i,j} = \beta_{j-1}\cdots\beta_{i+1}\beta_{i}$. Then if $A=\{i_1<\cdots<i_k\}$ is a non-empty subset of $\{1,\ldots,n\}$, we have
\[
\Lambda_A = \beta_{1,i_1}\beta_{2,i_2}\cdots\beta_{k,i_k}\cdot\left(\Delta^{(k-1)}(\Lambda)\otimes 1^{\otimes n-k}\right) \in \AW{n}.
\]

\begin{prop}
\label{prop:action_AW}
  The action of the braid group on (a completion of) $\Uq^{\otimes n}$ restricts to $\AW{n}$. Moreover, the images of the generators $\Lambda_A$ are given as follows:
\[
\beta_i\cdot \Lambda_A =
\begin{cases}
  \Lambda_A & \text{if } i,i+1 \in A \text{ or } i,i+1\not\in A,\\
  \Lambda_{(A\backslash \{i\})\cup \{i+1\}} & \text{if } i\in A,\; i+1\not\in A ,\\
  \displaystyle -\frac{\left[\Lambda_{A},\;\Lambda_{\{i,i+1\}}\right]_q-\left(q-q^{-1}\right)\left(\Lambda_{i+1}\Lambda_{A\cup\{i\}}+\Lambda_{i}\Lambda_{A\backslash\{i+1\}}\right)}{q^2-q^{-2}} & \text{if } i\not\in A,\; i+1\in A\text{.}
\end{cases}
\]
\end{prop}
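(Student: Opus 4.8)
The plan is to deduce the statement from its skein--theoretic counterpart \cref{prop:action_skein} via the isomorphism $\Phi\colon \SkAlg{q}{\Sigma^{\bullet}_{0,n+1}} \xrightarrow{\sim} \AW{n}$, $s_A\mapsto -\Lambda_A$ of \cref{thm:iso}. The essential point to establish is that $\Phi$ is \emph{equivariant} for the two braid group actions: half Dehn twists on the skein side, conjugation by the quasi-$R$-matrix on the Askey--Wilson side. Granting this, the three cases of the proposition follow by applying $\Phi$ to the three cases of \cref{prop:action_skein}, keeping track of the sign in $s_A\mapsto -\Lambda_A$: a single loop contributes a sign while a product of two loops contributes none, so $[s_A,s_{\{i,i+1\}}]_q$ is sent to $[\Lambda_A,\Lambda_{\{i,i+1\}}]_q$ and $s_{i+1}s_{A\cup\{i\}}$ to $\Lambda_{i+1}\Lambda_{A\cup\{i\}}$; since $\Phi(\beta_i\cdot s_A)=-\beta_i\cdot\Lambda_A$ this produces exactly the displayed formulas, the overall minus sign in the third case included. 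The claim that the action \emph{restricts} to $\AW{n}$ — a priori unclear, as the action is only defined on a completion of $\Uq^{\otimes n}$ — is then immediate: the formulas exhibit $\beta_i^{\pm1}\cdot\Lambda_A$ as an honest finite element of $\AW{n}$ for every generator, and since $\beta_i^{\pm1}$ act by algebra endomorphisms of the completion this forces $\beta_i^{\pm1}(\AW{n})\subseteq\AW{n}$, hence equality.

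To prove the equivariance of $\Phi$ I would run through the chain $\SkAlgSt{\Sigma^{\bullet}_{0,n+1}}\xrightarrow{g_n}\left(\Uq^{\lf}\right)^{\tilde{\otimes}n}\xrightarrow{\gamma_n}\Uq^{\otimes n}$ of \cref{thm:iso} and track how the half Dehn twist swapping the punctures $i$ and $i+1$ is transported at each step. Under the Costantino--L\^{e} identification (\cref{prop:nisoskeinrea}) this half twist becomes, on $\mathcal{O}_q(\SL_2)^{\tilde{\otimes}n}\simeq\left(\Uq^{\lf}\right)^{\tilde{\otimes}n}$, the composite of the transposition of the $i$-th and $(i+1)$-st tensorands with the braiding $c$ of the braided category of type $1$ integrable weight modules — geometrically the half twist is a single crossing of the $i$-th and $(i+1)$-st handles, which the skein category evaluates to that braiding (here one also invokes \cref{prop:cobr=br} to pass from the co-$R$-matrix braiding used in \cite{CL20} to the $\Uq$-braiding). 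The content of \cref{sec:unbraiding} — in particular \cref{lem:tau-conj}, which rewrites $\tau_L$ as conjugation by $\Theta_{21}$, together with the defining properties of $\Theta$ in \cref{thm:quasi-R-matrix} — then shows that $\gamma_n$ intertwines this braided transposition with the map $\id^{\otimes i-1}\otimes\beta\otimes\id^{\otimes n-i-1}$ for $\beta(x)=\sigma\left(\Psi^{-1}(\Theta x\Theta^{-1})\right)$; it suffices to verify the case $n=2$, namely $\gamma_2\circ(c\circ\sigma)=\beta\circ\gamma_2$, and then propagate along the inductive definition $\gamma_{n+1}=(\id\otimes\gamma_n)\circ\varphi_{n+1}$ exactly as in \cref{prop:unbraiding_action} and \cref{prop:inserting_units}. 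A clean way to package the outcome is the pair of matching identities $\Lambda_A=\beta_{1,i_1}\beta_{2,i_2}\cdots\beta_{k,i_k}\cdot\left(\Delta^{(k-1)}(\Lambda)\otimes 1^{\otimes n-k}\right)$ recorded in \cref{sec:braid_AW} and its skein analogue obtained by dragging the standard loop $s_{\{1,\ldots,k\}}$ into position by the same sequence of half Dehn twists, reducing everything to the standard loop, which $\Phi$ visibly sends to $-\Delta^{(k-1)}(\Lambda)\otimes1^{\otimes n-k}$.

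The main obstacle is making the middle step precise: correctly aligning all conventions — $c_{V,W}$ versus $c_{W,V}^{-1}$, the ``below'' convention for the loops $s_A$, the direction of the anti-clockwise rotation of punctures — so that the half Dehn twist matches the categorical braiding on the nose, and then confirming that $\gamma_n$ unbraids that braiding into precisely $\beta$ as defined in \cref{sec:braid_AW} and not a variant of it. Once the $n=2$ compatibility is checked directly from \cref{thm:quasi-R-matrix} and \cref{lem:tau-conj}, everything that remains is the same bookkeeping already carried out in \cref{sec:unbraiding}, and the final passage from \cref{prop:action_skein} to the three displayed formulas is purely formal.
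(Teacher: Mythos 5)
Your route is genuinely different from the paper's, and you should be aware that the paper's logical order is exactly the reverse of yours. The paper proves \cref{prop:action_AW} by a self-contained algebraic computation inside $\Uq^{\otimes n}$: for $i\in A$ it uses the factorisation $\Lambda_A=\beta_{1,i_1}\cdots\beta_{k,i_k}\cdot\left(\Delta^{(k-1)}(\Lambda)\otimes 1^{\otimes n-k}\right)$ together with the commutation relations $\beta_i\beta_{r,s}=\beta_{r,s}\beta_i$ and the intertwining property $\Psi(\Delta^{\op}(x))\Theta=\Theta\Delta(x)$ to show either invariance or the index shift; for $i\not\in A$, $i+1\in A$ it applies $\beta_i$ to the commutator relation of \cref{thm:commutators} for $A'=(A\backslash\{i+1\})\cup\{i\}$ and solves for $\beta_i\cdot\Lambda_A$ using the already-settled cases. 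The compatibility of the isomorphism $s_A\mapsto-\Lambda_A$ with the braid actions is then \emph{deduced} from \cref{prop:action_skein} and \cref{prop:action_AW}, not used to prove either. Your plan inverts this: you want to establish equivariance of $\Phi$ first, by a geometric/categorical argument, and then transport \cref{prop:action_skein} across. Your sign bookkeeping for that transport is correct (the quadratic terms are sign-invariant under $s_A\mapsto-\Lambda_A$ while $\beta_i\cdot s_A\mapsto-\beta_i\cdot\Lambda_A$, which produces the overall minus in the third case), and your observation that the explicit formulas then force the action to restrict to $\AW{n}$ is fine.

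The gap is that the equivariance of $\Phi$ — the entire mathematical content of your argument — is only sketched, and it is precisely the part the paper's proof is designed to avoid. You would need to prove two nontrivial compatibilities from scratch: (i) that the half Dehn twist on $\SkAlgSt{\Sigma^{\bullet}_{0,n+1}}$ is carried by the Costantino--L\^{e} isomorphism of \cref{prop:nisoskeinrea} to the braided transposition of adjacent tensorands (this is not stated anywhere in the paper and requires an argument about how the mapping class group interacts with the excision construction, plus the convention-matching via \cref{prop:cobr=br} that you flag); and (ii) that $\gamma_n$ intertwines that braided transposition with $\id^{\otimes i-1}\otimes\beta\otimes\id^{\otimes n-i-1}$, for which your proposed $n=2$ verification $\gamma_2\circ(c\circ\sigma)=\beta\circ\gamma_2$ followed by induction along $\gamma_{n+1}=(\id\otimes\gamma_n)\circ\varphi_{n+1}$ is plausible but not carried out. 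Until (i) and (ii) are done, the proof is incomplete; and if one were instead tempted to cite the section's final proposition (that $\Phi$ commutes with the braid actions) to supply the equivariance, the argument would be circular, since that proposition is itself derived from \cref{prop:action_AW}. If you want a proof at the level of detail the paper provides, the direct algebraic computation via \cref{thm:commutators} and the braid-word factorisation of $\Lambda_A$ is the shorter path.
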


\begin{proof}
  The first assertion follows from the explicit formulas for the action since this shows that $\psi_i(\Lambda_A) \in \AW{n}$ for all $1 \leq i \leq n-1$ and $A\subseteq \{1,\ldots,n\}$.

  Let $A=\{i_1<\cdots <i_k\}$ be a non-empty subset of $\{1,\ldots,n\}$. We first suppose that $i\in A$. Let $j$ be such that $i_j = i$. Since $\beta_i\beta_{r,s} = \beta_{r,s}\beta_i$ if $i+1<r$ or $s<i$, we have
  \begin{align*}
    \beta_i\cdot \Lambda_A &= \beta_i\cdot \left(\beta_{1,i_1}\beta_{2,i_2}\cdots\beta_{k,i_k}\cdot\left(\Delta^{(k-1)}(\Lambda)\otimes 1^{\otimes n-k}\right)\right)\\
                      &= \beta_{1,i_1}\cdots\beta_{j-1,i_{j-1}}\beta_i\beta_{j,i_j}\cdots\beta_{k,i_k}\cdot\left(\Delta^{(k-1)}(\Lambda)\otimes 1^{\otimes n-k}\right)\\
                      &= \beta_{1,i_1}\cdots\beta_{j-1,i_{j-1}}\beta_{j,i+1}\beta_{j+1,i_{j+1}}\cdots\beta_{k,i_k}\cdot\left(\Delta^{(k-1)}(\Lambda)\otimes 1^{\otimes n-k}\right),
  \end{align*}
which is obviously equal to $\Lambda_{(A\backslash \{i\})\cup \{i+1\}}$ if $i+1\not\in A$. 
 
If $i+1\in A$, then $i_{j+1} = i+1$ and since $\beta_{j,i+1}\beta_{j+1,i+1} = \beta_{j,i} \beta_{j,i+1} = \beta_{j,i} \beta_{j+1,i+1}\beta_j$, we find that
\[
  \beta_i\cdot\Lambda_A = \beta_{1,i_1}\beta_{2,i_2}\cdots\beta_{k,i_k}\beta_j\cdot\left(\Delta^{(k-1)}(\Lambda)\otimes 1^{\otimes n-k}\right).
\]
As $\Psi(\Delta^{\op}(x))\Theta = \Theta\Delta(x)$ and $j<k$, we have $\beta_j\cdot\left(\Delta^{(k-1)}(\Lambda)\otimes 1^{\otimes n-k}\right) = \Delta^{(k-1)}(\Lambda)\otimes 1^{\otimes n-k}$. Therefore, $\beta_i\cdot\Lambda_A = \Lambda_A$.

We now suppose that $i\not\in A$. If we also have that $i+1\not\in A$ then the arguments are similar to the previous case. If $i+1\in A$, we set $A'=\big(A\backslash\{i+1\}\big)\cup\{i\}$. Thanks to \cref{thm:commutators}, we have
\[
  \left[\Lambda_{A'},\;\Lambda_{\{i,i+1\}}\right]_q = -\left(q^2-q^{-2}\right)\Lambda_A + \left(q-q^{-1}\right)\left(\Lambda_{i+1}\Lambda_{A\cup\{i\}}+\Lambda_{i}\Lambda_{A\backslash\{i+1\}}\right).
\]
We now act with $\beta_i$ to obtain
\begin{multline*}
  \left[\beta_i\cdot \Lambda_{A'},\;\beta_i\cdot \Lambda_{\{i,i+1\}}\right]_q \\= -\left(q^2-q^{-2}\right)\beta_i\cdot \Lambda_A + \left(q-q^{-1}\right)\left(\left(\beta_i\cdot \Lambda_{i}\right)\left(\beta_i\cdot\Lambda_{A\cup\{i\}}\right)+\left(\beta_i\cdot \Lambda_{i+1}\right)\left(\beta_i\cdot\Lambda_{A\backslash\{i+1\}}\right)\right).
\end{multline*}
But $\beta_i\cdot\Lambda_{A'} = \Lambda_A$, $\beta_i\cdot\Lambda_{\{i,i+1\}}=\Lambda_{\{i,i+1\}}$, $\beta_i\cdot\Lambda_{A\cup\{i\}}=\Lambda_{A\cup\{i\}}$ and $\beta_i\cdot\Lambda_{A\backslash\{i+1\}}=\Lambda_{A\backslash\{i+1\}}$ by the previous cases and $\beta_i\cdot\Lambda_{i} = \Lambda_{i+1}$ and $\beta_i\cdot\Lambda_{i+1} = \Lambda_{i}$ since the Casimir $\Lambda$ is central. We then obtain the formula for $\beta_i\cdot\Lambda_A$.
\end{proof}

Form \cref{prop:action_skein} and \cref{prop:action_AW}, we immediately deduce the following

\begin{prop}
  The algebra isomorphism $\SkAlg{q}{\Sigma_{0, n+1}}\rightarrow \AW{n}$ given by $s_A\mapsto -\Lambda_A$ commutes with the action of the braid group $B_n$.
\end{prop}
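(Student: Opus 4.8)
The plan is to deduce the statement directly from the two explicit descriptions of the braid action obtained in \cref{prop:action_skein} and \cref{prop:action_AW}. Write $\Phi\colon \SkAlg{q}{\Sigma_{0,n+1}} \xrightarrow{\sim} \AW{n}$ for the isomorphism of \cref{thm:AWskeiniso}, so that $\Phi(s_A) = -\Lambda_A$ for every non-empty $A\subseteq\{1,\dots,n\}$. Since $B_n$ acts by algebra automorphisms on $\SkAlg{q}{\Sigma_{0,n+1}}$ (see \cref{sec:braid_skein}) and on $\AW{n}$ (by \cref{prop:action_AW}, the action on a completion of $\Uq^{\otimes n}$ restricting to honest automorphisms of $\AW{n}$), for each generator $\beta_i$ of $B_n$ both $x\mapsto \Phi(\beta_i\cdot x)$ and $x\mapsto \beta_i\cdot\Phi(x)$ are algebra homomorphisms $\SkAlg{q}{\Sigma_{0,n+1}}\to\AW{n}$. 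As the loops $s_A$ generate $\SkAlg{q}{\Sigma_{0,n+1}}$, it suffices to check that
\[
  \Phi(\beta_i\cdot s_A) = \beta_i\cdot\Phi(s_A) = -\,\beta_i\cdot\Lambda_A
\]
for all $i$ and all non-empty $A$; agreement on the $\beta_i$ then propagates to all of $B_n$.

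I would then compare the three cases of \cref{prop:action_skein} with those of \cref{prop:action_AW}. When $i,i+1\in A$ or $i,i+1\notin A$ both sides equal $-\Lambda_A$, and when $i\in A$, $i+1\notin A$ both equal $-\Lambda_{(A\setminus\{i\})\cup\{i+1\}}$, so only the case $i\notin A$, $i+1\in A$ has content. Here I apply $\Phi$ term by term to the right-hand side of the skein formula. Every monomial appearing there has total degree two in the generators $s_C$ --- namely $q\,s_As_{\{i,i+1\}}$ and $q^{-1}s_{\{i,i+1\}}s_A$ from $[s_A,s_{\{i,i+1\}}]_q$, together with $s_{i+1}s_{A\cup\{i\}}$ and $s_i s_{A\setminus\{i+1\}}$ --- so it picks up the factor $(-1)^2 = 1$ under $\Phi$, giving $\Phi([s_A,s_{\{i,i+1\}}]_q) = [\Lambda_A,\Lambda_{\{i,i+1\}}]_q$ and similarly for the other products. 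Hence
\[
  \Phi(\beta_i\cdot s_A) = \frac{[\Lambda_A,\Lambda_{\{i,i+1\}}]_q - \left(q-q^{-1}\right)\left(\Lambda_{i+1}\Lambda_{A\cup\{i\}} + \Lambda_i\Lambda_{A\setminus\{i+1\}}\right)}{q^2-q^{-2}},
\]
which is exactly $-\,\beta_i\cdot\Lambda_A = \beta_i\cdot\Phi(s_A)$ by the third case of \cref{prop:action_AW}. The overall minus sign present in \cref{prop:action_AW} but absent from \cref{prop:action_skein} is precisely what is absorbed by the identity $\Phi(s_A) = -\Lambda_A$.

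I do not anticipate a genuine obstacle: the only delicate point is this sign bookkeeping, which works because in the one nontrivial case every term is a degree-two monomial in the $s_A$, so the signs $(-1)^2$ all cancel. The reduction to the generating loops is legitimate because both $x\mapsto\Phi(\beta_i\cdot x)$ and $x\mapsto\beta_i\cdot\Phi(x)$ are genuine algebra maps --- this uses that $\Phi$ is an algebra isomorphism by \cref{thm:iso} and that the braid action is by algebra automorphisms on both sides.
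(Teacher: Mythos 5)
Your proof is correct and is exactly the deduction the paper has in mind: the paper states that the proposition follows immediately from \cref{prop:action_skein} and \cref{prop:action_AW}, and your write-up simply makes explicit the reduction to the generating loops $s_A$ and the case-by-case sign check, including the only non-trivial point that the minus sign in the third case of \cref{prop:action_AW} is absorbed because $\Phi$ sends each degree-two monomial $s_Bs_C$ to $(-\Lambda_B)(-\Lambda_C)=\Lambda_B\Lambda_C$.
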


\section{Graded Dimensions}
\label{sec:dimensions}

In this section we will compute the Hilbert series of the Askey--Wilson algebra $\AW{n}$ and the skein algebra $\SkAlg{q}{\Sigma_{g,r}}$ of the surface $\Sigma_{g,r}$ of genus $g$ with $r>0$ punctures. These algebras are filtered and their Hilbert series encodes vector space dimension of each graded part of the associated graded algebra. In the next section, we will use these Hilbert series to find presentations for $\AW{4}$ and $\SkAlg{q}{\Sigma_{0,5}}$.

\begin{defn}
The \emph{Hilbert series} of the \(\mathbb{Z}_{\geq 0}\) graded vector space \(A = \bigoplus_{n \in \mathbb{Z}_{\geq 0}} A[n-1]\) is the formal power series
\[
  h_A(t) = \sum_{n \in \mathbb{Z}_{\geq 0}} \dim(A[n]) t^n.
\]
The  Hilbert series of a \(\mathbb{Z}_{\geq 0}\) graded algebra \(A\) is the Hilbert series of its underlying \(\mathbb{Z}_{\geq 0}\) graded vector space.
\end{defn}

We will use the isomorphism between the subalgebra $\mathcal{L}_{\Sigma_{0,n+1}}^{\Uq}$ of the Aleeksev moduli algebra $\mathcal{L}_{\Sigma_{0,n+1}}$
which is invariant under the action of $\Uq$ and the Askey--Wilson algebra $\AW{n}$ from \cref{sec:moduli}, and also the isomorphism between $\mathcal{L}_{\Sigma_{g,r}}^{\Uq}$ and the skein algebra $\SkAlg{q}{\Sigma_{g,r}}$ from \cref{sec:skein}, so that we can instead compute the Hilbert series of $\mathcal{L}_{\Sigma_{g,r}}^{\Uq}$ whose compution is a generalisations of the calculations for $\Sigma_{0,4}$ by the first author in \cite{Cooke18}.

Recall that the Alekseev moduli algebra 
\[\mathcal{L}_{\Sigma_{g,r}} = \mathcal{D}_q(\SL_2)^{\tilde{\otimes} g}\tilde{\otimes}\mathcal{O}_q(\SL_2)^{\tilde{\otimes} r-1} = \mathcal{O}_q(\SL_2)^{\hat{\otimes} 2g}\tilde{\otimes}\mathcal{O}_q(\SL_2)^{\tilde{\otimes} r-1}\]
is generated as an algebra by elements $(k_i)_{\delta}^{\epsilon} \in \mathcal{K}_i \hookrightarrow{} \mathcal{L}_{\Sigma_{g,r}}$ for $i \in {1, \dots, (2g+r-1)}$. If we define the degree $ \left| (k_i)_{\delta}^{\epsilon} \right| = 1$ then all the relations of $\mathcal{L}_{\Sigma_{g,r}}$ are homogeneous except the determinant relations $(k_i)_+^+(k_i)_-^--q(k_i)_-^+(k_i)_+^- = 1$ for which the non-homogeneous element is in the ground ring. Hence, $\mathcal{L}_{\Sigma_{g,r}}$ is filtered with the $j^{th}$ filtered part $\left(\mathcal{L}_{\Sigma_{g,r}}\right)(j)$ 
being the vector space spanned by all monomials in the generators $(k_i)_{\delta}^{\epsilon}$ with degree at most $j$\footnote{Let $A$ be a filtered algebra with generators $\{x_i\}$ to which we assign degrees $|x_i| \in \mathbb{Z}_{>0}$. Then the degree of an element $f \in A$ is the smallest degree of any polynomial in the $\{x_i\}$ which represents $f$. Note that we have $\operatorname{degree}(fg) \leq \operatorname{degree}(f) \operatorname{degree}(g)$ rather than equality.
}. As $\mathcal{L}_{\Sigma_{g,r}}$ is filtered rather than graded we need to consider its associated graded algebra.

\begin{defn}
The \emph{associated graded algebra} of the \(\mathbb{Z}_{\geq 0}\) filtered algebra \(A = \bigcup_{n \in \mathbb{Z}_{\geq 0}} A(n)\) is 
\vspace{-1em}
\[
\mathscr{G}(A) = \bigoplus_{n \in \mathbb{Z}_{\geq 0}} A[n] \text{ where } A[n] = \begin{cases}
A(0) & \text{ for }n = 0 \\
\faktor{A(n)}{A(n-1)} &\text{ for } n > 0.
\end{cases}
\]
The \emph{Hilbert series} of the \(\mathbb{Z}_{\geq 0}\) filtered algebra \(A = \bigcup_{n \in \mathbb{Z}_{\geq 0}} A(n)\) is the Hilbert series of the associated graded algebra \(\mathscr{G}(A)\).
\end{defn}

As $\mathcal{L}_{\Sigma_{g,r}}$ is acted on by $\Uq$ we can also decompose it into its weight spaces to obtain its character.

\begin{defn}
  Let \(V\) be a vector space acted on by \(\Uq\) and let \(V^k\) denote the \(q^k\)-weight space of \(V\) where \(k \in \mathbb{Z}\). The \emph{character} of \(V\) is the formal power series
  \[
  \ch_V(u) = \sum_{k \in \mathbb{Z}} \dim \left(V^{k} \right) u^{k}.
  \]
\end{defn}

Using both the decomposition into graded parts and into weight spaces simultaneously gives the graded character.

\begin{defn}
Let \(V = \bigoplus_{n} V[n]\) be a graded vector space acted on by \(\Uq\). The \emph{graded character} of \(V\) is  
\[
h_V(u,t) := \sum_n \ch_{V[n]}(u) t^n = \sum_{n,k} \dim \left(V[n]^k\right) u^k t^n,
\]
where \(V[n]^k\) is the \(q^k\)-weight space of \(V[n]\). If \(V\) is filtered rather than graded the graded character of \(V\) \(h_V(u,t)\) is \(h_{\mathscr{G}(V)}(u,t)\), the graded character of associated graded vector space \(\mathscr{G}(V)\).
\end{defn}

As the Alekseev moduli algebra \(\mathcal{L}_{\Sigma_{g,r}}\) is simply multiple copies of the reflection equation algebra $\mathcal{O}_q(\SL_2)$ tensored together its graded character is easy to determine:

\begin{prop}
  \label{prop:charofL}
The graded character of \(\mathcal{L}_{\Sigma_{g,r}}\) is 
\begin{align*}
    h_{A}(u,t) = \left(\frac{(1 + t)}{(1 - t) (1-u^2t) (1-u^{-2}t)}\right)^{2g + r - 1}.
\end{align*}
\end{prop}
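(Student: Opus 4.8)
The plan is to reduce the computation to a single tensor factor and then evaluate a Poincaré--Birkhoff--Witt sum. Recall from \cref{sec:alekseev} that $\mathcal{L}_{\Sigma_{g,r}} = \mathcal{O}_q(\SL_2)^{\tilde{\otimes} n}$ with $n = 2g+r-1$. The degree filtration on $\mathcal{L}_{\Sigma_{g,r}}$ (every generator $(k_i)^{\epsilon}_{\delta}$ in degree $1$) is the convolution of the $n$ factor filtrations, and the $\Uq$-action is the diagonal one coming from the coproduct; here I would note that this is consistent because, under the isomorphism $\mathcal{O}_q(\SL_2)\cong\Uq^{\lf}$ of \cref{sec:REA}, the four generators span a $\Uq$-subrepresentation for the left adjoint action, so the degree filtration on each factor is $\Uq$-stable and is respected by the braided multiplication. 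Consequently $\mathscr{G}(\mathcal{L}_{\Sigma_{g,r}})$ is, as a graded $\Uq$-module, the $n$-fold tensor power of $\mathscr{G}(\mathcal{O}_q(\SL_2))$, and therefore $h_{\mathcal{L}_{\Sigma_{g,r}}}(u,t) = h_{\mathcal{O}_q(\SL_2)}(u,t)^{\,n}$. So it suffices to treat $n=1$.

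For $n=1$ I would first read off the weights of the generators. With the $\mathbb{Z}$-grading of \cref{sec:left_adjoint_action} (so that the $q^k$-weight space for the adjoint action is spanned by the homogeneous $x$ with $2\lvert x\rvert = k$, where $\lvert E\rvert = 1$, $\lvert F\rvert = -1$, $\lvert K\rvert = \lvert \Lambda\rvert = 0$) and the explicit isomorphism of \cref{sec:REA}, the generators $k_+^+$ and $k_-^-$ lie in the $q^0$-weight space, $k_-^+$ in the $q^2$-weight space, and $k_+^-$ in the $q^{-2}$-weight space. Next I would invoke a Poincaré--Birkhoff--Witt basis of $\mathcal{O}_q(\SL_2)$, namely the ordered monomials
\[
(k_-^+)^a (k_+^+)^b (k_+^-)^c \quad (a,b,c\geq 0) \qquad\text{and}\qquad (k_-^+)^a (k_-^-)^b (k_+^-)^c \quad (a,c\geq 0,\ b\geq 1),
\]
that is, the ordered monomials in which $k_+^+$ and $k_-^-$ do not both occur --- the quantum determinant relation $k_+^+k_-^- = q\,k_-^+k_+^- + 1$ together with the homogeneous reflection-equation relations allowing any monomial to be rewritten in this form. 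A basis monomial of the first kind has $\Uq$-weight $u^{2a-2c}$ and degree $a+b+c$, so this family contributes $\sum_{a,b,c\geq 0} u^{2a-2c}\,t^{a+b+c} = \bigl((1-u^2t)(1-t)(1-u^{-2}t)\bigr)^{-1}$ to the graded character, and the second family similarly contributes $t\,\bigl((1-u^2t)(1-t)(1-u^{-2}t)\bigr)^{-1}$. Adding these gives $h_{\mathcal{O}_q(\SL_2)}(u,t) = \dfrac{1+t}{(1-t)(1-u^2t)(1-u^{-2}t)}$, and raising to the $n$-th power with $n = 2g+r-1$ yields the claimed formula for $h_A(u,t)$.

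The one nonroutine ingredient is the PBW claim: that the listed monomials are linearly independent (that they span $\mathcal{O}_q(\SL_2)$ is immediate from the relations, which already gives the coefficientwise bound $h_{\mathcal{O}_q(\SL_2)}(u,t) \leq \tfrac{1+t}{(1-t)(1-u^2t)(1-u^{-2}t)}$). One way to get the reverse inequality is to transport a PBW basis of $\Uq^{\lf}\subseteq\Uq$ through the isomorphism of \cref{sec:REA}; alternatively one checks the overlap ambiguities for the reflection-equation and determinant relations via Bergman's diamond lemma, a finite computation for $\SL_2$. Everything else is bookkeeping with weights and geometric series, so this flatness statement is where I expect the real work to be.
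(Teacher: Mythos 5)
Your proposal is correct, and its overall shape matches the paper's: both reduce to the statement that the graded character is multiplicative across the $2g+r-1$ tensor factors (which holds because $\tilde{\otimes}$ and $\hat{\otimes}$ agree with the ordinary tensor product as filtered $\Uq$-modules) and then compute the character of a single copy of $\mathcal{O}_q(\SL_2)$. The difference is in how that single-factor character is obtained: the paper simply quotes it from \cite[Proposition A.6]{Cooke18}, whereas you derive it from scratch by exhibiting the standard monomial basis of $\mathcal{O}_q(\SL_2)$ (ordered monomials in $k_-^+,k_+^+,k_+^-,k_-^-$ in which $k_+^+$ and $k_-^-$ do not both appear) and summing weights and degrees over it. Your weight assignments ($k_-^+$ in weight $u^2$, $k_+^-$ in weight $u^{-2}$, the diagonal entries in weight $u^0$) agree with the adjoint grading of \cref{sec:left_adjoint_action} under the isomorphism of \cref{sec:REA}, and the two geometric-series contributions do sum to $\frac{1+t}{(1-t)(1-u^2t)(1-u^{-2}t)}$. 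You correctly isolate the one nontrivial input, namely linear independence of the listed monomials (equivalently, that $\mathcal{O}_q(\SL_2)$ is a flat deformation of the classical coordinate ring $\mathcal{O}(\SL_2)$); either of your two suggested justifications --- transporting a basis of $\Uq^{\lf}$ through the Rosso-type isomorphism, or a diamond-lemma check of the finitely many overlap ambiguities --- closes this. The trade-off is that your argument is self-contained where the paper's defers to a citation, at the cost of having to establish the PBW/flatness statement explicitly.
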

\begin{proof}
	From \cite[Proposition A.6.]{Cooke18} we have that the graded character of \(\mathcal{O}_q(\SL_2)\) is 
\begin{align*}
    h_{\mathcal{O}_q(\SL_2)}(u,t) = \frac{(1 + t)}{(1 - t) (1-u^2t) (1-u^{-2}t)}.
\end{align*}
and as $\mathcal{L}_{\Sigma_{g,r}} \cong (\mathcal{O}_q(\SL_2) \hat{\otimes} \mathcal{O}_q(\SL_2))^{\otimes g} \tilde{\otimes} \mathcal{O}_q(\SL_2)^{\otimes r - 1}$ this gives the result.
\end{proof}
\noindent
This can now be used to compute the Hilbert series of its $\Uq$ invariant subalgebra:

\begin{thm}
\label{thm:dimensions}
The Hilbert series of $\mathcal{L}_{\Sigma_{g,r}}^{\Uq}$ is
\[h(t) = \frac{(1+t)^{n-2}}{(1-t)^{n}(1-t^2)^{2n-3}}\left(\sum_{k=0}^{n-2}{\binom{n-2}{k}}^2t^{2k} - \sum_{k=0}^{n-3}\binom{n-2}{k}\binom{n-2}{k+1}t^{2k+1}\right) \]
where $n = 2g + r -1$.
\end{thm}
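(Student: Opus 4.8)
The plan is to extract the $\Uq$-invariants of the moduli algebra by character theory, writing $n=2g+r-1$ throughout. The first point is that the degree filtration on $\mathcal{L}_{\Sigma_{g,r}}$ is by $\Uq$-submodules: the four generators $(k_i)_\delta^\epsilon$ of each tensor factor span a copy of the coadjoint module, so $\Uq$ preserves the grading of the free algebra on these generators, and since $\mathcal{L}_{\Sigma_{g,r}}$ is a $\Uq$-module algebra the action preserves every filtered piece $\mathcal{L}_{\Sigma_{g,r}}(j)$. As the category of finite-dimensional type-$1$ $\Uq$-modules is semisimple, taking invariants is exact and commutes with forming the associated graded, so $\mathscr{G}\!\bigl(\mathcal{L}_{\Sigma_{g,r}}^{\Uq}\bigr)\cong\mathscr{G}\!\bigl(\mathcal{L}_{\Sigma_{g,r}}\bigr)^{\Uq}$ and the sought Hilbert series is $h(t)=\sum_j\dim\bigl(\mathscr{G}\mathcal{L}_{\Sigma_{g,r}}[j]\bigr)^{\Uq}t^j$.

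Next, for a finite-dimensional type-$1$ module $M=\bigoplus_\ell V_\ell^{\oplus a_\ell}$, comparing weight multiplicities gives $a_0=\dim M^0-\dim M^2$, so the trivial-isotypic multiplicity is read off from the character by the operator $[u^0]-[u^2]$. Applying this in each graded degree and inserting the graded character from \cref{prop:charofL},
\[
h(t)=\bigl([u^0]-[u^2]\bigr)\,h_{\mathcal{L}_{\Sigma_{g,r}}}(u,t)=\bigl([u^0]-[u^2]\bigr)\left(\frac{1+t}{(1-t)(1-u^2t)(1-u^{-2}t)}\right)^{n}.
\]

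I would then run the coefficient extraction: the $u$-free factor $(1+t)^n(1-t)^{-n}$ pulls out, and expanding $(1-u^2t)^{-n}=\sum_a\binom{n-1+a}{a}u^{2a}t^a$, $(1-u^{-2}t)^{-n}=\sum_b\binom{n-1+b}{b}u^{-2b}t^b$, multiplying, and reading off the $u^0$ part ($a=b$) and the $u^2$ part ($a=b+1$) yields
\[
h(t)=\frac{(1+t)^n}{(1-t)^n}\left(\sum_{a\ge0}\binom{n-1+a}{a}^{2}t^{2a}-\sum_{b\ge0}\binom{n+b}{b+1}\binom{n-1+b}{b}t^{2b+1}\right).
\]
To reach the stated closed form I would sum each series with the classical negative-binomial identities $\sum_{a}\binom{m+a}{a}^2x^a=(1-x)^{-2m-1}\sum_{j=0}^{m}\binom mj^2x^j$ and $\sum_{b}\binom{m+1+b}{b+1}\binom{m+b}{b}x^b=(1-x)^{-2m-1}\sum_{k=0}^{m-1}\binom{m-1}{k}\binom{m+1}{k+1}x^k$ (Euler transform, or a short induction), with $m=n-1$ and $x=t^2$. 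Since the ratio of the resulting prefactor to the target prefactor $(1+t)^{n-2}(1-t)^{-n}(1-t^2)^{-(2n-3)}$ is $(1+t)^2(1-t^2)^{-2}=(1-t)^{-2}$, the theorem becomes equivalent to the polynomial identity
\[
\sum_{j=0}^{n-1}\binom{n-1}{j}^2t^{2j}-t\sum_{k=0}^{n-2}\binom{n-2}{k}\binom{n}{k+1}t^{2k}=(1-t)^{2}\left(\sum_{k=0}^{n-2}\binom{n-2}{k}^2t^{2k}-\sum_{k=0}^{n-3}\binom{n-2}{k}\binom{n-2}{k+1}t^{2k+1}\right),
\]
which I would check coefficientwise: the even coefficients reduce to $\binom{n-1}{j}^2=\bigl(\binom{n-2}{j}+\binom{n-2}{j-1}\bigr)^2$ and the odd coefficients collapse in the same way, both instances of Pascal's rule.

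The conceptual half — reducing to invariants of $\mathscr{G}\mathcal{L}_{\Sigma_{g,r}}$ by semisimplicity, and the $[u^0]-[u^2]$ description of invariant multiplicities — is routine; the real work I expect lies in the generating-function bookkeeping of the last step, namely identifying the two one-variable sums with the right rational functions and matching the powers of $1-t$, $1+t$ and $1-t^2$. As a sanity check I would verify the small cases by hand: the formula gives $\tfrac1{(1-t)^2(1-t^2)}$ for $\Sigma_{0,3}$ and $\tfrac{1+t^3}{(1-t)^3(1-t^2)^3}$ for $\Sigma_{0,4}$, the latter recovering the computation of the first author in \cite{Cooke18}.
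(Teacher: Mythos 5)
Your proposal is correct and follows essentially the same route as the paper: both start from the graded character in \cref{prop:charofL}, extract the trivial-isotypic multiplicity degree by degree (your operator $[u^0]-[u^2]$ is literally the paper's ``$u$-coefficient of $(u-u^{-1})h(u,t)$''), and arrive at the same intermediate series $\left(\tfrac{1+t}{1-t}\right)^n\bigl(\sum_m\binom{n-1+m}{m}^2t^{2m}-\sum_m\binom{n-1+m}{m}\binom{n+m}{m+1}t^{2m+1}\bigr)$. The only difference is at the very end, where the paper closes by citing the identity \cite[(6.10)]{CGPdAV} while you prove it yourself via the two Euler-transform resummations and the Pascal-rule coefficient check (which I have verified goes through, including your $n=2,3$ sanity checks); you also make explicit the semisimplicity argument for commuting invariants with the associated graded, which the paper leaves implicit.
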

\begin{proof}
As we have filtered isomorphisms is sufficient to prove this result for $\mathcal{L}_{\Sigma_{g,r}}^{\Uq}$.
We have that
\begin{align*}
    (1-ax)^{-n} &= \sum_{k=0}^{\infty} \rchoose{n}{k} a^k x^k, 
\end{align*}
where $\rchoose{n}{k} := \displaystyle\binom{n+k-1}{k}$. Appling this to the graded character of $A = \mathcal{L}_{\Sigma_{g,r}}$ from \cref{prop:charofL} gives:
\begin{align*}
    h_{A}(u,t) &= \left(\frac{1 + t}{1 - t}\right)^n \frac{1}{(1-u^2t)^n(1-u^{-2}t)^n} \\
    &= \left(\frac{1 + t}{1 - t}\right)^n \left( \sum_{k=0}^{\infty} \rchoose{n}{k} u^{2k} t^k\right) \left( \sum_{l=0}^{\infty} \rchoose{n}{l} u^{-2l} t^l \right) \\
    &= \left(\frac{1 + t}{1 - t}\right)^n \left( \sum_{k+l = m} \rchoose{n}{k} \rchoose{n}{l} u^{2(k-l)} t^m\right)
\end{align*}
Similarly to in \cite{Cooke18} the Hilbert series \(h_{\mathscr{A}}(t)\) of $\mathscr{A} = \mathcal{L}_{\Sigma_{g,r}}^{\Uq}$ is determined by the \(u\) coefficient of \((u - u^{-1}) h_{A}(u,t)\). The \(u\) coefficient is 
\begin{align*}
h_{\mathscr{A}}(t)=&\left(\frac{1 + t}{1 - t}\right)^n \left( \sum_{k+l = m, k - l = 0} \rchoose{n}{k} \rchoose{n}{l} t^m  - \sum_{k+l = m, k - l = 1} \rchoose{n}{k} \rchoose{n}{l} t^m \right) \\
=&\left(\frac{1 + t}{1 - t}\right)^n \left( \sum_{m = 0}^{\infty} \rchoose{n}{m}^2 t^{2m}  - \sum_{m = 0}^{\infty} \rchoose{n}{m} \rchoose{n}{m+1} t^{2m+1} \right).
\end{align*}

Using \cite[(6.10)]{CGPdAV}, we have
\begin{multline*}
\frac{1}{(1 - t)^n} \left( \sum_{m = 0}^{\infty} \rchoose{n}{m}^2 t^{2m}  - \sum_{m = 0}^{\infty} \rchoose{n}{m} \rchoose{n}{m+1} t^{2m+1} \right) \\ = \frac{(1+t)^{n-2}}{(1-t^2)^{3(n-1)}}\left(\sum_{k=0}^{n-2}{\binom{n-2}{k}}^2t^{2k} - \sum_{k=0}^{n-3}\binom{n-2}{k}\binom{n-2}{k+1}t^{2k+1}\right),
\end{multline*}
which gives the expected formula after multiplying by $(1+t)^n$.
\end{proof}

Using the isomorphisms 
\[
  \mathcal{L}_{\Sigma_{g,r}}^{\Uq} \to \SkAlg{q}{\Sigma_{g,r}}: -\Delta^{(k-1)}(\qtr)_{\underline{i}} \mapsto s_A \text{ and } \mathcal{L}_{\Sigma_{0, n+1}}^{\Uq} \to \AW{n}: \Delta^{(k-1)}(\qtr)_{\underline{i}} \mapsto \Lambda_A
\]
from \cref{sec:moduli} and \cref{sec:skein} we can induce a filtered structure on $\SkAlg{q}{\Sigma_{g,r}}$ such that $s_A$ has degree $|A|$ and a filtered structure on $\AW{n}$ such that  $\Lambda_A$ has degree $|A|$. 

\begin{cor}
  \label{cor:hilbskein}
  The Hilbert series of the skein algebra $\SkAlg{q}{\Sigma_{g, r}}$ and the higher rank Askey--Wilson algebra $\AW{n}$ is 
\[h(t) = \frac{(1+t)^{n-2}}{(1-t)^{n}(1-t^2)^{2n-3}}\left(\sum_{k=0}^{n-2}{\binom{n-2}{k}}^2t^{2k} - \sum_{k=0}^{n-3}\binom{n-2}{k}\binom{n-2}{k+1}t^{2k+1}\right) \]
where $n = 2g + r -1$.
\end{cor}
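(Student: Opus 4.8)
The plan is to deduce this directly from \cref{thm:dimensions}, using that an isomorphism of filtered algebras induces an isomorphism of the associated graded algebras and therefore preserves Hilbert series.

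First I would recall the two relevant isomorphisms. The chain of maps built in \cref{sec:skein} gives an algebra isomorphism $\mathcal{L}_{\Sigma_{0,n+1}}^{\Uq}\xrightarrow{\sim}\AW{n}$ sending $\Delta^{(k-1)}(\qtr)_{\underline{i}}\mapsto\Lambda_A$ (this is \cref{thm:iso}, obtained by combining \cref{prop:nisoskeinrea} with \cref{cor:AW_generators}); and, using in addition the identification of the skein algebra of a general punctured surface $\Sigma_{g,r}$ with the $\Uq$-invariants of the Alekseev moduli algebra from \cite{GunninghamJordanSafronov19,Faitg2020}, an algebra isomorphism $\mathcal{L}_{\Sigma_{g,r}}^{\Uq}\xrightarrow{\sim}\SkAlg{q}{\Sigma_{g,r}}$ sending $-\Delta^{(k-1)}(\qtr)_{\underline{i}}\mapsto s_A$. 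By construction the filtrations on $\AW{n}$ and on $\SkAlg{q}{\Sigma_{g,r}}$ are precisely those transported along these isomorphisms from the sub-filtration that $\mathcal{L}_{\Sigma_{g,r}}^{\Uq}$ inherits from the filtration of $\mathcal{L}_{\Sigma_{g,r}}$ in which each $(k_i)_\delta^\epsilon$ has degree $1$ (so that $\Delta^{(k-1)}(\qtr)_{\underline{i}}$ has degree $|A|$). Hence both isomorphisms are strict morphisms of filtered algebras, so they descend to graded isomorphisms of the associated graded algebras, giving
\[
h_{\AW{n}}(t)=h_{\mathcal{L}_{\Sigma_{0,n+1}}^{\Uq}}(t)\qquad\text{and}\qquad h_{\SkAlg{q}{\Sigma_{g,r}}}(t)=h_{\mathcal{L}_{\Sigma_{g,r}}^{\Uq}}(t).
\]
Then \cref{thm:dimensions} evaluates both right-hand sides to the asserted rational function with $n=2g+r-1$ (the Askey--Wilson case corresponding to $g=0$, $r=n+1$), which completes the proof.

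I do not anticipate any genuine obstacle: all of the substance sits in \cref{thm:dimensions}. The only points that need a line of justification are (i) that the two isomorphisms are \emph{strict} as filtered maps---equivalently, that $\Lambda_A$ and $s_A$ are the images of $\Delta^{(k-1)}(\qtr)_{\underline{i}}$ and $-\Delta^{(k-1)}(\qtr)_{\underline{i}}$, which is exactly what the assignments recorded above say, and that these elements generate the transported filtrations; and (ii) that for positive genus the skein/moduli identification must be taken from \cite{GunninghamJordanSafronov19,Faitg2020} rather than from the punctured-sphere statement \cref{prop:nisoskeinrea}.
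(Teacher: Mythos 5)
Your proposal matches the paper's own argument: the corollary is deduced from \cref{thm:dimensions} by transporting the filtration along the isomorphisms $\mathcal{L}_{\Sigma_{g,r}}^{\Uq}\to\SkAlg{q}{\Sigma_{g,r}}$ and $\mathcal{L}_{\Sigma_{0,n+1}}^{\Uq}\to\AW{n}$ so that $s_A$ and $\Lambda_A$ have degree $|A|$, whence the associated graded algebras, and so the Hilbert series, agree. Your two flagged justification points (strictness of the filtered isomorphisms and the use of the general-surface identification for $g>0$) are exactly the content the paper records in the paragraph preceding the corollary, so there is nothing missing.
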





\begin{rmk}
  In \cite[Section 6.2]{CGPdAV}, it is shown that the polynomial $(1-t)^n h(t)$ is the Hilbert series of the centraliser of the diagonal action $U(\mathfrak{sl}_2)$ in $U\left(\mathfrak{sl}_2^{\otimes 2}\right)$ and that the numerator has positive coefficients. 
  The term $\frac{1}{(1-t)^n}$ is from counting the $n$ simple loops $s_1,\ldots,s_n$ which are central and have no relations with any other loops so that $\SkAlg{q}{\Sigma_{0, n+1}}$ is free over the subalgebra generated by them.
\end{rmk}

\begin{rmk}
 This Hilbert series can also be written in terms of the hypergeometric function as follows:
 \[
    h(t) = \frac{(1+t)^n}{(1-t)^n} \left( \prescript{}{2}{F}_1(n,n;1;t^2) - nt \prescript{}{2}{F}_1(n,n+1;2;t^2) \right)
 \]
 \end{rmk}




\section{Presentation of the Skein Algebra of the Five-Punctured Sphere}
\label{sec:presentation}

In this section we shall use the isomorphisms between the Askey--Wilson algebra $\AW{n}$, the \(\Uq\)-invariants of the Alekseev moduli algebra and the skein algebra $\SkAlg{q}{\Sigma_{0, n+1}}$ together with the Hilbert series computed in the previous section to obtain a presentation for $\SkAlg{q}{\Sigma_{0, 5}}$ and therefore also for $\AW{4}$. This case represents the lowest of the higher-rank Askey--Wilson algebras and was the case considered by Post and Walker. 

Presentations of the Kauffman bracket skein algebra in the punctured surface case are only known for a handful of the simplest cases: punctured spheres with up to four punctures and punctured tori with either one or two punctures. 
The Hilbert series of $\SkAlg{q}{\Sigma_{g,r}}$ only depends on $n = 2g + r -1$ and so the cases for which a presentation for $\SkAlg{q}{\Sigma_{g,r}}$ in known correspond to $n=1,2,3$ whereas in this section we shall consider the five-punctured sphere which corresponds to $n=4$. Applying \cref{cor:hilbskein} for $n=4$ we get:

\begin{cor}
    \label{cor:hilbn4}
The Hilbert series of $\SkAlg{q}{\Sigma_{0,5}}$ and $\AW{4}$ is
\[
   h(t) = \frac{(1+t)^2}{(1-t)^4(1-t^2)^5}\left(1-2t+2t^2-2t^3+t^4\right)=\frac{1+t^2+4t^3+t^4+t^6}{(1-t)^4(1-t^2)^5}
\]
\end{cor}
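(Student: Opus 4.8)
The plan is to obtain \cref{cor:hilbn4} directly as the $n=4$ specialisation of \cref{cor:hilbskein} (which in turn rests on \cref{thm:dimensions}). For the five-punctured sphere $\Sigma_{0,5}$ one has $g=0$ and $r=5$, so $n = 2g+r-1 = 4$, and $\AW{4}$ is by definition the $n=4$ Askey--Wilson algebra; hence both $\SkAlg{q}{\Sigma_{0,5}}$ and $\AW{4}$ have the Hilbert series of \cref{cor:hilbskein} evaluated at $n=4$. The first step is therefore simply to substitute $n=4$ into
\[
  h(t) = \frac{(1+t)^{n-2}}{(1-t)^{n}(1-t^2)^{2n-3}}\left(\sum_{k=0}^{n-2}\binom{n-2}{k}^{2}t^{2k} - \sum_{k=0}^{n-3}\binom{n-2}{k}\binom{n-2}{k+1}t^{2k+1}\right),
\]
which turns the rational prefactor into $\dfrac{(1+t)^{2}}{(1-t)^{4}(1-t^{2})^{5}}$.

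The second step is to evaluate the two finite sums when $n-2=2$. Using $\binom{2}{0}=\binom{2}{2}=1$ and $\binom{2}{1}=2$, the first sum is $1+4t^{2}+t^{4}$ and the second is $2t+2t^{3}$, so the parenthesised factor is the quartic $1-2t+4t^{2}-2t^{3}+t^{4}$. This already yields the first displayed form of $h(t)$.

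The third step is the closed-form rewriting: one expands $(1+t)^{2}\bigl(1-2t+4t^{2}-2t^{3}+t^{4}\bigr)$ and checks it equals $1+t^{2}+4t^{3}+t^{4}+t^{6}$, whence dividing by $(1-t)^{4}(1-t^{2})^{5}$ gives the second expression. I expect no genuine obstacle here: the statement is an immediate specialisation of an already-established theorem followed by an elementary polynomial identity, and the only care needed is the bookkeeping of the binomial coefficients and signs in forming the numerator and the one-line expansion verifying the final identity.
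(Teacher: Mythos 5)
Your proposal is correct and is exactly the paper's (implicit) proof: the corollary is obtained by substituting $n=4$ into \cref{cor:hilbskein} and simplifying. Note that your parenthesised quartic $1-2t+4t^{2}-2t^{3}+t^{4}$ differs from the printed $1-2t+2t^{2}-2t^{3}+t^{4}$ in the statement, but yours is the right one --- it is the only version consistent with the closed form $\frac{1+t^{2}+4t^{3}+t^{4}+t^{6}}{(1-t)^{4}(1-t^{2})^{5}}$ and with the Hilbert series of $\mathcal{B}$ computed later in \cref{sec:presentation}, so the $2t^{2}$ in the statement is a typo.
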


The difficulty for finding presentations for more complex punctured surfaces is that the number of generators and relations required increases and also, whilst it is easy to find relations using diagrams and resolving crossings, it is difficult to prove that you have found all the relations. We are able to overcome this second difficulty using the above Hilbert series.

In order to find the relations we will first generalise the relations in the presentation for the four-punctured sphere found by Bullock and Przytycki\footnote{We have corrected a sign error in the first relation which appears in the published version of the paper \cite{BullockPrzytycki00}.} before considering the additional relations which are of a genuinely different nature. 

Let $x_1 = s_{12}, x_2 = s_{23}$ and $x_3 = s_{13}$ and let $s_4$ denote the loop around the outside puncture (see \cref{figure:FPuncSphereEdges}). If curve $x_i$ separates $s_i,s_j$ from $s_k, s_\ell$, let
$ p_i = s_i s_j + s_k s_\ell$. Explicitly,
\[
p_1 = s_1 s_2 + s_3 s_4,\quad \quad p_2 = s_2 s_3 + s_1 s_4,\quad \quad p_3 = s_1 s_3 + s_2 s_4
\]

\begin{thm}[\cite{BullockPrzytycki00}]
\label{thm:presentationofsphereskein}
As an algebra over the polynomial ring $R[s_1,s_2,s_3,s_4]$,
the Kauffman bracket skein algebra \(\SkAlg{q}{\Sigma_{0,4}}\) has a presentation with generators \(x_1,\,x_2,\,x_3\) and  relations
\begin{align}
    \left[x_i,\; x_{i+1}\right]_{q} &= \left(q^2 - q^{-2}\right)x_{i+2} + \left(q - q^{-1}\right)p_{i+2} \text{ (indices taken modulo 3)}; \label{eq:cubic_commutator}\\
    \Omega_K &= \left(q +q^{-1}\right)^2 - \left(p_1 p_2 p_3 p_4 + p_1^2 + p_2^2 + p_3^2 + p_4^2\right); \label{eq:cubic}
\end{align}
where we have used the following Casimir element:
\[\Omega_K := -q x_1 x_2 x_3 + q^2 x_1^2 + q^{-2} x_2^2 + q^2 x_3^2 + q p_1 x_1 + q^{-1}p_2 x_2 + q p_3 x_3.\]
\end{thm}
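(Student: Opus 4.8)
The plan is to turn the presentation into an equality of Hilbert series. Write $A$ for the $R$-algebra abstractly generated by $x_1,x_2,x_3$ together with central indeterminates $s_1,s_2,s_3,s_4$ and subject to the relations \eqref{eq:cubic_commutator} and \eqref{eq:cubic}; the goal is an isomorphism $A\xrightarrow{\sim}\SkAlg{q}{\Sigma_{0,4}}$ carrying each generator to the loop of the same name. The existence and surjectivity of such a map is the easy direction. The loops $s_1,s_2,s_3$ and $s_4=s_{123}$ can be pushed into a collar of the punctures, hence are central; by \cite{Bullock99} the loops $s_A$ for $A\subseteq\{1,2,3\}$ — i.e.\ $x_1=s_{12},x_2=s_{23},x_3=s_{13},s_1,s_2,s_3,s_4$ — generate $\SkAlg{q}{\Sigma_{0,4}}$; relation \eqref{eq:cubic_commutator} holds in $\SkAlg{q}{\Sigma_{0,4}}$ by \cref{thm:commutators} (with singleton or empty blocks $A_i$, via the correspondence $s_A\leftrightarrow-\Lambda_A$ of \cref{thm:iso}), and relation \eqref{eq:cubic} is the image under \cref{thm:iso} of the defining Casimir relation of $\AW{3}$; alternatively, all of these can be checked directly by resolving the crossings in the products $x_ix_{i+1}$, $x_{i+1}x_i$ and $x_1x_2x_3$. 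This produces a filtered surjection $\pi\colon A\twoheadrightarrow\SkAlg{q}{\Sigma_{0,4}}$.

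For injectivity I would bound $\dim A$ in each filtered degree by building a normal form. Read \eqref{eq:cubic_commutator} as three sorting rules $x_jx_i\rightsquigarrow q^{\pm2}x_ix_j+(\text{terms of strictly smaller filtered degree})$ for $i<j$; here the cyclic relation governing the pair $(1,3)$ is solved for $x_3x_1$, so that all three rules push monomials towards the ordered shape $x_1^ax_2^bx_3^c$. Then read \eqref{eq:cubic}, solved for its leading term, as the rule $x_1x_2x_3\rightsquigarrow q\bigl(q^2x_1^2+q^{-2}x_2^2+q^2x_3^2+\cdots\bigr)$, whose right-hand side has strictly smaller filtered degree. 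With a degree-first monomial order this system is terminating, and after checking the overlap ambiguities (those among the three sorting rules, and those of each sorting rule with the cubic elimination rule) one obtains by Bergman's diamond lemma that $A$ is spanned over $R[s_1,s_2,s_3,s_4]$ by the family $\{x_1^ax_2^bx_3^c:abc=0\}$.

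Each $x_i$ has filtered degree $2$, so this spanning set has Hilbert series $\dfrac{1+t^2+t^4}{(1-t^2)^2}$ relative to $R[s_1,s_2,s_3,s_4]$, that is $\dfrac{1+t^3}{(1-t)^3(1-t^2)^3}$ as an $R$-algebra; but this is precisely the Hilbert series of $\SkAlg{q}{\Sigma_{0,4}}$ furnished by \cref{cor:hilbskein} with $n=3$ (the matching also confirms that the $s_i$ generate a free polynomial subalgebra, justifying the phrase ``over $R[s_1,s_2,s_3,s_4]$''). Since $\pi$ is a filtered surjection and the filtered dimensions of $A$ are now $\le$ those of $\SkAlg{q}{\Sigma_{0,4}}$ in every degree, the coincidence of Hilbert series forces $\pi$ to be an isomorphism.

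The main obstacle is the confluence analysis: because the three commutator relations are genuinely cyclic, one has to verify that the chosen orientations of the sorting rules are mutually compatible, and one must resolve the overlaps of these rules with the degree-six cubic rule (reducing $x_1x_2x_3x_i$ in two different ways and checking the normal forms agree). Everything else is either routine diagrammatic bookkeeping or an elementary generating-function computation, and the dimension input from \cref{cor:hilbskein} is exactly what lets us avoid proving linear independence of the normal-form monomials by hand — the same device the paper uses later for $\SkAlg{q}{\Sigma_{0,5}}$.
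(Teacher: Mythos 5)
This theorem is not proved in the paper at all --- it is imported from \cite{BullockPrzytycki00} (with a corrected sign) --- so the only internal point of comparison is the paper's analogous argument for $\SkAlg{q}{\Sigma_{0,5}}$ in \cref{sec:presentation}, which your proposal closely imitates. Your surjectivity half and your arithmetic are fine: \eqref{eq:cubic_commutator} is indeed the $n=3$ instance of \cref{thm:commutators} transported through \cref{thm:iso}, and the generating function of $\{x_1^ax_2^bx_3^c:abc=0\}$ over $R[s_1,s_2,s_3,s_4]$ (with $s_4=s_{123}$ in degree $3$) does equal $\frac{1+t^3}{(1-t)^3(1-t^2)^3}$, which matches \cref{cor:hilbskein} at $n=3$.

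The gap is in the normal-form step. For your four-rule system the irreducible words are the ordered monomials $x_1^ax_2^bx_3^c$ that do not contain $x_1x_2x_3$ as a \emph{consecutive} subword, i.e.\ those with $a=0$, or $c=0$, or $b\neq 1$; for instance $x_1x_2^2x_3$ is irreducible. This set is strictly larger than $\{abc=0\}$ (its generating function exceeds $\frac{1-t^6}{(1-t^2)^3}$ by $\frac{t^8}{(1-t^2)^3}$), so even granting termination the spanning set you extract has Hilbert series strictly larger than that of $\SkAlg{q}{\Sigma_{0,4}}$ and the squeeze does not close. Worse, the overlap ambiguity you would have to check between $x_2x_1$ and $x_1x_2x_3$ (the word $x_2x_1x_2x_3$) genuinely fails to resolve: reducing $x_2x_1$ first produces the irreducible leading term $q^2x_1x_2^2x_3$, which the other reduction path never produces. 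This is exactly the obstruction the paper spells out in \cref{sec:presentation} with the word $s_{23}s_{12}s_{23}s_{13}$: the naive system forces an infinite family of extra rules $x_1x_2^nx_3\rightsquigarrow\cdots$ whose joint consistency was proved by induction in \cite{Cooke18}, and the paper's own workaround for $\Sigma_{0,5}$ is to adjoin auxiliary generators such as $s_{1\overline{2}3}$ so that every relation becomes pairwise. (A secondary inaccuracy: the right-hand side of the cubic rule does not have strictly smaller filtered degree, since $p_ix_i$, $s_1s_2s_3s_4$ and $s_4^2$ all have degree $6=\deg(x_1x_2x_3)$ once $s_4$ is given degree $3$; termination has to be measured by word length in the $x_i$ over the central ring.)

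The cheapest repair keeps your architecture but drops the diamond lemma: a spanning set does not require confluence. In the associated graded algebra the commutator relations become $x_jx_i=q^{\pm 2}x_ix_j$ modulo terms already in the span, so any monomial with $a,b,c\geq 1$ is, up to a power of $q$ and lower-order corrections, $x_1^{a-1}x_2^{b-1}x_3^{c-1}\cdot x_1x_2x_3$, and \eqref{eq:cubic} rewrites $x_1x_2x_3$ as an $R[s_1,\ldots,s_4]$-combination of $1$, $x_i$ and $x_i^2$. Induction on the $x$-degree then shows that $\{x_1^ax_2^bx_3^c:abc=0\}$ spans $A$ over $R[s_1,\ldots,s_4]$, after which your Hilbert-series comparison finishes the proof exactly as written.
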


\begin{figure}[ht]
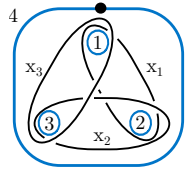

    \centering
    \diagramhh{present}{cubic}{0pt}{0pt}{0.25}
    \caption{This figure shows the product $x_1 x_2 x_3$, which is the leading term of the cubic relation, on the four-punctured sphere $\Sigma^{\bullet}_{0,4}.$}
    \label{figure:FPuncSphereEdges}
\end{figure}

\begin{figure}[ht]
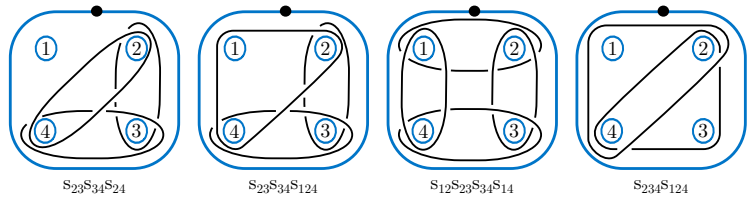

    \centering
    \diagramhh{present}{4cubic}{0pt}{0pt}{0.25} \diagramhh{present}{cubictriple}{0pt}{0pt}{0.25} \diagramhh{present}{quadratic}{0pt}{0pt}{0.25} \diagramhh{present}{2tripleloop}{0pt}{0pt}{0.25}
    \caption{The leading terms for the four types of relations in $\SkAlg{q}{\Sigma_{0,5}}$ which are generalisations of the cubic relation in $\SkAlg{q}{\Sigma_{0,4}}$.}
    \label{figure:4looprels}
\end{figure}
The first set of relations (\cref{eq:cubic_commutator}) are the commutators relations from \cref{thm:commutators} and we will also want the commutator relations from this theorem for our case. The second relation (\cref{eq:cubic}) can be derived by taking $x_1 x_2 x_3$ and resolving all crossings. As there are four ways to embed three points into four points we will end up with four such cubic relations; that is we will have relations whose left-hand sides are $s_{12} s_{23} s_{13}$, $s_{23} s_{34} s_{24}$, $s_{34} s_{14} s_{13}$ and $s_{12} s_{14} s_{24}$. We shall also have four cubic relations where one of the \emph{diagonal loops}, $s_{13}$ or $s_{24}$, has been replaced by a triple loop; these have left-hand sides $s_{12} s_{23} s_{134}$, $s_{23} s_{34} s_{124}$, $s_{34} s_{14} s_{123}$ and $s_{12} s_{14} s_{234}$.
Furthermore, instead of using three loops to create a closed loop, we can use four loops giving the \emph{quartic relation}:
\begin{align*}
s_{12}&s_{23}s_{34}s_{14} \\
    &= s_{1}s_{3}s_{12}s_{23} + s_{2}s_{4}s_{12}s_{14} + s_{2}s_{4}s_{23}s_{34} + s_{1}s_{3}s_{34}s_{14} \\
    &+ \left(qs_{3}s_{12} + qs_{2}s_{13} + q^{-1}s_{1}s_{23} + s_{123} + q^{-1}s_{4}s_{1234}+s_{1}s_{2}s_{3}\right)s_{123} \\
    &+ \left(qs_{4}s_{13} + qs_{3}s_{14} + q^{-1}s_{1}s_{34} + s_{134} + q^{-1}s_{2}s_{1234}+s_{1}s_{3}s_{4}\right)s_{134} \\
    &+ \left( qs_{4}s_{12} + qs_{2}s_{14} + q^{-1}s_{1}s_{24}  + s_{124} + q^{-1}s_{3}s_{1234}+s_{1}s_{2}s_{4}\right)s_{124} \\
    &+ q^{-2}\left(q^{-1}s_{4}s_{23} + q^{-1}s_{3}s_{24} + q^{-1}s_{2}s_{34}  + q^{-2}s_{234} + q^{-1}s_{1}s_{1234} + \left(2-q^2\right)s_{2}s_{3}s_{4}\right)s_{234} \\
    &+ \left(q^2s_{12} + \left(q+q^{-1}\right)s_{1}s_{2} +s_{3}s_{4}s_{1234}\right)s_{12} + \left(q^{-2}s_{23}+q^{-1}s_{2}s_{3}+q^{-2}s_{1}s_{4}s_{1234}\right)s_{23} \\
    &+ \left(q^2s_{14} + \left(q+q^{-1}\right)s_{1}s_{4} +s_{2}s_{3}s_{1234}\right)s_{14} + \left(q^{-2}s_{34} + q^{-1}s_{3}s_{4}+q^{-2}s_{1}s_{2}s_{1234}\right)s_{34} \\
    &+ \left(q^{-2}s_{24} +(q^{-1}-q)s_{2}s_{4}+q^{-2}s_{1}s_{3}s_{1234}\right)s_{24} +\left(q^2s_{13} + s_{2}s_{4}s_{1234}\right)s_{13} \\
    &+q^{-2}s_{1234}^2 -s_{2}^2s_{4}^2 -s_{1}^2s_{3}^2   +q^{-1}s_{1}s_{2}s_{3}s_{4}s_{1234} +\left(2+q^{-2}\right)\left(s_{1}^2 + s_{2}^2 + s_{3}^2 + s_{4}^2\right) \\
    &-2q^2-5-4q^{-2}-q^{-4}
\end{align*}
or we can create a closed loop from two triple loops:
\[s_{123}s_{134} = s_{12}s_{14} + s_{23}s_{34} -s_{3}s_{234} -s_{1}s_{124} + s_{1234}s_{13} -\left(q+q^{-1}\right)s_{24}  -s_{2}s_{4}\]

\begin{figure}[ht]
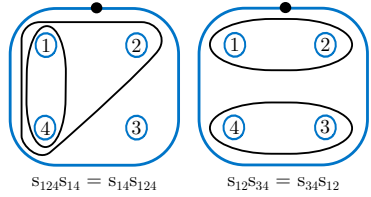

    \centering
    \diagramhh{present}{commutesubset}{0pt}{0pt}{0.25} \diagramhh{present}{commute}{0pt}{0pt}{0.25}
    \caption{Loops commute when the points in one are a subset of the points in the other as in the left image or when the loops are in different parts of the surface as in the right image.}
    \label{figure:commuting}
\end{figure}

Whenever two loops do not intersect they commute. In the case $n=3$ this only happens when one of the loops either contains a single point or all the points. As these loops which contain a single point or all the points are all central this is encoded by adding these loops to the polynomial ring. In the case $n=4$ we still have these central loops but we also have pairs of loops neither of which are central which commute. This happens when the points of one of the loops is a subset of the points of the other loop:
\[s_A s_B = s_B s_A \text{ for }  A \subseteq B; \]
or when the loops are simply in different parts of the surface:
\[s_{12} s_{34} = s_{34} s_{12} \text{ and } s_{23} s_{14} = s_{14} s_{23}.\]

Note that determining whether two loops do not intersect is not a simple as noting that $A \cap B = \emptyset$ as we have
\begin{align*}
    s_{13}s_{24} &= q^{-2}s_{12} s_{34} + q^2 s_{23} s_{14} + q^{-1} s_{1} s_{2} s_{34} + q^{-1} s_{3} s_{4} s_{12} + q s_{1} s_{4} s_{23} + q s_{2} s_{3} s_{14} \\
                 &+ s_{1} s_{234} + s_{4} s_{123} + s_{3} s_{124} + s_{2} s_{134}
                 + s_{1} s_{2} s_{3} s_{4} + \left(q+q^{-1}\right)s_{1234}
\end{align*}
We shall call this relation the \emph{crossing relation}. Whilst in the case $n=4$ we only have a single crossing relation, relations of this type are a general feature of $\SkAlg{q}{\Sigma_{0, n+1}}$ for higher $n$. 

\begin{figure}[ht]
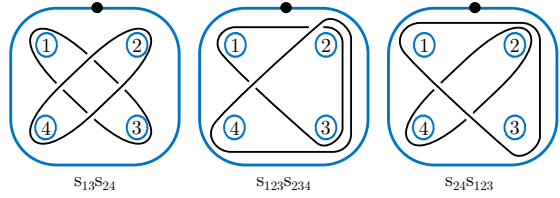

    \centering
    \diagramhh{present}{cross}{0pt}{0pt}{0.25} 
    \diagramhh{present}{2triplelink}{0pt}{0pt}{0.25} 
    \diagramhh{present}{doubletriplecross}{0pt}{0pt}{0.25} 
    \caption{There are three types of relation in $\SkAlg{q}{\Sigma_{0,5}}$ which do not correspond to any relation in $\SkAlg{q}{\Sigma_{0,4}}$. On the left is the leading term of the cross relation, in the middle of a triple link relation and on the right a double and triple relation.}
    \label{figure:crossandtriple}
\end{figure}

For the remaining relations we need to consider afresh the proof of \cref{thm:commutators}. This considers two loops $s_A$ and $s_B$ which are simply linked; that is to say there intersection looks like the intersection of $s_{12}$ and $s_{23}$. When the crossings of $s_A s_B$ are resolved one of the resultant terms is not a simple loop as the loop goes around the outside of the punctures $C = A \cap B$ (see \cref{figure:extra} for examples of the non-simple loops you obtain). In the proof this term is eliminated using $s_B s_A$ to yield the commutator relation. However, note that $s_{12} s_{23} s_{34}$ and $s_{123} s_{234}$ both yield the same non-simple term $s_{1 \overline{23} 4}$. Hence, we get a relation
\begin{align*}
    s_{123}s_{234} &= qs_{12}s_{23}s_{34} -qs_{3}s_{12}s_{234}  -q^{-1}s_{2}s_{34}s_{123} -q^2s_{12}s_{24} - s_{34}s_{13} \\
    &+s_{3}s_{134} + q^{-2}s_{2}s_{124}  +\left(q^{-1} - q\right)s_{2}s_{4}s_{12} + s_{1234}s_{23} + \left(q+q^{-1}\right)s_{14} \\
    &+ q^{-1}s_{2}s_{3}s_{1234}+s_{1}s_{4} 
\end{align*}
and by symmetry similar relations with left hand sides $s_{234} s_{134}$, $s_{134} s_{124}$ and $s_{124} s_{123}$.
Finally, resolving the crossing of $s_{24} s_{123}$ yields the terms $s_{4 \overline{1} 23}$ and $s_{12 \overline{3} 4}$ which can be obtained from $s_{14} s_{123}$ and $s_{34} s_{123}$ respectively. This gives the relation
\begin{align*}
    s_{24}s_{123} &= q^2s_{12}s_{234} + q^{-2}s_{23}s_{124} + qs_{4}s_{12}s_{23} \\
&- qs_{2}s_{4}s_{123} - \left(q^3+q^{-3}\right)s_{134} -q^2s_{1}s_{34} -q^{-2}s_{3}s_{14} -q^2s_{4}s_{13} \\
&+ \left(1-q^2-q^{-2}\right)s_{2}s_{1234} -qs_{1}s_{3}s_{4}
\end{align*}
and by symmetry similar relations with left-hand sides $s_{13}s_{234}$, $s_{24} s_{134}$ and $s_{13}s_{124}$.
The remainder of this section will be dedicated to proving that this set of relations is complete.

\begin{thm}
\label{thm:presentation_fivepunctures}
The Kauffman bracket skein algebra $\SkAlg{q}{\Sigma_{0,5}}$ of the five-punctured sphere with generic $q$ has a presentation as an algebra over $R$ given by the simple loops $s_A$ for all $A \subseteq \{1,2,3,4\}$ with commuting relations
\begin{gather}
s_A \text{ for } |A| = 1 \text{ or } 4 \text{ is central }, \\
s_A s_{123} = s_{123} s_A \text{ for } A \in \{12, 23, 13\}, \\
s_{12} s_{34} = s_{34} s_{12} \text{ and } s_{23} s_{14} = s_{14} s_{23}, 
\end{gather}
commutator relations
\[
    [s_{1}, s_{2}]_q = \left(q^{-2}- q^{2}\right) s_{(A \cup B) \backslash (A \cap B)} + \left(q -q^{-1}\right) \left( s_{A \cap B} s_{A \cup B} + s_{A \backslash (A \cup B)} s_{B \backslash (A \cup B)} \right)
\]
where $A$ and $B$ are set of points with conditions as stated in \cref{thm:commutators}, and relations for the following terms (see Appendix for full list of relations)
\[\begin{array}{ccccr}
    s_{12}s_{23}s_{13} & s_{23}s_{34}s_{24} & s_{34}s_{14}s_{13} & s_{12}s_{14} s_{24}  &\text{(cubic relations)} \\
    s_{12}s_{23}s_{134} & s_{23}s_{34}s_{124} & s_{34}s_{14}s_{123} & s_{12}s_{14} s_{234}  & \text{(cubic relations with triples)} \\
    &s_{12} s_{23} s_{34} s_{14} & s_{13} s_{24} && \text{(quartic and cross relations)} \\
    & s_{123}s_{134} & s_{234} s_{124} && \text{(triple loop relations)} \\
    s_{134} s_{124} & s_{123} s_{124} & s_{123} s_{234} & s_{234} s_{134} & \text{(triple link relations)} \\
    s_{13} s_{234}  & s_{13} s_{124}  & s_{24} s_{123}  & s_{24} s_{134} & \text{(double and triple relations)}
\end{array}
\]
\end{thm}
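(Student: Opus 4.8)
The plan is to exhibit the listed relations as a confluent, terminating term rewriting system and apply Bergman's diamond lemma: this yields an explicit PBW-type linear basis of the abstract $R$-algebra $\mathcal{A}$ presented by the generators $s_A$ ($A\subseteq\{1,2,3,4\}$) and the stated relations, and comparing the resulting Hilbert series with $h(t)$ from \cref{cor:hilbn4} identifies $\mathcal{A}$ with $\SkAlg{q}{\Sigma_{0,5}}$. First I would check that every relation in the statement actually holds in $\SkAlg{q}{\Sigma_{0,5}}$: the commuting relations are clear (the loops in question are disjoint or nested), the commutator relations are \cref{thm:commutators}, and each cubic, cubic-with-triple, quartic, crossing, triple-loop, triple-link, and double-and-triple relation is obtained by drawing the indicated product of loops, resolving every crossing via \cref{skeinrel_cross,skeinrel_loop}, and re-expressing the single non-simple component produced (a loop encircling $A\cap B$ or its complement) in terms of simple loops, exactly as in the proof of \cref{thm:commutators}. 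Since the $s_A$ generate $\SkAlg{q}{\Sigma_{0,5}}$, this produces a surjection $\pi\colon\mathcal{A}\twoheadrightarrow\SkAlg{q}{\Sigma_{0,5}}$; as $\pi$ respects the filtration with $\deg s_A=|A|$ and sends generators to generators, it suffices to prove $\dim_R\mathcal{A}(k)\le\dim_R\SkAlg{q}{\Sigma_{0,5}}(k)$ for every $k$, since the reverse inequality is automatic from surjectivity and equality then forces $\pi$ to be an isomorphism on each filtered piece.

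Next I would orient the relations into rewriting rules. The subtle point is the choice of monomial order: it cannot simply be the degree order attached to $\deg s_A=|A|$, because in the cubic and crossing relations several monomials share the maximal degree (for instance $s_{13}s_{24}$, $s_1s_{234}$, $s_1s_2s_3s_4$ and $s_{1234}$ all have degree $4$, and the cubic relation $\Omega_K = (q+q^{-1})^2-(p_1p_2p_3p_4+\cdots)$ mixes monomials of several degrees). Instead I would use a monomial order that assigns much larger weights to the non-central loops $s_{ij}$ and to the triples $s_{ijk}$ than to the central loops $s_i$ and $s_{1234}$, breaking ties by a lexicographic order on a fixed total ordering of the generators, arranged so that the displayed leading term of every relation — $s_{12}s_{23}s_{13}$ and its analogues for the four cubic relations, $s_{12}s_{23}s_{34}s_{14}$ for the quartic, $s_{13}s_{24}$ for the crossing, $s_{123}s_{134}$ for the triple-loop relations, and the commutator and triple-link leading products — is indeed the largest monomial in that relation. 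Each rule then rewrites its leading monomial as a sum of strictly smaller monomials, so the rewriting system terminates.

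The heart of the argument, and the step I expect to be the main obstacle, is confluence: one must resolve all overlap ambiguities of this rewriting system. This means enumerating the words in which the leading monomials of two rules overlap — a commutator leading product feeding into a cubic or triple-link leading product, two cubic or triple-link leading products sharing a common factor, a crossing-relation leading term abutting a commuting relation, a triple appearing on both sides of adjacent rules, and so on — and verifying that the two reduction sequences starting from each such word terminate at the same irreducible element. There will be a substantial number of critical pairs, and several of them close only after repeated further applications of the commutator and cubic rules, so in practice this verification is carried out with computer assistance.

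Once confluence is proven, the diamond lemma gives that the irreducible monomials form an $R$-basis of $\mathcal{A}$. These are precisely the monomials in the $s_A$ that contain no leading monomial of a rule as a consecutive sub-word, so their generating function with respect to the filtration $\deg s_A=|A|$ is a finite combinatorial computation: the four central loops $s_1,s_2,s_3,s_4$ contribute the free factor $(1-t)^{-4}$ (matching the remark after \cref{cor:hilbskein}), and the constrained monomials in $s_{12},s_{13},s_{23},s_{14},s_{24},s_{34},s_{123},s_{124},s_{134},s_{234},s_{1234}$ contribute the remaining factor; I would check that the total equals $h(t)=\dfrac{1+t^2+4t^3+t^4+t^6}{(1-t)^4(1-t^2)^5}$ of \cref{cor:hilbn4}. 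This gives $\dim_R\mathcal{A}(k)=\dim_R\SkAlg{q}{\Sigma_{0,5}}(k)$ for all $k$, so the surjection $\pi$ is an isomorphism, which proves the theorem; composing with the isomorphism $\SkAlg{q}{\Sigma_{0,5}}\cong\AW{4}$ of \cref{thm:iso} then transports the presentation to the rank $2$ Askey--Wilson algebra.
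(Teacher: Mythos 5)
Your overall architecture --- build the abstract algebra $\mathcal{A}$ on the listed relations, orient them into a terminating rewriting system, apply Bergman's diamond lemma to get a PBW basis, and match its generating function against the Hilbert series of \cref{cor:hilbn4} to force the surjection $\mathcal{A}\twoheadrightarrow\SkAlg{q}{\Sigma_{0,5}}$ to be an isomorphism --- is exactly the strategy of the paper. But there is a genuine gap at the step you yourself flag as the heart of the argument. The rewriting system you propose, with rules read off directly from the listed relations on the simple loops $s_A$, is \emph{not} locally confluent, and it cannot be completed to a finite confluent system by any choice of monomial order or by "repeated further applications of the commutator and cubic rules." The paper exhibits the obstruction explicitly: in the word $s_{23}\,s_{12}s_{23}s_{13}$, reducing first by the commutator rule for $s_{23}s_{12}$ produces the monomial $s_{12}s_{23}^2s_{13}$, which contains no leading word of any listed rule and is therefore irreducible, while reducing first by the cubic rule for $s_{12}s_{23}s_{13}$ never produces that monomial. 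Closing this critical pair forces a new rule for $s_{12}s_{23}^2s_{13}$, and then inductively for $s_{12}s_{23}^ns_{13}$ for every $n$, so the completion is infinite. Concretely, this means your irreducible monomials are linearly dependent in $\mathcal{A}$, their generating function strictly exceeds $h(t)$, and the final Hilbert-series comparison fails.

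The missing idea is to enlarge the generating set before building the rewriting system: the paper adjoins auxiliary generators for the non-simple curves that appear when resolving the cubic and quartic products (the "double point" loops $s_{1\overline{2}3},s_{2\overline{3}4},s_{3\overline{4}1},s_{4\overline{1}2}$, the loops $s_{1\overline{23}4}$ etc., and the disjoint unions $s_{12,34},s_{23,14}$), so that \emph{every} relation becomes quadratic in the extended alphabet. Only then is the resulting system $S_{\mathcal{B}}$ (241 rules) finite, checkable for confluence (by computer, as you anticipate), and terminating --- the latter via a bespoke order chaining the inversion-count ("reduced degree") order with total degree and a group-distance order, rather than a weighted-lex order. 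The Hilbert series of the PBW basis of the extended algebra $\mathcal{B}$ is then shown to equal $h(t)$, and the stated presentation is recovered at the very end by eliminating the auxiliary generators through the "generator generating" relations. Without this enlargement (or some equivalent device, e.g.\ a noncommutative Gr\"obner basis computation that terminates), your proof does not go through.
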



\begin{rmk}
Looking at the skein relations it is easy to see that given a relation for $s_A s_B$ switching all the over crossing for under crossings gives a relation for $s_B s_A$ which has the same terms with modified coefficients. Furthermore, given a relation reflecting all the terms in the vertical or horizontal plane and again modifying the coefficients will give another relation.
\end{rmk}

\subsection{Term Rewriting Systems and the Diamond Lemma}

In order to prove \cref{thm:presentation_fivepunctures} we shall use a Term Rewriting System (TRWS).

\begin{defn}
An \emph{abstract rewriting system} is a set A together with a binary relation \(\to\) on A called the \emph{reduction relation} or \emph{rewrite relation}. 
\begin{enumerate}
    \item It is \emph{terminating} if there are no infinite chains \(a_0 \to a_1 \to a_2 \to \dots\). 
    \item It is \emph{locally confluent} if for all \(y \xleftarrow{} x \xrightarrow{} z\) there exists an element \(y \downarrow z \in A\) such that there are paths \(y \to \dots \to (y \downarrow z) \) and \(z \to \dots \to (y \downarrow z) \). 
    \item It is \emph{confluent} if for all \(y \xleftarrow{} \dots \xleftarrow{} x \xrightarrow{} \dots \xrightarrow{} z\) there exists an element \(y \downarrow z \in A\) such that there are paths \(y \to \dots \to (y \downarrow z) \) and \(z \to \dots \to (y \downarrow z) \).
\end{enumerate}
In a terminating confluent abstract rewriting system an element \(a \in A\) will always reduce to a unique reduced expression regardless of the order of the reductions used.
\end{defn}
    
The \emph{diamond lemma} (or Newman's lemma) for abstract rewriting systems states that a terminating abstract rewriting system is confluent if and only if it is locally confluent. Bergman's diamond lemma is an application to ring theory of the diamond lemma for abstract rewriting systems. The definitions given in this section can be found \cite[Section~1]{diamond}. 


Let \(\mathcal{R}\) be a commutative ring with multiplicative identity and \(X\) be an alphabet (a set of symbols from which we form words).  

\begin{defn}
A \emph{reduction system} \(S\) consists of term rewriting rules \(\sigma : W_{\sigma} \mapsto f_{\sigma}\) where \(W_{\sigma} \in \langle X \rangle\) is a word in the alphabet \(X\) and \(f_{\sigma} \in \mathcal{R} \langle X \rangle\) is a linear combination of words. A \emph{\(\sigma\)-reduction} \(r_{\sigma}(T)\) of an expression \(T \in \mathcal{R}\langle X \rangle\) is formed by replacing an instance of \(W_{\sigma}\) in \(T\) with \(f_{\sigma}\). A \emph{reduction} is a \(\sigma\)-reduction for some \(\sigma \in S\). If there are no possible reductions for an expression we say it is \emph{irreducible}.
\end{defn}

\begin{defn}
The five-tuple \((\sigma, \tau, A, B, C)\) with \(\sigma, \tau \in S\) and \(A,B, C \in \langle X \rangle\) is an \emph{overlap ambiguity} if \(W_{\sigma} = AB\) and \(W_{\tau} = BC\) and an \emph{inclusion ambiguity} if \(W_{\sigma} = B\) and \(W_{\tau} = ABC\).
These ambiguities are \emph{resolvable} if reducing \(ABC\) by starting with a \(\sigma\)-reduction gives the same result as starting with a \(\tau\)-reduction. 
\end{defn} 

\begin{ex}
    Suppose we have an alphabet \(X = \langle a,b \rangle\) and reduction system \(S=\{\,\sigma : ab \mapsto ba, \tau : ba \mapsto a\,\}\). Then \(r_{\sigma}(T) = aba + a\) is a \(\sigma\)-reduction of \(T= aab + a\). We also have an overlap ambiguity \((\sigma, \tau, a,b,a)\) which is resolvable as \( aba \xmapsto{r_{\sigma}} ba^2 \xmapsto{r_{\tau}} a^2 \) gives the same expression as \(aba \xmapsto{r_{\tau}} a^2\).
\end{ex}

\begin{defn}
A \emph{semigroup partial ordering} \(\leq\) on \(\langle X \rangle\) is a partial order such that \(B \leq B' \) implies that \(ABC \leq AB'C\) for all words \(A,B, B', C\). It is \emph{compatible with the reduction system} \(S\) if  for all \(\sigma \in S\) the monomials in \(f_{\sigma}\) are less than \(W_{\sigma}\).
\end{defn}

\begin{defn}
A reduction system \(S\) satisfies the \emph{descending chain condition} or is \emph{terminating} if for any expression \(T \in \mathcal{R} \langle X \rangle\) any sequence of reductions terminates in a finite number of reductions with an irreducible expression. 
\end{defn}

\begin{lem}[The Diamond Lemma {\cite[Theorem~1.2]{diamond}}]
Let \(S\) be a reduction system for \(\mathcal{R} \langle X \rangle\) and let \(\leq\) be a semigroup partial ordering on \(\langle X \rangle\) compatible with the reduction system \(S\) with the descending chain condition. The following are equivalent:
\begin{enumerate}
    \item All ambiguities in \(S\) are resolvable (\(S\) is \emph{locally confluent});
    \item Every element \(a \in \mathcal{R} \langle X \rangle\) can be reduced in a finite number of reductions to a unique expression \(r_S(a)\) (\(S\) is \emph{confluent});
    \item The algebra \(K = \mathcal{R} \langle X \rangle / I\), where \(I\) is the two-sided ideal of \(\mathcal{R} \langle X \rangle\) generated by the elements \((W_{\sigma} - f_{\sigma})\), can be identified with the \(\mathcal{R}\)-algebra \(k \langle X \rangle_{\mathrm{irr}}\) spanned by the \(S\)-irreducible monomials of \(\langle X \rangle\) with multiplication given by \(a \cdot b = r_S(ab)\). These \(S\)-irreducible monomials are called a Poincare--Birkhoff--Witt basis of \(K\).
\end{enumerate}
\end{lem}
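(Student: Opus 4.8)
The plan is to identify the abstractly presented algebra with $\SkAlg{q}{\Sigma_{0,5}}$ by a Hilbert series count. Let $K$ denote the $R$-algebra generated by symbols $s_A$, $A\subseteq\{1,2,3,4\}$, subject to exactly the relations listed in the statement. Every one of these is a genuine identity in $\SkAlg{q}{\Sigma_{0,5}}$: the centrality and commuting relations follow from the non-intersection arguments of \cref{sec:commutator}, the commutator relations are instances of \cref{thm:commutators}, and each product relation (cubic, cubic-with-triple, quartic, crossing, triple-loop, triple-link, double-and-triple) is obtained by taking the displayed leading product, resolving every crossing using \cref{skeinrel_cross,skeinrel_loop}, and reading off the resulting linear combination, as indicated in \cref{figure:4looprels,figure:crossandtriple}. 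Since the $s_A$ generate $\SkAlg{q}{\Sigma_{0,5}}$, there is a surjective algebra homomorphism $\pi\colon K \twoheadrightarrow \SkAlg{q}{\Sigma_{0,5}}$ sending each generator to the corresponding loop. Assigning $s_A$ the degree $|A|$ makes both algebras filtered and $\pi$ a filtered surjection, so it suffices to prove that $K$ has the Hilbert series $h(t)$ of \cref{cor:hilbn4}; then $\pi$, being a filtered surjection of spaces with equal finite-dimensional graded pieces, is forced to be an isomorphism.

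To compute the Hilbert series of $K$ I would orient the defining relations into a reduction system $S$ in the sense of Bergman and invoke the Diamond Lemma. First I would fix a total order on the generators (single and quadruple loops smallest so their centrality can be applied freely, then the edge loops, then the diagonal and triple loops) and extend it to a semigroup partial order $\leq$ on words by comparing total degree and breaking ties degree-lexicographically. Each relation is then oriented $W_\sigma \mapsto f_\sigma$ by declaring its $\leq$-largest word to be $W_\sigma$; for the product relations this largest word is exactly the displayed leading term (for example $s_{12}s_{23}s_{13}$, $s_{12}s_{23}s_{34}s_{14}$, $s_{13}s_{24}$, $s_{123}s_{134}$), and $f_\sigma$ is the right-hand side, which is a combination of strictly smaller words. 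Compatibility of $\leq$ with $S$ is then a direct check, and termination (the descending chain condition) is automatic because every reduction strictly decreases a word in a well-founded order.

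The substantive step is local confluence. By the Diamond Lemma it is enough to verify that every overlap and inclusion ambiguity between two rules resolves to a common irreducible expression. Such ambiguities occur precisely when two leading words share a generator or one is a subword of another: overlaps among the four cubic leading words that share an edge loop, overlaps of a cubic rule with a crossing or triple-link rule, overlaps among the triple relations, and so on. For each ambiguous word $ABC$ I would reduce it in two ways and compare the fully reduced results. Because all rules are true identities in $\SkAlg{q}{\Sigma_{0,5}}$, the two reductions necessarily agree under $\pi$; the real content is checking that they agree \emph{after full reduction in the free algebra}, which confirms that no hidden relation among irreducibles has been omitted. Once all ambiguities are resolved, the Diamond Lemma supplies a Poincaré--Birkhoff--Witt basis for $K$ consisting of the $S$-irreducible monomials.

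Finally I would enumerate the $S$-irreducible monomials by multidegree and assemble their generating function: the central loops $s_1,\dots,s_4$ and $s_{1234}$ contribute free polynomial-ring factors, while the remaining loops, constrained by the oriented rules, contribute the rest, and the total is shown to equal $h(t)$ from \cref{cor:hilbn4}. Matching this with the Hilbert series of the target forces $\pi$ to be an isomorphism, completing the proof. I expect the local-confluence verification to be the main obstacle: orienting the large family of relations so that every displayed leading term really is the maximal word, and then resolving the many overlap ambiguities without error, is a heavy and delicate bookkeeping task that in practice calls for computer assistance, and the monomial order must be chosen with care so that the irreducible count produces \emph{exactly} $h(t)$ rather than something larger.
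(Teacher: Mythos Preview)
Your proposal does not address the stated lemma at all. The statement you were asked to prove is Bergman's Diamond Lemma itself: the equivalence, for a reduction system on $\mathcal{R}\langle X\rangle$ with a compatible well-founded semigroup partial order, of (1) resolvability of all ambiguities, (2) confluence, and (3) the irreducible monomials forming a linear basis of the quotient algebra. In the paper this lemma is simply quoted from the literature (it is \cite[Theorem~1.2]{diamond}) and is not given a proof; it is a tool, not a result of the paper.

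What you have written instead is an outline of the proof of \cref{thm:presentation_fivepunctures}, the presentation of $\SkAlg{q}{\Sigma_{0,5}}$, which \emph{applies} the Diamond Lemma. Your outline is broadly in the spirit of what the paper does in \cref{sec:presentation} (set up a reduction system, check local confluence, read off a PBW basis, and match Hilbert series), though the paper's actual implementation differs in important details: it enlarges the generating set with non-simple loops so that all rewriting rules become pairwise, uses a bespoke three-tier ordering (reduced degree, then total degree, then a ``group distance'' order) rather than degree-lexicographic, and verifies the 241-rule system's ambiguities by computer. But none of this is relevant to the lemma you were asked to prove. A proof of the Diamond Lemma would instead argue, via Newman's lemma, that termination plus local confluence gives confluence, and then show that confluence forces the irreducible monomials to be linearly independent in $\mathcal{R}\langle X\rangle/I$.
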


\subsection{Linear Basis for $\SkAlg{q}{\Sigma_{0,5}}$}

In this subsection we will construct a locally consistant, terminating term rewriting system from the relations stated in \cref{thm:presentation_fivepunctures} for $\SkAlg{q}{\Sigma_{0,5}}$. This will give a linear basis for $\SkAlg{q}{\Sigma_{0,5}}$ over the commutative ring $\mathcal{R}$ $=$ $R[s_{1}, s_{2}, s_{3}, s_{4}, s_{1234}]$ which we will then use to prove \cref{thm:presentation_fivepunctures}.

The obvious approach would be to take each relation from \cref{thm:presentation_fivepunctures} (except the first as this is implicit in our choice of base ring) and turn it into a rewriting rule. Whilst this approach works well for the relations whose left hand side is pairwise (is the product of two loops) adding the non-pairwise relations leads to an infinite TRWS which would be difficult to show that it was consistent\footnote{In \cite{Cooke18} the consistency of the resulting infinite system was proven by induction for the case fo $\SkAlg{q}{\Sigma_{0,4}}$ however this method does not scale well.}. 

\begin{ex}
Assume we construct a TRWS with all the pairwise relations for \cref{thm:presentation_fivepunctures} and the non-pairwise cubic relation for $s_{12} s_{23} s_{13}$. One of the ambiguities for this TRW is the word $s_{23} s_{12} s_{23} s_{13}$ which on the one hand can be reduced using the cubic relation and on the other hand using the commutator relation for $s_{23} s_{12}$. Using the commutator relation gives as its leading term $s_{12} s_{23}^2 s_{13}$ which cannot be reduced further. However, the term $s_{12} s_{23}^2 s_{13}$ does not arise if you start reducing with the cubic relation, and thus the system is not consistent. In order to make a consistent system you would need to add a rewriting rule for  $s_{12} s_{23}^2 s_{13}$. Considering $s_{23} s_{12} s_{23}^2 s_{13}$ using the same argument as above leads to the conclusion you also need $s_{12} s_{23}^3 s_{13}$ and indeed $s_{12} s_{23}^n s_{13}$ for all $n \in \mathbb{Z}_{>0}$ in the TRWS: that is you end up with an infinite TRWS.
\end{ex}

In order to avoid an infinite TRWS we add some extra generators so that all the relations are pairwise. 
In order to make the cubic relations pairwise we add the generators $s_{1 \overline{2} 3}$, $s_{2 \overline{3} 4}$, $s_{3 \overline{4} 1}$ and $s_{4 \overline{1} 2}$ and for the quartic relation we add the generators  $s_{1 \overline{23} 4}$, $s_{2 \overline{34} 1}$, $s_{3 \overline{14} 2}$ and $s_{4 \overline{12} 3}$. Finally, we also need the generators $s_{12,34}$ and $s_{23,14}$ which are just the products $s_{12} s_{34}$ and $s_{12} s_{34}$ respectively considered as a generator.
We shall order these extended generators and place them into five groups as follows:
\begin{description}[leftmargin=!,labelwidth=\widthof{\bfseries{\textrm{III}.}}]
\item[\textrm{I}.] $s_{12}$ $s_{23}$ $s_{34}$ $s_{14}$
\item[\textrm{II}.] $s_{1 \overline{2} 3}$ $s_{2 \overline{3} 4}$ $s_{3 \overline{4} 1}$ $s_{4 \overline{1} 2}$ $s_{12, 34}$ $s_{23,14}$
\item[\textrm{III}.] $s_{1 \overline{23} 4}$, $s_{2 \overline{34} 1}$, $s_{3 \overline{14} 2}$          
\item[\textrm{IV}.] $s_{13}$ $s_{24}$
\item[\textrm{V}.] $s_{123}$ $s_{234}$ $s_{134}$ $s_{123}$ 
\end{description}

\begin{figure}[ht]
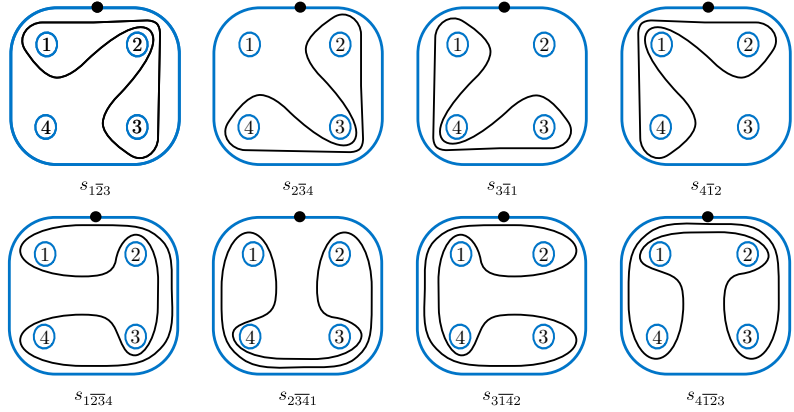

    \centering
    \diagramhh{present}{extra}{0pt}{0pt}{0.25} 
    \caption{Extra generators we add to make all the relations in $\SkAlg{q}{\Sigma_{0,5}}$ pairwise.}
    \label{figure:extra}
\end{figure}

\begin{defn}
    By convention if a point $x \not\in A$ then the loop $s_A$ passes on the inside of the point $x$ if the points are in a circle or below the point $x$ if the points are in a line. If instead a loop passes on the outside (or above) of the point $x$ we refer to $x$ as a \emph{double point}\footnote{In the handlebody decomposition of the surface, the handle associated to a double point will intersect the loop in two arcs.}. The extra generators have double points denoted by $\overline{x}$.
\end{defn}

Using these generators the non-pairwise relations the cubic relations become
\begin{align*}
s_{1 \overline{2} 3}s_{13}&= \left(q^{-1}s_{1}s_{23} + qs_{3}s_{12} + s_{1}s_{2}s_{3} + s_{123}\right)s_{123}  + q^{-1}\big(s_{2}s_{3}\\ &+ q^{-1} s_{23}\big)s_{23}  + q(s_{1}s_{2} + q s_{12})s_{12} + s_{1}^2 + s_{2}^2 + s_{3}^2 - \left(q + q^{-1}\right)^2,\\
s_{1 \overline{2} 3}s_{134}&=  \left(q^{-1}s_{1}s_{23} + qs_{3}s_{12}  + s_{1}s_{2}s_{3} + s_{123}\right)s_{1234} +q(s_{1}s_{2} + qs_{12})s_{124} \\
&+  q^{-1}\left(s_{2}s_{3} + q^{-1}s_{23}\right)s_{234} + s_{1}s_{14} + s_{2}s_{24}  + s_{3}s_{34} + \left(q + q^{-1}\right)s_{4},
\end{align*}
and their symmetries, and the quartic relation becomes the two relations
\begin{align*}
    s_{12}s_{2 \overline{34} 1}&= \left(q^{-1}s_{1}s_{234} + qs_{2}s_{134} + s_{1}s_{2}s_{34} + s_{1234}\right) s_{1234} + q(s_{1}s_{34} + qs_{134})s_{134} \\
&+ q^{-1}\left(s_{2}s_{34}s_{234}  + q^{-1}s_{234}\right)s_{234} + s_{1}^2 + s_{2}^2 + s_{34}^2 - \left(q + q^{-1}\right)^2,\\
s_{23}s_{3 \overline{14} 2}&= \left(qs_{3}s_{124} + q^{-1}s_{2}s_{134} + s_{2}s_{3}s_{14} +s_{1234} \right) s_{1234} + q(s_{2}s_{14}+ qs_{124})s_{124} \\
&+ q^{-1}\left(s_{3}s_{14} + q^{-1}s_{134}\right)s_{134} + s_{2}^2 + s_{3}^2 + s_{14}^2 - \left(q + q^{-1}\right)^2.
\end{align*}

Unfortunately, adding extra generators massively increases the number of relations, but these extra relations can be generated from the original relations in a manner which will now be described.

Firstly we have the relations which relate the new generators to the simple generators which we shall call the \emph{generator generating relations}. For the generators which consist of two disjoint loops these are trivial:
\[s_{12}s_{34} \mapsto s_{12 , 34}  \quad s_{23}s_{14} \mapsto s_{23 , 14} \]
For the generators with a single double point the relations have the form:
\[s_{12}s_{23} \mapsto q^{-1}s_{1 \overline{2} 3} + qs_{13} + s_{1}s_{3} + s_{2}s_{123} \]
For the generators with two double points the relations have the form:
\begin{align*}
    s_{12}s_{2 \overline{3} 4}&\mapsto q^{-1}s_{1 \overline{23} 4} + s_{1}s_{4} + qp^{-1}\left(s_{34}s_{13} - p^{-1}s_{14} - s_{1}s_{4} - s_{3}s_{134}\right) \\
    &+ p^{-1}s_{2}\left(s_{34}s_{123} - p^{-1}s_{124} - s_{4}s_{12} - s_{3}s_{1234}\right) \\
    s_{12}s_{23 , 14}&\mapsto qs_{13}s_{14} + q^{-1}s_{1 \overline{2} 3}s_{14} + s_{2}s_{123}s_{14} + s_{1}s_{3}s_{14}
\end{align*}
where $p = q$ except when finding the coefficients for the symmetric relations it does not invert. For the full list of relations see the code\footnote{Code available at \href{https://github.com/jcooke848/Askey-Wilson-Algebras-as-Skein-Code.git}{https://github.com/jcooke848/Askey-Wilson-Algebras-as-Skein-Code.git} \label{code}}.
 
Now let $\{bc \mapsto r_{bc}\}$ be a relation in \cref{thm:presentation_fivepunctures} excluding the commuting and the commutator relations, and let $\{ab \mapsto r_{ab}\}$ be a relation in the generator generating relations. Hence, we have an equality
\[ 
    r_{ab}c = abc = a r_{bc}
\]
and furthermore $r_{ab}c$ contains a new generator multiplied by $c$ on the right. Rearrange with respect to $mc$ where $m$ is the largest new generator in $r_{ab}$ and turn this equation into a reduction rule for $mc$. 

After generating these new relations for all the original $\{bc \mapsto r_{bc}\}$ relations, iterate by the generating the relations where $\{bc \mapsto r_{bc}\}$ is one of these newly generated relations. This generates all the new relations including the extra generators apart from the commuting and commutator relations involving the extra generators. To generate these consider monomials $ba$ where $b>a$, one of them is a new generator and $ba$ is not reducible using the generator generating relations or any other relations we have generated so far. Consider the monomial $\prod b_i a_j$ where $\prod b_i$ generates $b$ and $a_j$ generates $a$ as the leading term using the generator generating relations. We can generate a relation if:
\begin{itemize}
    \item All the $b_i$ terms are larger than all the $a_j$ terms by considering $x =\prod b_i a_j$
    \item All the $b_i$ terms are smaller than all the $a_j$ terms by considering $x =\prod a_j b_i$
\end{itemize}
Take $x$ and apply the generator generating relations to $\prod b_i$ and $\prod a_j$ separatly, then order the result using the commutator and commuting relations and rearrange the result to obtain a relation for $ba$. This generates a term rewriting system with 241 relations which we shall call $S_{\mathcal{B}}$ we shall now show is confluent. 

 \begin{prop}
 \label{prop:ambiguities}
All ambiguities in the term rewriting system $S_{\mathcal{B}}$ are resolvable.
 \end{prop}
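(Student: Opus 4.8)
The plan is to apply Bergman's Diamond Lemma as stated above: since $S_{\mathcal{B}}$ is a finite reduction system equipped with a semigroup partial ordering compatible with the rules (the ordering by group I--V and then by the standard total order on generators within each group, extended degree-lexicographically), which is terminating because every rule strictly decreases a fixed well-ordering on monomials, it suffices to verify that every overlap and inclusion ambiguity is resolvable. Concretely, I would first argue that $S_{\mathcal{B}}$ is terminating by exhibiting the compatible semigroup ordering: one orders monomials first by total degree in the original simple loops $s_A$ (so that the quadratic Casimir-type relations genuinely reduce), and within a fixed degree by a lexicographic order on the extended alphabet that places the ``higher'' generators (triples in group V, then the double-point generators of groups III and II, then the diagonal loops $s_{13},s_{24}$) above the basic square loops $s_{12},s_{23},s_{34},s_{14}$; one checks by inspection of the explicit list of $241$ rules (all of which are pairwise, i.e.\ have a two-letter word on the left) that each right-hand side is a sum of strictly smaller monomials. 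This is exactly the reason the extra generators were introduced: it makes all left-hand sides words of length two, so the only ambiguities to analyse are overlaps $W_\sigma=ab$, $W_\tau=bc$ of two pairwise rules sharing a single middle letter $b$, plus the finitely many inclusion ambiguities coming from the generator-generating relations.

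Next I would enumerate the ambiguities. Because all rules are pairwise, an overlap ambiguity is determined by a triple of generators $a>b$ and $b>c$ (or the analogous configurations with the commuting relations $ab\mapsto ba$), so the set of ambiguities is finite and completely explicit; I would organise them by the type of the middle letter $b$ and by which of the seven families of relations (cubic, cubic-with-triple, quartic, triple-loop, triple-link, double-and-triple, crossing, commuting, commutator) the two participating rules belong to. For each such triple one reduces the word $abc$ in the two possible ways and checks the results agree after bringing both to irreducible form. The vast majority of these checks are mechanical: the commuting and commutator relations account for a large block that reduces to the braid-type identities already established, and the overlaps among the seven ``interesting'' families reduce, after expanding, to linear combinations of the skein relations in $\SkAlg{q}{\Sigma_{0,5}}$, which we already know hold because every relation in the list was \emph{derived} diagrammatically. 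The cleanest way to present this is to invoke that the quotient $\mathcal{R}\langle X\rangle/I$ surjects onto $\SkAlg{q}{\Sigma_{0,5}}$ (the relations are valid skein identities) and then do the ambiguity verification purely formally; alternatively one defers the surjectivity to the next stage and checks confluence by hand.

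The main obstacle is sheer bookkeeping: with $241$ rules there are several hundred overlap ambiguities, and a handful of them — precisely the ones where the middle letter is a double-point generator from group II or III, so that \emph{both} rules are among the longest relations (the quartic relation and the double-triple relations), and resolving the ambiguity forces one to re-expand the extended generators back into simple loops via the generator-generating relations, reorder, and re-contract — are genuinely long. For those I would not reproduce the computation in the text but instead point to the accompanying computer code (footnote~\ref{code}), having first isolated by hand the small number of ``critical'' ambiguities that do not reduce to a previously checked case or to a manifest skein identity, and verifying those in full. So the structure of the argument is: (1) fix the compatible well-ordering and note termination; (2) observe that pairwise-ness makes the ambiguity set finite and list its types; (3) dispatch the commuting/commutator overlaps by reduction to the braid relations; (4) dispatch the remaining overlaps by reduction to skein identities, with the long double-point cases handled with computer assistance; this yields local confluence, hence by the Diamond Lemma a Poincar\'e--Birkhoff--Witt basis, which in the following subsection is matched against the Hilbert series of \cref{cor:hilbn4} to conclude that the presentation is complete.
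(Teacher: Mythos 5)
Your core strategy is exactly the paper's: since every rule in $S_{\mathcal{B}}$ has a two-letter left-hand side, the only ambiguities are overlaps on words $abc$ with $ab\mapsto r_{ab}$ and $bc\mapsto r_{bc}$ both in $S_{\mathcal{B}}$, and the (large, finite) set of these is verified by computer. Two caveats. First, there are in fact no inclusion ambiguities at all: with all left-hand sides of length exactly two, no $W_\sigma$ can be a proper subword of a $W_\tau$, so the generator-generating relations contribute only overlaps. Second, and more importantly, your suggested shortcut of dispatching overlaps ``by reduction to skein identities'' does not work and you should drop it: if the two reductions of $abc$ terminate in irreducible expressions $P$ and $Q$, knowing that $P-Q$ maps to zero in $\SkAlg{q}{\Sigma_{0,5}}$ does not give $P=Q$ in $\mathcal{R}\langle X\rangle$ unless the irreducible monomials are already known to be linearly independent in the skein algebra --- but that linear independence is precisely what the Diamond Lemma plus the Hilbert series comparison is being used to establish, so the argument would be circular. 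The ambiguities must be resolved formally in the free algebra, which is what the paper (and your fallback to the computer check) actually does; note also that the compatible ordering and termination are treated separately in the paper (\cref{lem:partialorder}) and are not part of this proposition.
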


\begin{proof}
    As all reductions in $S_{\mathcal{B}}$ are pairwise, it is sufficient to check $abc$ where $a,b,c$ are basis elements such that $\{\;ab\mapsto r_{ab},\; bc \mapsto r_{bc}\;\}$ are reductions in $S_{\mathcal{B}}$. As there are 241 relations, there are a very large number of such ambiguities so we have used a computer to check these\cref{code}.
\end{proof}

To use the diamond lemma for ring theory we now need to prove that the system $S_{\mathcal{B}}$ terminates. We shall do this by constructing a partial order which is compatible with the term rewriting system. This partial order will be constructed by chaining together three different partial orders. The first ordering is ordering by \emph{reduced degree} \cite[Section~15]{ReducedOrder}:

\begin{defn}
Give the letters of the finite alphabet \(X\) an ordering \(x_1 \leq \dots \leq x_N\). Any word \(W\) of length \(n\) can be written as \(W = x_{i_1} \dots x_{i_n}\) where \(x_{i_j} \in X\). An \emph{inversion} of \(W\) is a pair \(k \leq l\) with \(x_{i_{k}} \geq x_{i_{l}}\) i.e.\ a pair with letters in the incorrect order. The number of inversions of \(W\) is denoted \(|W|\).
\end{defn}
\begin{defn}
Any expression \(T\) can be written as a linear combination of words \(T= \sum c_l W_l\). Define \(\rho_n(T):= \sum_{\operatorname{length}(W_l)=n, c_l \neq 0} |W_l|\). The \emph{reduced degree of \(T\)} is the largest \(n\) such that \(\rho_n(T) \neq 0\).
\end{defn}
\begin{defn}
Under the \emph{reduced degree ordering}, \(T \leq S\) if 
\begin{enumerate}
\item The reduced degree of  \(T\) is less than the reduced degree of \(S\), or
\item The reduced degree of \(T\) and \(S\) are equal, but  \(\rho_n(T) \leq \rho_n(S)\) for maximal nonzero \(n\). 
\end{enumerate}
\end{defn}

The second ordering is by total degree.

\begin{defn}
    The \emph{total degree} of $T \in k<X>$ is the maximal degree of its monomials. Under the \emph{total degree ordering} $T \leq S$ if the total degree of $T$ is less than or equal to the total degree of $S$. 
    \end{defn}

\begin{defn}
Let $s$ be one of the extended generators. The \emph{degree} of $s$ is 
\[\operatorname{degree}(s) = (\text{number of points inside loop}) + 2(\text{number of double points}),\]
so for example $s_{1 \overline{2} 3}$ has degree $4$. 
The degree of a monomial is the sum of the degree of its terms. 
\end{defn}

The final partial order is a partial order based on the notion on how near are the loops that make up a monomial in the ordered list of loops. 

\begin{defn}
Let $\operatorname{group}(s)$ denote the group (1-5) which the loop $s$ is in. For a monomial $m = \prod_{i \in I} s_i$ we define
\[\operatorname{nearness}(m) = \sum_{i,j \in I} | \operatorname{group}(s_i) - \operatorname{group}(s_j) |.\]
Let $\{m_i\}$ and $\{n_j\}$ be the maximal total degree monomials of expressions $T$ and $S$ respectively. Under the \emph{group distance ordering} $T \leq S$ if 
\[\max_i(\text{number of distinct loops in }m_i) < \max_j(\text{number of distinct loops in }m_j)\]
or these maxima are equal and 
\[\sum_i \operatorname{nearness}(m_i) > \sum_j \operatorname{nearness}(n_j).\]
\end{defn}

We now combine these three partial orders to obtain a single partial order of $\mathcal{R}\langle X\rangle$. 

\begin{defn}
\label{defn:ordering}
Let $m, n \in \mathcal{R}\langle X\rangle$. We define $m \leq n$ if one of the following conditions is satisfied:
\begin{enumerate}
    \item $m<n$ with respect to the reduced degree ordering
    \item $m = n$ under the reduced degree ordering and $m<n$ with respect to the total degree ordering
    \item $m = n$ under the reduced degree ordering, they have the same total degree and $m<n$ with respect to the group distance ordering.
\end{enumerate}
\end{defn}

\begin{lem}
\label{lem:partialorder}
The term rewriting system $S_{\mathcal{B}}$ is compatible with the ordering defined in \cref{defn:ordering}.  
\end{lem}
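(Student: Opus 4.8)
The plan is to verify that each of the 241 rewriting rules in $S_{\mathcal{B}}$ strictly decreases the word it acts on, with respect to the composite order of \cref{defn:ordering}. Since the order is built by lexicographically chaining the reduced degree order, the total degree order, and the group distance order, it suffices to show for each rule $W_\sigma \mapsto f_\sigma$ that every monomial of $f_\sigma$ is strictly smaller than $W_\sigma$ in at least one of these three orders, with equality (or the relevant tie) in all the earlier ones. First I would stratify the rules into the natural families: (a) the pairwise relations coming directly from \cref{thm:presentation_fivepunctures} (cubic, quartic, triple loop, triple link, double-and-triple, crossing), now rephrased using the extended generators of groups I--V; (b) the generator generating relations $ab\mapsto r_{ab}$; (c) the commuting and commutator relations among all generators, including the extended ones; and (d) the derived relations produced by the iterative closure procedure. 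For each family I would pin down which of the three component orders ``does the work''.

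The key observations are as follows. For the commuting relations $ab\mapsto ba$ with $a<b$, total degree and group distance are both preserved, and the reduced degree strictly drops because the inversion $(a,b)$ is removed and no new inversions at that length are created; this is exactly the situation in \cite[Section~15]{ReducedOrder}. For the commutator relations $[\Lambda_A,\Lambda_B]_q$-type rules, the leading term on the left has two distinct loops while every monomial on the right either has fewer distinct loops (the $s_{(A\cup B)\setminus(A\cap B)}$ term and the scalar-times-generator terms) or has the same number of distinct loops but lies in a ``less spread out'' configuration — here one checks that $\operatorname{nearness}$ strictly increases, which by the definition of the group distance order means the monomial is strictly smaller. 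For the cubic, quartic and triple-type relations the mechanism is degree: the left-hand monomial $W_\sigma$ is a product of two extended generators whose degrees (points plus twice double points) sum to a value strictly larger than the degree of every monomial appearing on the right, because resolving a crossing always either lowers the number of double points or replaces a product of large loops by a combination involving the smaller loops $s_A$ with $|A|\le 3$ together with central loops absorbed into $\mathcal{R}$. The generator generating relations are designed so that the new generator $s_{\ldots\overline{x}\ldots}$ sits strictly below the product it is extracted from in total degree; and the derived relations inherit their strict decrease from the relations they were built out of, since at each iteration step we rearrange an equality $r_{ab}c = a r_{bc}$ in which both sides are already known to reduce, and solve for the maximal new generator.

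The main obstacle will be the bookkeeping in family (d) and the interaction between the three component orders: one must make sure that when a rule preserves reduced degree and total degree it genuinely decreases group distance (and not merely keeps it equal), and conversely that no rule ever increases reduced degree or total degree on some monomial of $f_\sigma$ that a casual glance would miss — in particular the quartic relation's right-hand side is long and contains many degree-$4$ monomials, so checking that none of them ties $W_\sigma$ in all three orders is delicate. Because there are 241 rules this verification is best organised and confirmed by the same computer algebra used for \cref{prop:ambiguities}; see the code\cref{code}. I would present the human-readable argument family by family as above, and cite the computation for the exhaustive case check, exactly as is done for the resolution of ambiguities.
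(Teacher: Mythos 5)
Your proposal is correct and follows essentially the same route as the paper: the paper's proof of this lemma is a one-line reduction to the observation that compatibility means every monomial of $f_{\sigma}$ must be strictly below $W_{\sigma}$ in the composite order, verified exhaustively by the same computer code used for \cref{prop:ambiguities}. One small caveat: not all of your family-by-family heuristics are accurate --- for the commutator rules the leading term $s_{12}s_{23}$ ties the left-hand side $s_{23}s_{12}$ in total degree and in nearness (both loops lie in the same group), and it is the reduced-degree (inversion-count) component that does the work there --- but since your final arbiter is the machine check, this does not affect the validity of the argument.
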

\begin{proof}
    This requires that for every rewriting rule $\{\sigma\mapsto r_{\sigma}\}$, $\sigma < m$ for all monomials $m$ in $r_{\sigma}$. This can easily be checked using the code \footref{code}. 
\end{proof}

We can now apply the diamond lemma for ring theory.

\begin{thm}
The term-rewriting system $S_{\mathcal{B}}$ is confluent and hence the reduced monomials form a linear basis for the associated algebra $\mathcal{B}$. 
\end{thm}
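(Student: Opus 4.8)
The plan is to invoke the Diamond Lemma (Bergman, \cite[Theorem~1.2]{diamond}) in the form stated earlier, applied to the reduction system $S_{\mathcal{B}}$ over the base ring $\mathcal{R}=R[s_1,s_2,s_3,s_4,s_{1234}]$ and the alphabet $X$ consisting of the extended generators in groups I--V. The two hypotheses of the lemma are exactly the content of the two preceding results: \cref{prop:ambiguities} establishes that every (overlap or inclusion) ambiguity in $S_{\mathcal{B}}$ is resolvable, and \cref{lem:partialorder} together with the construction of the ordering in \cref{defn:ordering} establishes that there is a semigroup partial order compatible with $S_{\mathcal{B}}$.

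First I would verify that the ordering of \cref{defn:ordering} satisfies the descending chain condition, so that $S_{\mathcal{B}}$ is terminating: since each of the three component orders (reduced degree, total degree, group distance) takes values in a well-ordered set and they are combined lexicographically, any strictly descending chain must eventually be constant in the first coordinate, then in the second, then in the third, whence it terminates. Compatibility in the sense required by the Diamond Lemma --- that for each rule $\sigma\mapsto r_\sigma$ every monomial of $r_\sigma$ is strictly below $W_\sigma$ --- is precisely \cref{lem:partialorder}. With termination and local confluence (\cref{prop:ambiguities}) in hand, Newman's lemma gives confluence, and then part (3) of the Diamond Lemma identifies $\mathcal{B}=\mathcal{R}\langle X\rangle/I$ with the $\mathcal{R}$-span of the $S_{\mathcal{B}}$-irreducible monomials, equipped with the product $a\cdot b = r_{S_{\mathcal{B}}}(ab)$. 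In particular the reduced monomials form an $\mathcal{R}$-linear basis of $\mathcal{B}$, which is the assertion.

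The only genuine content to check here is termination, i.e.\ that $S_{\mathcal{B}}$ satisfies the descending chain condition for the ordering of \cref{defn:ordering}; local confluence and compatibility are already recorded. The subtlety is that the reduction rules are not length-preserving (several rules replace a degree-$d$ monomial by a linear combination of monomials of possibly larger word-length but with fewer inversions, or lower reduced degree), so one must be careful that the \emph{leading} coordinate of the ordering --- the reduced degree --- is what strictly decreases, or, when it is preserved, that the total degree or group-distance coordinate does. This is exactly why the ordering was built by chaining the three partial orders in this specific priority: the reduced-degree count dominates, and within a fixed reduced degree the total degree and then the nearness statistic handle the remaining rules. Having confirmed this --- which for each of the $241$ rules is a finite check carried out in the accompanying code\footref{code} --- the Diamond Lemma applies verbatim and the theorem follows.
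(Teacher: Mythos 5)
Your proposal is correct and follows essentially the same route as the paper: both cite \cref{prop:ambiguities} for resolvability of ambiguities and \cref{lem:partialorder} for compatibility with the ordering of \cref{defn:ordering}, then conclude termination and apply Bergman's diamond lemma to identify the irreducible monomials as a basis. The only difference is that you spell out why the chained ordering satisfies the descending chain condition, which the paper leaves implicit.
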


\begin{proof}
    The ambiguities are resolvable by \cref{prop:ambiguities} and by \cref{lem:partialorder} there is a compatible term rewriting system so the term rewriting system terminates and hence we can apply the diamond lemma for ring theory. 
\end{proof}

If we filter $\mathcal{B}$ by degree, we have a surjective filtered algebra homomorphism
\[
    \phi: \mathcal{B} \to \SkAlg{q}{\Sigma_{0,5}}.
\]
We now need to prove that $\phi$ is an isomorphism and thus that $\mathcal{B}$ is a presentation for $\SkAlg{q}{\Sigma_{0,5}}$. To do this we shall compute the Hilbert series of $\mathcal{B}$ and show it is the same as the Hilbert series for $\SkAlg{q}{\Sigma_{0,5}}$ which we have already computed in \cref{thm:dimensions}. 
\begin{prop}
The Hilbert series of $\mathcal{B}$ is
\[\frac{t^4 - 2t^3 + 4t^2 -2t +1}{(1-t^2)^3(1-t)^6}\]
\end{prop}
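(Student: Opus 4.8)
The plan is to read off the Hilbert series of $\mathcal{B}$ directly from the Poincar\'e--Birkhoff--Witt basis produced by the Diamond Lemma. Since $S_{\mathcal{B}}$ has already been shown to be confluent and terminating, $\mathcal{B}$ is identified, as a graded $R$-vector space, with the span of the $S_{\mathcal{B}}$-irreducible monomials in the central generators $s_1,s_2,s_3,s_4,s_{1234}$ together with the extended generators of groups I--V; so it only remains to count these monomials in each degree. The structural point --- which is precisely why the extra generators were introduced --- is that \emph{every} rewriting rule of $S_{\mathcal{B}}$ has a left-hand side which is a product of exactly two generators. Hence a monomial $s_{a_1}s_{a_2}\cdots s_{a_m}$ is $S_{\mathcal{B}}$-irreducible if and only if none of the consecutive pairs $s_{a_i}s_{a_{i+1}}$ is the left-hand side of a rule; equivalently, the irreducible monomials are exactly the words over the finite extended alphabet that avoid the finite set $\mathcal{F}$ of ``forbidden'' length-two factors, namely the left-hand sides of the $241$ rules of $S_{\mathcal{B}}$ (the commuting, commutator, generator-generating, and derived relations).

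Counting words over a finite alphabet avoiding a finite set of length-two factors is a transfer-matrix computation. Let $N$ be the number of generators, let $M$ be the $N\times N$ matrix indexed by the generators with $M_{r,s}=t^{\operatorname{degree}(s)}$ when $rs\notin\mathcal{F}$ and $M_{r,s}=0$ otherwise, and let $\mathbf{d}$ be the column vector with entries $\mathbf{d}_s=t^{\operatorname{degree}(s)}$. Then $\sum_{k\ge 0}M^k=(I-M)^{-1}$ enumerates admissible transition paths by degree, so the Hilbert series of $\mathcal{B}$ is $h_{\mathcal{B}}(t)=1+\mathbf{d}^{T}(I-M)^{-1}\mathbf{1}$ (the $1$ accounting for the empty monomial). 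As a cross-check one may first factor out the polynomial subring $\mathcal{R}=R[s_1,s_2,s_3,s_4,s_{1234}]$, whose Hilbert series is $\tfrac{1}{(1-t)^4(1-t^4)}$ since its generators have degrees $1,1,1,1,4$; then $h_{\mathcal{B}}(t)$ equals this times the analogous generating function for the irreducible words built only from the group I--V generators (degrees $2,4,6,2,3$). In either formulation $M$ is an explicit matrix with polynomial entries, so $(I-M)^{-1}$ is rational, and simplifying the resulting expression yields $\dfrac{t^4-2t^3+4t^2-2t+1}{(1-t^2)^3(1-t)^6}$; this final algebraic simplification is mechanical and is carried out with the accompanying code.

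The only real difficulty is bookkeeping, not mathematics: one must be certain that $\mathcal{F}$ is \emph{exactly} the set of left-hand sides of the rules of $S_{\mathcal{B}}$ --- including all commuting and commutator relations that involve the new generators of groups I--V, and every relation produced by the iterated generator-generating procedure --- and that each such left-hand side genuinely has length two in the extended alphabet. Once the rule list has been verified (as done by computer) and $M$ assembled accordingly, computing $(I-M)^{-1}$ and simplifying to the stated rational function is routine.
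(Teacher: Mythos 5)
Your proposal is correct in substance and rests on the same foundation as the paper's proof: by the Diamond Lemma, $\mathcal{B}$ has the $S_{\mathcal{B}}$-irreducible monomials as a linear basis, and since every left-hand side of $S_{\mathcal{B}}$ is a product of exactly two generators, the irreducible monomials are precisely the words over the extended alphabet avoiding a finite set of length-two factors. Where you diverge is in how the count is organised. The paper extracts explicit structure from the rule list first: every unordered pair of generators in the same group is forbidden in both orders and every out-of-order pair across groups is forbidden, so a reduced monomial is forced into the shape $s_{\textrm{I}}^{\alpha}s_{\textrm{II}}^{\beta}s_{\textrm{III}}^{\gamma}s_{\textrm{IV}}^{\delta}s_{\textrm{V}}^{\epsilon}$ with at most one distinct generator per group; the Hilbert series is then assembled by hand, conditioning on which groups occur and using the specific compatibility counts (two choices of a group-I loop below a given group-II loop, three choices of group-II below a group-III, no group-III compatible with a group-IV, and so on), finally multiplying by the Hilbert series $\tfrac{1}{(1-t)^4(1-t^4)}$ of the central subring $\mathcal{R}$. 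Your transfer-matrix formula $1+\mathbf{d}^{T}(I-M)^{-1}\mathbf{1}$ encodes exactly the same data and is a legitimate, arguably more systematic and more automatable, way to perform the identical enumeration; it would also scale better to $\Sigma_{0,n+1}$ for larger $n$. What it buys in uniformity it loses in transparency: the compatibility structure that the paper makes visible (and that explains the shape of the answer) is buried in the entries of $M$.

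Two points of care. First, your primary formulation puts the central generators $s_1,\dots,s_4,s_{1234}$ into the alphabet; to avoid overcounting you would then need commutation rules pinning them to a canonical position, so your ``cross-check'' version --- factoring out $\mathcal{R}$ with Hilbert series $\tfrac{1}{(1-t)^4(1-t^4)}$ and running the transfer matrix only over the nineteen group I--V generators of degrees $2,4,6,2,3$ --- is the one that matches the paper's setup and should be taken as the actual argument. Second, your proof as written never exhibits the forbidden set $\mathcal{F}$ nor carries out the inversion and simplification; it delegates both to ``the accompanying code.'' That is acceptable in spirit (the paper itself verifies the $241$ rules and their ambiguities by computer), but it means the step that constitutes the entire content of this particular proposition --- determining which adjacencies survive and summing the resulting rational functions to $\tfrac{t^4-2t^3+4t^2-2t+1}{(1-t^2)^3(1-t)^6}$ --- is asserted rather than performed. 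To make the argument self-contained you should at minimum record the adjacency data between groups, as the paper does.
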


\begin{proof}
In order to compute the Hilbert series, we first consider what are the conditions on a monomial $m = \prod_{i \in I} s_i$ if it is reduced and therefore is in the vector space basis. Firstly, note that there is a relation $yx$ for any $y>x$ so the $s_i$ must be ordered. Furthermore, there is a relation $xy$ between any two loops in the same group; hence, 
\[ m = s_{\textrm{I}}^{\alpha} s_{\textrm{II}}^{\beta} s_{\textrm{III}}^{\gamma} s_{\textrm{IV}}^{\delta} s_{\textrm{V}}^{\epsilon} \]
where for example $s_{\textrm{I}}$ is a loop in group 1 and $\alpha, \beta, \gamma, \delta, \epsilon \in \mathbb{Z}_{\geq 0}$. 
Also note that as all the relations are pairwise we only need to concern ourselves with neighbouring terms in the monomial. 

The Hilbert series of $\{\;x^m \mathrel{|} m \in \mathbb{Z}_{>0},\; \deg(x) = n\;\}$ is $\frac{t^n}{1-t^n}$, 
so the Hilbert series when there is only a $s_{\textrm{I}}$ loop is 
\[
1 + \frac{4t^2}{1-t^2}.
\]
Given a $s_{\textrm{II}}$ loop there are two possible choices for $s_{\textrm{I}}$; hence the Hilbert series for $s_\textrm{I}^{\alpha} s_{\textrm{II}}^{\beta}$ is
\[
    1 + \frac{4t^2}{1-t^2} + \frac{6t^4}{1-t^4}\left(1 + \frac{2t^2}{1-t^2}\right)
    = 1 + \frac{4t^2}{1-t^2} + \frac{6t^4}{(1-t^2)^2}
\]
Given a $s_{\textrm{III}}$ loop there are three choices of $s_{\textrm{II}}$ or if there is no $s_{\textrm{II}}$ loop there are three choices for $s_{\textrm{I}}$; hence the Hilbert series for $s_{\textrm{I}}^{\alpha} s_{\textrm{II}}^{\beta} s_{\textrm{III}}^{\gamma}$ is
\[
  1 + \frac{4t^2}{1-t^2} + \frac{6t^4}{(1-t^2)^2} + \frac{4t^6}{1-t^6}\left( \frac{3t^4}{(1-t^2)^2} + \frac{3t^2}{1-t^2} + 1 \right) = \frac{1-t^8}{(1-t^2)^4}
\]
Given a $s_{\textrm{IV}}$ loop there are no choices for $s_{\textrm{III}}$ (the relations are derived from the cubic relation) and two choices for $s_{\textrm{V}}$. There are four choices for $s_{\textrm{II}}$ and if there is no $s_{\textrm{II}}$ loop there is a free choice of $s_{\textrm{I}}$; hence the Hilbert series for $s_{\textrm{I}}^{\alpha} s_{\textrm{II}}^{\beta} s_{\textrm{III}}^{\gamma} s_{\textrm{IV}}^{\epsilon} s_{\textrm{V}}^{\delta}$  assuming $\delta \neq 0$ is 
\[
\frac{2t^2}{1-t^2}\left(\frac{2t^3}{1-t^3}+1\right)\left( \frac{4t^4}{(1-t^2)^2} + \frac{4t^2}{1-t^2} + 1\right)
\]

Finally, we assume there is no $s_{\textrm{IV}}$ loop but fix a $s_{\textrm{V}}$ loop. There are two choices for $s_{\textrm{III}}$, if there is no $s_{\textrm{III}}$ loop there are five choices for $s_{\textrm{II}}$ and if there is only a $s_{\textrm{I}}$ loop then there is a free choice. Hence, the Hilbert series for $s_{\textrm{I}}^{\alpha} s_{\textrm{II}}^{\beta} s_{\textrm{III}}^{\gamma} s_{\textrm{V}}^{\epsilon}$  assuming $\epsilon \neq 0$ is 
\[
\frac{4t^3}{1-t^3} \left(\frac{2t^6}{(1-t^2)^3} + \frac{5t^4}{(1-t^2)^2} + \frac{4t^2}{1-t^2} + 1 \right) = \frac{4t^3}{1-t^3} \frac{(1-t^4)}{(1-t^2)^4}.
\]
Combining these cases gives a Hilbert series for $\mathcal{B}$ of 
\begin{gather*}
\frac{1-t^8}{(1-t^2)^4} + \frac{2t^2}{1-t^2}\left(\frac{2t^3}{1-t^3}+1\right)\left( \frac{4t^2}{(1-t^2)^2} + 1\right) +  \frac{4t^3}{1-t^3} \frac{(1-t^4)}{(1-t^2)^4} \\
= \frac{t^4 - 2t^3 + 4t^2 -2t +1}{(1-t^2)^3(1-t)^6}
\end{gather*}
as required. 
\end{proof}

This means we have an isomorphism
\[\phi: \SkAlg{q}{\Sigma_{0,5}} \to \mathcal{B}\]
and therefore we have a presentation for $\SkAlg{q}{\Sigma_{0,5}}$. Finally, we will remove the extra generators to reduce the presentation to that in \cref{thm:presentation_fivepunctures} thus proving this theorem.
\begin{proof}[Proof of \cref{thm:presentation_fivepunctures}]
    We have that $\mathcal{B}$ is a presentation for $\SkAlg{q}{\Sigma_{0,5}}$ as the algebras are isomorphic. Using generator generating relations we can eliminate the non-simple loop generators of $\mathcal{B}$. The relations in $\mathcal{B}$ between simple loops are the same as in $\mathcal{A}$ and it is straightforward to check that the new cubic and quartic relations reduce (see code). The extra relations in $\mathcal{B}$ which were generated from these relations reduce by how they were defined. Hence, we can reduce the presentation $\mathcal{B}$ thus concluding the proof. 
\end{proof}

\begin{appendices}
    \crefalias{section}{appsec}
    \section{Appendix}
\label{app:1}
In this appendix we list explicitly the full list of relations for the presentation $\SkAlg{q}{\Sigma_{0,5}}$ which is given in Theorem \ref{thm:presentation_fivepunctures}. 

\subsection{Commuting}
\begin{align*}
    s_{124}s_{24} &=  s_{24}s_{124} &
s_{124}s_{14} &=  s_{14}s_{124} &
s_{134}s_{14} &=  s_{14}s_{134}\\
s_{134}s_{13} &=  s_{13}s_{134} &
s_{124}s_{12} &=  s_{12}s_{124} &
s_{234}s_{34} &=  s_{34}s_{234}\\
s_{234}s_{24} &=  s_{24}s_{234} &
s_{134}s_{34} &=  s_{34}s_{134} &
s_{234}s_{23} &=  s_{23}s_{234}\\
s_{123}s_{13} &=  s_{13}s_{123} &
s_{123}s_{12} &=  s_{12}s_{123} &
s_{123}s_{23} &=  s_{23}s_{123}\\
s_{14}s_{23} &=  s_{23}s_{14} &
s_{34}s_{12} &=  s_{12}s_{34} &
\end{align*}

\subsection{Commutators} 
\begin{align*}
s_{23}s_{12} &=  \left(1-q^2\right)s_{1}s_{3} + \left(q^{-1} - q^3\right)s_{13} + \left(1-q^2\right)s_{2}s_{123} + q^2s_{12}s_{23}\\
s_{34}s_{23} &=  \left(1-q^2\right)s_{2}s_{4} + \left(q^{-1} - q^3\right)s_{24} + \left(1-q^2\right)s_{3}s_{234} + q^2s_{23}s_{34}\\
s_{14}s_{12} &=  \left(1-q^{-2}\right)s_{2}s_{4} + \left(q - q^{-3}\right)s_{24} + \left(1-q^{-2}\right)s_{1}s_{124} + q^{-2}s_{12}s_{14}\\
s_{14}s_{34} &=  \left(1-q^2\right)s_{1}s_{3} + \left(q^{-1} - q^3\right)s_{13} + \left(1-q^2\right)s_{4}s_{134} + q^2s_{34}s_{14}\\
s_{13}s_{12} &=  \left(1-q^{-2}\right)s_{2}s_{3} + \left(q - q^{-3}\right)s_{23} + \left(1-q^{-2}\right)s_{1}s_{123} + q^{-2}s_{12}s_{13}\\
s_{13}s_{23} &=  \left(1-q^2\right)s_{1}s_{2} + \left(q^{-1} - q^3\right)s_{12} + \left(1-q^2\right)s_{3}s_{123} + q^2s_{23}s_{13}\\
s_{13}s_{14} &=  \left(1-q^2\right)s_{3}s_{4} + \left(q^{-1} - q^3\right)s_{34} + \left(1-q^2\right)s_{1}s_{134} + q^2s_{14}s_{13}\\
s_{24}s_{12} &=  \left(1-q^2\right)s_{1}s_{4} + \left(q^{-1} - q^3\right)s_{14} + \left(1-q^2\right)s_{2}s_{124} + q^2s_{12}s_{24}\\
s_{24}s_{23} &=  \left(1-q^{-2}\right)s_{3}s_{4} + \left(q - q^{-3}\right)s_{34} + \left(1-q^{-2}\right)s_{2}s_{234} + q^{-2}s_{23}s_{24}\\
s_{24}s_{34} &=  \left(1-q^2\right)s_{2}s_{3} + \left(q^{-1} - q^3\right)s_{23} + \left(1-q^2\right)s_{4}s_{234} + q^2s_{34}s_{24}\\
s_{24}s_{14} &=  \left(1-q^{-2}\right)s_{1}s_{2} + \left(q - q^{-3}\right)s_{12} + \left(1-q^{-2}\right)s_{4}s_{124} + q^{-2}s_{14}s_{24}\\
s_{13}s_{34} &=  \left(1-q^{-2}\right)s_{1}s_{4} + \left(q - q^{-3}\right)s_{14} + \left(1-q^{-2}\right)s_{3}s_{134} + q^{-2}s_{34}s_{13}\\
s_{123}s_{34} &=  \left(1-q^{-2}\right)s_{3}s_{1234} + \left(1-q^{-2}\right)s_{4}s_{12} + \left(q - q^{-3}\right)s_{124} + q^{-2}s_{34}s_{123}\\
s_{123}s_{14} &=  \left(1-q^2\right)s_{1}s_{1234} + \left(1-q^2\right)s_{4}s_{23} + \left(q^{-1} - q^3\right)s_{234} + q^2s_{14}s_{123}\\
s_{234}s_{12} &=  \left(1-q^2\right)s_{2}s_{1234} + \left(1-q^2\right)s_{1}s_{34} + \left(q^{-1} - q^3\right)s_{134} + q^2s_{12}s_{234}\\
s_{234}s_{14} &=  \left(1-q^{-2}\right)s_{4}s_{1234} + \left(1-q^{-2}\right)s_{1}s_{23} + \left(q - q^{-3}\right)s_{123} + q^{-2}s_{14}s_{234}\\
s_{134}s_{23} &=  \left(1-q^2\right)s_{3}s_{1234} + \left(1-q^2\right)s_{2}s_{14} + \left(q^{-1} - q^3\right)s_{124} + q^2s_{23}s_{134}\\
s_{134}s_{12} &=  \left(1-q^{-2}\right)s_{1}s_{1234} + \left(1-q^{-2}\right)s_{2}s_{34} + \left(q - q^{-3}\right)s_{234} + q^{-2}s_{12}s_{134}\\
s_{124}s_{23} &=  \left(1-q^{-2}\right)s_{2}s_{1234} + \left(1-q^{-2}\right)s_{3}s_{14} + \left(q - q^{-3}\right)s_{134} + q^{-2}s_{23}s_{124}\\
s_{124}s_{34} &=  \left(1-q^2\right)s_{4}s_{1234} + \left(1-q^2\right)s_{3}s_{12} + \left(q^{-1} - q^3\right)s_{123} + q^2s_{34}s_{124}\\
\end{align*}

\subsection{Cubic Relations}
\begin{align*}
    s_{34}s_{14}s_{13} &=  -\left(q + 2q^{-1} + q^{-3}\right)+q^{-1}s_{4}^2+q^{-1}s_{3}^2+q^{-1}s_{1}^2 \\
&+ s_{3}s_{4}s_{34} + q^{-2}s_{1}s_{4}s_{14} + s_{1}s_{3}s_{13} + q^{-1}s_{1}s_{3}s_{4}s_{134} + qs_{34}^2 + s_{1}s_{34}s_{134} \\
&+ q^{-3}s_{14}^2 + q^{-2}s_{3}s_{14}s_{134} + qs_{13}^2 + s_{4}s_{13}s_{134} + q^{-1}s_{134}^2\\
s_{12}s_{14}s_{24} &=  -\left(q^3 + 2q + q^{-1}\right)+qs_{4}^2+qs_{2}^2+qs_{1}^2 \\
&+ s_{1}s_{2}s_{12} + q^2s_{1}s_{4}s_{14} + s_{2}s_{4}s_{24} + qs_{1}s_{2}s_{4}s_{124} + q^{-1}s_{12}^2 + s_{4}s_{12}s_{124} \\
&+ q^3s_{14}^2 + q^2s_{2}s_{14}s_{124} + q^{-1}s_{24}^2 + s_{1}s_{24}s_{124} + qs_{124}^2\\
s_{12}s_{23}s_{13} &=  -\left(q + 2q^{-1} + q^{-3}\right)+q^{-1}s_{3}^2+q^{-1}s_{2}^2+q^{-1}s_{1}^2 \\
&+ s_{1}s_{2}s_{12} + q^{-2}s_{2}s_{3}s_{23} + s_{1}s_{3}s_{13} + q^{-1}s_{1}s_{2}s_{3}s_{123} + qs_{12}^2 + s_{3}s_{12}s_{123} \\
&+ q^{-3}s_{23}^2 + q^{-2}s_{1}s_{23}s_{123} + qs_{13}^2 + s_{2}s_{13}s_{123} + q^{-1}s_{123}^2\\
s_{23}s_{34}s_{24} &=  -\left(q + 2q^{-1} + q^{-3}\right)+q^{-1}s_{4}^2+q^{-1}s_{3}^2+q^{-1}s_{2}^2 \\
&+ s_{2}s_{3}s_{23} + q^{-2}s_{3}s_{4}s_{34} + s_{2}s_{4}s_{24} + q^{-1}s_{2}s_{3}s_{4}s_{234} + qs_{23}^2 + s_{4}s_{23}s_{234} \\
&+ q^{-3}s_{34}^2 + q^{-2}s_{2}s_{34}s_{234} + qs_{24}^2 + s_{3}s_{24}s_{234} + q^{-1}s_{234}^2\\
\end{align*}

\subsection{Cubic Relations with Triples} 
\begin{align*}
 s_{12}s_{14}s_{234} &= \left(q^2 + 1\right)s_{3}+qs_{1}s_{2}s_{4}s_{1234}-q^2s_{1}^2s_{3} +  s_{4}s_{1234}s_{12} + qs_{2}s_{23} \\
 &+ qs_{4}s_{34} + q^2s_{2}s_{1234}s_{14} -q^3s_{1}s_{13} +  s_{1}s_{1234}s_{24} +  s_{2}s_{4}s_{234} + qs_{1234}\\
 &+s_{124} +  s_{1}s_{12}s_{23} + q^{-1}s_{12}s_{123} + q^2s_{1}s_{34}s_{14} + q^3s_{14}s_{134} + q^{-1}s_{24}s_{234}\\
  s_{34}s_{14}s_{123} &= \left(1+q^{-2}\right)s_{2}+\left(1-q^2-q^{-2}\right)s_{2}s_{4}^2+q^{-1}s_{1}s_{3}s_{4}s_{1234} + q^{-1}s_{1}s_{12} \\
 &+ q^{-1}s_{3}s_{23} +  s_{1}s_{1234}s_{34} + q^{-2}s_{3}s_{1234}s_{14} +  s_{4}s_{1234}s_{13} \\
 &+ \left(-q^3+q^{-1}-q^{-3}\right)s_{4}s_{24} +  s_{1}s_{3}s_{123} + \left(-q^2+1\right)s_{3}s_{4}s_{234} + q^{-1}s_{1234}s_{134} \\
 &+ q^{-2}s_{4}s_{12}s_{14} + q^2s_{4}s_{23}s_{34} + qs_{34}s_{234} + q^{-3}s_{14}s_{124} + qs_{13}s_{123}\\
  s_{23}s_{34}s_{124} &= q^{-1}s_{2}s_{3}s_{4}s_{1234}+\left(1+q^{-2}\right)s_{1}+-q^2s_{1}s_{3}^2 + q^{-1}s_{2}s_{12} \\(q^2+1)
 &+  s_{4}s_{1234}s_{23} + q^{-2}s_{2}s_{1234}s_{34} + q^{-1}s_{4}s_{14} + \left(-q^3-q+q^{-1}\right)s_{3}s_{13} \\
 &+  s_{3}s_{1234}s_{24} + \left(-q^2+1\right)s_{2}s_{3}s_{123} + q^{-1}s_{1234}s_{234} + \left(q^{-2}-1\right)s_{3}s_{4}s_{134} \\
 &+  s_{2}s_{4}s_{124} + q^2s_{3}s_{12}s_{23} + qs_{23}s_{123} +  s_{3}s_{34}s_{14} + q^{-3}s_{34}s_{134} + qs_{24}s_{124}\\
 s_{12}s_{23}s_{134} &= \left(1+q^{-2}\right)s_{4}- s_{2}^2s_{4}+q^{-1}s_{1}s_{2}s_{3}s_{1234} +  s_{3}s_{1234}s_{12} \\
 &+ q^{-2}s_{1}s_{1234}s_{23} + q^{-1}s_{3}s_{34} + q^{-1}s_{1}s_{14} +  s_{2}s_{1234}s_{13} -qs_{2}s_{24} \\
 &+ q^{-1}s_{1234}s_{123} + \left(q^{-2}-1\right)s_{2}s_{3}s_{234} +  s_{1}s_{3}s_{134} +  s_{2}s_{12}s_{14} + qs_{12}s_{124} \\
 &+  s_{2}s_{23}s_{34} + q^{-3}s_{23}s_{234} + qs_{13}s_{134}
\end{align*}

\subsection{Quartic Relation}
\begin{align*}
    s_{12}&s_{23}s_{34}s_{14} \\
    &= s_{1}s_{3}s_{12}s_{23} + s_{2}s_{4}s_{12}s_{14} + s_{2}s_{4}s_{23}s_{34} + s_{1}s_{3}s_{34}s_{14} \\
    &+ \left(qs_{3}s_{12} + qs_{2}s_{13} + q^{-1}s_{1}s_{23} + s_{123} + q^{-1}s_{4}s_{1234}+s_{1}s_{2}s_{3}\right)s_{123} \\
    &+ \left(qs_{4}s_{13} + qs_{3}s_{14} + q^{-1}s_{1}s_{34} + s_{134} + q^{-1}s_{2}s_{1234}+s_{1}s_{3}s_{4}\right)s_{134} \\
    &+ \left( qs_{4}s_{12} + qs_{2}s_{14} + q^{-1}s_{1}s_{24}  + s_{124} + q^{-1}s_{3}s_{1234}+s_{1}s_{2}s_{4}\right)s_{124} \\
    &+ q^{-2}\left(q^{-1}s_{4}s_{23} + q^{-1}s_{3}s_{24} + q^{-1}s_{2}s_{34}  + q^{-2}s_{234} + q^{-1}s_{1}s_{1234} + (2-q^2)s_{2}s_{3}s_{4}\right)s_{234} \\
    &+ \left(q^2s_{12} + \left(q+q^{-1}\right)s_{1}s_{2} +s_{3}s_{4}s_{1234}\right)s_{12} + \left(q^{-2}s_{23}+q^{-1}s_{2}s_{3}+q^{-2}s_{1}s_{4}s_{1234}\right)s_{23} \\
    &+ \left(q^2s_{14} + \left(q+q^{-1}\right)s_{1}s_{4} +s_{2}s_{3}s_{1234}\right)s_{14} + \left(q^{-2}s_{34} + q^{-1}s_{3}s_{4}+q^{-2}s_{1}s_{2}s_{1234}\right)s_{34} \\
    &+ \left(q^{-2}s_{24} +\left(q^{-1}-q\right)s_{2}s_{4}+q^{-2}s_{1}s_{3}s_{1234}\right)s_{24} +\left(q^2s_{13} + s_{2}s_{4}s_{1234}\right)s_{13} \\
    &+q^{-2}s_{1234}^2 -s_{2}^2s_{4}^2 -s_{1}^2s_{3}^2   +q^{-1}s_{1}s_{2}s_{3}s_{4}s_{1234} +\left(2+q^{-2}\right)\left(s_{1}^2 + s_{2}^2 + s_{3}^2 + s_{4}^2\right)\\
    &-2q^2-5-4q^{-2}-q^{-4}\\
\end{align*}

\subsection{Loop Triple Relations}
\begin{align*}
    s_{123}s_{134} &=  -s_{2}s_{4} + s_{1234}s_{13} -\left(q + q^{-1}\right)s_{24} -s_{3}s_{234} -s_{1}s_{124} + s_{12}s_{14} + s_{23}s_{34}\\
s_{134}s_{123} &=  \left(1 - q^2 - q^{-2}\right)s_{2}s_{4} + s_{1234}s_{13} -\left(q^3 - q^{-3}\right)s_{24} -q^2s_{3}s_{234} -q^{-2}s_{1}s_{124} \\
&+ q^{-2}s_{12}s_{14} + q^2s_{23}s_{34}\\
s_{234}s_{124} &=  -q^2s_{1}s_{3} - \left(q^3+q\right)s_{13} + s_{1234}s_{24} -q^2s_{2}s_{123} -s_{4}s_{134} + q^2s_{12}s_{23} + s_{34}s_{14}\\
s_{124}s_{234} &=  -q^2s_{1}s_{3} - \left(q^3+q\right)s_{13} + s_{1234}s_{24} -s_{2}s_{123} -q^2s_{4}s_{134} + s_{12}s_{23} + q^2s_{34}s_{14}\\
\end{align*}

\subsection{Link Triple Relations}
\begin{align*}
s_{123}s_{234} &=  q^{-1}s_{2}s_{3}s_{1234}+s_{1}s_{4} + \left(q^{-1} - q\right)s_{2}s_{4}s_{12} + s_{1234}s_{23} + \left(q + q^{-1}\right)s_{14} \\
&+ s_{3}s_{134} + q^{-2}s_{2}s_{124} -q^2s_{12}s_{24} -qs_{3}s_{12}s_{234} -s_{34}s_{13} -q^{-1}s_{2}s_{34}s_{123}\\ 
&+ qs_{12}s_{23}s_{34}\\
s_{234}s_{123} &=  qs_{2}s_{3}s_{1234}+s_{1}s_{4} + \left(-q^3+q\right)s_{2}s_{4}s_{12} + s_{1234}s_{23} + \left(q + q^{-1}\right)s_{14} \\
&+ q^2s_{3}s_{134} + s_{2}s_{124} -q^4s_{12}s_{24} -q^3s_{3}s_{12}s_{234} -q^2s_{34}s_{13} -qs_{2}s_{34}s_{123}\\ 
&+ q^3s_{12}s_{23}s_{34}\\
s_{124}s_{134} &=  s_{2}s_{3}+qs_{1}s_{4}s_{1234} + \left(q + q^{-1}\right)s_{23} + s_{1234}s_{14} + q^2s_{1}s_{123} \\
&+ s_{4}s_{234} -q^2s_{12}s_{13} -qs_{4}s_{12}s_{134} -s_{34}s_{24} -qs_{1}s_{34}s_{124} \\
&+ qs_{12}s_{34}s_{14}\\
s_{134}s_{124} &=  s_{2}s_{3}+q^{-1}s_{1}s_{4}s_{1234} + \left(q + q^{-1}\right)s_{23} + s_{1234}s_{14} + s_{1}s_{123} \\
&+ q^{-2}s_{4}s_{234} -s_{12}s_{13} -q^{-1}s_{4}s_{12}s_{134} -q^{-2}s_{34}s_{24} -q^{-1}s_{1}s_{34}s_{124}\\ 
&+ q^{-1}s_{12}s_{34}s_{14}\\
s_{134}s_{234} &=  qs_{3}s_{4}s_{1234}+s_{1}s_{2} + \left(q + q^{-1}\right)s_{12} + \left(-q^3+q\right)s_{1}s_{3}s_{23} + s_{1234}s_{34} \\
&+ s_{3}s_{123} + q^2s_{4}s_{124} -q^4s_{23}s_{13} -q^3s_{4}s_{23}s_{134} -q^2s_{14}s_{24} -qs_{3}s_{14}s_{234} \\
&+ q^3s_{23}s_{34}s_{14}\\
s_{234}s_{134} &=  q^{-1}s_{3}s_{4}s_{1234}+s_{1}s_{2} + \left(q + q^{-1}\right)s_{12} + \left(q^{-1} - q\right)s_{1}s_{3}s_{23} + s_{1234}s_{34}\\
&+ q^{-2}s_{3}s_{123} + s_{4}s_{124} -q^2s_{23}s_{13} -qs_{4}s_{23}s_{134} -s_{14}s_{24} -q^{-1}s_{3}s_{14}s_{234}\\ 
&+ qs_{23}s_{34}s_{14}\\
s_{124}s_{123} &=  \left(q^2 - q^{-2} + q^{-4}\right)s_{3}s_{4}+\left(1 - q^{-1} + q^{-3}\right)s_{1}s_{2}s_{1234} + s_{1234}s_{12} \\
&+ \left(q-q^{-1}\right)s_{2}s_{4}s_{23} + \left(q^3+q-q^{-1}+q^{-5}\right)s_{34} + \left(q^{-3}-q^{-1}\right)s_{1}s_{3}s_{14} \\
&+ \left(q^2 - q^{-2} + q^{-4}\right)s_{2}s_{234} + \left(q^2 - 1 + q^{-4}\right)s_{1}s_{134} -q^{-4}s_{23}s_{24} \\
&-q^{-3}s_{1}s_{23}s_{124} -q^2s_{14}s_{13} -qs_{2}s_{14}s_{123} + q^{-1}s_{12}s_{23}s_{14}\\
s_{123}s_{124} &=  \left(q^4-q^2+q^{-2}\right)s_{3}s_{4}+\left(q^3-q+q^{-1}\right)s_{1}s_{2}s_{1234} + s_{1234}s_{12} \\
&+ \left(q^3-q\right)s_{2}s_{4}s_{23} + \left(q^5-q+q^{-1}+q^{-3}\right)s_{34} + \left(q^{-1} - q\right)s_{1}s_{3}s_{14} \\
&+ \left(q^4-1+q^{-2}\right)s_{2}s_{234} + \left(q^4-q^2+q^{-2}\right)s_{1}s_{134} -q^{-2}s_{23}s_{24} \\
&-q^{-1}s_{1}s_{23}s_{124} -q^4s_{14}s_{13} -q^3s_{2}s_{14}s_{123} + qs_{12}s_{23}s_{14}\\
\end{align*}

\subsection{Crossing Relations}
\begin{align*}
    s_{13}s_{24} &=  \left(q + q^{-1}\right)s_{1234}+s_{1}s_{2}s_{3}s_{4} + q^{-1}s_{3}s_{4}s_{12} + qs_{1}s_{4}s_{23} + q^{-1}s_{1}s_{2}s_{34} \\
&+ qs_{2}s_{3}s_{14} + s_{4}s_{123} + s_{1}s_{234} + s_{2}s_{134} + s_{3}s_{124} + q^{-2}s_{12}s_{34} + q^2s_{23}s_{14}\\
s_{24}s_{13} &=  \left(q + q^{-1}\right)s_{1234}+s_{1}s_{2}s_{3}s_{4} + qs_{3}s_{4}s_{12} + q^{-1}s_{1}s_{4}s_{23} + qs_{1}s_{2}s_{34} \\
&+ q^{-1}s_{2}s_{3}s_{14} + s_{4}s_{123} + s_{1}s_{234} + s_{2}s_{134} + s_{3}s_{124} + q^2s_{12}s_{34} + q^{-2}s_{23}s_{14}\\
\end{align*}

\subsection{Double and Triple Crossing Relations}
\begin{align*}
    s_{123}s_{24} &=  -s_{2}s_{1234}-qs_{1}s_{3}s_{4} -s_{1}s_{34} -s_{3}s_{14} -q^2s_{4}s_{13} -qs_{2}s_{4}s_{123} \\
&-\left(q + q^{-1}\right)s_{134} + qs_{4}s_{12}s_{23} + s_{12}s_{234} + s_{23}s_{124}\\
s_{234}s_{13} &=  -s_{3}s_{1234}-qs_{1}s_{2}s_{4} -s_{4}s_{12} -s_{2}s_{14} -q^2s_{1}s_{24} -qs_{1}s_{3}s_{234} \\
&-\left(q + q^{-1}\right)s_{124} + qs_{1}s_{23}s_{34} + s_{23}s_{134} + s_{34}s_{123}\\
s_{13}s_{124} &=  -q^{-1}s_{2}s_{3}s_{4}+\left(1 - q^2 - q^{-2}\right)s_{1}s_{1234} -q^2s_{4}s_{23} -q^{-2}s_{2}s_{34} -q^{-2}s_{3}s_{24} \\
&-\left(q^3 - q^{-3}\right)s_{234} -q^{-1}s_{1}s_{3}s_{124} + q^{-1}s_{3}s_{12}s_{14} + q^{-2}s_{12}s_{134} + q^2s_{14}s_{123}\\
s_{24}s_{134} &=  \left(1 - q^2 - q^{-2}\right)s_{4}s_{1234}-qs_{1}s_{2}s_{3} -q^2s_{3}s_{12} -q^{-2}s_{1}s_{23} -q^2s_{2}s_{13} \\
&-\left(q^3 - q^{-3}\right)s_{123} -qs_{2}s_{4}s_{134} + qs_{2}s_{34}s_{14} + q^2s_{34}s_{124} + q^{-2}s_{14}s_{234}\\
s_{24}s_{123} &=  \left(1 - q^2 - q^{-2}\right)s_{2}s_{1234}-qs_{1}s_{3}s_{4} -q^2s_{1}s_{34} -q^{-2}s_{3}s_{14} -q^2s_{4}s_{13} \\
&-qs_{2}s_{4}s_{123} -\left(q^3 - q^{-3}\right)s_{134} + qs_{4}s_{12}s_{23} + q^2s_{12}s_{234} + q^{-2}s_{23}s_{124}\\
s_{13}s_{234} &=  \left(1 - q^2 - q^{-2}\right)s_{3}s_{1234}-qs_{1}s_{2}s_{4} -q^{-2}s_{4}s_{12} -q^2s_{2}s_{14} -q^2s_{1}s_{24} \\
&-qs_{1}s_{3}s_{234} -\left(q^3 - q^{-3}\right)s_{124} + qs_{1}s_{23}s_{34} + q^2s_{23}s_{134} + q^{-2}s_{34}s_{123}\\
s_{134}s_{24} &=  -s_{4}s_{1234}-qs_{1}s_{2}s_{3} -s_{3}s_{12} -s_{1}s_{23} -q^2s_{2}s_{13} -\left(q + q^{-1}\right)s_{123} \\
&-qs_{2}s_{4}s_{134} + qs_{2}s_{34}s_{14} + s_{34}s_{124} + s_{14}s_{234}\\
s_{124}s_{13} &=  -q^{-1}s_{2}s_{3}s_{4}-s_{1}s_{1234} -s_{4}s_{23} -s_{2}s_{34} -q^{-2}s_{3}s_{24} -\left(q + q^{-1}\right)s_{234} \\
&-q^{-1}s_{1}s_{3}s_{124} + q^{-1}s_{3}s_{12}s_{14} + s_{12}s_{134} + s_{14}s_{123}\\
\end{align*}

\end{appendices}


\printbibliography[title={References}]

\end{document}